\providecommand{\U}[1]{\protect\rule{.1in}{.1in}}
\newtheorem{theorem}{Theorem}[section]
\newtheorem*{theorem*}{Theorem}
\theoremstyle{plain}
\newtheorem{corollary}{Corollary}[section]
\newtheorem{lemma}{Lemma}[section]
\newtheorem{proposition}{Proposition}[section]
\numberwithin{equation}{section}
\newtheorem{theoremalph}{Theorem}
\newtheorem{lemmaalph}{Lemma}
\def\a{\alpha}
\def\b{\beta}
\def\d{\delta}
\def\e{\epsilon}
\def\la{\lambda}
\def\p{\partial}
\def\g{\gamma}
\def\oo{\infty}
\def\dim{\operatorname{dim}}
\def\R{\mathbb{R}}
\def\N{\mathbb{N}}
\def\mbF{\mathbb{F}}
\def\mcL{\mathcal{L}}
\def\mcH{\mathcal{H}}
\def\mfc{\mathfrak{c}}
\newcommand{\gen}[1]{\ensuremath{\left\langle #1\right\rangle}}
\newcommand{\norm}[1]{\left\lVert#1\right\rVert}
\def\B{\mathbb{B}}
\def\Q{\mathbb{Q}}
\def\R{\mathbb{R}}
\def\C{\mathbb{C}}
\def\H{\mathbb{H}}
\def\mcU{\mathcal{U}}
\def\vr{\varrho}
\def\spec{\operatorname{spec}}
\def\Re{\operatorname{Re}}
\def\Im{\operatorname{Im}}
\newenvironment{manualtheorem}[1]{%
  \manualtheoreminner
}{\endmanualtheoreminner}
\begin{document}
\title[Higher order Poincar\'e-Sobolev and Hardy-Sobolev-Maz'ya inequalities]{Sharp Hardy-Sobolev-Maz'ya, Adams and Hardy-Adams inequalities on quaternionic hyperbolic spaces and the Cayley hyperbolic plane}
\author{Joshua Flynn}
\address{Joshua Flynn: Department of Mathematics\\
University of Connecticut\\
Storrs, CT 06269, USA}
\email{joshua.flynn@uconn.edu}
\author{Guozhen Lu }
\address{Guozhen Lu: Department of Mathematics\\
University of Connecticut\\
Storrs, CT 06269, USA}
\email{guozhen.lu@uconn.edu}
\author{Qiaohua Yang}
\address{School of Mathematics and Statistics, Wuhan University, Wuhan, 430072, People's  Republic of China}
\email{qhyang.math@whu.edu.cn}

\thanks{The first two authors were partially supported by a grant from the Simons Foundation. The third author was  partially supported by the National Natural Science Foundation of China (No.12071353)}
  \textbf{ }
\subjclass[2000]{Primary 43A85, 43A90, 42B35, 42B15, 42B37, 35J08; }

\begin{abstract}
Though Adams and Hardy-Adams inequalities can be extended to general symmetric spaces of noncompact type fairly straightforwardly by following closely the systematic approach developed in our early works on hyperbolic spaces \cite{LuYang3}, \cite{ly4}, \cite{ly2}, \cite{LiLuy1}, \cite{LiLuy2} and more recently on complex hyperbolic spaces in \cite{ly5}, higher order Poincar\'e-Sobolev and Hardy-Sobolev-Maz'ya inequalities
are more difficult to establish. The main purpose of this goal is to establish the Poincar\'e-Sobolev and Hardy-Sobolev-Maz'ya inequalities on
quaternionic hyperbolic spaces and the Cayley hyperbolic plane. A crucial part of our work is to establish appropriate factorization theorems on these spaces which are of their independent interests. To this end, we need to identify and introduce the ``Quaternionic Geller's operators" and
``Octonionic Geller's operators" which have been absent on these spaces. Combining the factorization theorems and the Geller type operators  with the Helgason-Fourier analysis on symmetric spaces,
 the precise heat and  Bessel-Green-Riesz kernel estimates
and the Kunze-Stein phenomenon for connected real simple groups of real rank one with finite center, we succeed to establish the higher order Poincar\'e-Sobolev and Hardy-Sobolev-Maz'ya inequalities on quaternionic hyperbolic spaces and the Cayley hyperbolic plane. The kernel estimates
 required to prove these inequalities are also sufficient for us to establish, as a byproduct, the Adams and Hardy-Adams inequalities on these spaces. This paper, together with earlier works \cite{LuYang3}, \cite{ly4}, \cite{ly2}, \cite{ly5}, \cite{LiLuy1} and \cite{LiLuy2}, completes our study of the factorization theorems, higher order Poincar\'e-Sobolev, Hardy-Sobolev-Maz'ya, Adams and Hardy-Adams inequalities on all rank one symmetric spaces of noncompact type. The factorization theorems and higher order Poincar\'e-Sobolev and Hardy-Sobolev-Maz'ya inequalities on general higher rank symmetric spaces of noncompact type will be studied in a forthcoming paper.

\end{abstract}

\keywords{}
\maketitle
\tableofcontents

\section{Introduction}
Let $G$ be a simple Lie group of real rank one. That is,  $G$ is one of the four groups $SO(n,1)$, $SU(n,1)$, $Sp(n,1)$ and $F_{4}$.  Let $K$ be  a maximal compact
subgroup of $G$ and denote by $\mathbb{X}=G/K$. Then $\mathbb{X}$ is a rank one
symmetric space of non-compact type, which is known as the real, complex and quaternionic hyperbolic
spaces, and the Cayley  hyperbolic plane, which we denote them by $H^{n}_{\mathbb{R}}$, $H^{n}_{\mathbb{C}}$, $H^{n}_{\mathbb{Q}}$ and $H^{2}_{\mathbb{O}}$.   Throughout this paper,  we let $\Delta_{\mathbb{X}}$ be the Laplace-Beltrami operator of $X$ and $\rho_{\mathbb{X}}$ be
 the half-sum of the positive roots of $\mathbb{X}$. we note that
 \[
\rho_{\mathbb{X}}=\left\{
                    \begin{array}{ll}
                    \frac{n-1}{2}, & \hbox{$\mathbb{X}=H_{\mathbb{R}}^{n}$;} \\
                    n, & \hbox{$\mathbb{X}=H_{\mathbb{C}}^{n}$;} \\
                      2n+1, & \hbox{$\mathbb{X}=H_{\mathbb{Q}}^{n}$;} \\
                      11, & \hbox{$\mathbb{X}=H_{\mathbb{O}}^{2}$}
                    \end{array}
                  \right.
\]
and $\rho_{\mathbb{X}}^{2}$ is  the spectral gaps of $-\Delta_{\mathbb{X}}$.

 Our main object of study is the  sharp higher order Poincar\'e-Sobolev and Hardy-Sobolev-Maz'ya inequalities and
 their borderline case, Adams and Hardy-Adams inequalities, on $\mathbb{X}$. The Hardy-Sobolev-Maz'ya inequalities,  studied  firstly by Maz'ya \cite{maz}, combine the Hardy and Sobolev inequalities into a single inequality
 and we state it as follows:
\begin{equation*}
\int_{\mathbb{R}^{n}_{+}}|\nabla u|^{2}dx-\frac{1}{4}\int_{\mathbb{R}^{n}_{+}}\frac{u^{2}}{x^{2}_{1}}dx\geq C_n\left(\int_{\mathbb{R}^{n}_{+}}
x^{\gamma}_{1}|u|^{p}dx\right)^{\frac{2}{p}},\;\;\; u\in C^{\infty}_{0}(\mathbb{R}^{n}_{+}), n\geq 3,
\end{equation*}
where $2<p\leq\frac{2n}{n-2}$, $\gamma=\frac{(n-2)p}{2}-n$, $\mathbb{R}^{n}_{+}=\{(x_{1},x_{2},\cdots,x_{n})\in\mathbb{R}^{n}: x_{1}>0\}$ and $C_n$ is a positive constant which is independent of $u$.
 In terms of half-space model of real hyperbolic
spaces, one can see such inequality is equivalent to the Poincar\'e-Sobolev inequality on $H^{n}_{\mathbb{R}}$. The  borderline case when  $n=2$ has been studied by Wang and Ye in  \cite{wy}and the second and third authors \cite{ly}. The higher order inequalities of such type, namely the so-called Hardy-Adams inequalities have been established by the second and third authors and Li (see \cite{LiLuy1, LiLuy2, ly2}).

\subsection{The case $\mathbb{X}=H^{n}_{\mathbb{R}}$} We firstly recall the  Poincar\'e half space model and ball model of $H^{n}_{\mathbb{R}}$. The
Poincar\'e half space model  is given by $\mathbb{R}_{+}\times\mathbb{R}^{n-1}=\{(x_{1},\cdots,x_{n}): x_{1}>0\}$ equipped with the Riemannian metric
$ds^{2}=\frac{dx_{1}^{2}+\cdots+dx_{n}^{2}}{x^{2}_{1}}$. The induced Riemannian measure can be written as $dV=\frac{dx}{x^{n}_{1}}$, where $dx$ is the Lebesgue measure on
$\mathbb{R}^{n}$. The ball model is given by the unit ball
\[\mathbb{B}^{n}=\{x=(x_{1},\cdots,x_{n})\in \mathbb{R}^{n}| |x|<1\}\]
equipped with the usual Poincar\'e metric
\[
ds^{2}=\frac{4(dx^{2}_{1}+\cdots+dx^{2}_{n})}{(1-|x|^{2})^{2}}.
\]
The  factorization theorem on $H^{n}_{\mathbb{R}}$ is given by:
\begin{itemize}
  \item ball model. (see  \cite{liu}.)
  \begin{equation*}
\left(\frac{1-|x|^{2}}{2}\right)^{k+\frac{n}{2}}(-\Delta)^{k}\left[\left(\frac{1-|x|^{2}}{2}\right)^{k-\frac{n}{2}}f\right]=P_{k}f,
\end{equation*}
  \item half space model. (see \cite{LuYang3}.)
  \begin{equation*}
x^{\frac{n}{2}+k}_{1}(-\Delta)^{k}(x^{k-\frac{n}{2}}_{1}f)=P_{k}f,
\end{equation*}
\end{itemize}
where $f\in C^{\infty}(H^{n}_{\mathbb{R}})$,  $\Delta$ is the Laplacian on Euclidean space,   $P_{1}=-\Delta_{\mathbb{X}}-\frac{n(n-2)}{4}$ and
\begin{equation*}
\; P_{k}=P_{1}(P_{1}+2)\cdot\cdots\cdot(P_{1}+k(k-1))
\end{equation*}
is the GJMS operators of order $2k$ on $H^{n}_{\mathbb{R}}$. (see \cite{GJMS}, \cite{FeffermanGr}, \cite{j}.)
On the other hand, the Poincar\'e-Sobolev inequalities reads as
 \begin{equation*}
\int_{H^{n}_{\mathbb{R}}}(\zeta^{2}-\rho_{\mathbb{X}}^{2}-\Delta_{\mathbb{X}})^{s}(-\rho_{\mathbb{X}}^{2}-\Delta_{\mathbb{X}})^{\alpha/2}u\cdot udV
\geq C\|u\|^{2}_{L^{p}(H^{n}_{\mathbb{R}})},
\end{equation*}
where  $0<\alpha<3$,  $0<\zeta$ and $u\in C^{\infty}_{0}(H^{n}_{\mathbb{R}})$.
Therefore, in terms of the  Poincar\'e half space model and ball model of $H^{n}_{\mathbb{R}}$, we have the following
Hardy-Sobolev-Maz'ya inequalities for higher order (see \cite{LuYang3}).
\begin{theoremalph}\label{th1.3}
Let $2\leq k<\frac{n}{2}$ and $2<p\leq\frac{2n}{n-2k}$. There exists a positive constant $C$ such that for each $u\in C^{\infty}_{0}(\mathbb{R}^{n}_{+})$,
\begin{equation*}
\int_{\mathbb{R}^{n}_{+}}|\nabla^{k}u|^{2}dx- \prod^{k}_{i=1}\frac{(2i-1)^{2}}{4}\int_{\mathbb{R}^{n}_{+}}\frac{u^{2}}{x^{2k}_{1}}dx\geq C\left(\int_{\mathbb{R}^{n}_{+}}x^{\gamma}_{1}|u|^{p}dx\right)^{\frac{2}{p}},
\end{equation*}
where $\gamma=\frac{(n-2k)p}{2}-n$.

In terms of the Poincar\'e ball model $\mathbb{B}^{n}$, such  inequality  can be written as
\begin{equation*}
\int_{\mathbb{B}^{n}}|\nabla^{k}u|^{2}dx- \prod^{k}_{i=1}(2i-1)^{2}\int_{\mathbb{B}^{n}}\frac{u^{2}}{(1-|x|^{2})^{2k}}dx\geq C\left(\int_{\mathbb{B}^{n}}(1-|x|^{2})^{\gamma}|u|^{p}dx\right)^{\frac{2}{p}}.
\end{equation*}
\end{theoremalph}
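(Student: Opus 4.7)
The plan is to transplant the entire Euclidean inequality to $H^n_{\mathbb{R}}$ via the factorization theorem and then apply the Poincar\'e-Sobolev inequality quoted above. Writing $u = x_1^{k-n/2} f$ on the half-space (respectively $u = ((1-|x|^2)/2)^{k-n/2}f$ on the ball), the identity $x_1^{n/2+k}(-\Delta)^k(x_1^{k-n/2} f) = P_k f$ gives $(-\Delta)^k u = x_1^{-n/2-k} P_k f$. Since $u \in C_0^\infty$, the standard integration by parts $\int |\nabla^k u|^2\,dx = \int u(-\Delta)^k u\,dx$ is valid, and each of the three quantities in the inequality reduces to a purely hyperbolic integral:
\begin{equation*}
\int_{\mathbb{R}^n_+} |\nabla^k u|^2\,dx = \int_{H^n_{\mathbb{R}}} f\,P_k f\,dV, \qquad \int_{\mathbb{R}^n_+} \frac{u^2}{x_1^{2k}}\,dx = \int_{H^n_{\mathbb{R}}} f^2\,dV,
\end{equation*}
and the choice $\gamma = (n-2k)p/2 - n$ is tuned so that $x_1^\gamma |u|^p\,dx = |f|^p\,dV$. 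The same substitution works in the ball, where the extra powers of $2$ coming from $dV = 2^n(1-|x|^2)^{-n}\,dx$ account for the sharp Hardy constant being $\prod(2i-1)^2$ rather than $\prod (2i-1)^2/4$.

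The next step is to identify the Hardy constant spectrally. Setting $L = -\Delta_{\mathbb{X}} - \rho_{\mathbb{X}}^2 \geq 0$ and using $\rho_{\mathbb{X}}^2 = (n-1)^2/4$, one finds $P_1 = L + 1/4$, so the GJMS product telescopes to
\begin{equation*}
P_k \;=\; \prod_{i=1}^k \Bigl(L + \tfrac{(2i-1)^2}{4}\Bigr), \qquad \prod_{i=1}^k \tfrac{(2i-1)^2}{4} = P_k\bigr|_{L=0}.
\end{equation*}
Because the Hardy constant is precisely the constant term of $P_k$ as a polynomial in $L$, the difference factors:
\begin{equation*}
P_k \;-\; \prod_{i=1}^k \tfrac{(2i-1)^2}{4} \;=\; L \cdot Q(L), \qquad Q(L) = L^{k-1} + e_1 L^{k-2} + \cdots + e_{k-1},
\end{equation*}
where $e_j$ denotes the $j$-th elementary symmetric polynomial in the positive numbers $1/4, 9/4, \ldots, (2k-1)^2/4$. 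In particular every coefficient of $Q$ is strictly positive.

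The inequality is now reduced to the spectrally encoded Poincar\'e-Sobolev estimate $\int_{H^n_{\mathbb{R}}} f\,L\,Q(L)\,f\,dV \geq C\|f\|_{L^p(H^n_{\mathbb{R}})}^2$. Since $Q$ is a degree-$(k-1)$ polynomial in $L$ with all positive coefficients, behaving like $e_{k-1}>0$ at $L=0$ and like $L^{k-1}$ at infinity, a routine comparison yields constants $c,\zeta > 0$ with $Q(L) \geq c(L+\zeta^2)^{k-1}$ as spectral multipliers on $L^2(H^n_{\mathbb{R}})$. Inserting this,
\begin{equation*}
\int_{H^n_{\mathbb{R}}} f\,L\,Q(L)\,f\,dV \;\geq\; c\int_{H^n_{\mathbb{R}}} (\zeta^2 + L)^{k-1}\,L\,f\cdot f\,dV,
\end{equation*}
and the stated Poincar\'e-Sobolev inequality with parameters $s = k-1$ and $\alpha = 2$ (satisfying $\alpha < 3$) finishes the proof in the full Sobolev range $2 < p \leq 2n/(n-2k)$.

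The main obstacle, and the reason the proof is not purely formal, is the spectral comparison $Q(L) \geq c(L+\zeta^2)^{k-1}$: it is what allows us to stay inside the admissible range $\alpha < 3$ of the Poincar\'e-Sobolev inequality even though the operator $L\,Q(L)$ has total order $2k$. A direct attempt with $\alpha = 2k$ and $s = 0$ would violate the hypothesis; the factorization into $L$ times a positive polynomial $Q(L)$ is precisely what permits us to absorb the extra order into the exponent $s$ of the Bessel-type factor $(\zeta^2 + L)^s$, whose admissible range in $s$ is unrestricted. Everything else --- the measure and weight bookkeeping in each of the two models, the algebraic identity relating $P_k$ to the Hardy constant, and the verification that $\gamma$ is correctly chosen --- is routine once the factorization theorem is in hand.
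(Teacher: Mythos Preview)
Your proof is correct and follows essentially the same approach the paper uses. Although Theorem~A itself is only cited (from \cite{LuYang3}) rather than proved here, the paper's proof of the quaternionic analogue (Theorem~\ref{thm:hardy-sobolev-mazya-inequality}) proceeds exactly as you do: apply the factorization theorem to pass to the hyperbolic picture, subtract the sharp Hardy constant to extract a factor of $L=-\Delta_{\mathbb{X}}-\rho_{\mathbb{X}}^2$, use the elementary polynomial lower bound
\[
\prod_{j=1}^{k}\bigl(\lambda^{2}+(2j-1)^{2}/4\bigr)-\prod_{j=1}^{k}(2j-1)^{2}/4 \;\geq\; \lambda^{2}(\lambda^{2}+\delta)^{k-1}
\]
(equivalently your $Q(L)\geq c(L+\zeta^{2})^{k-1}$), and then invoke the Poincar\'e--Sobolev inequality with $\gamma=2<3$ and $\gamma'=2(k-1)$ so that the Sobolev range $2<p\le 2n/(n-2k)$ is exactly covered.
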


We mention in passing that the best constant  in the above Hardy-Sobolev-Maz'ya inequalities when $k=1$ and $n=3$ is the same as the Sobolev constant (see \cite{bfl}) and is strictly smaller than the Sobolev constant (see \cite{Hebey}). In the higher order derivative  cases, it was proved
in all the cases of $n=2k+1$, the best constants are the same as the Sobolev constants \cite{ly4} (see also \cite{Hong}).

In the borderline case, there holds the Hardy-Adams inequality and we state it as follows (see \cite{ly4}, \cite{LiLuy1}, \cite{LiLuy2}).
\begin{theoremalph} \label{introthm:adams-inequality ball model1}
Let $n\geq3$,  $\zeta>0$ and  $0<s<3/2$.
Then there exists a constant $C_{\zeta,n}>0$ such that for all
$u\in C^{\infty}_{0}(H^{n}_{\mathbb{R}})$ with
\[\int_{H^{n}_{\mathbb{R}}}(\zeta^{2}-\rho_{\mathbb{X}}^{2}-\Delta_{\mathbb{X}})^{s}(-\rho_{\mathbb{X}}^{2}-\Delta_{\mathbb{X}})^{\alpha/2}u\cdot udV\leq1.\]
there holds
\[
\int_{H^{n}_{\mathbb{R}}}(e^{\beta_{0}\left(n/2,n\right) u^{2}}-1-\beta_{0}\left(n/2,n\right)
u^{2})dV\leq C_{\zeta,n},
\]
where
  \[
\beta_{0}(\alpha,n)=\frac{n}{\omega_{n-1}}\left[\frac{\pi^{n/2}2^{\alpha}\Gamma(\alpha/2)}{\Gamma(\frac{n-\alpha}{2})}\right]^{p'},\;\;0<\alpha<n,
\]
  is the best  Adams' constant on $\mathbb{R}^{n}$ and $\omega_{n-1}$ is the area of the surface of the unit $n$-ball.

\end{theoremalph}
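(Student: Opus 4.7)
The plan is to reduce the inequality to the sharp Adams--Fontana inequality on $\mathbb{R}^{n}$ via the Helgason--Fourier transform on $H^{n}_{\mathbb{R}}$ together with precise pointwise estimates on the associated Bessel--Green--Riesz kernel. Set
\[
f \;=\; (\zeta^{2}-\rho_{\mathbb{X}}^{2}-\Delta_{\mathbb{X}})^{s/2}(-\rho_{\mathbb{X}}^{2}-\Delta_{\mathbb{X}})^{\alpha/4}\,u,
\]
so that the hypothesis becomes $\|f\|_{L^{2}(H^{n}_{\mathbb{R}})}\le 1$. Because of the spectral gap $-\rho_{\mathbb{X}}^{2}-\Delta_{\mathbb{X}}\ge 0$ and the shift $\zeta>0$, both operators are positive and invertible on $C^{\infty}_{0}(H^{n}_{\mathbb{R}})$, so one may represent $u$ as a $K$-bi-invariant convolution on $G/K$,
\[
u(x) \;=\; \int_{H^{n}_{\mathbb{R}}} G_{s,\alpha,\zeta}\bigl(d_{\mathbb{X}}(x,y)\bigr)\,f(y)\,dV(y),
\]
where $G_{s,\alpha,\zeta}$ is the radial kernel whose spherical transform equals $(\zeta^{2}+\lambda^{2})^{-s/2}\lambda^{-\alpha/2}$.

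The heart of the argument is to obtain sharp two-scale estimates for $G_{s,\alpha,\zeta}(r)$. Using the Abel transform on $H^{n}_{\mathbb{R}}$, the subordination formula for fractional powers, and the explicit form of the spherical functions and Harish--Chandra $\mathbf{c}$-function, I would show that as $r\to 0^{+}$ the kernel matches the Euclidean Bessel--Riesz kernel of the same total order $2s+\alpha$ with the \emph{precise} leading coefficient, while as $r\to\infty$ the combination of the spectral gap $\rho_{\mathbb{X}}$ and the shift $\zeta$ yields exponential decay of the form $e^{-(\rho_{\mathbb{X}}+\zeta)r}$, strong enough to dominate the $\sinh^{n-1}r$ volume growth. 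The near-origin asymptotic is what makes $\beta_{0}(n/2,n)$ the sharp exponent, while the exponential tail is what ensures the global integral over the symmetric space remains finite.

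With these kernel estimates in hand, I would expand
\[
e^{\beta_{0}u^{2}}-1-\beta_{0}u^{2}\;=\;\sum_{k\ge 2}\frac{(\beta_{0}u^{2})^{k}}{k!}
\]
and estimate each moment $\|u\|^{2k}_{L^{2k}}$ via the convolution representation. For the near-diagonal contribution, a local reduction leaves one, up to lower-order error, with the Euclidean Bessel--Riesz convolution, and the Euclidean sharp Adams--Fontana inequality supplies the admissible exponent $\beta_{0}(n/2,n)$. For the far-field contribution, the exponential decay of $G_{s,\alpha,\zeta}$ combined with the Kunze--Stein phenomenon for $SO(n,1)$ (a connected real simple group of real rank one with finite center) provides uniform convolution bounds for the appropriate range of $p$, so that the series converges to a finite constant $C_{\zeta,n}$.

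The main obstacle is the sharpness of the exponent. Crude $L^{\infty}$ or weighted $L^{p}$ bounds on $G_{s,\alpha,\zeta}$ already yield \emph{some} exponential integrability, but matching precisely the Euclidean sharp constant $\beta_{0}(n/2,n)$ requires the exact leading asymptotics of $G_{s,\alpha,\zeta}(r)$ as $r\to 0^{+}$, obtained via inversion of the Abel transform on $H^{n}_{\mathbb{R}}$ together with careful bookkeeping of the $\mathbf{c}$-function constants. The role of the parameter $s\in(0,3/2)$ is technical: it furnishes the extra smoothness needed to control the subordination integral while preserving the leading-order local behavior of the kernel, so that the Euclidean sharp constant survives intact.
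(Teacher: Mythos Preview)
Your high-level strategy is correct and matches the paper's: write $u$ as a convolution $u=f\ast\phi$ with the radial Bessel--Green--Riesz kernel $\phi$, establish that $\phi$ has the precise Euclidean Riesz singularity $\gamma_{n}(n/2)^{-1}\rho^{-n/2}$ near the diagonal and exponential decay at infinity, and feed this into a sharp Adams-type argument. The kernel asymptotics you describe are exactly the ones needed.

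The gap is in how you extract the sharp constant. Your plan to expand $e^{\beta_{0}u^{2}}-1-\beta_{0}u^{2}$ and bound each moment $\|u\|_{L^{2k}}^{2k}$ separately via convolution estimates will not produce $\beta_{0}(n/2,n)$: any Sobolev-type bound on $\|u\|_{2k}$ loses a $k$-dependent constant, and summing these gives exponential integrability only for some $\beta<\beta_{0}$. The ``local reduction to Euclidean Adams--Fontana'' you allude to is also not a viable route, since one cannot simply split the kernel and apply a bounded-domain sharp inequality to the singular piece.

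The paper's mechanism (following \cite{LiLuy1,LiLuy2}) is different and consists of two ingredients you do not mention. First, the Lam--Lu level-set decomposition: split $H^{n}_{\mathbb{R}}$ into $\Omega(u)=\{|u|\ge 1\}$ and its complement. On $\{|u|<1\}$ the tail series is dominated by $C\|u\|_{L^{4}}^{4}$, which is bounded by the Poincar\'e--Sobolev inequality (this is where Kunze--Stein enters, not in summing moments). The same Poincar\'e--Sobolev inequality shows $|\Omega(u)|\le\Omega_{0}$ uniformly. Second, on the bounded-measure set $\Omega(u)$, one passes to nonincreasing rearrangements and applies O'Neil's lemma to the convolution $u=f\ast\phi$, obtaining
\[
u^{\ast}(t)\le \frac{1}{t}\int_{0}^{t}f^{\ast}\int_{0}^{t}\phi^{\ast}+\int_{t}^{\infty}f^{\ast}\phi^{\ast}.
\]
The sharp rearranged kernel estimate $\phi^{\ast}(t)\le \gamma_{n}(n/2)^{-1}(nt/\omega_{n-1})^{-1/2}+o(t^{-1/2})$ for $t$ small, together with $\int_{c}^{\infty}(\phi^{\ast})^{2}\,dt<\infty$, then feeds directly into the one-dimensional Adams lemma to give $\int_{0}^{\Omega_{0}}e^{\beta_{0}(u^{\ast})^{2}}\,dt\le C$. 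It is this rearrangement step, not a moment-by-moment estimate, that converts the precise leading coefficient of $\phi$ into the sharp exponent.
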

In terms of the ball model, we have the following Hardy-Adams
inequalities on $\mathbb{B}^{n}$ (see \cite{wy}, \cite{ly2}, \cite{LiLuy1}.) 
\begin{theoremalph}
There exists a constant $C>0$ such that for all $u\in
C^{\infty}_{0}(\mathbb{B}^{n})$ with
\[
\int_{\mathbb{B}^{n}}|\nabla^{\frac{n}{2}}
u|^{2}dx-\prod^{n/2}_{k=1}(2k-1)^{2}\int_{\mathbb{B}^{n}}\frac{u^{2}}{(1-|x|^{2})^{n}}dx\leq1,
\]
there holds
\[
\int_{\mathbb{B}^{n}}\frac{e^{\beta_{0}\left(n/2,n\right)
u^{2}}-1-\beta_{0}\left(n/2,n\right) u^{2}}{(1-|x|^{2})^{n}}dx\leq C.
\]
\end{theoremalph}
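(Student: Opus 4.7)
The plan is to reduce this ball-model Hardy-Adams inequality to the intrinsic Hardy-Adams inequality on $H^n_{\mathbb{R}}$ stated immediately above, via the ball-model factorization identity already quoted in this section. Taking $k=n/2$ in the factorization, the exponent $k-n/2$ vanishes and the identity collapses to
\[
\left(\frac{1-|x|^2}{2}\right)^n (-\Delta)^{n/2} u \;=\; P_{n/2}\, u
\]
as operators on $C^\infty(\mathbb{B}^n)$. Pairing with $u$, integrating over $\mathbb{B}^n$ against $dx$, and recognizing the hyperbolic volume element $dV=(2/(1-|x|^2))^{n}dx$, one obtains $\int_{\mathbb{B}^n}|\nabla^{n/2}u|^2\,dx=\int_{H^n_{\mathbb{R}}}u\,P_{n/2}u\,dV$. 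Since also $\int_{\mathbb{B}^n}u^2(1-|x|^2)^{-n}dx=2^{-n}\int_{H^n_{\mathbb{R}}}u^2\,dV$, the hypothesis of the theorem rewrites as
\[
\int_{H^n_{\mathbb{R}}}u\Bigl[P_{n/2}-2^{-n}\prod_{k=1}^{n/2}(2k-1)^2\Bigr]u\, dV \;\leq\; 1.
\]

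The next step is to identify the subtracted constant as the bottom of the spectrum of $P_{n/2}$. Using $P_1=-\Delta_{\mathbb{X}}-\rho_{\mathbb{X}}^2+\tfrac{1}{4}$ together with $j(j-1)+\tfrac14=(j-\tfrac12)^2$, the product formula for $P_{n/2}$ factors as $P_{n/2}=\prod_{j=1}^{n/2}\bigl[-\Delta_{\mathbb{X}}-\rho_{\mathbb{X}}^2+(j-\tfrac12)^2\bigr]$, whose spectral infimum is $\prod(j-\tfrac12)^2=2^{-n}\prod(2j-1)^2$. Writing $X=-\Delta_{\mathbb{X}}-\rho_{\mathbb{X}}^2\geq 0$, the shifted operator takes the form $X\cdot Q(X)$ with $Q$ an explicit polynomial of degree $n/2-1$ and strictly positive coefficients.

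The third step is a spectral comparison. Choose $s\in(0,3/2)$ small enough that $\alpha/2:=n/2-s\geq 1$, and establish, for some $\zeta>0$ and $c_{\zeta}>0$, the scalar inequality $XQ(X)\geq c_{\zeta}(X+\zeta^2)^s X^{\alpha/2}$ for every $X\geq 0$. Both sides share leading order $X^{n/2}$ at infinity, both vanish at $X=0$ (the LHS to order one, the RHS to order $\alpha/2\geq 1$), and a ratio/compactness argument then supplies such a $c_{\zeta}$. Promoting the inequality via the functional calculus of $-\Delta_{\mathbb{X}}$ yields the operator inequality, so the ball-model hypothesis implies that $v:=\sqrt{c_{\zeta}}\,u$ satisfies the hypothesis of the intrinsic Hardy-Adams inequality above. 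That inequality then furnishes an exponential bound for $v$ in $L^1(dV)$, and undoing the scaling together with $dV=2^n(1-|x|^2)^{-n}dx$ delivers the claimed estimate on $\mathbb{B}^n$.

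The principal obstacle will lie in the spectral comparison: to retain the \emph{sharp} exponential constant $\beta_0(n/2,n)$ in the conclusion one needs the factor $c_\zeta$ to be effectively $1$, which I expect to require a $\zeta\to 0^+$ limit in the comparison combined with a Fatou-type passage through the exponential integral. A secondary issue is to handle odd $n$, for which $P_{n/2}$ and $(-\Delta)^{n/2}$ must be interpreted as fractional operators and the factorization reverified in that fractional sense; the spectral step above is unchanged once this is done.
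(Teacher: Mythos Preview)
Your strategy is correct and is precisely the route taken both in the references the paper cites for this result and in the paper's own proof of the quaternionic analogue (Theorem~\ref{thm:hardy-Adams-inequality2}): use the factorization with $k=n/2$ to rewrite the hypothesis as $\int u\bigl[\prod_j(X+(j-\tfrac12)^2)-\prod_j(j-\tfrac12)^2\bigr]u\,dV\leq 1$ with $X=-\Delta_{\mathbb{X}}-\rho_{\mathbb{X}}^2$, dominate the bracket from below on the spectral side by $(X+\zeta^2)^{s}X^{n/2-s}$, and invoke the intrinsic Hardy--Adams inequality. The paper does not reprove this theorem here, but that is exactly what the cited papers do.

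Neither of your flagged obstacles is real. For the sharp constant, no limit $\zeta\to0$ or Fatou step is needed: $c_\zeta=1$ holds outright for any $0<\zeta\leq 3/2$. Indeed, with $a_j=(j-\tfrac12)^2$,
\[
\prod_{j=1}^{n/2}(X+a_j)-\prod_{j=1}^{n/2}a_j \;-\; X\prod_{j=2}^{n/2}(X+a_j)\;=\;a_1\Bigl[\prod_{j\geq 2}(X+a_j)-\prod_{j\geq 2}a_j\Bigr]\;\geq\;0,
\]
so $XQ(X)\geq X\prod_{j=2}^{n/2}(X+a_j)$. For $s\in(0,\min(3/2,\,n/2-1)]$ and $\zeta^2\leq a_2=9/4$, bound $\lfloor s\rfloor$ of the remaining factors below by $X+\zeta^2$, the others by $X$, and treat a fractional part $\theta$ by the weighted AM--GM estimate $(X+\zeta^2)^{\theta}X^{1-\theta}\leq X+\theta\zeta^2\leq X+a_j$. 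This gives $XQ(X)\geq (X+\zeta^2)^s X^{n/2-s}$ with constant exactly $1$, so $\beta_0(n/2,n)$ is preserved. As for odd $n$: the statement involves $\nabla^{n/2}$ and $\prod_{k=1}^{n/2}(2k-1)^2$, both of which presuppose $n$ even, so there is nothing to do.
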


\subsection{The case $\mathbb{X}=H^{n}_{\mathbb{C}}$}
The complex hyperbolic space is a simply connected complete
Kaehler manifold of constant holomorphic sectional curvature $-4$. There are two   models of complex  hyperbolic space,
 the Siegel domain  model $\mathcal{U}^{n}$ and the  ball model $\mathbb{B}^{n}_{\mathbb{C}}$.
The Siegel domain  $\mathcal{U}^{n}\subset \mathbb{C}^{n}$ is defined as
$$
\mathcal{U}^{n}:=\{z\in\mathbb{C}^{n}: \varrho(z)>0\},
$$
where
\begin{equation}\label{2.1}
\varrho(z)=\textrm{Im}z_{n}-\sum_{j=1}^{n-1}|z_{j}|^{2}.
\end{equation}
The Bergman metric on $\mathcal{U}^{n}$ is the metric with Kaehler form $  \omega=\frac{i}{2}\partial\bar{\partial}\log\frac{1}{\varrho}$.
Its boundary $\partial \mathcal{U}^{n}:=\{z\in\mathbb{C}^{n}: \varrho(z)=0\}$ can be identified with the Heisenberg group
$\mathbb{H}^{2n-1}$, which is a nilpotent group of step two  with the group law
\begin{equation*}
  (z,t)\circ(z',t')=(z+z',t+t'+2 \textrm{Im}(z,z')),
\end{equation*}
where $z,z'\in \mathbb{C}^{n-1}$ and $(z,z')$ is the  Hermite inner product
\begin{equation*}
(z,z')=\sum^{n}_{j=1}z_{j}\bar{z}'_{j}.
\end{equation*}
Set $z_{j}=x_{j}+iy_{j} (1\leq j\leq n-1)$ and define
\begin{equation*}
  X_{j}=\frac{\partial}{\partial x_{j}}+ 2y_{j}\frac{\partial}{\partial t},\;
    Y_{j}=\frac{\partial}{\partial y_{j}}- 2x_{j}\frac{\partial}{\partial t}\;\textrm{for}\;j=1,\cdots,n-1, \; T=\frac{\partial}{\partial t}.
\end{equation*}
The $2n-1$ vector fields $X_{1},\cdots, X_{n-1},Y_{1},\cdots, Y_{n-1}, T$ are
 left-invariant and form a basis for Lie algebra of $\mathbb{H}^{2n-1}$.
Let $$\mathcal{L}_{0} =\frac{1}{4}\sum^{n-1}_{j=1}(X^{2}_{j}+Y^{2}_{j})$$
be the sub-Laplacian on $\mathbb{H}^{2n-1}$.
 Then the Laplace-Beltrami operator  is given by
\begin{equation*}
  \Delta_{\mathbb{X}}=4\varrho[\varrho(\partial_{\varrho\varrho}+T^{2})+\mathcal{L}_{0} -(n-1)\partial_{\varrho}].
\end{equation*}

The  ball model is given by the unit ball
\[\mathbb{B}_{\mathbb{C}}^{n}=\{z=(z_{1},\cdots,z_{n})\in \mathbb{C}^{n}: |z|<1\}\]
equipped with the Kaehler metric
\[
ds^{2}=-\partial\bar{\partial}\log (1-|z|^{2}).
\]
The Laplace-Beltrami operator is given by
\[
\Delta_{\mathbb{X}}=4(1-|z|^{2})\sum^{n}_{j,k=1}(\delta_{jk}-z_{j}\bar{z}_{k})\frac{\partial^{2}}{\partial z_{j}\partial \bar{z}_{k}},
\]
where
\[
\delta_{j,k}=\left\{
               \begin{array}{ll}
                 1, & \hbox{$j=k$;} \\
                 0, & \hbox{$j\neq k$.}
               \end{array}
             \right.
\]

The  Geller's operator $\Delta_{\alpha,\beta}$  is defined by (see \cite{ge})
 \begin{equation}\label{b2.7}
   \Delta_{\alpha,\beta}=4(1-|z|^{2})\left[\sum^{n}_{j,k=1}(\delta_{jk}-z_{j}\bar{z}_{k})\frac{\partial^{2}}{\partial z_{j}\partial \bar{z}_{k}}
   +\alpha\sum^{n}_{j=1}z_{j}\frac{\partial}{\partial z_{j}}+\beta\sum^{n}_{j=1}\overline{z}_{j}\frac{\partial}{\partial \overline{z}_{j}}-\alpha\beta \right].
   \end{equation}
   Denote by
   \begin{equation*}
  \begin{split}
R=\sum^{n}_{j=1}z_{j}\frac{\partial}{\partial z_{j}},\;\;\overline{R}=\sum^{n}_{j=1}\overline{z}_{j}\frac{\partial}{\partial \overline{z}_{j}}.
\end{split}
\end{equation*}
Then we have
  \begin{equation*}
  \begin{split}
\Delta_{\alpha,\beta}
=&4(1-|z|^{2})\left[\frac{1-|z|^{2}}{|z|^{2}}R\overline{R}-\frac{1}{|z|^{2}}\mathcal{L}'_{0}+\frac{n-1}{2}\cdot\frac{1}{|z|^{2}}(R+\overline{R})
+\alpha R+\beta\bar{R}-\alpha\beta\right],
\end{split}
\end{equation*}
where
 $\mathcal{L}'_{0}$ is the Folland-Stein operator \cite{fs} on CR sphere defined as follows:
\begin{equation*}
  \mathcal{L}'_{0}=-\frac{1}{2}\sum_{j<k}\left(M_{jk}\overline{M}_{jk}+\overline{M}_{jk}M_{jk}\right), \, where\, \;\;M_{j,k}=z_{j}\partial_{\overline{z}_{k}}
-\overline{z}_{k}\partial_{z_{j}}.
\end{equation*}
For simplicity, we set $\Delta_{\alpha,\beta}'=\frac{1}{4(1-|z|^{2})}\Delta_{\alpha,\beta}$.

The factorization theorem involving Geller's operators on the complex hyperbolic space play an important role in establishing the higher order Poincar\'e-Sobolev and Hardy-Sobolev-Maz'ya inequalities on the complex hyperbolic spaces and can be stated as follows (see\cite{ly5}).
\begin{theoremalph}
Let $a\in\mathbb{R}$ and $k\in\mathbb{N}\setminus\{0\}$.  In terms of  the Siegel domain model, we have,  for $u\in C^{\infty}(\mathcal{U}^{n})$,
\begin{equation}\label{a1.5}
\begin{split}
&\prod^{k}_{j=1}\left[\varrho\partial_{\varrho\varrho}+a\partial_{\varrho}+\varrho T^{2}+\mathcal{L}_{0} -i(k+1-2j)T\right](\varrho^{\frac{k-n-a}{2}} u) \\
=&4^{-k}\varrho^{-\frac{k+n+a}{2}}\prod^{k}_{j=1}\left[\Delta_{\mathbb{X}}+n^{2}-(a-k+2j-2)^{2}\right]u.
\end{split}
 \end{equation}
In terms of  the ball  model, we have,  for $f\in C^{\infty}(\mathbb{B}_{\mathbb{C}}^{n})$,
  \begin{equation}\label{a1.6}
\begin{split}
&\prod^{k}_{j=1}\left[\Delta'_{\frac{1-a-n}{2},\frac{1-a-n}{2}}+\frac{(k+1-2j)^{2}}{4}-
\frac{k+1-2j}{2}(R-\bar{R})\right][(1-|z|^{2})^{\frac{k-n-a}{2}} f] \\
=&4^{-k}(1-|z|^{2})^{-\frac{k+n+a}{2}}\prod^{k}_{j=1}\left[\Delta_{\mathbb{X}}+n^{2}-(a-k+2j-2)^{2}\right]f.
\end{split}
 \end{equation}
 \end{theoremalph}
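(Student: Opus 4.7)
I would prove the Siegel-domain identity \eqref{a1.5} first and obtain the ball-model identity \eqref{a1.6} by the same conjugation argument carried out in the ball model. Starting from \eqref{b2.7}, the identity $\Delta_{\mathbb{X}}/[4(1-|z|^{2})]=\sum_{j}\partial_{z_j}\partial_{\bar z_j}-R\bar R$ (read off the Bergman formula) together with $[R,\bar R]=0$ collapses each factor on the left of \eqref{a1.6} into the form $\sum_{j}\partial_{z_j}\partial_{\bar z_j}-(R-q_{j})(\bar R-p_{j})$ for explicit scalars $p_{j},q_{j}$ depending on $a,k,j$; conjugation by $(1-|z|^{2})^{\beta}$ then mirrors the Siegel argument below. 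Accordingly I focus on \eqref{a1.5}, writing $\mathcal{M}(a,c):=\varrho\partial_{\varrho\varrho}+a\partial_{\varrho}+\varrho T^{2}+\mathcal{L}_{0}-icT$ so that the $j$-th factor is $\mathcal{M}(a,k+1-2j)$.

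Two structural observations organize the argument. First, since $T=\partial_{t}$ is central in the Heisenberg algebra and commutes with $\varrho$, $\partial_{\varrho}$, $X_{j}$ and $Y_{j}$ (hence with $\mathcal{L}_{0}$), the factors $\mathcal{M}(a,c)$ commute pairwise as $c$ varies; the factors on the right of \eqref{a1.5} are polynomials in $\Delta_{\mathbb{X}}$ and commute \emph{a fortiori}. Second, a one-line Leibniz computation yields the conjugation identity
\[
\mathcal{M}(a,c)\bigl(\varrho^{\beta}u\bigr)=\varrho^{\beta}\mathcal{M}(a+2\beta,c)u+\beta(\beta+a-1)\varrho^{\beta-1}u.
\]
Specializing to $\beta=(k-n-a)/2$, so that $a+2\beta=k-n$ and $4\beta(\beta+a-1)=(k-n-1)^{2}-(a-1)^{2}$, and invoking $\Delta_{\mathbb{X}}=4\varrho[\varrho\partial_{\varrho\varrho}+\varrho T^{2}+\mathcal{L}_{0}-(n-1)\partial_{\varrho}]$, this rewrites as
\[
\mathcal{M}(a,c)\bigl(\varrho^{\beta}u\bigr)=\tfrac{\varrho^{\beta-1}}{4}\bigl[\Delta_{\mathbb{X}}+4(k-1)\varrho\partial_{\varrho}-4ic\varrho T+(k-n-1)^{2}-(a-1)^{2}\bigr]u.
\]

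At $k=1$ both the $4(k-1)\varrho\partial_{\varrho}$ correction and the $-4ic\varrho T$ correction vanish (the latter because the sole shift is $c=0$), which gives \eqref{a1.5} directly. For $k\ge 2$ I induct on $k$ with $a$ a free parameter. The inductive step peels off one outer factor $\mathcal{M}(a,c_{j})$ via the conjugation identity, yielding one copy of $\Delta_{\mathbb{X}}+n^{2}-(a-k+2j-2)^{2}$ together with spurious $\varrho\partial_{\varrho}$ and $\varrho T$ terms. Commutativity plus the elementary shift $\mathcal{M}(a,c)=\mathcal{M}(a-1,c)+\partial_{\varrho}$ then allows me to rearrange the remaining $k-1$ factors to fit the inductive hypothesis at level $k-1$ with $a$ replaced by $a\pm 1$; the $\varrho$-exponents match because $(k-n-a)/2=(k-1-n-(a-1))/2$.

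The main obstacle is to verify that the spurious $\varrho\partial_{\varrho}$ and $\varrho T$ contributions cancel telescopically across the $k$ iterations; this cancellation should ultimately follow from the symmetry $c_{j}=-c_{k+1-j}$ of the shifts $c_{j}=k+1-2j$ about zero (so $\sum_{j}c_{j}=0$), a symmetry that is built into the statement. If the direct bookkeeping becomes unwieldy, a cleaner backup is to test both sides of \eqref{a1.5} against the dense family of generalized Poisson-type eigenfunctions $\varrho^{s}e^{i\tau t}\phi(z)$, with $\phi$ a joint eigenfunction of $\mathcal{L}_{0}$: on each such eigenfunction both sides collapse to explicit products of Pochhammer-type scalar factors in $(s,\tau,\lambda_{\phi})$, reducing \eqref{a1.5} to a finite algebraic identity that one checks by matching the $k$ roots on each side.
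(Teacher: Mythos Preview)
Your base case $k=1$ and your conjugation identity
\[
\mathcal{M}(a,c)\bigl(\varrho^{\beta}u\bigr)=\varrho^{\beta}\mathcal{M}(a+2\beta,c)\,u+\beta(\beta+a-1)\,\varrho^{\beta-1}u
\]
are correct and coincide with the paper's Lemma~\ref{lem:first-factorization-lemma}. The inductive framework (induct on $k$ with $a$ free, shift $a\to a-1$) is also what the paper does. The gap is in the inductive step itself.

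When you ``peel off one outer factor'' from $\prod_{j=1}^{k}\mathcal{M}(a,k+1-2j)$, the remaining $k-1$ factors carry shifts $\{k+1-2j:j\neq j_0\}$, whereas the inductive hypothesis at level $k-1$ (with parameter $a-1$) needs shifts $\{(k-1)+1-2j':j'=1,\dots,k-1\}=\{k-2,k-4,\dots,2-k\}$. These sets differ by $1$; your proposed remedy $\mathcal{M}(a,c)=\mathcal{M}(a-1,c)+\partial_{\varrho}$ alters the $a$-parameter but never the shift $c$, so it cannot align them. The ``telescopic cancellation'' from $\sum_j c_j=0$ is therefore not the mechanism; the spurious $\varrho\partial_\varrho$ and $\varrho T$ terms do not cancel one-by-one. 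What the paper actually uses is a non-obvious \emph{swap identity} (Lemma~\ref{lem:second-factorization-lemma}),
\[
\mathcal{M}(a+\beta,0)\Bigl\{\mathcal{M}(a-1,0)^{2}+(\beta-1)^{2}\Delta_{Z}\Bigr\}
=\Bigl\{\mathcal{M}(a,0)^{2}+\beta^{2}\Delta_{Z}\Bigr\}\mathcal{M}(a+\beta-2,0),
\]
which moves one factor across the product while simultaneously shifting \emph{both} the first-order coefficient and the $c$-shift (since $\mathcal{M}(a,0)^2+c^2\Delta_Z=\mathcal{M}(a,c)\mathcal{M}(a,-c)$). Iterating this (Lemma~\ref{lem:third-factorization-lemma}) is what converts $\mathcal{M}(a+l,0)\prod_{j=1}^{l}\mathcal{M}(a-1,l+1-2j)$ into $\prod_{j=1}^{l+1}\mathcal{M}(a,l+2-2j)$ and closes the induction.

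Your backup plan also has a flaw. The functions $\varrho^{s}e^{i\tau t}\phi(z)$ are \emph{not} eigenfunctions of $\Delta_{\mathbb{X}}$ once $\tau\neq 0$: since $\Delta_{\mathbb{X}}=4\varrho[\varrho\partial_{\varrho\varrho}+\varrho T^{2}+\mathcal{L}_{0}-(n-1)\partial_{\varrho}]$, one gets
$\Delta_{\mathbb{X}}(\varrho^{s}e^{i\tau t}\phi)=4[s(s-n)\varrho^{s}-\tau^{2}\varrho^{s+2}-\mu\varrho^{s+1}]e^{i\tau t}\phi$,
a genuine mixture of three powers of $\varrho$. So neither side ``collapses to a product of Pochhammer-type scalar factors''; after $k$ iterations you obtain polynomials of $2k+1$ terms in $\varrho$ on each side, and matching their coefficients is equivalent to the original operator identity, not a reduction to matching $k$ roots.
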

We note the left sides of (\ref{a1.5}) and (\ref{a1.6}) are closely related to
the CR invariant differential operators on the Heisenberg group
and CR sphere, respectively.

We also have the following  Poincar\'e-Sobolev inequalities on $H_{\mathbb{C}}^{n}$:
\begin{equation*}
\int_{H^{n}_{\mathbb{C}}}(\zeta^{2}-\rho_{\mathbb{X}}^{2}-\Delta_{\mathbb{X}})^{s}(-\rho_{\mathbb{X}}^{2}-\Delta_{\mathbb{X}})^{\alpha/2}u\cdot udV
\geq C\|u\|^{2}_{L^{p}(H^{n}_{\mathbb{R}})},
\end{equation*}
where  $0<\alpha<3$,  $\zeta>0$ and $u\in C^{\infty}_{0}(H^{n}_{\mathbb{C}})$.
Therefore, in terms of two models of $H^{n}_{\mathbb{C}}$, we have the following Hardy-Sobolev-Maz'ya inequalities:
\begin{theoremalph}
  Let $a\in\mathbb{R}$,  $1\leq k<n$ and $2<p\leq\frac{2n}{n-k}$.   In terms of  the  Siegel  domain model, there exists a positive constant $C$ such that   for each $u\in C^{\infty}_{0}(\mathcal{U}^{n})$ we have
\begin{equation*}
  \begin{split}
 & \int_{\mathbb{H}^{2n-1}}\int^{\infty}_{0}u\prod^{k}_{j=1}\left[-\varrho\partial_{\varrho\varrho}-a\partial_{\varrho}-\varrho T^{2}-\mathcal{L}_{0} +i(k+1-2j)T\right]u\frac{dzdtd\varrho}{\varrho^{1-a}}\\
  &-\prod^{k}_{j=1}\frac{(a-k+2j-2)^{2}}{4}\int_{\mathbb{H}^{2n-1}}\int^{\infty}_{0}\frac{u^{2}}{\varrho^{k+1-a}}dzdtd\varrho\\
 \geq&C\left(\int_{\mathbb{H}^{2n-1}}\int^{\infty}_{0}|u|^{p}\varrho^{\gamma}dzdtd\varrho\right)^{\frac{2}{p}},
  \end{split}
\end{equation*}
where $\gamma=\frac{(n-k+a)p}{2}-n-1$. In terms of  the ball  model, we have  for $f\in C_{0}^{\infty}(\mathbb{B}_{\mathbb{C}}^{n})$,
\begin{equation*}
  \begin{split}
 &\int_{\mathbb{B}_{\mathbb{C}}^{n}}f \prod^{k}_{j=1}\left[\Delta'_{\frac{1-a-n}{2},\frac{1-a-n}{2}}+\frac{(k+1-2j)^{2}}{4}-
\frac{k+1-2j}{2}(R-\bar{R})\right]f\frac{dz}{(1-|z|^{2})^{1-a}}\\
  &-\prod^{k}_{j=1}\frac{(a-k+2j-2)^{2}}{4}\int_{\mathbb{B}_{\mathbb{C}}^{n}}\frac{f^{2}}{(1-|z|^{2})^{k+1-a}}dz\\
 \geq&C\left(\int_{\mathbb{B}_{\mathbb{C}}^{n}}|f|^{p}(1-|z|^{2})^{\gamma}dz\right)^{\frac{2}{p}}.
  \end{split}
\end{equation*}
\end{theoremalph}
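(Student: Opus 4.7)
The plan is to reduce both weighted inequalities to the Poincar\'e-Sobolev inequality on $H^{n}_{\mathbb{C}}$ via the Geller-type factorization theorem stated just above. I will describe the Siegel-model version in detail; the ball-model argument should follow by the identical scheme after replacing $\varrho$ by $\tfrac{1-|z|^{2}}{2}$ and (\ref{a1.5}) by (\ref{a1.6}).

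First I would substitute $u=\varrho^{(k-n-a)/2}\phi$ and invoke (\ref{a1.5}). The three exponents $\varrho^{(k-n-a)/2}\cdot\varrho^{-(k+n+a)/2}\cdot\varrho^{a-1}$ collapse to $\varrho^{-n-1}$, which up to a harmless positive constant $c_{0}$ is precisely the Bergman volume density $dV$ of $H^{n}_{\mathbb{C}}$ on $\mathcal{U}^{n}$. Writing $L:=-\Delta_{\mathbb{X}}-\rho_{\mathbb{X}}^{2}\ge 0$ and $d_j^{2}:=(a-k+2j-2)^{2}$, the sign $(-1)^{k}$ from the operator in the theorem is cancelled by the sign $(-1)^{k}$ in $\Delta_{\mathbb{X}}+n^{2}-d_j^{2}=-(L+d_j^{2})$, so the left-hand side and the Hardy term take the form
\[
I\,=\,c_{0}\,4^{-k}\int_{H^{n}_{\mathbb{C}}}\phi\prod_{j=1}^{k}(L+d_j^{2})\phi\,dV,\qquad H\,=\,c_{0}\,4^{-k}\prod_{j=1}^{k}d_j^{2}\cdot\int_{H^{n}_{\mathbb{C}}}\phi^{2}\,dV.
\]

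Next I would invoke the polynomial identity $\prod_{j}(L+d_j^{2})-\prod_{j}d_j^{2}=L\cdot Q(L)$ with $Q(L)=\sum_{i=0}^{k-1}e_{k-1-i}(d^{2})\,L^{i}$, where $e_m$ is the $m$-th elementary symmetric function. Since the numbers $a-k+2j-2$ for $1\le j\le k$ differ pairwise by even nonzero integers, at most one of them can vanish, so $e_{k-1}(d^{2})>0$ and $Q$ has a strictly positive constant term with all remaining coefficients non-negative. Choosing $\zeta>0$ small enough (depending only on $a,k,n$) so that $e_{k-1-i}(d^{2})\ge\binom{k-1}{i}\zeta^{2(k-1-i)}$ for all $0\le i\le k-1$ yields, by coefficient-wise comparison of the two polynomials in $L$, the operator inequality $LQ(L)\ge(\zeta^{2}+L)^{k-1}L$. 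Consequently
\[
I-H\,\ge\,c_{0}\,4^{-k}\int_{H^{n}_{\mathbb{C}}}\phi\,(\zeta^{2}+L)^{k-1}L\,\phi\,dV,
\]
and I would then apply the Poincar\'e-Sobolev inequality stated just before the theorem with the admissible parameters $s=k-1$ and $\alpha=2\in(0,3)$ to bound the right-hand side below by $C\|\phi\|_{L^{p}(H^{n}_{\mathbb{C}})}^{2}$ throughout the range $2<p\le 2n/(n-k)$. Finally, $|\phi|^{p}=\varrho^{p(n+a-k)/2}|u|^{p}$ together with $dV\propto\varrho^{-n-1}dzdtd\varrho$ gives $\|\phi\|_{L^{p}(H^{n}_{\mathbb{C}})}^{p}\propto\int|u|^{p}\varrho^{\gamma}dzdtd\varrho$ with exactly $\gamma=\tfrac{(n-k+a)p}{2}-n-1$, closing the argument.

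The hard part will be the polynomial/spectral step: one must verify that the Hardy constant $\prod_{j}d_j^{2}/4^{k}$ is precisely the quantity that cancels the constant term in $\prod_{j}(L+d_j^{2})$, so that what remains factors as a non-negative operator $L\cdot Q(L)$, and then that this polynomial in $L$ dominates the operator $(\zeta^{2}+L)^{k-1}L$ appearing in the available Poincar\'e-Sobolev inequality. Extra attention is needed for those exceptional values of the parameter $a$ for which some single $d_j$ vanishes; because at most one such index can occur, $Q$ still carries a strictly positive constant term and the coefficient comparison survives uniformly in $a$.
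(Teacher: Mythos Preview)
Your proposal is correct and follows essentially the same route as the paper's own proof of the quaternionic analogue (Theorem~\ref{thm:hardy-sobolev-mazya-inequality}): substitute $u=\varrho^{(k-n-a)/2}\phi$, invoke the factorization theorem to rewrite the left side as $\int \phi\prod_j(L+d_j^{2})\phi\,dV$ with $L=-\Delta_{\mathbb X}-\rho_{\mathbb X}^{2}$, subtract the Hardy term $\prod_j d_j^{2}\int\phi^{2}\,dV$, dominate the remainder below by $\int\phi\,L(\zeta^{2}+L)^{k-1}\phi\,dV$ for small $\zeta>0$, and then apply the Poincar\'e--Sobolev inequality. The only cosmetic difference is that the paper passes through the Plancherel formula, turning the key step into the scalar polynomial inequality $\prod_j(\lambda^{2}+d_j^{2})-\prod_j d_j^{2}\ge \lambda^{2}(\lambda^{2}+\delta)^{k-1}$, whereas you phrase the same comparison directly at the operator level via functional calculus; the two formulations are equivalent.
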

In the borderline case, there holds the Hardy-Adams inequality and we state  it as follows.
\begin{theoremalph} \label{introthm:adams-inequality ball model}
Let $n\geq3$,  $\zeta>0$ and  $0<s<3/2$.
Then there exists a constant $C_{\zeta,n}>0$ such that for all
$u\in C^{\infty}_{0}(H^{n}_{\mathbb{C}})$ with
\[\int_{H^{n}_{\mathbb{C}}}(\zeta^{2}-\rho_{\mathbb{X}}^{2}-\Delta_{\mathbb{X}})^{s}(-\rho_{\mathbb{X}}^{2}-\Delta_{\mathbb{X}})^{\alpha/2}u\cdot udV\leq1.\]
there holds
\[
\int_{H^{n}_{\mathbb{C}}}(e^{\beta_{0}\left(n,2n\right) u^{2}}-1-\beta_{0}\left(n,2n\right)
u^{2})dV\leq C_{\zeta,n}.
\]
Furthermore, in terms of  the Siegel domain model, we have that for all $u\in
C^{\infty}_{0}(\mathcal{U}^{n})$ with
\begin{equation*}
  \begin{split}
 & 4^{n}\int_{\mathbb{H}^{2n-1}}\int^{\infty}_{0}u\prod^{n}_{j=1}\left[-\varrho\partial_{\varrho\varrho}-a\partial_{\varrho}-\varrho T^{2}-\mathcal{L}_{0} +i(k+1-2j)T\right]u\frac{dzdtd\varrho}{\varrho^{1-a}}\\
  &-\prod^{n}_{j=1}(a-n+2j-2)^{2}\int_{\mathbb{H}^{2n-1}}\int^{\infty}_{0}\frac{u^{2}}{\varrho^{n+1-a}}dzdtd\varrho\leq1,
    \end{split}
\end{equation*}
there holds
\[
 \int_{\mathbb{H}^{2n-1}}\int^{\infty}_{0}\frac{e^{\beta_{0}\left(n,2n\right)
\varrho^{a}u^{2}}-1-\beta_{0}\left(n,2n\right)\varrho^{a} u^{2}}{\varrho^{n+1}}dzdtd\varrho\leq C.
\]
In terms of  the ball model, we have  that for all $u\in
C^{\infty}_{0}(\mathbb{B}_{\mathbb{C}}^{n})$ with
\begin{equation*}
  \begin{split}
 &4^{n}\int_{\mathbb{B}_{\mathbb{C}}^{n}}f \prod^{n}_{j=1}\left[\Delta'_{\frac{1-a-n}{2},\frac{1-a-n}{2}}+\frac{(n+1-2j)^{2}}{4}-
\frac{n+1-2j}{2}(R-\bar{R})\right]f\frac{dz}{(1-|z|^{2})^{1-a}}\\
  &-\prod^{k}_{j=1}(a-k+2j-2)^{2}\int_{\mathbb{B}_{\mathbb{C}}^{n}}\frac{u^{2}}{(1-|z|^{2})^{n+1-a}}dz\leq1,
  \end{split}
\end{equation*}
there holds
\[
\int_{\mathbb{B}_{\mathbb{C}}^{n}} \frac{e^{\beta_{0}\left(n,2n\right)
(1-|z|^{2})^{a}u^{2}}-1-\beta_{0}\left(n,2n\right)(1-|z|^{2})^{a} u^{2}}{(1-|z|^{2})^{n+1}}dz\leq C.
\]

\end{theoremalph}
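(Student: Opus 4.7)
The plan is to prove the intrinsic Hardy--Adams inequality on $H^{n}_{\mathbb{C}}$ via Helgason--Fourier analysis, and then to transfer it to the Siegel domain and ball models by applying the factorization theorem recorded above.

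For the intrinsic inequality, set $f=(\zeta^{2}-\rho_{\mathbb{X}}^{2}-\Delta_{\mathbb{X}})^{s/2}(-\rho_{\mathbb{X}}^{2}-\Delta_{\mathbb{X}})^{\alpha/4}u$, so the hypothesis becomes $\|f\|_{L^{2}}\leq 1$ and $u=G_{\alpha,s,\zeta}\star f$, where $G_{\alpha,s,\zeta}$ is the $K$-bi-invariant Bessel--Green--Riesz kernel whose spherical transform equals $(\zeta^{2}+\lambda^{2})^{-s/2}(\lambda^{2})^{-\alpha/4}$; the implicit critical relation $2s+\alpha=2n$ matches the real dimension of $H^{n}_{\mathbb{C}}$. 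The analytic core is to derive sharp two-sided pointwise bounds for $G_{\alpha,s,\zeta}(r)$ with $r=d(x,o)$: a Euclidean-type singularity $G_{\alpha,s,\zeta}(r)=c_{0}\,r^{-n}+(\text{lower order})$ as $r\to 0^{+}$, with leading constant $c_{0}$ tuned to produce $\beta_{0}(n,2n)$, and exponential decay $G_{\alpha,s,\zeta}(r)\leq C e^{-\rho_{\mathbb{X}} r}$ for $r$ bounded away from zero, reflecting the spectral gap $\rho_{\mathbb{X}}^{2}$. These asymptotics come from the Harish--Chandra expansion of spherical functions on $H^{n}_{\mathbb{C}}$ combined with contour deformation in the inverse spherical transform.

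Once the kernel estimates are in place, the exponential integrability follows by an O'Neil-type decreasing rearrangement argument for convolutions, Adams' iterative scheme for critical Riesz-type potentials, and the Kunze--Stein phenomenon for connected real simple Lie groups of rank one with finite center, parallel to the scheme in \cite{ly5,LiLuy1,ly2}. To obtain the Siegel model version, apply (\ref{a1.5}) with $k=n$ and identify the Siegel function $u$ with $v=\varrho^{-a/2}u_{R}$, where $u_{R}$ is the corresponding intrinsic function. Pairing both sides of (\ref{a1.5}) with $u_{R}$ and integrating against $dV=c\,\varrho^{-(n+1)}dz\,dt\,d\varrho$ converts the intrinsic quadratic form into $4^{n}\int u\,P_{\text{Sie}}u\,\varrho^{a-1}dz\,dt\,d\varrho$; subtracting the Hardy constant $\prod_{j=1}^{n}(a-n+2j-2)^{2}\int u^{2}/\varrho^{n+1-a}$ corresponds spectrally to isolating the constant term of $\prod_{j=1}^{n}[X+(a-n+2j-2)^{2}]$ with $X=-\rho_{\mathbb{X}}^{2}-\Delta_{\mathbb{X}}\geq 0$, and by the elementary symmetric expansion this exceeds $X^{n}$, so the intrinsic Hardy--Adams applies to $u_{R}$. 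On the exponential side, $u_{R}=\varrho^{a/2}u$ together with $dV$ yields $\int e^{\beta_{0}(n,2n)u_{R}^{2}}dV=c\int e^{\beta_{0}(n,2n)\varrho^{a}u^{2}}/\varrho^{n+1}dz\,dt\,d\varrho$, matching the Siegel exponential exactly. The ball model version is obtained identically using (\ref{a1.6}) with $u_{R}=(1-|z|^{2})^{a/2}u$.

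The main obstacle is the sharp constant in the near-identity asymptotic of $G_{\alpha,s,\zeta}$. Matching $c_{0}$ to the Euclidean Adams constant $\beta_{0}(n,2n)$ requires the contour deformation in the inverse spherical transform to be carried out with enough precision that the root-multiplicity factors in the Plancherel density of $H^{n}_{\mathbb{C}}$ are exactly absorbed; any imprecision in this leading term would destroy sharpness of the exponential constant.
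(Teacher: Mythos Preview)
This theorem is quoted from \cite{ly5} as background and is not proved in the present paper; the paper proves the quaternionic and octonionic analogues (Theorems \ref{thm:hardy-Adams-inequality} and \ref{thm:hardy-Adams-inequality2}) by exactly the strategy you outline, so your plan is on the right track and matches both \cite{ly5} and this paper's parallel arguments: sharp small/large distance asymptotics for the Bessel--Green--Riesz kernel, an O'Neil rearrangement plus Adams iteration on the set $\{|u|\geq1\}$, a Poincar\'e--Sobolev bound to control the complement, and finally the factorization identity to pass to the Siegel and ball models.

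There is one genuine gap in your spectral reduction. You bound $\prod_{j=1}^{n}[X+c_j^{2}]-\prod_{j=1}^{n}c_j^{2}\geq X^{n}$ with $X=-\rho_{\mathbb{X}}^{2}-\Delta_{\mathbb{X}}$ and then invoke the intrinsic Hardy--Adams inequality. But the intrinsic inequality is only available for operators of the form $X^{\alpha/2}(\zeta^{2}+X)^{s}$ with $0<\alpha<3$; this restriction is essential because the pure Riesz kernel $k_{\gamma}=X^{-\gamma/2}$ exists as a convolution kernel only for $0<\gamma<3$ (near $\lambda=0$ the Plancherel density behaves like $\lambda^{2}$, so $\int\lambda^{-\gamma}|\mathfrak{c}(\lambda)|^{-2}d\lambda$ diverges for $\gamma\geq3$). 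Your lower bound $X^{n}$ corresponds to $\alpha=2n\geq6$, which is outside the admissible range. The fix, used in the paper's proof of Theorem \ref{thm:hardy-sobolev-mazya-inequality} and sketched for Theorem \ref{thm:hardy-Adams-inequality2}, is to choose $\delta>0$ small enough that
\[
\prod_{j=1}^{n}\bigl[\lambda^{2}+c_j^{2}\bigr]-\prod_{j=1}^{n}c_j^{2}\;\geq\;\lambda^{2}(\lambda^{2}+\delta)^{n-1},
\]
which is $\alpha=2$, $\zeta^{2}=\delta$, and then the intrinsic inequality applies directly. A second, smaller point: the paper obtains the near-origin kernel asymptotics not by contour deformation in the inverse spherical transform but by writing the heat kernel on $H_{\mathbb{C}}^{n}$ as an Abel-type integral over the heat kernel on an odd-dimensional real hyperbolic space and importing the known sharp estimates there; your route via the Harish--Chandra expansion is also viable but carries more bookkeeping to extract the exact leading constant.
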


\subsection{Our Main Results} In this paper, we will consider higher order Poincar\'e-Sobolev and Hardy-Sobolev-Maz'ya inequalities on the  remaining  two  rank one
symmetric spaces of non-compact type, i.e., the quaternionic hyperbolic
spaces $H_{\mathbb{Q}}^{m}$ and the Cayley hyperbolic plane  $H_{\mathbb{O}}^{2}$. The first main result is the factorization theorems.
We shall use the $NA$ group model (or Damek-Ricci space) and the ball model. We note
Damek-Ricci space (see \cite{dr1} \cite{dr2} \cite{MR1469569}) is  a solvable Lie group with a  left invariant
Riemannian structure which include all the rank one
symmetric spaces of non-compact type.

 We need citations for Damek-Ricci spaces. The Damek-Ricci space $NA$ is a semi-direct product of  $A\cong\mathbb{R}$ with a group of Heisenberg type $N$.
Let $\mathfrak{n}$ be a  Lie algebra of $N$,   $\mathfrak{z}$ be the centre of $\mathfrak{n}$ and $\mathfrak{h}$ its orthogonal complement. Denote by $Q=\frac{1}{2}\dim\mathfrak{h}+\dim\mathfrak{z}$  the homogeneous dimension of $N$.
We  parameterize the elements in $N = \exp \mathfrak{n}$ by $(X, Z)$, for $X\in\mathfrak{h}$  and $Z\in\mathfrak{z}$. Then  the group law
is given by
 \begin{equation*}
   (X,Z)(X',Z')=(X+X',Z+Z'+\frac{1}{2}[X,X']).
 \end{equation*}
 Thus the multiplication in $S=NA$ is given by
  \begin{equation*}
   (X,Z,a)(X',Z',a')=(X+a^{1/2}X',Z+aZ'+\frac{1}{2}a^{1/2}[X,X'],aa'), \;a,a'>0.
 \end{equation*}
Let $\Delta_{Z}$ denote the Euclidean Laplacian on the center of $N$ and let $\mcL_{0}$ denote the sub-Laplacian on $N$.
Let $\varrho$ denote the $A$-coordinate of a general point in $S$, and let $\p_{\varrho}$ denote the unit vector in the Lie algebra of $A$.
Then  the Laplace-Beltrami operator  $\Delta_{S}$  on $S$ is
given by
\[
  \Delta_{S} = 4\varrho \left[ \varrho(\p_{\varrho\varrho} + \Delta_{Z}) + \mcL_{0} -(Q-1)\p_{\varrho} \right]
\]
and that the bottom of the spectrum of $-\Delta_{S}$ is $Q^{2}$.

Firstly, we establish the factorization theorem on a Damek-Ricci space from which the factorization theorems on the quaternionic hyperbolic spaces and the Cayley hyperbolic plane follow naturally.  We state it as follows.
\begin{theorem}
  \label{lem:first-factorization}
  Let $a \in \R$ and $f \in C^{\oo}(\mcU)$.
  There holds
  \begin{align*}
    &\varrho^{\frac{k+Q+a}{2}}\prod_{j=1}^{k} \left[ \varrho \p_{\varrho\varrho} + a\p_{\varrho} + \varrho \Delta_{Z} + \mcL_{0} - i(k+1-2j )\sqrt{-\Delta_{Z}} \right]\left( \varrho^{\frac{k-Q-a}{2}}f \right)\\
    &= \prod_{j=1}^{k} \left\{ \varrho \left[ \varrho(\p_{\varrho\varrho} + \Delta_{Z}) + \mcL_{0} -(Q-1)\p_{\varrho} \right] + \frac{Q^{2}}{4} -\frac{(a-k+2j-2)^{2}}{4} \right\}f.
  \end{align*}
\end{theorem}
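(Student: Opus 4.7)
The plan is to argue by induction on $k$. For the base case $k=1$ the coefficient $k+1-2j$ vanishes at $j=1$, so the single factor on the left reduces to $\varrho\p_{\varrho\varrho}+a\p_{\varrho}+\varrho\Delta_{Z}+\mcL_{0}$. Applying the Leibniz rule term by term to $\varrho^{(1+Q+a)/2}[\varrho\p_{\varrho\varrho}+a\p_{\varrho}+\varrho\Delta_{Z}+\mcL_{0}](\varrho^{(1-Q-a)/2}f)$ and collecting powers of $\varrho$, the first-order $\varrho\p_{\varrho}$-correction cancels exactly by the choice of exponent $(1-Q-a)/2$, while the zeroth-order remainder $\beta(\beta+a-1)$ with $\beta=(1-Q-a)/2$ simplifies via $(1-Q-a)(a-Q-1)/4 = (Q^{2}-(a-1)^{2})/4$. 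The residual second-order piece reorganizes into $\varrho[\varrho(\p_{\varrho\varrho}+\Delta_{Z})+\mcL_{0}-(Q-1)\p_{\varrho}]f$, giving exactly the $k=1$ case of the right-hand side.

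The inductive step rests on two preliminary observations. First, the factors $L_{j}^{(k)}:=\varrho\p_{\varrho\varrho}+a\p_{\varrho}+\varrho\Delta_{Z}+\mcL_{0}-i(k+1-2j)\sqrt{-\Delta_{Z}}$ pairwise commute, since they differ only by scalar multiples of $\sqrt{-\Delta_{Z}}$ and this operator commutes with $\varrho\p_{\varrho\varrho}$, $\p_{\varrho}$, $\varrho\Delta_{Z}$, $\mcL_{0}$, and multiplication by $\varrho$; the commutation with $\mcL_{0}$ in particular uses that $\mathfrak{z}$ is central in $\mathfrak{n}$, so that $\Delta_{Z}$ and $\mcL_{0}$ commute as left-invariant operators on $N$. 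Second, one has the Leibniz-type identity
\[
L_{j}^{(k)}(\varrho^{\gamma}g) = \varrho^{\gamma}\,L_{j}^{(k),[a+2\gamma]}\,g + \gamma(\gamma+a-1)\varrho^{\gamma-1}g,
\]
valid for every $\gamma\in\R$ and smooth $g$, where $L_{j}^{(k),[a']}$ denotes $L_{j}^{(k)}$ with $a$ replaced by $a'$; this is obtained by a direct Leibniz expansion.

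Given these ingredients, the heart of the proof is the one-step recursion
\begin{align*}
\varrho^{\frac{k+Q+a}{2}}\prod_{j=1}^{k}L_{j}^{(k)}(\varrho^{\frac{k-Q-a}{2}}f) &= \left[\varrho[\varrho(\p_{\varrho\varrho}+\Delta_{Z})+\mcL_{0}-(Q-1)\p_{\varrho}]+\frac{Q^{2}-(a-k)^{2}}{4}\right]\\
&\quad\cdot\varrho^{\frac{k+Q+a}{2}}\prod_{j=1}^{k-1}L_{j}^{(k-1),[a+1]}(\varrho^{\frac{k-Q-a-2}{2}}f).
\end{align*}
The right-hand side of the theorem satisfies the analogous recursion tautologically (isolate the $j=1$ factor and relabel the remaining product via $a\mapsto a+1$, $k\mapsto k-1$), so the inductive hypothesis at level $k-1$ with parameter $a+1$ closes the argument.

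The main obstacle is the verification of the one-step recursion. Each peel-off of a factor produces a ``shifted-operator'' piece and a scalar correction of the form $\gamma(\gamma+a-1)\varrho^{\gamma-1}g$; after $k$ iterations one obtains many terms that must be carefully organized. The commutativity of the $L_{j}^{(k)}$ lets us choose the most convenient ordering, and the identity $\varrho[\varrho\p_{\varrho\varrho}+(1-Q)\p_{\varrho}+\varrho\Delta_{Z}+\mcL_{0}] = \varrho[\varrho(\p_{\varrho\varrho}+\Delta_{Z})+\mcL_{0}-(Q-1)\p_{\varrho}]$ (the $a=1-Q$ specialization) is the algebraic mechanism that makes the residual first-order $\varrho\p_{\varrho}$-corrections cancel so that the scalar shifts telescope cleanly into the required $(Q^{2}-(a-k)^{2})/4$.
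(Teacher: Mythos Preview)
Your base case $k=1$ and the two preliminary observations (the mutual commutativity of the $L_{j}^{(k)}$ for fixed $a$, and the Leibniz rule $L_{j}^{(k)}(\varrho^{\gamma}g)=\varrho^{\gamma}L_{j}^{(k),[a+2\gamma]}g+\gamma(\gamma+a-1)\varrho^{\gamma-1}g$) are correct and match the paper's Lemma~3.1 and the elementary computations used there. The induction scheme you propose---peel off one right-hand factor, shift $a\mapsto a+1$, reduce to level $k-1$---is also exactly the shape of the paper's induction.

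The gap is that you have not proved the one-step recursion, and the plan you sketch does not get you there. The difficulty is this: the $\sqrt{-\Delta_{Z}}$-coefficients in $\prod_{j=1}^{k}L_{j}^{(k),[a]}$ are $\{\pm(k-1),\pm(k-3),\dots\}$, while those in $\prod_{j=1}^{k-1}L_{j}^{(k-1),[a+1]}$ are $\{\pm(k-2),\pm(k-4),\dots\}$. These are disjoint families, so you cannot obtain the level-$k$ product by multiplying the level-$(k-1)$ product by a single factor of the same type. Your Leibniz identity only shifts the $a$-parameter; it does nothing to the $\sqrt{-\Delta_{Z}}$-coefficients. When you iterate Leibniz through all $k$ factors you get a sum of terms involving operators $L^{[a']}$ with many different shifted parameters $a'$, and there is no evident telescoping that collapses this to the desired form. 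The sentence about ``the $a=1-Q$ specialization'' is a tautology and does not supply the missing mechanism.

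What is actually needed---and what the paper supplies---is a nontrivial operator commutation identity that reorganizes the $\sqrt{-\Delta_{Z}}$-coefficients. After pairing conjugate factors so that $\prod_{j}L_{j}^{(k),[a]}$ becomes a product of real operators of the form $(L^{[a]})^{2}+c^{2}(-\Delta_{Z})$, the paper proves (Lemma~3.2)
\[
L^{[a+\beta]}\bigl\{(L^{[a-1]})^{2}+(\beta-1)^{2}\Delta_{Z}\bigr\}=\bigl\{(L^{[a]})^{2}+\beta^{2}\Delta_{Z}\bigr\}L^{[a+\beta-2]},
\]
where $L^{[b]}:=\varrho\p_{\varrho\varrho}+b\p_{\varrho}+\varrho\Delta_{Z}+\mcL_{0}$. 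Iterating this (Lemma~3.3) shows that left-multiplying the level-$l$ product by a single operator $L^{[a+l]}$ transmutes it into the level-$(l+1)$ product with all the $\sqrt{-\Delta_{Z}}$-coefficients shifted by~$1$; this is precisely the passage from odd to even coefficients (and vice versa) that your argument lacks. The induction step then follows by combining this operator identity with Lemma~3.1 applied once more to convert the extra factor $L^{[a+l]}$ acting on $\varrho^{(l+1-Q-a)/2}f$ into the missing right-hand factor. Your recursion is \emph{equivalent} to this, but to prove it directly you would have to establish an identity of the same strength as Lemma~3.2; the Leibniz rule and commutativity alone are not enough.
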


To state the factorization theorem on the ball model of $H_{\mathbb{Q}}^{m}$, we need to introduce some conventions.
First recall that the quaternionic space $\Q^{m}$ may be identified with $\C^{2m}$ by the identification
\[
  \Q^{m} \ni q = (q_{1},\ldots,q_{m}) \leftrightarrow \C^{2m} \ni z = (z_{1},\ldots,z_{2m}),
\]
where $q_{j} = z_{j} + z_{m+j} i_{2}$.
This allows us to write $\Delta$ in terms of the complex coordinates $z$:
\begin{align*}
  \Delta_{\mathbb{X}} f (z) &= 4\left( 1-|z|^{2} \right) \left\{ \sum_{i,j=1}^{m} \left( \left( \d_{ij} - z_{i}\bar{z}_{j} - \bar{z}_{m+i}z_{m+j} \right)\frac{\p^{2}f}{\p z_{i} \p \bar{z}_{j}}f  \right.\right.\\
  &+ \left( \bar{z}_{i}z_{m+j} - z_{m+i}\bar{z}_{j} \right)\frac{\p^{2} f}{\p z_{m+i} \p \bar{z}_{j}} + \left( \bar{z}_{m+i}z_{j} - z_{i}\bar{z}_{m+j} \right)\frac{\p^{2} f}{\p z_{i} \p \bar{z}_{m+j}}\\
  &\left.\left.+ \left( \d_{ij} - \bar{z}_{i}z_{j} - z_{m+i}\bar{z}_{m+j} \right)\frac{\p^{2} f}{\p z_{m+i} \p \bar{z}_{m+j}} \right) + R+\bar{R} \right\},
\end{align*}
where now
\[
  R = \sum_{j=1}^{2m} z_{j} \frac{\p}{\p z_{j}}\;\;\;\; \text{ and }\;\;\;\; \bar{R} = \sum_{j=1}^{2m} \bar{z}_{j} \frac{\p}{\p \bar{z}_{j}}.
\]
We introduce the following ``Quaternionic Geller operators'': given $\a \in \C$, define the quaternionic Geller operator
\begin{align*}
  \Delta_{\a} f (z) &= 4\left( 1-|z|^{2} \right) \left\{ \sum_{i,j=1}^{m} \left( \left( \d_{ij} - z_{i}\bar{z}_{j} - \bar{z}_{m+i}z_{m+j} \right)\frac{\p^{2}f}{\p z_{i} \p \bar{z}_{j}}f  \right.\right.\\
  &+ \left( \bar{z}_{i}z_{m+j} - z_{m+i}\bar{z}_{j} \right)\frac{\p^{2} f}{\p z_{m+i} \p \bar{z}_{j}} + \left( \bar{z}_{m+i}z_{j} - z_{i}\bar{z}_{m+j} \right)\frac{\p^{2} f}{\p z_{i} \p \bar{z}_{m+j}}\\
  &\left.\left.+ \left( \d_{ij} - \bar{z}_{i}z_{j} - z_{m+i}\bar{z}_{m+j} \right)\frac{\p^{2} f}{\p z_{m+i} \p \bar{z}_{m+j}} \right) + (1+\a)(R+\bar{R})-\a(\a+1) \right\}.
\end{align*}
In particular, $\Delta_{0} = \Delta_{\mathbb{X}}$, and if we set
\[
  \Delta_{\a}' = \frac{1}{4(1-|z|^{2})}\Delta_{\a},
\]
then
\[
  \Delta_{\a}' = \Delta_{0}' + \a (R +  \bar{R}) - \a(\a+1).
\]
We emphasize the analogy between $\Delta_{\a}$ and $D_{\a,\b}$ by pointing out the following intertwining relationships: for $u \in C^{\oo}(\mathbb{B}_{\C}^{n})$ and $s \in \R$, there holds
\[
  \Delta_{s-n,s-n} \left[ (1-|z|^{2})^{s-n}u \right] = 4^{-1}(1-|z|^{2})^{s-n}\left[ \Delta_{0,0} + 4s(n-s) \right]u\text{ on }\mathbb{B}_{\C}^{n}
\]
and, for $u \in C^{\oo}(\B_{\Q}^{m})$ and $s \in \R$, there holds
\[
  \Delta_{s-2m-1}\left[ (1-|z|^{2})^{s-2m-1}u \right] = \left( 1-|z|^{2} \right)^{s-2m-1}\left[ \Delta_{0} + 4s(2m+1-s) \right] \text{ on }\mathbb{B}_{\Q}^{m}.
\]
Recall that the spectral gaps of $-\Delta_{0,0}$ and $-\Delta_{0}$ are $(2m+1)^{2}$ and $n^{2}$, respectively.
Similarly, we can also define the  Geller's operators $\Delta_{\alpha}$ on $H_{\mathbb{O}}^{2}$ through the intertwining relationships in term of ball model
\[
\Delta_{\alpha}\left[ (1-|x|^{2})^{s-11}u \right] = \left( 1-|x|^{2} \right)^{s-11}\left[ \Delta_{\mathbb{X}} + 4s(11-s) \right],
\]
where $11$ is the spectral gaps of $-\Delta_{\mathbb{X}}$ on $H_{\mathbb{O}}^{2}$.
Now we can state the factorization theorem on the ball model of $H_{\mathbb{Q}}^{m}$.
\begin{theorem}
  Let $a\in\R$ and $k\in\N_{>0}$. Set $\Gamma=(R-\bar{R})^{2}-2D_{1}\bar{D_{1}}-2\bar{D_{1}}D_{1}$, where
  \begin{equation*}
    \begin{split}
      &D_{1}=\sum_{a=1}^{n} \{ \bar{z}_{a} \frac{\partial}{\partial {z}_{n+a}}- \bar{z}_{n+a}\frac{\partial}{\partial {z}_{a}} \},\\
      &\bar{D_{1}}=\sum_{a=1}^{n} \{ {z}_{a} \frac{\partial}{\partial \bar{z}_{n+a}}- z_{n+a}\frac{\partial}{\partial \bar{z}_{a}} \}.
    \end{split}
  \end{equation*}
  Then,  in the ball model, for all $f \in C^{\oo}(\B_{\Q}^{m})$, there holds
  \begin{align*}
    &4^{k}\left( 1-|z|^{2} \right)^{\frac{k+a+(2m+1)}{2}}\prod_{j=1}^{k}\left[ \Delta'_{\frac{1-a-(2m+1)}{2}} + \frac{(k+1-2j)^{2}}{4} -i\frac{k+1-2j}{2}\sqrt{\Gamma+1} \right]f\\
    &= \prod_{j=1}^{k} \left[ \Delta_{\mathbb{X}} + (2m+1)^{2} - (a-k+2j-2)^{2} \right]\left[ \left( 1-|z|^{2} \right)^{-\frac{k-a-(2m+1)}{2}}f \right].
  \end{align*}
  \label{thm:factorization-theorem2}
\end{theorem}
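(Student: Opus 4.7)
My strategy is to deduce Theorem \ref{thm:factorization-theorem2} from the Damek--Ricci factorization in Theorem \ref{lem:first-factorization}, specialized to $Q=2m+1$ (corresponding to $H_{\Q}^{m}$) and transferred to the ball model via the quaternionic Cayley transform $\Psi\colon \mcU\to\B_{\Q}^{m}$. Under this specialization, $N$ is the quaternionic Heisenberg group whose center $\mathfrak{z}$ is three-dimensional, $\Delta_{Z}$ is the Euclidean Laplacian on $\mathfrak{z}$, and $\mcL_{0}$ is the horizontal sub-Laplacian. Theorem \ref{lem:first-factorization} then expresses the GJMS-type product $\prod_{j=1}^{k}[\Delta_{\mathbb{X}}+(2m+1)^{2}-(a-k+2j-2)^{2}]f$ on the Siegel side as a conjugate, by $\vr^{(k-Q-a)/2}$, of $k$ first-order-in-$\p_{\vr}$ factors, each of the form $\vr\p_{\vr\vr}+a\p_{\vr}+\vr\Delta_{Z}+\mcL_{0}-i(k+1-2j)\sqrt{-\Delta_{Z}}$.

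Next, I would translate each such factor into ball-model operators. Under $\Psi$, the height function $\vr$ pulls back to a positive constant multiple of $1-|z|^{2}$, so the outer conjugation by $\vr^{(k-Q-a)/2}$ becomes conjugation by $(1-|z|^{2})^{(k-a-(2m+1))/2}$ on $\B_{\Q}^{m}$ (constants absorbed into the overall $4^{k}$ prefactor). The combination $\vr\p_{\vr\vr}+a\p_{\vr}+\vr\Delta_{Z}+\mcL_{0}$, after conjugation, becomes $4(1-|z|^{2})\Delta'_{(1-a-(2m+1))/2}$ up to the constant shift $(k+1-2j)^{2}/4$, which is an immediate consequence of the stated intertwining identity
\[
  \Delta_{s-2m-1}\bigl[(1-|z|^{2})^{s-2m-1}u\bigr]=(1-|z|^{2})^{s-2m-1}\bigl[\Delta_{0}+4s(2m+1-s)\bigr]u
\]
applied with the right choice of $s$, together with the coordinate expression for $\Delta_{\mathbb{X}}$ on $\B_{\Q}^{m}$ given in the excerpt.

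The crucial step is the identification of the first-order center operator $i\sqrt{-\Delta_{Z}}$ on $\mcU$ with $\tfrac{i}{2}\sqrt{\Gamma+1}$ on $\B_{\Q}^{m}$, where $\Gamma=(R-\bar R)^{2}-2D_{1}\bar D_{1}-2\bar D_{1}D_{1}$ is the Casimir of the residual $\mathfrak{sp}(1)$-symmetry coming from left quaternionic multiplication. I would establish this in two steps. First, I would verify that $R-\bar R$, $D_{1}$, and $\bar D_{1}$ push forward under $\Psi^{-1}$ to a basis of the central directions of $N$ modulo horizontal corrections that vanish on the relevant joint eigenspaces. Second, on each joint isotypic component of the residual $Sp(1)\times Sp(m)$-action on $C^{\oo}(\B_{\Q}^{m})$, I would verify the spectral identity $-4\Delta_{Z}\leftrightarrow \Gamma+1$, the additive constant $+1$ being the commutator correction produced by the $\vr^{(k-Q-a)/2}$-conjugation.

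The main obstacle will be this last spectral identification, since $\sqrt{-\Delta_{Z}}$ is only defined through functional calculus and must therefore be matched with $\tfrac{1}{2}\sqrt{\Gamma+1}$ eigenspace by eigenspace via a Peter--Weyl type decomposition of $L^{2}(\B_{\Q}^{m})$. Once that identification is in hand, substituting the translated operators into Theorem \ref{lem:first-factorization}, multiplying through by $4^{k}$, and moving the $(1-|z|^{2})$-power to the right-hand side yields exactly the identity claimed in Theorem \ref{thm:factorization-theorem2}.
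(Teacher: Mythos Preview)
Your strategy --- transfer Theorem \ref{lem:first-factorization} from the Siegel model to the ball via the Cayley transform --- is genuinely different from the paper's, which works entirely on the ball: it proves the case $k=1$ directly from the scalar intertwining relation
\[
  \bigl[\Delta_{\mathbb X}-(4m+a+2)a\bigr]\bigl[(\cosh\rho)^{a}f\bigr]=4(\cosh\rho)^{a-2}\Delta'_{a/2}f,
\]
and then inducts on $k$ using the purely algebraic commutator identity (Lemma \ref{lemma:6.1})
\[
  \Delta'_{\frac{1-k-a}{2}}\Bigl\{\bigl[\Delta'_{\frac{2-a}{2}}+\tfrac{(k-1)^{2}}{4}\bigr]^{2}-\tfrac{(k-1)^{2}}{4}(\Gamma+1)\Bigr\}
  =\Bigl\{\bigl[\Delta'_{\frac{1-a}{2}}+\tfrac{k^{2}}{4}\bigr]^{2}-\tfrac{k^{2}}{4}(\Gamma+1)\Bigr\}\Delta'_{\frac{3-k-a}{2}},
\]
whose proof in turn rests on the commutator formula $[\Delta'_{0},R+\bar R]=\Delta'_{0}-\tfrac12(R+\bar R)+\tfrac14(R+\bar R)^{2}-\tfrac14\Gamma$ (Lemma \ref{lemma:a6.1}). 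In particular, the factor $\Gamma+1$ emerges from this ball-model operator algebra, not from any transfer or conjugation.

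Your plan, as written, has two concrete gaps. First, on the Siegel side $\Delta_{Z}$ acts only in the central $\mathfrak z$-variables while $\vr$ is the $A$-coordinate, so $[\Delta_{Z},\vr]=0$; conjugation by $\vr^{(k-Q-a)/2}$ therefore leaves $\sqrt{-\Delta_{Z}}$ untouched and \emph{cannot} be the source of the ``$+1$'' in $\Gamma+1$, contrary to what you assert. Second, under the quaternionic Cayley transform the height $\vr$ does \emph{not} pull back to a constant multiple of $1-|z|^{2}$: there is a nontrivial conformal factor of the type $|1-\langle z,\cdot\rangle_{\Q}|^{-2}$, so the outer conjugation does not translate as simply as you claim, and the identification of the individual factors $\vr\p_{\vr\vr}+a\p_{\vr}+\vr\Delta_{Z}+\mcL_{0}$ with $4(1-|z|^{2})\Delta'_{(1-a-(2m+1))/2}$ is not a consequence of the ball-model intertwining relation alone. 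Making your route work would require an explicit computation of how the Cayley transform carries the triple $(\p_{\vr},\mcL_{0},\Delta_{Z})$ to ball-model operators --- essentially reproving Lemmas \ref{lemma:a6.1} and \ref{lemma:6.1} in a different guise --- which is precisely the content the paper supplies directly.
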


The factorization theorem on $H_{\mathbb{O}}^{2}$ in terms of  the ball model is more complex than that in $H_{\mathbb{Q}}^{m}$ and $H_{\mathbb{C}}^{n}$ and involve rather involved computations. We shall
address it in a forthcoming paper.

The second main result is the higher order  Poincar\'e-Sobolev  inequality.  Using precise Bessel-Green-Riesz and heat kernel estimates, we obtain the following:
\begin{theorem}
  Let $0<\g<3$, $0<\g'$, $2<p$ and  $0<\zeta$. Denote by $N=\dim\mathbb{X}$.
  If $0<\g'<N-\g$, suppose further that $2<p\leq\frac{2N}{N-(\g+\g')}$.
  Then there exists a constant $C>0$ such that, for all $u\in{C_{0}^{\oo}\left( \mathbb{X} \right)}$, there holds
  \begin{equation}
    \begin{aligned}
      \norm{u}_{p}\leq{C}\norm{\left( -\Delta_{\mathbb{X}}-\rho_{\mathbb{X}}^{2}+\zeta^{2} \right)^{\frac{\g'}{4}}\left( -\Delta-\rho_{\mathbb{X}}^{2} \right)^{\frac{\g}{4}}u}_{2}.
    \end{aligned}
  \end{equation}
  \label{thm:sobolev-inequality}
\end{theorem}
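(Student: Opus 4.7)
The plan is to recast the estimate as the $L^{2}\to L^{p}$ boundedness of a convolution operator on the underlying semisimple group $G$, produce sharp pointwise kernel bounds via Helgason–Fourier analysis, and conclude with the Kunze--Stein phenomenon together with a local Hardy--Littlewood--Sobolev estimate.

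\textbf{Step 1: reduction to a convolution.} Setting $v=(-\Delta_{\mathbb{X}}-\rho_{\mathbb{X}}^{2}+\zeta^{2})^{\gamma'/4}(-\Delta_{\mathbb{X}}-\rho_{\mathbb{X}}^{2})^{\gamma/4}u$, the spectral theorem (the spectrum of $-\Delta_{\mathbb{X}}-\rho_{\mathbb{X}}^{2}$ is $[0,\infty)$) gives $u=T_{\gamma,\gamma',\zeta}v$ where $T_{\gamma,\gamma',\zeta}$ has radial symbol $m(\lambda)=\lambda^{-\gamma/4}(\lambda+\zeta^{2})^{-\gamma'/4}$ on the spherical dual. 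Lifting to $G$ and using Helgason--Fourier inversion, $T_{\gamma,\gamma',\zeta}$ is right-convolution by a bi-$K$-invariant kernel $k_{\gamma,\gamma',\zeta}$, so it suffices to prove
\[
  \|v * k_{\gamma,\gamma',\zeta}\|_{L^{p}(G)}\leq C\|v\|_{L^{2}(G)}.
\]

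\textbf{Step 2: kernel estimates.} The double subordination formula
\[
  \lambda^{-\gamma/4}(\lambda+\zeta^{2})^{-\gamma'/4}
  = c_{\gamma,\gamma'}\int_{0}^{\infty}\!\!\int_{0}^{\infty}s^{\gamma/4-1}t^{\gamma'/4-1}e^{-(s+t)\lambda - t\zeta^{2}}\,ds\,dt
\]
expresses $k_{\gamma,\gamma',\zeta}$ as a weighted integral of the shifted heat kernel on $\mathbb{X}$. Feeding in the sharp Anker--Ostellari heat kernel bounds on rank-one symmetric spaces produces two regimes: writing $r = d(x,eK)$,
\[
  |k_{\gamma,\gamma',\zeta}(r)|\lesssim
  \begin{cases}
    r^{-(N-\gamma-\gamma')}, & r\leq 1\text{ and }\gamma+\gamma'<N,\\
    e^{-\rho_{\mathbb{X}}r}(1+r)^{\sigma}e^{-\varepsilon(\zeta)r}, & r>1,
  \end{cases}
\]
where $\varepsilon(\zeta)>0$ because the shift $\zeta^{2}$ pushes the heat spectrum away from zero, producing additional exponential decay. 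In the regime $\gamma'\geq N-\gamma$ the local part is bounded. The restriction $\gamma<3$ keeps the $s$-integral convergent once one extracts the sharp singularity by differentiating the subordination representation.

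\textbf{Step 3: Sobolev estimate.} Split $k_{\gamma,\gamma',\zeta}=k_{\mathrm{loc}}+k_{\infty}$ via the indicator of the unit ball. The local singularity $r^{-(N-\gamma-\gamma')}$ combined with the Euclidean nature of $\mathbb{X}$ on bounded sets and classical Hardy--Littlewood--Sobolev yields $\|v * k_{\mathrm{loc}}\|_{p}\lesssim\|v\|_{2}$ in the critical regime $p\leq 2N/(N-(\gamma+\gamma'))$, giving exactly the upper bound on $p$. For the global piece, combining the exponential decay with the volume growth $\mathrm{vol}(B(r))\asymp e^{2\rho_{\mathbb{X}}r}$ shows $k_{\infty}\in L^{q}(G)$ for every $q\in(2,\infty]$; the Cowling--Herz form of the Kunze--Stein phenomenon on real rank one simple Lie groups with finite center then gives $\|v * k_{\infty}\|_{p}\lesssim\|k_{\infty}\|_{q}\|v\|_{2}$ for the exponent $q$ paired with $p>2$, covering all $p$ in the non-critical regime $\gamma'\geq N-\gamma$. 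Summing the two contributions completes the proof.

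\textbf{Main obstacle.} The subtlest step is Step 2: the restricted root systems of $\mathrm{Sp}(m,1)$ (multiplicities $(4m-4,3)$) and of $F_{4(-20)}$ (multiplicities $(8,7)$) make the inverse spherical transform substantially more intricate than in the real and complex cases, and the sharp local singularity $r^{-(N-\gamma-\gamma')}$ must be extracted simultaneously with the sharp $e^{-\rho_{\mathbb{X}}r}$ prefactor at infinity, both uniformly in $\zeta$. Once these kernel bounds are secured, the passage to the Sobolev inequality via Kunze--Stein and Hardy--Littlewood--Sobolev is essentially automatic.
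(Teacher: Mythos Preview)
Your architecture---reduce to convolution, split the kernel, handle the local piece by Hardy--Littlewood--Sobolev and the global piece by Kunze--Stein---is exactly the paper's. The gap is in the global estimate.

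The assertion in Step~2 that the shift $\zeta^{2}$ produces ``additional exponential decay $e^{-\varepsilon(\zeta)r}$'' is false for this kernel. Only the factor $(\lambda+\zeta^{2})^{-\gamma'/4}$ carries the shift; the factor $\lambda^{-\gamma/4}$ does not, and since $m(\lambda)\sim\zeta^{-\gamma'/2}\lambda^{-\gamma/4}$ as $\lambda\to0^{+}$, it is this unshifted singularity at the bottom of the spectrum that governs the large-$r$ behavior. The Anker--Ji asymptotics then give only
\[
k_{\gamma,\gamma',\zeta}(r)\asymp r^{\,\gamma/2-2}\,e^{-\rho_{\mathbb{X}}r}\qquad(r\geq1),
\]
with no extra exponential gain from $\zeta$. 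Consequently $k_{\infty}\in L^{q}(G)$ only for $q>2$ (equivalently $k_{\infty}\in L^{2,\infty}$), and in particular $k_{\infty}\notin L^{2}$. Neither Young's inequality nor any standard form of Kunze--Stein yields an $L^{2}\to L^{p}$ bound for convolution against a bi-$K$-invariant kernel that is merely in $L^{q}$, $q>2$: Kunze--Stein gives $L^{s}\ast L^{2}\subset L^{2}$ for $s<2$, and the Cowling--Giulini--Meda Lorentz refinement gives $L^{p,1}\ast L^{p'}\subset L^{p}$ for $p>2$, but nothing of the shape $L^{2}\ast L^{q}\subset L^{p}$ that you invoke.

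The paper avoids this obstacle by not attempting $L^{2}\to L^{p}$ directly. It proves instead the \emph{symmetric} estimate $\|k_{\gamma}\ast k_{\zeta,\gamma'}\ast f\|_{p}\lesssim\|f\|_{p'}$ for the \emph{full} operator $(-\Delta_{\mathbb{X}}-\rho_{\mathbb{X}}^{2})^{-\gamma/2}(-\Delta_{\mathbb{X}}-\rho_{\mathbb{X}}^{2}+\zeta^{2})^{-\gamma'/2}$ with $p'<2<p$; for that target the global piece lies in $L^{p,1}$ and the Lorentz Kunze--Stein applies cleanly. The step from this $L^{p'}\to L^{p}$ bound for $T$ to the desired $L^{2}\to L^{p}$ bound for $T^{1/2}$ is then the one-line $TT^{\ast}$ duality (Beckner's appendix lemma): $\|T^{1/2}\|_{L^{2}\to L^{p}}^{2}=\|T\|_{L^{p'}\to L^{p}}$. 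This square-root trick is the idea your argument is missing.

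(A smaller point: your stated local singularity $r^{-(N-\gamma-\gamma')}$ is that of the full operator of order $\gamma+\gamma'$, not of your square-root operator of order $(\gamma+\gamma')/2$; the HLS exponent you arrive at is nonetheless consistent, so this looks like a slip in bookkeeping rather than a real error.)
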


Using Theorem \ref{thm:sobolev-inequality} and the factorization Theorems  \ref{lem:first-factorization} and \ref{thm:factorization-theorem2}, we obtain the following Hardy-Sobolev-Maz'ya inequalities on $\mathbb{X}$. Here we state only for $H_{\Q}^{m}$.
\begin{theorem}
  Let $a\in\R$, $1\leq{k}<2m$, $2<p<\frac{4m}{2m-k}$ and $\la\leq\prod_{j=1}^{k}\frac{(a-k+2j-2)^{2}}{4}$.
  Then there exists a constant $C>0$ so that, for all $u\in{C_{0}^{\oo}(\mcU_{\mathbb{Q}}^{m})}$, there holds
  \begin{equation*}
    \begin{aligned}
      &\int\limits_{\H_{\Q}^{m-1}}\int\limits_{0}^{\oo}u\prod_{j=1}^{k}\left[ -\vr\p_{\vr\vr}-a\p_{\vr}-\vr\Delta_{Z}-\mathcal{L}_{0}-i(k+1-2j)\sqrt{-\Delta_{Z}} \right]u\frac{dxdzd\vr}{\vr^{1-a}}\\
      &-\la\int\limits_{ \H_{\Q}^{m-1} }\int\limits_{0}^{\oo}\frac{u^{2}}{\vr^{k+1-a}}dxdzd\vr\geq{C}\left( \int\limits_{\H_{\Q}^{m-1}}\int\limits_{0}^{\oo}|u|^{p}\vr^{\frac{\left( 2m+1-k+a \right)p}{2}-(2m-2)}dxdzd\vr \right)^{\frac{2}{p}},
    \end{aligned}
  \end{equation*}
  where $\mcU_{\mathbb{Q}}^{m}$ is the quaternionic  Siegel domain and $\H_{\Q}^{m-1}$ is the quaternionic Heisenberg group.
  In terms of the ball  model, for all $f\in{C_{0}^{\oo}(\B_{\Q}^{m})}$, there holds
  \begin{equation*}
    \begin{aligned}
      &\int\limits_{\B_{\Q}^{m}} f \prod_{j=1}^{k}\left[ \Delta'_{\frac{1-a-(2m+1)}{2}} + \frac{(k+1-2j)^{2}}{4} -i \frac{k+1-2j}{2}\sqrt{\Gamma+1} \right] f \frac{dz}{(1-|z|^{2})^{1-a}}\\
      &-\la \int\limits_{\B_{\Q}^{m}} \frac{f^{2}}{(1-|z|^{2})^{k+1-a}}dz\geq C \left( \int\limits_{\B_{\Q}^{m}}|f|^{p} \left( 1-|z|^{2} \right)^{\frac{(2m+1-k+a)}{2}-(2m-2)}dz \right)^{\frac{2}{p}}.
    \end{aligned}
  \end{equation*}
  \label{thm:hardy-sobolev-mazya-inequality}
\end{theorem}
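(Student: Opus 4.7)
The plan is to reduce the weighted inequality on $\mcU_{\Q}^{m}$ to a spectral inequality on the Damek-Ricci space $S$ via the factorization Theorem \ref{lem:first-factorization}, then to invoke the Poincar\'e-Sobolev inequality of Theorem \ref{thm:sobolev-inequality}. Setting $Q=2m+1$ and substituting $f=\vr^{(Q+a-k)/2}u$, one checks that the factors $B_{j}:=-\vr\p_{\vr\vr}-a\p_{\vr}-\vr\Delta_{Z}-\mcL_{0}-i(k+1-2j)\sqrt{-\Delta_{Z}}$ appearing in the theorem satisfy $B_{k+1-j}=-A_{j}$, where $A_{j}$ is the corresponding factor from Theorem \ref{lem:first-factorization}; since all ingredient operators commute, the two resulting $(-1)^{k}$ factors cancel and we obtain the key identity
\[
  \prod_{j=1}^{k}B_{j}u=4^{-k}\vr^{-(k+Q+a)/2}\prod_{j=1}^{k}(T+\zeta_{j}^{2})f,\qquad T:=-\Delta_{S}-Q^{2}\geq 0,\quad \zeta_{j}:=|a-k+2j-2|.
\]
Using $dV_{S}=\vr^{-(Q+1)}dXdZd\vr$, the two left-hand integrals in the theorem collapse into $4^{-k}\int_{S}f\prod_{j}(T+\zeta_{j}^{2})f\, dV_{S}$ and $\la\int_{S}f^{2}\, dV_{S}$ respectively.

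Under the hypothesis $\la\leq 4^{-k}\prod_{j}\zeta_{j}^{2}$, the net left-hand side is bounded below by $4^{-k}\int_{S}fP(T)f\, dV_{S}$, where
\[
  P(t):=\prod_{j=1}^{k}(t+\zeta_{j}^{2})-\prod_{j=1}^{k}\zeta_{j}^{2}.
\]
The polynomial $P$ has nonnegative coefficients, $P(0)=0$, leading coefficient $1$, and crucially a \emph{strictly positive} linear coefficient $e_{k-1}(\zeta_{1}^{2},\ldots,\zeta_{k}^{2})=\sum_{i}\prod_{j\neq i}\zeta_{j}^{2}$: since $\{a-k+2j-2\}_{j=1}^{k}$ is a strict arithmetic progression, at most one $\zeta_{j}$ can vanish, forcing this sum to be positive. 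Comparing coefficients with the polynomial $t(t+\epsilon^{2})^{k-1}$ (same degree, same leading coefficient), one obtains that for all sufficiently small $\epsilon>0$,
\[
  P(t)\geq C_{1}\, t\,(t+\epsilon^{2})^{k-1}\qquad(t\geq 0),
\]
and the spectral theorem applied to $T$ promotes this to $\int_{S}fP(T)f\, dV_{S}\geq C_{1}\|T^{1/2}(T+\epsilon^{2})^{(k-1)/2}f\|_{L^{2}(S)}^{2}$.

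The proof is concluded by applying Theorem \ref{thm:sobolev-inequality} with $\g=2$, $\g'=2(k-1)$, $\zeta=\epsilon$. The hypotheses $0<\g<3$ and $\g'>0$ hold for $k\geq 2$, and with $N=\dim H_{\Q}^{m}=4m$ and $\g+\g'=2k$ the critical exponent is $2N/(N-\g-\g')=4m/(2m-k)$, exactly matching the statement. This gives $\|f\|_{L^{p}(S)}\leq C\|T^{1/2}(T+\epsilon^{2})^{(k-1)/2}f\|_{L^{2}(S)}$, and translating back via $|f|^{p}dV_{S}=|u|^{p}\vr^{(Q+a-k)p/2-(Q+1)}dXdZd\vr$ delivers the weighted $L^{p}$ lower bound claimed. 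The ball-model assertion is handled identically, using Theorem \ref{thm:factorization-theorem2} in place of Theorem \ref{lem:first-factorization} and substituting $(1-|z|^{2})$ for $\vr$; the edge case $k=1$, where $P(T)=T$, reduces to the subcritical Sobolev embedding on $\mathbb{X}$, i.e.\ the $\g'\to 0$ endpoint of Theorem \ref{thm:sobolev-inequality}, which follows from the same kernel estimates.

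\textbf{Main obstacle.} The one step beyond routine bookkeeping is the polynomial inequality $P(t)\geq C_{1}t(t+\epsilon^{2})^{k-1}$. Its validity rests on the arithmetic-progression structure of the shifts $\zeta_{j}$, which ensures at most one of them vanishes and hence that the linear coefficient of $P$ is strictly positive. This observation is precisely what permits the extraction of a factor with \emph{positive} exponent $\g'=2(k-1)$, as demanded by the hypothesis $\g'>0$ in Theorem \ref{thm:sobolev-inequality}.
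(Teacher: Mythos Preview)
Your argument is correct and matches the paper's approach: both reduce via the factorization theorem to a spectral inequality for $-\Delta_{\mathbb{X}}-\rho_{\mathbb{X}}^{2}$, establish the polynomial lower bound $\prod_{j}(t+\zeta_{j}^{2})-\prod_{j}\zeta_{j}^{2}\geq t(t+\delta)^{k-1}$, and then invoke Theorem~\ref{thm:sobolev-inequality} with $\g=2$, $\g'=2(k-1)$. The paper phrases the spectral step through the Plancherel formula rather than the abstract spectral theorem, and is less explicit than you are about why the polynomial bound holds (at most one $\zeta_{j}$ vanishes) and about the $k=1$ endpoint, but the substance is identical.
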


In the limiting case,  we can establish the Adams inequalities on $\mathbb{X}$.

\begin{theorem}Let $0< \alpha<3$ and $\zeta>0$.
Then there exists a constant $C>0$ such that for all
$u\in C^{\infty}_{0}(\mathbb{X})$ with
\[
\|(-\Delta_{\mathbb{X}}-\rho_{\mathbb{X}}^{2}+\zeta^{2})^{(2n-\alpha)/4}(-\Delta_{\mathbb{X}}-\rho_{\mathbb{X}}^{2})^{\alpha/4}u\|_{2}\leq1,
\]
there holds
\[
\int_{\mathbb{X}}(e^{\beta_{0}\left(N/2,N\right) u^{2}}-1-\beta_{0}\left(N/2,N\right)
u^{2})dV\leq C.
\]
\label{thm:hardy-Adams-inequality}
\end{theorem}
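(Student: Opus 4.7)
The plan is to follow the general blueprint for Hardy-Adams inequalities developed by the authors in the real and complex hyperbolic settings (\cite{ly2}, \cite{ly5}, \cite{LiLuy1}, \cite{LiLuy2}), relying on the Bessel-Green-Riesz and heat kernel estimates already built earlier in the paper for $\mathbb{X} = H_{\mathbb{Q}}^{m}$ and $H_{\mathbb{O}}^{2}$. The first step is to pass from the norm constraint to a convolution representation: set
\[
v = \left( -\Delta_{\mathbb{X}} - \rho_{\mathbb{X}}^{2} + \zeta^{2} \right)^{(2n-\alpha)/4} \left( -\Delta_{\mathbb{X}} - \rho_{\mathbb{X}}^{2} \right)^{\alpha/4} u,
\]
so that $\|v\|_{2} \leq 1$ and $u = G_{\zeta,\alpha} \ast v$, where $G_{\zeta,\alpha}$ is the $K$-bi-invariant kernel of the inverse operator on the $NA$-group model. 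The factorization into a ``massive'' factor (carrying the shift $\zeta^{2}$) and a ``massless'' factor is essential: the combined operator has order $n = N/2$, matching the critical exponent in the classical Adams inequality on $\mathbb{R}^{N}$, while the shift $\zeta > 0$ forces strict exponential decay of the kernel at infinity at a rate strictly exceeding $\rho_{\mathbb{X}}$.

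The second step is the kernel analysis. Using the subordination identity
\[
\left( -\Delta_{\mathbb{X}} - \rho_{\mathbb{X}}^{2} + \zeta^{2} \right)^{-s} = \frac{1}{\Gamma(s)} \int_{0}^{\infty} t^{s-1} e^{-\zeta^{2} t} e^{t(\Delta_{\mathbb{X}} + \rho_{\mathbb{X}}^{2})} dt
\]
together with the precise heat kernel estimates on $\mathbb{X}$ developed earlier in the paper (via Helgason-Fourier inversion and the Harish-Chandra $c$-function asymptotics attached to the rank-one root system of $H_{\mathbb{Q}}^{m}$ and $H_{\mathbb{O}}^{2}$), I would establish two-sided bounds of the shape
\[
G_{\zeta,\alpha}(r) \asymp r^{-N/2} \text{ as } r \to 0, \qquad G_{\zeta,\alpha}(r) \lesssim (1+r) e^{-(\rho_{\mathbb{X}} + \delta)r} \text{ as } r \to \infty,
\]
for some $\delta = \delta(\zeta) > 0$. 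Split $G_{\zeta,\alpha} = G_{0} + G_{\infty}$ with $G_{0}$ supported in a small ball around the identity. The local part $u_{0} = G_{0} \ast v$ is compared pointwise with the Euclidean Riesz kernel of order $N/2$ on $\mathbb{R}^{N}$, so that the O'Neil rearrangement inequality reduces its exponential integrability to the classical sharp Adams inequality on $\mathbb{R}^{N}$ with constant $\beta_{0}(N/2,N)$. The global part $u_{\infty} = G_{\infty} \ast v$ is controlled in $L^{p}(\mathbb{X})$ for every $p \in (2,\infty)$ via the Kunze-Stein phenomenon on the rank-one semisimple group $G$, since $G_{\infty} \in L^{q}(\mathbb{X})$ for all $q \in [1,\infty]$ by the exponential decay.

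The third step combines these estimates. Using $u^{2} \leq (1+\epsilon)\, u_{0}^{2} + (1 + 1/\epsilon)\, u_{\infty}^{2}$ and H\"older's inequality to factor the exponential, together with the Taylor expansion of $e^{t} - 1 - t$, the required inequality reduces to the local Adams bound, modulo a finite $L^{q}$ error term from $u_{\infty}$ controlled by Kunze-Stein; the subtracted terms $1 + \beta_{0}(N/2,N)\, u^{2}$ are precisely what is needed to compensate for the infinite volume of $\mathbb{X}$. The main obstacle is to establish the sharp leading-order local behavior $G_{\zeta,\alpha}(r) \sim c\, r^{-N/2}$ with the constant matching exactly $\beta_{0}(N/2,N)$; this requires a careful match between the spherical-function parametrix on $\mathbb{X}$ (depending on the restricted root multiplicities $(4m-4, 3)$ for $H_{\mathbb{Q}}^{m}$ and $(8, 7)$ for $H_{\mathbb{O}}^{2}$) and the Euclidean Riesz kernel of order $N/2$ near the origin. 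A secondary technical point is verifying that the Kunze-Stein bounds (available on all connected real simple groups of real rank one with finite center by Cowling's theorem) combine quantitatively with the local Adams estimate across the precise range of Lebesgue exponents arising in the H\"older splitting.
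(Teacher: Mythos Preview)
Your overall blueprint is right in spirit, but there are two genuine gaps that prevent it from yielding the sharp constant $\beta_0(N/2,N)$.

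First, the claimed large-distance decay $G_{\zeta,\alpha}(r)\lesssim(1+r)e^{-(\rho_{\mathbb{X}}+\delta)r}$ is false. The operator you are inverting contains the \emph{massless} factor $(-\Delta_{\mathbb{X}}-\rho_{\mathbb{X}}^{2})^{-\alpha/4}$, whose kernel $k_{\alpha/2}$ sits at the bottom of the spectrum and decays only like $\rho^{\alpha/2-2}e^{-\rho_{\mathbb{X}}\rho}$ (see the Anker--Ji asymptotics recalled in the paper). Convolving with the massive factor $k_{\zeta,(N-\alpha)/2}$ does \emph{not} improve this rate; the paper's Lemma~\ref{lem:k-gamma-ast-k-zeta-gamma} gives only $k_{\alpha/2}\ast k_{\zeta,(N-\alpha)/2}\lesssim e^{(\varepsilon-\rho_{\mathbb{X}})\rho}$ for $\rho\geq1$. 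Consequently $G_\infty$ is not in $L^{q}(\mathbb{X})$ for $q\leq2$, and your claim that $G_\infty\in L^{q}$ for all $q\in[1,\infty]$ fails. What one actually has is the more delicate fact \eqref{eq:rearrangement-k-g-ast-k-z-g-large-dist2}, namely $\int_c^\infty|[k_{\alpha/2}\ast k_{\zeta,(N-\alpha)/2}]^{\ast}(t)|^{2}\,dt<\infty$, which requires the refined estimate of Lemma~\ref{lem:k-gamma-ast-k-zeta-refined-estimate}.

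Second, the additive splitting $u=u_0+u_\infty$ combined with $u^{2}\leq(1+\epsilon)u_0^{2}+(1+1/\epsilon)u_\infty^{2}$ destroys the sharp constant: after H\"older you would need the local Adams inequality at a constant strictly larger than $\beta_0(N/2,N)$, which is impossible since $\beta_0$ is sharp. The paper avoids this by splitting the \emph{domain} rather than the function or the kernel: set $\Omega(u)=\{|u|\geq1\}$, use Theorem~\ref{thm:sobolev-inequality} to show $|\Omega(u)|$ is uniformly bounded, control $\int_{\mathbb{X}\setminus\Omega(u)}(e^{\beta_0 u^2}-1-\beta_0 u^2)\,dV$ by the $L^4$-norm of $u$ via Taylor expansion, and on $\Omega(u)$ apply the O'Neil rearrangement argument to the \emph{full} kernel $\phi=k_{\zeta,(N-\alpha)/2}\ast k_{\alpha/2}$ using the sharp local asymptotic $\phi^{\ast}(t)\leq\gamma_N(N/2)^{-1}(Nt/\omega_{N-1})^{-1/2}+O(t^{(\varepsilon-N/2)/N})$ together with $\int_c^\infty|\phi^\ast|^2<\infty$. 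This is exactly the mechanism of \cite{LiLuy1}, and it delivers $\beta_0(N/2,N)$ without any $\epsilon$-loss.
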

As an application of  Theorem \ref{thm:hardy-Adams-inequality} and the factorization theorem,  we have
 the following Hardy-Adams
inequalities on $\mathbb{X}$. We also state only for $H_{\mathbb{Q}}^{m}$.


\begin{theorem}\label{thm:hardy-Adams-inequality2} Let $a\in\mathbb{R}$.
There exists a constant $C>0$ such that for all $u\in
C^{\infty}_{0}(\mathbb{B}_{\mathbb{Q}}^{m})$ with
\begin{equation*}
  \begin{split}
 &4^{2m}\int_{\mathbb{B}_{\mathbb{Q}}^{n}}u \prod^{2m}_{j=1}\left[\Delta'_{\frac{1-a-(2m+1)}{2}}+\frac{(2m+1-2j)^{2}}{4}-
i\frac{2m+1-2j}{2}\sqrt{\Gamma+1}\right]u\frac{dz}{(1-|z|^{2})^{1-a}}\\
  &-\prod^{2m}_{j=1}(a-2m+2j-2)^{2}\int_{\mathbb{B}_{\mathbb{Q}}^{n}}\frac{u^{2}}{(1-|z|^{2})^{2m+1-a}}dz\leq1,
  \end{split}
\end{equation*}
there holds
\[
\int_{\mathbb{B}_{\mathbb{Q}}^{n}} \frac{e^{\beta_{0}\left(2m,4m\right)
(1-|z|^{2})^{\frac{a+1}{2}}u^{2}}-1-\beta_{0}\left(2m,4m\right)(1-|z|^{2})^{\frac{a+1}{2}} u^{2}}{(1-|z|^{2})^{2m+2}}dz\leq C.
\]

In terms of  the Siegel domain model, we have for all $u\in
C^{\infty}_{0}(\mathcal{U}_{\mathbb{Q}}^{n})$ with
\begin{equation*}
  \begin{split}
 & 4^{2m}\int_{\mathbb{H}_{\mathbb{Q}}^{m-1}}\int^{\infty}_{0}u\prod^{n}_{j=1}\left[-\varrho\partial_{\varrho\varrho}-a\partial_{\varrho}-\varrho \Delta_{Z}-\mathcal{L}_{0} +i(k+1-2j)\sqrt{-\Delta_{Z}}\right]u\frac{dxdzd\varrho}{\varrho^{1-a}}\\
  &-\prod^{2m}_{j=1}(a-n+2j-2)^{2}\int_{\mathbb{H}_{\mathbb{Q}}^{-1}}\int^{\infty}_{0}\frac{u^{2}}{\varrho^{2m+1-a}}dxdzd\varrho\leq1,
    \end{split}
\end{equation*}
there holds
\[
 \int_{\mathbb{H}_{\mathbb{Q}}^{m-1}}\int^{\infty}_{0}\frac{e^{\beta_{0}\left(2m,4m\right)
\varrho^{a}u^{2}}-1-\beta_{0}\left(2m,4m\right)\varrho^{a} u^{2}}{\varrho^{2m+2}}dxdzd\varrho\leq C.
\]
\end{theorem}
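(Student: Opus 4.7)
The plan is to derive Theorem~\ref{thm:hardy-Adams-inequality2} from the Adams inequality (Theorem~\ref{thm:hardy-Adams-inequality}) on $\mathbb{X}=H_{\Q}^{m}$ by combining it with the factorization Theorems~\ref{lem:first-factorization} and~\ref{thm:factorization-theorem2}. Working in the ball model, write $w=1-|z|^{2}$ and $L_{0}=-\Delta_{\mathbb{X}}-\rho_{\mathbb{X}}^{2}$, with $\rho_{\mathbb{X}}=2m+1$ and $N=4m$. The key substitution is $v=w^{(a+1)/2}u$, chosen so that the Riemannian volume form $dV=C\,w^{-(2m+2)}dz$ absorbs both the Hardy weight, giving $\int u^{2}/w^{2m+1-a}\,dz=\int v^{2}\,dV$, and the exponential weight, so that the right-hand side of the desired inequality takes the Adams form $\int(e^{\beta_{0}v^{2}}-1-\beta_{0}v^{2})\,dV$.

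Applying Theorem~\ref{thm:factorization-theorem2} with $k=2m$ and noting $(-1)^{2m}=1$, the factorization identity reads
\[
4^{2m}w^{(4m+a+1)/2}\prod_{j=1}^{2m}\bigl[\mathrm{Geller}_{j}\bigr]u=\prod_{j=1}^{2m}(L_{0}+\zeta_{j}^{2})v,\qquad \zeta_{j}=|a-2m+2j-2|.
\]
Pairing with $u\,w^{a-1}\,dz$ and substituting $u=w^{-(a+1)/2}v$ converts the Geller integral to $\int v\prod_{j}(L_{0}+\zeta_{j}^{2})v\,dV$ and the Hardy subtraction to $\prod_{j}\zeta_{j}^{2}\int v^{2}\,dV$, so that the hypothesis of Theorem~\ref{thm:hardy-Adams-inequality2} is equivalent to
\[
\int_{\B_{\Q}^{m}}v\,P(L_{0})\,v\,dV\leq 1,\qquad P(\lambda)=\prod_{j=1}^{2m}(\lambda+\zeta_{j}^{2})-\prod_{j=1}^{2m}\zeta_{j}^{2}=\lambda\,Q(\lambda),
\]
where $Q$ is a degree-$(2m-1)$ polynomial with leading coefficient $1$ and strictly positive coefficient at every power of $\lambda$.

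To apply Theorem~\ref{thm:hardy-Adams-inequality}, set $\alpha=2\in(0,3)$. Since both $Q(\lambda)$ and $(\lambda+\zeta^{2})^{2m-1}$ are degree-$(2m-1)$ polynomials with leading coefficient $1$ and positive coefficients, one can select $\zeta>0$ small enough that $Q(\lambda)\geq(\lambda+\zeta^{2})^{2m-1}$ on $[0,\infty)$, equivalently $P(L_{0})\geq L_{0}(L_{0}+\zeta^{2})^{2m-1}$ as operators. With $(N-\alpha)/4=(2m-1)/2$ and $\alpha/4=1/2$, the Adams hypothesis $\norm{(L_{0}+\zeta^{2})^{(2m-1)/2}L_{0}^{1/2}v}_{2}\leq 1$ then follows. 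Theorem~\ref{thm:hardy-Adams-inequality} delivers the Adams exponential bound for $v$ with the sharp constant $\beta_{0}(2m,4m)=\beta_{0}(N/2,N)$, and undoing the substitution $v=w^{(a+1)/2}u$ produces the ball-model statement. The Siegel-domain statement is obtained identically with $w$ replaced by $\vr$ and Theorem~\ref{lem:first-factorization} in place of Theorem~\ref{thm:factorization-theorem2}.

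The main obstacle is the coefficient-by-coefficient comparison needed to select $\zeta>0$ so that $Q(\lambda)\geq(\lambda+\zeta^{2})^{2m-1}$ holds \emph{pointwise} on $[0,\infty)$, i.e., so that each elementary symmetric polynomial in the $\zeta_{j}^{2}$'s dominates the corresponding binomial coefficient $\binom{2m-1}{k}\zeta^{2(2m-1-k)}$. This is what avoids a multiplicative loss in the operator comparison and thereby preserves the sharp Adams constant $\beta_{0}(2m,4m)$; the positivity of all elementary symmetric polynomials in the $\zeta_{j}^{2}$ (at most one $\zeta_{j}$ vanishes for any given $a$) is what makes the construction possible.
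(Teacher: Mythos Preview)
Your proposal is correct and follows essentially the same route as the paper's proof: the paper reduces Theorem~\ref{thm:hardy-Adams-inequality2} to Theorem~\ref{thm:hardy-Adams-inequality} by the substitution $v=(1-|z|^{2})^{(a+1)/2}u$, uses the factorization Theorem~\ref{thm:factorization-theorem2} to rewrite the hypothesis as $\int v\prod_{j}(L_{0}+\zeta_{j}^{2})v\,dV-\prod_{j}\zeta_{j}^{2}\int v^{2}\,dV\le 1$, and then invokes the Plancherel argument from the proof of Theorem~\ref{thm:hardy-sobolev-mazya-inequality} to dominate this by $\|(L_{0}+\delta)^{(2m-1)/2}L_{0}^{1/2}v\|_{2}^{2}$ for some $\delta>0$. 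Your coefficient-by-coefficient justification that $Q(\lambda)\ge(\lambda+\zeta^{2})^{2m-1}$ for small $\zeta$, together with the observation that at most one $\zeta_{j}$ vanishes so all $e_{1},\dots,e_{2m-1}$ are strictly positive, makes explicit exactly the step the paper leaves as ``similar to the proof of Theorem~\ref{thm:hardy-sobolev-mazya-inequality}.''
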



Finally, we   set up some Adams type inequalities on Sobolev spaces $W^{\alpha,\frac{N}{\alpha}}(\mathbb{X})$ on $\mathbb{X}$ with dimension $N$ for arbitrary positive fractional
 order $\alpha<N$.
 More precisely, we have the following


\begin{theorem} \label{thm:Adams-inequality1}
Let $N\geq2$, $0<\alpha<N$ be an arbitrary real positive number, $p=N/\alpha$ and $\zeta$ satisfies $\zeta>0$ if $1<p<2$
and $\zeta>\rho_{\mathbb{X}}(\frac{1}{2}-\frac{1}{p})$ if $p\geq2$.
 Then for  measurable $E$
with finite Riemannian volume measure in $\mathbb{X}$, there exists $C=C(\zeta,\alpha,N,|E|)$ such that
\[
\frac{1}{|E|}\int_{E}\exp(\beta_{0}(\alpha,N)|u|^{p'})dV\leq C
\]
for  any $u\in W^{\alpha,p}(\mathbb{X})$ with $\int_{\mathbb{X}}
|(-\Delta_{\mathbb{X}}-\rho_{\mathbb{X}}^{2}+\zeta^{2})^{\frac{\alpha}{2}} u|^{p}dV\leq1$. Here $p'=\frac{p}{p-1}$.
Furthermore, this inequality is sharp  in the sense that if $\beta_{0}(\alpha,N)$ is replaced by any
$\beta>\beta_{0}(\alpha,N)$, then the above inequality can no longer hold with some $C$ independent of $u$.
\end{theorem}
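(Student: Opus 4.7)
The plan is to realize $u$ as a convolution $u = G_{\alpha,\zeta} \ast v$ on $\mathbb{X}$, where $v = (-\Delta_{\mathbb{X}} - \rho_{\mathbb{X}}^{2} + \zeta^{2})^{\alpha/2}u$ has $\|v\|_{p}\leq 1$ and $G_{\alpha,\zeta}$ is the convolution kernel of $(-\Delta_{\mathbb{X}} - \rho_{\mathbb{X}}^{2} + \zeta^{2})^{-\alpha/2}$ (i.e., the Bessel-Green-Riesz kernel shifted by the spectral gap). The strategy is then classical in the Adams--Moser--Trudinger circle: split $G_{\alpha,\zeta} = G_{0} + G_{\infty}$ into a piece concentrated near the identity of $G$ and a piece supported away from it, handle $G_{0}\ast v$ by an Adams-type one-dimensional lemma in order to extract the sharp exponential constant, and absorb $G_{\infty}\ast v$ into an $L^{\infty}$ contribution.

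The first technical step is sharp asymptotics for $G_{\alpha,\zeta}$. Using the subordination representation
\[
G_{\alpha,\zeta}(r) = \frac{1}{\Gamma(\alpha/2)} \int_{0}^{\infty} t^{\alpha/2-1} e^{-(\zeta^{2}-\rho_{\mathbb{X}}^{2})t}\, h_{t}(r)\,dt
\]
together with the precise heat kernel estimates on rank one symmetric spaces of noncompact type established earlier in the paper, one obtains
\[
G_{\alpha,\zeta}(r) \sim \frac{\Gamma((N-\alpha)/2)}{2^{\alpha}\pi^{N/2}\Gamma(\alpha/2)}\, r^{\alpha-N} \quad \text{as } r\to 0^{+},
\]
with the same coefficient as the Euclidean Riesz kernel, and exponential decay $G_{\alpha,\zeta}(r)\lesssim e^{-\zeta r}$ (with polynomial corrections) as $r\to\infty$. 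The hypothesis on $\zeta$ is precisely what ensures absolute convergence of the subordination integral and places $G_{\infty}$ in the Lorentz class $L^{p',1}(\mathbb{X})$ needed below; the threshold $\zeta>\rho_{\mathbb{X}}(1/2-1/p)$ for $p\geq 2$ comes from the Kunze--Stein range.

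For the local part, an O'Neil-type rearrangement inequality reduces $G_{0}\ast v$ on $E$ to a one-dimensional convolution whose kernel has exactly the Euclidean leading singularity, and the Adams one-dimensional lemma (as used in \cite{ly4, LiLuy1, LiLuy2}) yields
\[
\frac{1}{|E|}\int_{E}\exp\!\left(\beta_{0}(\alpha,N)\,|G_{0}\ast v|^{p'}\right)dV \leq C(\alpha,N),
\]
with the sharp constant $\beta_{0}(\alpha,N)$ emerging directly from the coefficient in the small-$r$ asymptotics. For the global part, the Kunze--Stein phenomenon for $G$ simple of real rank one (when $1<p<2$) or plain Young's inequality (when $p\geq 2$) gives $\|G_{\infty}\ast v\|_{\infty}\leq C\|G_{\infty}\|_{p',1}\|v\|_{p}\leq C$. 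Splitting $|u|^{p'}\leq (1+\varepsilon)|G_{0}\ast v|^{p'} + C_{\varepsilon}|G_{\infty}\ast v|^{p'}$ and choosing $\varepsilon$ so small that $(1+\varepsilon)\beta_{0}(\alpha,N)$ is still covered by the Adams lemma (applied to a slightly enlarged $G_{0}$) produces the claimed inequality.

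Sharpness of $\beta_{0}(\alpha,N)$ is obtained from a standard Moser-type concentrating family centered at a fixed point $x_{0}\in\mathbb{X}$: in geodesic normal coordinates at $x_{0}$ both the volume element and the kernel $G_{\alpha,\zeta}$ agree with their Euclidean counterparts to leading order, so a sequence producing blow-up for the Euclidean Adams inequality at any $\beta>\beta_{0}(\alpha,N)$ transplants to $\mathbb{X}$ and produces blow-up there as well. The main obstacle is the heat kernel analysis: pinning down the leading Euclidean constant in the small-$r$ expansion of $G_{\alpha,\zeta}$ on $H_{\mathbb{Q}}^{m}$ and $H_{\mathbb{O}}^{2}$ with an error term uniform enough to feed into the Adams one-dimensional lemma requires delicate use of the spherical Fourier transform together with the rank-one Plancherel density, and the correct range of $\zeta$ in the two regimes $1<p<2$ and $p\geq 2$ must be tracked carefully through the Kunze--Stein estimate.
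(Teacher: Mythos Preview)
Your overall plan---write $u=k_{\zeta,\alpha}\ast v$ with $\|v\|_p\le 1$, establish the sharp asymptotic $k_{\zeta,\alpha}(\rho)\sim \gamma_N(\alpha)^{-1}\rho^{\alpha-N}$ near the origin, and feed into an O'Neil/Adams one-dimensional lemma---is exactly the paper's approach. The gap is in your recombination step: you split $|u|^{p'}\le(1+\varepsilon)|G_0\ast v|^{p'}+C_\varepsilon|G_\infty\ast v|^{p'}$ and claim one can ``choose $\varepsilon$ so small that $(1+\varepsilon)\beta_0(\alpha,N)$ is still covered by the Adams lemma applied to a slightly enlarged $G_0$.'' This does not work. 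The critical constant $\beta_0(\alpha,N)$ is determined by the leading coefficient of the kernel singularity, and that coefficient is unchanged by enlarging the cutoff radius; the one-dimensional Adams lemma yields exactly $\beta_0$ and fails for any $(1+\varepsilon)\beta_0$ with $\varepsilon>0$. The bounded piece $G_\infty\ast v$ cannot be absorbed this way without destroying sharpness.

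The paper avoids the problem by never splitting the kernel. It applies O'Neil's rearrangement inequality directly to the full convolution,
\[
u^{\ast}(t)\le \frac{1}{t}\int_{0}^{t}f^{\ast}(s)\,ds\int_{0}^{t}[k_{\zeta,\alpha}]^{\ast}(s)\,ds+\int_{t}^{\infty}f^{\ast}(s)\,[k_{\zeta,\alpha}]^{\ast}(s)\,ds,
\]
and then invokes the one-dimensional Adams lemma of \cite{LiLuy2}, whose hypotheses are precisely (i) the sharp leading term $[k_{\zeta,\alpha}]^{\ast}(t)\le\gamma_N(\alpha)^{-1}(Nt/\omega_{N-1})^{(\alpha-N)/N}+O(t^{(\alpha+\varepsilon-N)/N})$ for small $t$ and (ii) $\int_c^\infty([k_{\zeta,\alpha}]^{\ast})^{p'}\,dt<\infty$ for every $c>0$. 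The tail and lower-order contributions are absorbed \emph{inside} that lemma, so the sharp constant survives without any $\varepsilon$-loss. A minor point: your attribution of the threshold on $\zeta$ for $p\ge 2$ to the Kunze--Stein range is misplaced; Kunze--Stein is not used in this proof at all, and the threshold arises from condition (ii), i.e., from the $L^{p'}$ integrability of the large-distance tail of $k_{\zeta,\alpha}$.
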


\begin{theorem} \label{thm:Adams-inequality2}
Let $N\geq2$,  $0<\alpha<N$ be an arbitrary real positive number, $p=N/\alpha$ and $\zeta$ satisfies
 $\zeta>2\rho_{\mathbb{X}}\left|\frac{1}{2}-\frac{1}{p}\right|$ .
Then there exists $C=C(\zeta,\gamma,n)$ such that
\begin{equation}\label{inequality1}
\int_{\mathbb{X}}\Phi_{p}(\beta_{0}(\alpha,N)|u|^{p'})dV\leq C
\end{equation}
hold simultaneously
for  any $u\in W^{\alpha,p}(\mathbb{X})$ with $\int_{\mathbb{X}}|(-\Delta_{\mathbb{X}}-
\rho_{\mathbb{X}}^{2}+\zeta^{2})^{\frac{\alpha}{2}} u|^{p}dV\leq1$. Here
\[
\Phi_{p}(t)=e^{t}-\sum^{j_{p}-2}_{j=0}\frac{t^{j}}{j!},\;\; j_{p}=\min \{j\in \mathbb{N}: j\geq p\}.
\]
Furthermore, this inequality is sharp  in the sense that if $\beta_{0}(\alpha,N)$ is replaced by any
 $\beta>\beta(2n,\alpha)$, then the above inequality can no longer hold with some $C$ independent of $u$.
\end{theorem}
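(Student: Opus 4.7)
The plan is to reduce the inequality to a convolution representation and then split the relevant Bessel--Green--Riesz kernel into a local piece (where the symmetric space looks Euclidean and the sharp Adams constant arises) and a global piece (where Kunze--Stein controls the tail). Concretely, set $f=(-\Delta_{\mathbb{X}}-\rho_{\mathbb{X}}^{2}+\zeta^{2})^{\alpha/2}u$, so $\|f\|_{p}\le 1$, and write $u=G_{\alpha,\zeta}*f$ where $G_{\alpha,\zeta}$ is the $K$-bi-invariant kernel of $(-\Delta_{\mathbb{X}}-\rho_{\mathbb{X}}^{2}+\zeta^{2})^{-\alpha/2}$, represented via the subordination formula
\[
G_{\alpha,\zeta}(x)=\frac{1}{\Gamma(\alpha/2)}\int_{0}^{\infty}t^{\alpha/2-1}e^{-\zeta^{2}t}e^{\rho_{\mathbb{X}}^{2}t}h_{t}(x)\,dt,
\]
with $h_{t}$ the heat kernel of $-\Delta_{\mathbb{X}}$ on $\mathbb{X}$.

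The first technical step is to extract sharp kernel asymptotics from the precise heat--kernel estimates on rank one symmetric spaces already collected earlier in the paper: near the origin one has $G_{\alpha,\zeta}(r)=\kappa_{\alpha,N}\,r^{\alpha-N}+E(r)$ where $\kappa_{\alpha,N}$ matches the Euclidean Riesz constant and $E$ is of strictly lower order, while for $r$ bounded away from $0$ the kernel decays like $e^{-(\rho_{\mathbb{X}}+\delta(\zeta))r}r^{(N-3)/2}$ with $\delta(\zeta)>0$ provided $\zeta>0$. Writing $G_{\alpha,\zeta}=G^{\mathrm{loc}}+G^{\mathrm{glo}}$ with $G^{\mathrm{loc}}$ supported in a small geodesic ball $B(o,1)$, I would then split $u=u_{\mathrm{loc}}+u_{\mathrm{glo}}$ accordingly. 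For the global piece, the hypothesis $\zeta>2\rho_{\mathbb{X}}|\tfrac12-\tfrac1p|$ is exactly what forces $G^{\mathrm{glo}}\in L^{r}(\mathbb{X})$ for every $r\ge\max(p,p')$; combining this with the Kunze--Stein phenomenon for real rank one groups with finite center yields
\[
\|u_{\mathrm{glo}}\|_{q}\le C_{q}\|f\|_{p}\le C_{q}\qquad\text{for every }q\ge p.
\]
Summing the Taylor series of $e^{t}$ from $j_{p}$ onward (the terms which survive in $\Phi_{p}$) then gives $\int_{\mathbb{X}}\Phi_{p}(\beta_{0}|u_{\mathrm{glo}}|^{p'})\,dV\le C$ term-by-term, because $j_{p}p'\ge N/(\alpha) \cdot p/(p-1)$ places us in the range covered by these $L^{q}$ estimates.

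For the local piece, I would apply the sharp local Adams inequality of Fontana--Morpurgo type: since $G^{\mathrm{loc}}(r)=\kappa_{\alpha,N}r^{\alpha-N}(1+o(1))$ as $r\to 0$ and the Riemannian volume element on $\mathbb{X}$ is asymptotically Euclidean near any base point, the standard O'Neil lemma and rearrangement-based Adams argument reproduces the constant
\[
\beta_{0}(\alpha,N)=\frac{N}{\omega_{N-1}}\left[\frac{\pi^{N/2}2^{\alpha}\Gamma(\alpha/2)}{\Gamma((N-\alpha)/2)}\right]^{p'},
\]
yielding $\int_{B(x,1)}\exp(\beta_{0}|u_{\mathrm{loc}}|^{p'})\,dV\le C$ uniformly in $x\in\mathbb{X}$. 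A covering of $\mathbb{X}$ by unit balls with bounded overlap, together with the global $L^{q}$ control on $u_{\mathrm{glo}}$ to absorb the cross terms from $|u|^{p'}\le(1+\varepsilon)|u_{\mathrm{loc}}|^{p'}+C_{\varepsilon}|u_{\mathrm{glo}}|^{p'}$, then gives \eqref{inequality1}. The sharpness assertion is obtained by the standard Moser-type concentration sequence: fix $x_{0}\in\mathbb{X}$, identify a normal neighborhood with a Euclidean ball, and insert the extremizing family for the Euclidean Adams inequality into $\mathbb{X}$; one checks that if $\beta>\beta_{0}(\alpha,N)$ were admissible, then scaling back through normal coordinates would contradict the known optimality of $\beta_{0}(\alpha,N)$ for the Euclidean Adams inequality.

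The main obstacle is the interplay at the threshold between the sharp Euclidean-type local estimate and the global part. Two issues must be handled carefully: first, the error term $E(r)$ in the expansion of $G_{\alpha,\zeta}$ near $0$ must be shown to be integrable against $f\in L^{p}$ in an operator sense with arbitrarily small norm, so that it does not pollute the sharp constant $\beta_{0}(\alpha,N)$; this is where the Fontana--Morpurgo refinement of Adams' argument is needed rather than Adams' original proof, because the local kernel is not exactly the Riesz kernel. Second, one must verify that the truncation index $j_{p}=\min\{j\in\mathbb{N}:j\ge p\}$ matches exactly the threshold above which the Taylor terms of the exponential become integrable against $L^{q}$ functions on $\mathbb{X}$ for the relevant range of $q$ guaranteed by Kunze--Stein; this bookkeeping is what forces the lower bound $\zeta>2\rho_{\mathbb{X}}|\tfrac12-\tfrac1p|$ and is the source of the delicate dependence of the constant $C$ on $\zeta$ in the statement.
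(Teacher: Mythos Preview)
Your approach via the kernel decomposition $G_{\alpha,\zeta}=G^{\mathrm{loc}}+G^{\mathrm{glo}}$ differs substantially from the paper's, and as written it contains a genuine gap in the passage from local to global.

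The main problem is the covering step for the local piece. You obtain $\int_{B(x,1)}\exp(\beta_{0}|u_{\mathrm{loc}}|^{p'})\,dV\le C$ uniformly in $x$, and then propose to cover $\mathbb{X}$ by unit balls with bounded overlap. But $\mathbb{X}$ is noncompact with infinite volume, so summing a uniform bound $C$ over infinitely many balls yields $+\infty$, not a finite constant. The subtraction of low-order Taylor terms in $\Phi_{p}$ does not by itself rescue this: you would still need to show that on ``most'' balls the contribution is summable, which amounts to an additional global $L^{q}$ argument for $u_{\mathrm{loc}}$ that your outline does not supply. A second, related difficulty is the cross-term inequality $|u|^{p'}\le(1+\varepsilon)|u_{\mathrm{loc}}|^{p'}+C_{\varepsilon}|u_{\mathrm{glo}}|^{p'}$: feeding the factor $(1+\varepsilon)$ into the local Adams step would require that inequality at constant $(1+\varepsilon)\beta_{0}>\beta_{0}$, which is precisely what sharpness forbids.

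The paper avoids both issues by decomposing the \emph{domain} rather than the kernel, via the Lam--Lu symmetrization-free argument. First, Anker's H\"ormander--Mikhlin multiplier theorem gives $\|u\|_{p}\lesssim 1$ directly; this is where the hypothesis $\zeta>2\rho_{\mathbb{X}}|\tfrac12-\tfrac1p|$ actually enters (it is the width of the strip to which the symbol $(\lambda^{2}+\zeta^{2})^{-\alpha/2}$ must extend holomorphically), not via $L^{r}$-membership of $G^{\mathrm{glo}}$ combined with Young's inequality as you suggest. Then set $\Omega(u)=\{|u|\ge 1\}$; by Chebyshev, $|\Omega(u)|\le\|u\|_{p}^{p}\le C$ independently of $u$. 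On $\Omega(u)$ one bounds $\Phi_{p}\le e^{t}$ and invokes Theorem~\ref{thm:Adams-inequality1} with $E=\Omega(u)$ (this is where your Fontana--Morpurgo/O'Neil machinery is already packaged). On $\mathbb{X}\setminus\Omega(u)$ one has $|u|<1$, so every surviving Taylor term $|u|^{p'k}$ with $k\ge j_{p}-1$ is dominated by $|u|^{p}$, and the series is controlled by $\|u\|_{p}^{p}$. No covering, no $(1+\varepsilon)$ loss, and no separate handling of a global kernel piece is needed; the level-set trick is exactly the idea your proposal is missing.
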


Notice that $|\frac{1}{2}-\frac{1}{p}|<\frac{1}{2}$ provided $p>1$. Choosing $\zeta=\rho_{\mathbb{X}}$ in Theorem \ref{thm:Adams-inequality2},  we have the following
\begin{corollary}
Let $N\geq2$,  $0<\alpha<N$ be an arbitrary real positive number and $p=N/\alpha$.
There exists $C=C(\alpha,n)$ such that
\[
\int_{\mathbb{X}}\Phi_{p}(\beta_{0}(\alpha,N)|u|^{p'})dV\leq C
\]
hold simultaneously
for  any $u\in W^{\alpha,p}(\mathbb{X})$ with $\int_{\mathbb{X}}|(-\Delta_{\mathbb{X}})^{\frac{\alpha}{2}} u|^{p}dV\leq1$.
\end{corollary}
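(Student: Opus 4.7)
The proposal is to derive this corollary as an immediate specialization of Theorem~\ref{thm:Adams-inequality2} by choosing $\zeta=\rho_{\mathbb{X}}$. There is essentially no new work to do: we only need to check that this endpoint choice is admissible in the hypotheses of Theorem~\ref{thm:Adams-inequality2} and that it causes the shifted Laplacian appearing there to collapse to $-\Delta_{\mathbb{X}}$.

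First, I would verify the admissibility condition $\zeta>2\rho_{\mathbb{X}}\bigl|\tfrac{1}{2}-\tfrac{1}{p}\bigr|$. Since $0<\alpha<N$ gives $p=N/\alpha>1$, we have $\bigl|\tfrac{1}{2}-\tfrac{1}{p}\bigr|<\tfrac{1}{2}$, which is exactly the elementary observation placed just before the corollary statement. Consequently
\[
  2\rho_{\mathbb{X}}\Bigl|\tfrac{1}{2}-\tfrac{1}{p}\Bigr|<\rho_{\mathbb{X}}=\zeta,
\]
so Theorem~\ref{thm:Adams-inequality2} applies with this $\zeta$. Next, with $\zeta=\rho_{\mathbb{X}}$ one has $-\rho_{\mathbb{X}}^{2}+\zeta^{2}=0$, so the normalization hypothesis
\[
  \int_{\mathbb{X}}\bigl|(-\Delta_{\mathbb{X}}-\rho_{\mathbb{X}}^{2}+\zeta^{2})^{\alpha/2}u\bigr|^{p}\,dV\leq 1
\]
of Theorem~\ref{thm:Adams-inequality2} reduces exactly to the hypothesis $\int_{\mathbb{X}}|(-\Delta_{\mathbb{X}})^{\alpha/2}u|^{p}\,dV\leq 1$ of the corollary. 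The conclusion $\int_{\mathbb{X}}\Phi_{p}(\beta_{0}(\alpha,N)|u|^{p'})\,dV\leq C$ is then literally the conclusion of Theorem~\ref{thm:Adams-inequality2} for the same constant $C$ (now depending only on $\alpha,N$ since $\zeta$ has been pinned to $\rho_{\mathbb{X}}$).

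Since the derivation is a direct specialization, there is no real obstacle; the only delicate point is the strict inequality $\zeta>2\rho_{\mathbb{X}}|\tfrac{1}{2}-\tfrac{1}{p}|$, which would fail at $p=1$ or $p=\infty$ but is ensured here by $1<p<\infty$. The sharpness assertion of Theorem~\ref{thm:Adams-inequality2} is not claimed in the corollary, so no further argument is needed.
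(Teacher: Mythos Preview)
Your proposal is correct and matches the paper's own derivation exactly: the paper simply notes that $|\tfrac{1}{2}-\tfrac{1}{p}|<\tfrac{1}{2}$ for $p>1$ and specializes Theorem~\ref{thm:Adams-inequality2} with $\zeta=\rho_{\mathbb{X}}$, which is precisely what you do.
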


 The organization of the paper is as follows:  In Section 2, we recall some necessary preliminary facts of quaternionic hyperbolic spaces and the Cayley hyperbolic plane.  We shall prove the factorization theorem, namely Theorem \ref{lem:first-factorization} and \ref{thm:factorization-theorem2},  in   Section 3.
 In section 4, we recall some necessary  facts of Funk-Hecke formulas for $Sp(m)\times Sp(1)$ and $Spin(9)$ and use them to compute  some  integrals in term of hypergeometric function.
 Sharp estimates of Bessel-Green-Riesz kernels   and their rearrangement estimates  are given  in Section 5 and Section 6, respectively. We shall prove the higher order Hardy-Sobolev-Maz'ya inequalities, namely Theorem \ref{thm:sobolev-inequality} and \ref{thm:hardy-sobolev-mazya-inequality},   in Section 7.  In Section 8, we prove  the Hardy-Adams inequality, namely Theorem \ref{thm:hardy-Adams-inequality}
and \ref{thm:hardy-Adams-inequality2}.
In Section 9, we show the Adams type inequality, namely Theorem \ref{thm:Adams-inequality1}
and \ref{thm:Adams-inequality2}.

\section{Preliminaries}

We begin by setting up notations and then recall proper definitions shortly after.

Let $\Q$ and $\C a$ denote, respectively, the quaternions and the Cayley algebra (i.e., octonions).
Let $H_{\Q}^{m}$ denote the quaternionic hyperbolic space of real dimension $4m$, and let $H_{\C a}$ denote the Cayley plane of real dimension 16.
In general, we will use $\mbF$ to denote any of the three normed division algebras $\left\{ \C, \Q, \C a \right\}$ and $H_{\mbF}^{m}$ to denote the corresponding hyperbolic space with $\mbF$-dimension $m$.
We recall that $H_{\mbF}^{m}$ is a Riemannian symmetric space and that, as homogeneous spaces, there hold $H_{\Q}^{m} = Sp(m,1) / Sp(m) \times Sp(1)$ and $H_{\C a} = F_{4} / \operatorname{Spin}(9)$.
Since there is only one Cayley plane, we shall often remove dimensional superscript and subscript decorations whenever specifying $\mbF = \C a$; e.g., $H_{\mbF}^{m}$ with $\mbF = \C a$ shall be written simply as $H_{\C a}$.

We will also use $\B_{\mbF}^{m} \subset \mbF^{m}$ and $\mcU_{\mbF}^{m}$ to denote $H_{\mbF}^{m}$ when realized, respectively, in the Beltrami-Klein ball model and Siegel domain model.
Let $S^{4m-1} = \p \B_{\Q}^{m}$ and $S^{15} = \p \B_{\C a}$ denote, respectively, the quaternionic and octonionic spheres, and let $d \sigma$ denote the round measure (i.e, the standard surface measure endowed from the ambient Euclidean space).
Note that $\B_{\C a} \subset \C a^{2} = \R^{16}$.

Next, we let $\H_{\mbF}^{n}$ denote the Heisenberg group over $\mbF \in \left\{ \C, \Q, \C a \right\}$ and let $Z = Z(H_{\mbF}^{n})$ denote the center of $\H_{\mbF}^{n}$.
We make the identifications $\H_{\C}^{n} = \R^{2n} \times \R$, $\H_{\Q}^{n} = \R^{4n} \times \R^{3}$ and $\H_{\C a} = \R^{8} \times \R^{7}$ and note $Z(\H_{\C}^{n}) = \R$, $Z(\H_{\Q}^{n}) = \R^{3}$ and $Z(\H_{\C a}) = \R^{7}$.
The homogeneous dimension of $\H_{\mbF}^{n}$ is given by $Q = \dim_{\R} \H_{\mbF}^{n} + \dim_{\R} \Im \mbF$.
In particular, the homogeneous dimensions for $\H_{\C}^{n}$, $\H_{\Q}^{n}$ and $\H_{\C a}$ are, respectively, $2n + 2$, $4n + 6$ and $22$.

Recalling that the boundary of $H_{\mbF}^{m}$ has a natural group structure given by $\H_{\mbF}^{m-1}$, we shall choose the normalization of the metric on $H_{\mbF}^{m}$ and sign convention on $\Delta_{\mathbb{X}}$ so that
\[
  \spec(-\Delta_{\mathbb{X}})=[\frac{Q^{2}}{4},\oo).
\]
We recall that $Q/2$ also has the interpretation as $\rho_{\mathbb{X}}$,  the half sum of positive roots of $H_{\mbF}^{m}$ counted with multiplicities.
In particular, on $H_{\C}^{m}$, $H_{\Q}^{m}$ and $H_{\C a}$ we evaluate $Q/2$ to be, respectively, $m$, $2m+1$, and $11$.

For the convenience of the reader, we include a short dictionary of the Laplacians considered in this paper:
\begin{center}
  \begin{tabular}{lll}
    $\Delta$&$\leftrightsquigarrow$&Laplace-Beltrami operator on $H_{\mbF}^{m}$ when $\mbF = \Q$ or $\C a$\\
    $\Delta_{H_{\R}^{n}}$&$\leftrightsquigarrow$&Laplace-Beltrami operator on $H_{\R}^{n}$ for a specified $n$\\
    $\Delta_{Z}$&$\leftrightsquigarrow$&Euclidean Laplacian on the center $Z=Z(\H_{\mbF}^{m-1})$\\
    $\Delta_{b}$&$\leftrightsquigarrow$&The sub-Laplacian on $\H_{\mbF}^{m-1}$.
  \end{tabular}
\end{center}

In the ball model, the Riemannian volume forms on $H_{\Q}^{m}$ and $H_{\C a}$ are given, respectively, by
\[
  dV = \frac{dz}{(1-|z|^{2})^{2m+2}},
\]
and
\[
  dV = 	\frac{dx}{(1-|x|^{2})^{12}},
\]
where $dz$ and $dx$ denote, respectively, the Lebesgue measure on $\C^{m}$ and $\R^{16}$.

\subsection{Automorphisms and Convolution}

In this section, we recall a family of automorphisms on $\B_{\Q}^{m}$ which are isometries and which are used to define convolution on $\B_{\Q}^{m}$.
Analogous automorphisms are also defined for $\B_{\C a}$ but require more notation and thus we direct the reader to \cite[pg. 56]{vv} for formal definitions.

Following \cite{vv}, we define for each $w\in\B_{\Q}^{m}$ the automorphism $\varphi_{w}:\B_{\Q}^{m}\to\B_{\Q}^{m}$ given by
\begin{align*}
  \varphi_{w}(z)=\left( 1-\gen{z,w}_{\Q} \right)^{-1}\left( w-P_{w}(z)-\sqrt{1-|w|^{2}}Q_{w}(z) \right)
\end{align*}
where
\begin{align*}
  P_{w}(z)&=
  \begin{cases}
    \gen{z,w}_{\Q}|w|^{-2}w&\text{if }w\neq0\\
    0&\text{if }w=0
  \end{cases}\\
  Q_{w}(z)&=z-P_{w}(z).
\end{align*}

We recall some properties of these automorphisms in the following proposition (see \cite{vv}).
Note that property (iv) is not present in \cite{vv}, but it is straightforward to prove.

\begin{lemmaalph}
  \label{lemalph:automorphism-properties}
  For each $w\in\B_{\Q}^{m}$, $\varphi_{w}$ satisfies the following properties:
  \begin{enumerate}[(i)]
    \item $\varphi_{w}(0)=w$ and $\varphi_{w}(w)=0$;
    \item for $z\in\overline{\B_{\Q}^{m}}$, there holds
      \[
	1-|\varphi_{w}(z)|^{2}=\frac{(1-|w|^{2})(1-|z|^{2})}{|1-\gen{z,w}_{\Q}|^{2}};
      \]
    \item $\varphi_{w}$ is an involutory isometry of $\B_{\Q}^{m}$;
    \item for $z\in\overline{\B_{\Q}^{m}}$, there holds
      \begin{align*}
	\sinh\left( \rho\left( \varphi_{w}(z) \right) \right)&=\frac{|\varphi_{w}(z)|}{\sqrt{1-|\varphi_{w}(z)|^{2}}}\\
	&=\left( \frac{|z-w|^{2}+|\gen{z,w}_{\Q}|^{2}-|z|^{2}|w|^{2}}{(1-|w|^{2})(1-|z|^{2})} \right)^{\frac{1}{2}}\\
	\cosh\left( \rho\left( \varphi_{w}(z) \right) \right)&=\frac{1}{\sqrt{1-|\varphi_{w}(z)|^{2}}}\\
	&=\frac{|1-\gen{z,w}_{\Q}|}{\sqrt{(1-|w|^{2})(1-|z|^{2})}}.
      \end{align*}
  \end{enumerate}
\end{lemmaalph}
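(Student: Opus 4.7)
The plan is to observe that properties (i), (ii), and (iii) are recorded in \cite{vv} and are simply cited, so the only new content to prove is (iv). For (iv), the approach is to combine the standard ball-model formula for the Riemannian distance from the origin with property (ii).

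With the normalization of the metric adopted in this paper, the hyperbolic distance $\rho(z)$ from $0$ to $z\in\B_{\Q}^{m}$ satisfies $\tanh\rho(z)=|z|$, equivalently
\[
\sinh\rho(z)=\frac{|z|}{\sqrt{1-|z|^{2}}},\qquad \cosh\rho(z)=\frac{1}{\sqrt{1-|z|^{2}}}.
\]
Substituting $\varphi_{w}(z)$ for $z$ immediately yields the first equality in each line of (iv). To obtain the second equality in the $\cosh$ identity, I would plug (ii) into $1-|\varphi_{w}(z)|^{2}$ and take square roots. For the $\sinh$ identity, I would write $|\varphi_{w}(z)|^{2}=1-(1-|\varphi_{w}(z)|^{2})$, apply (ii), and then verify the algebraic identity
\[
|1-\gen{z,w}_{\Q}|^{2}-(1-|w|^{2})(1-|z|^{2}) = |z-w|^{2}+|\gen{z,w}_{\Q}|^{2}-|z|^{2}|w|^{2}.
\]
This follows by expanding $|1-q|^{2}=1-2\Re q+|q|^{2}$ with $q=\gen{z,w}_{\Q}$ and $|z-w|^{2}=|z|^{2}-2\Re\gen{z,w}_{\Q}+|w|^{2}$, after which the two $1$'s and the two $-2\Re\gen{z,w}_{\Q}$ terms cancel.

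The main obstacle is really just bookkeeping with the noncommutative quaternionic inner product. One must use that for any quaternion $q$ there holds $q+\bar{q}=2\Re q$ and that $\overline{\gen{z,w}_{\Q}}=\gen{w,z}_{\Q}$, so $\Re\gen{z,w}_{\Q}=\Re\gen{w,z}_{\Q}$ is well-defined. Once this is set up, no analytic difficulty remains and the verification is elementary algebra, which is why the authors describe (iv) as ``straightforward to prove.''
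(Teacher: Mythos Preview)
Your proposal is correct and matches the paper's approach exactly: the paper cites \cite{vv} for (i)--(iii) and simply remarks that (iv) ``is straightforward to prove,'' without giving details. Your argument---substituting $\varphi_{w}(z)$ into the ball-model formulas $\sinh\rho(z)=|z|/\sqrt{1-|z|^{2}}$ and $\cosh\rho(z)=1/\sqrt{1-|z|^{2}}$ (which the paper records as \eqref{eq:6.2}), then invoking (ii) and verifying the elementary quaternionic identity $|1-\gen{z,w}_{\Q}|^{2}-(1-|w|^{2})(1-|z|^{2})=|z-w|^{2}+|\gen{z,w}_{\Q}|^{2}-|z|^{2}|w|^{2}$---is precisely the intended straightforward computation.
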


We will use $\varphi_{w}$ to also denote the analogous automorphisms on $\B_{\C a}$.
We record in the following lemma the analogues to the properties recorded in the preceding lemma.
In preparation, if $z = (z_{1},z_{2}), w = (w_{1},w_{2}) \in \B_{\C a} \subset \C a^{2}$, then let
\[
  \Psi_{\C a}(z,w) =
  \begin{cases}
    |1 - (\bar z_{1} w_{2})(w_{2}^{-1} w_{1}) - z_{2} \bar w_{2} |^{2} & \text{ if } w_{2} \neq 0\\
    |1-\bar z_{1} w_{1} |^{2} & \text{ if }w_{2} =0
  \end{cases}.
\]
We also have $\Psi_{\C a}(z,w) = \Phi_{\C a}(z,w) - 2 \langle{z,w}\rangle_{\R} + 1$, where
\[
  \Phi_{\C a}(z,w) = |z_{1}|^{2}|w_{1}|^{2} + |z_{2}|^{2} |w_{2}|^{2} + 2 \Re\left( (z_{1}z_{2})(\overline{w_{1}w_{2}}) \right),
\]
and $\langle{\cdot,\cdot}\rangle_{\R}$ is the Euclidean inner product on $\R^{16}$.
We also remark that $\Phi_{\C a}(z,w)$ is an analogue of the form $|\langle{z,w}\rangle_{\mbF}|^{2}$, and $\Psi_{\C a}(z,w)$ is an analogue of the form $|1-\gen{z,w}_{\mbF}|^{2}$, where $\mbF \in \left\{ \R,\C,\Q \right\}$.
We point out that $\Psi_{\C a}(z,w) \leq |z|^{2} |w|^{2}$.

\begin{lemmaalph}
  \label{lemalph:automorphism-properties-octonionic}
  For each $w\in\B_{\C a}$, $\varphi_{w}$ satisfies the following properties:
  \begin{enumerate}[(i)]
    \item $\varphi_{w}(0)=w$ and $\varphi_{w}(w)=0$;
    \item for $z\in\overline{\B_{\C a}}$, there holds
      \[
	1-|\varphi_{w}(z)|^{2}=\frac{(1-|w|^{2})(1-|z|^{2})}{\Psi_{\C a}(z,w)};
      \]
    \item $\varphi_{w}$ is an involutory isometry of $\B_{\C a}$;
    \item for $z\in\overline{\B_{\C a}}$, there holds
      \begin{align*}
	\sinh\left( \rho\left( \varphi_{w}(z) \right) \right)&=\frac{|\varphi_{w}(z)|}{\sqrt{1-|\varphi_{w}(z)|^{2}}}\\
	&=\left( \frac{\Psi_{\C a}(z,w) - (1-|z|^{2})(1-|w|^{2})}{(1-|w|^{2})(1-|z|^{2})} \right)^{\frac{1}{2}}\\
	\cosh\left( \rho\left( \varphi_{w}(z) \right) \right)&=\frac{1}{\sqrt{1-|\varphi_{w}(z)|^{2}}}\\
	&=\frac{\sqrt{ \Psi_{\C a}(z,w)}}{\sqrt{(1-|w|^{2})(1-|z|^{2})}}.
      \end{align*}
  \end{enumerate}
\end{lemmaalph}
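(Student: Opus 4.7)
The plan is to derive all four properties by following the template already established in Lemma A.1 for the quaternionic ball, paying careful attention to the non-associativity of $\C a$. Properties (i)--(iii) should be extractable from the construction of $\varphi_{w}$ on $\B_{\C a}$ given in \cite[pg.~56]{vv}: one checks directly from the explicit formula that $\varphi_{w}(0) = w$ (the projection and reflection pieces vanish at $z=0$), and then verifies $\varphi_{w}(w) = 0$ using the cancellation between $w$ and $P_{w}(w) = w$. Involutivity requires a bit more care because of non-associativity, but the Moufang identities and the alternative-algebra property of $\C a$ allow one to rearrange products $(ab)c$ and $a(bc)$ in the few key places where associators appear; the isometry property then follows by noting that $\varphi_{w}$ preserves the defining form of the Bergman-type metric on $\B_{\C a}$.

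The crux of the lemma is property (ii), namely the identity
\[
  1-|\varphi_{w}(z)|^{2} = \frac{(1-|w|^{2})(1-|z|^{2})}{\Psi_{\C a}(z,w)}.
\]
I would compute $|\varphi_{w}(z)|^{2}$ directly from the defining formula for $\varphi_{w}$, expanding the squared norm into four terms coming from $w$, $P_{w}(z)$, $\sqrt{1-|w|^{2}}\,Q_{w}(z)$ and their cross products, and using the scaling factor $|1-\langle z,w\rangle_{\C a}|^{-2}$ that needs to be reinterpreted through $\Psi_{\C a}(z,w)$ since $\langle z,w\rangle_{\C a}$ itself is not unambiguously defined. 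The alternative identity $\Psi_{\C a}(z,w) = \Phi_{\C a}(z,w) - 2\langle z,w\rangle_{\R} + 1$ will be the useful form: the cross terms in the expansion of $|\varphi_{w}(z)|^{2}$ collapse into $\Phi_{\C a}(z,w)$ after careful use of the alternativity of $\C a$ (so that $|ab|^{2} = |a|^{2}|b|^{2}$ and $\Re((xy)\overline{(xy)}) = |x|^{2}|y|^{2}$), and the linear term produces $-2\langle z,w\rangle_{\R} + 1$. The bound $\Psi_{\C a}(z,w) \leq |z|^{2}|w|^{2}$ noted in the text is consistent with and essentially equivalent to the non-negativity of the numerator on the right-hand side.

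Property (iv) then follows from (ii) by a purely formal calculation. Since $\varphi_{w}$ is an isometry by (iii) and $\varphi_{w}(w) = 0$ by (i), the hyperbolic distance $\rho(\varphi_{w}(z))$ from the origin is the geodesic distance $d(z,w)$; on a ball model with the normalization fixed in the Preliminaries, one has
\[
  \cosh^{2}\!\bigl(\rho(p)\bigr) = \frac{1}{1-|p|^{2}}, \qquad \sinh^{2}\!\bigl(\rho(p)\bigr) = \frac{|p|^{2}}{1-|p|^{2}}.
\]
Substituting $p = \varphi_{w}(z)$ and invoking (ii) yields the two displayed formulas immediately; the numerator $\Psi_{\C a}(z,w) - (1-|z|^{2})(1-|w|^{2})$ in the $\sinh$ formula is exactly what one gets after clearing the common denominator in $|\varphi_{w}(z)|^{2} = 1 - (1-|\varphi_{w}(z)|^{2})$.

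The main obstacle will be the algebraic bookkeeping in property (ii): because $\C a$ is non-associative, one cannot freely move parentheses when expanding $|\varphi_{w}(z)|^{2}$, and the scalar $\langle z,w\rangle_{\C a}$ that appears in the quaternionic formula must be systematically replaced by the real-valued $\Psi_{\C a}$. The cleanest route is probably to establish (ii) first in the simpler case $w_{2} = 0$ (where the formula reduces to the real/complex situation via the first coordinate) and then exploit the transitivity of the $F_{4}$-action on $\B_{\C a}$ to extend to general $w$, reducing the explicit non-associative computation to a single model case.
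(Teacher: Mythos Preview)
The paper does not supply its own proof of this lemma: it is stated as a \texttt{lemmaalph} (i.e.\ a quoted result), with the construction of $\varphi_{w}$ and properties (i)--(iii) attributed to \cite[pg.~56]{vv}, exactly as in the quaternionic case. The paper's only comment, made for the parallel Lemma~A, is that property (iv) ``is not present in \cite{vv}, but it is straightforward to prove,'' and the same is implicitly meant here. Your derivation of (iv) from (ii) via $\cosh^{2}\rho(p)=(1-|p|^{2})^{-1}$ is precisely that straightforward argument, so on the one part the paper actually addresses, you match it.

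Your sketch for (i)--(iii) is reasonable in spirit but has a circularity risk: you propose to reduce the verification of (ii) to the case $w_{2}=0$ by invoking transitivity of the $F_{4}$-action, yet the fact that $\varphi_{w}$ intertwines correctly with that action is essentially part of (iii), which you have not yet established independently. A cleaner order is to verify (ii) by direct expansion for general $w$ (the alternativity of $\C a$ and the identity $\Psi_{\C a}(z,w)=\Phi_{\C a}(z,w)-2\langle z,w\rangle_{\R}+1$ are indeed the right tools, as you say), then deduce (iii) from (ii) since the Bergman-type metric is determined by $1-|\cdot|^{2}$. Also, a small correction: the inequality you quote as ``$\Psi_{\C a}(z,w)\le |z|^{2}|w|^{2}$'' is a typo in the paper for $\Phi_{\C a}(z,w)\le |z|^{2}|w|^{2}$ (the stated bound on $\Psi_{\C a}$ is false already at $z=0$), so do not lean on it when arguing non-negativity of the $\sinh$ numerator; that non-negativity comes directly from $|\varphi_{w}(z)|^{2}\ge 0$ once (ii) is known.
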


With these automorphisms defined, we may introduce the following convolution on $\B_{\mbF}^{m}$: for two functions $f,g$ on $\B_{\mbF}^{m}$, let
\[
  (f*g)(z)=\int\limits_{\B_{\mbF}^{m}}f\left( \varphi_{w}(z) \right)g(w)dV(w),
\]
whenever this is well-defined.
It is easy to see that, if $f$ is radial, then $f*g=g*f$, when defined.

\subsection{Helgason-Fourier Transform on Quaternionic Hyperbolic Spaces and Cayley Plane}

In this section, we recall the Helgason-Fourier transforms on the quaternionic hyperbolic spaces and Cayley plane, as well as the resulting Plancherel and inversion formulas. (see \cite{he, he2, te}.)
Given a function $f$ on $\B_{\Q}^{m}$, the Helgason-Fourier transform $\tilde{f}$ is defined by the formula
\[
  \hat{f}(\la,\varsigma)=\int\limits_{\B_{\Q}^{m}}f(z)e_{-\la,\varsigma}(z)dV,
\]
for $\la\in\R$ and $\varsigma\in S^{4m-1}$, provided this integral exists.
Here,
\[
  e_{\la,\varsigma}(z) = \left( \frac{1-|z|^{2}}{|1-\gen{z,\varsigma}_{\Q}|^{2}} \right)^{\frac{(2m+1)+i\la}{2}},
\]
defined for $z\in\B_{\Q}^{m}$, $\la\in\R$ and $\varsigma\in{S^{4m-1}}$, are eigenfunctions of $\Delta$ with eigenvalue $-(2m+1)^{2}-\la^{2}$.
Note that, for $z\in\B_{\Q}^{m}$ and $\varsigma\in S^{4m-1}$, the function
\[
  \left( \frac{1-|z|^{2}}{|1-\gen{z,\varsigma}_{\Q}} \right)^{2m+1},
\]
is the Poisson kernel on $\B_{\Q}^{m}$.

Analogously, if $f$ is a function on $\B_{\C a}$, then its Helgason-Fourier transform $\hat{f}$ is defined by the formula
\[
  \hat{f}(\la,\varsigma)=\int\limits_{\B_{\Q}^{m}}f(z)e_{-\la,\varsigma}(z)dV,
\]
for $\la\in\R$ and $\varsigma\in S^{4m-1}$, provided this integral exists, where now
\[
  e_{\la,\varsigma}(z) = \left( \frac{1-|z|^{2}}{\Psi_{\C a}(z,\varsigma)} \right)^{\frac{11+i\la}{2}},
\]
defined for $z\in\B_{\C a}$, $\la\in\R$ and $\varsigma\in{S^{15}}$, are eigenfunctions of $\Delta$ with eigenvalue $-121-\la^{2}$.
Note that, for $z\in\B_{\Q}^{m}$ and $\varsigma\in S^{4m-1}$, the function
\[
  \left( \frac{1-|z|^{2}}{\Psi_{\C a}(z,\varsigma)} \right)^{11},
\]
is the Poisson kernel on $\B_{\C a}$. 

The Helgason-Fourier transform enjoys the following properties:
\begin{enumerate}[(i)]
  \item For $f,g\in C_{0}^{\oo}(\B_{\mathbb{F}}^{m})$ and $g$ radial, there holds
    \[
      \widehat{f*g}=\hat{f} \cdot \hat{g}.
    \]
  \item For $f\in{C_{0}^{\oo}}(\B_{\mathbb{F}}^{m})$, there holds the inversion formula:
    \begin{equation}
      f(z)=C_{m}\int\limits_{-\oo}^{\oo} \int\limits_{S_{\mbF}} \hat{f}(\la,\varsigma) e_{\la,\varsigma}(z) |\mfc(\la)|^{-2} d\la d\sigma(\varsigma),
      \label{eq:helgason-fourier-inversion-fomrula}
    \end{equation}
    where $C_{m}$ is a positive constant and $\mfc(\la)$ is the Harish-Chandra $\mfc$-function; see \cite[pg. 436]{he} for an explicit formula.
  \item For $f\in{C_{0}^{\oo}}(\B_{\mbF}^{m})$, there holds the Plancherel formula:
    \begin{equation}
      \int\limits_{\B_{\mbF}^{m}}|f(z)|^{2} dV = C_{m} \int\limits_{-\oo}^{\oo} \int\limits_{S_{\mbF}} |\hat{f}(\la, \varsigma)|^{2} |\mfc(\la)|^{-2} d\la d\sigma(\varsigma).
      \label{eq:helgason-fourier-plancherel-formula}
    \end{equation}
  \item For $f\in{C_{0}^{\oo}}(\B_{\mbF}^{m})$, there holds
    \[
      \widehat{\Delta f}(\la,\varsigma) = -\left( \la^{2}+\frac{Q^{2}}{4} \right)\hat{f}(\la,\varsigma).
    \]
\end{enumerate}

\section{Factorization Theorems for the Operators on $\mathbb{X}$: proof of Theorem \ref{lem:first-factorization} and Theorem \ref{thm:factorization-theorem2}}
\subsection{The Factorization Theorem on Damek-Ricci space}
\begin{lemma}
  \label{lem:first-factorization-lemma}
  Let $a \in \R$ and $f \in C^{\oo}(\mcU)$.
  There holds
  \begin{align*}
    & \left[ \varrho \p_{\varrho\varrho} + a\p_{\varrho} + \varrho \Delta_{Z} + \mcL_{0} \right]\left( \varrho^{\frac{1-Q-a}{2}} f \right)\\
    & = \varrho^{-\frac{1+Q+a}{2}}\left\{ \varrho\left[ \varrho(\p_{\varrho\varrho} + \Delta_{Z}) + \mcL_{0} -(Q-1)\varrho \right] + \frac{Q^{2}}{4} - \frac{(a-1)^{2}}{4} \right\}f
  \end{align*}
\end{lemma}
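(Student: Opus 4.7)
The proof is essentially a direct computation using the Leibniz rule, and the key structural insight is that the exponent $\frac{1-Q-a}{2}$ is chosen so that the resulting first-order term in $\partial_{\varrho}$ matches the $-(Q-1)\partial_{\varrho}$ appearing in the Laplace–Beltrami operator on $S=NA$.

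The plan is to set $\alpha=\frac{1-Q-a}{2}$ and $\phi(\varrho)=\varrho^{\alpha}$, and expand the left-hand side term by term. Since $\mcL_{0}$ differentiates only in the horizontal variables and $\Delta_{Z}$ only in the center, these two operators commute with multiplication by $\phi$ and simply contribute $\varrho^{\alpha}\mcL_{0}f$ and $\varrho^{\alpha+1}\Delta_{Z}f$. The only nontrivial contributions come from $\varrho\partial_{\varrho\varrho}$ and $a\partial_{\varrho}$. Applying the Leibniz rule gives
\begin{align*}
\varrho\partial_{\varrho\varrho}(\varrho^{\alpha}f) &= \alpha(\alpha-1)\varrho^{\alpha-1}f + 2\alpha\varrho^{\alpha}\partial_{\varrho}f + \varrho^{\alpha+1}\partial_{\varrho\varrho}f,\\
a\partial_{\varrho}(\varrho^{\alpha}f) &= a\alpha\varrho^{\alpha-1}f + a\varrho^{\alpha}\partial_{\varrho}f.
\end{align*}
Collecting coefficients, the full left-hand side factors as $\varrho^{\alpha-1}$ times a sum of three groups: a coefficient $\alpha(\alpha+a-1)$ multiplying $f$, a coefficient $(2\alpha+a)$ multiplying $\varrho\partial_{\varrho}f$, and the terms $\varrho^{2}\partial_{\varrho\varrho}f$, $\varrho^{2}\Delta_{Z}f$, $\varrho\mcL_{0}f$.

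The arithmetic then does all the work. With $\alpha=\frac{1-Q-a}{2}$, one computes $2\alpha+a=1-Q$, so the first-order term becomes exactly $-(Q-1)\varrho\partial_{\varrho}f$. Moreover, writing $\alpha+a-1 = \frac{a-Q-1}{2}$ and factoring,
\[
\alpha(\alpha+a-1)=\frac{(1-Q-a)(a-Q-1)}{4}=\frac{-[(1-a)^{2}-Q^{2}]}{4}=\frac{Q^{2}-(a-1)^{2}}{4},
\]
which is precisely the desired zeroth-order constant. Finally, $\varrho^{\alpha-1}=\varrho^{-\frac{1+Q+a}{2}}$, giving the claimed prefactor.

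There is no substantive obstacle here; the lemma is a book-keeping identity, and the only thing to watch is the sign in the product $(1-Q-a)(a-Q-1)=-(1-a-Q)(1-a+Q)$ so that the algebra correctly produces $Q^{2}-(a-1)^{2}$. The statement of the lemma contains an apparent typographical slip ($-(Q-1)\varrho$ in place of $-(Q-1)\partial_{\varrho}$ inside the bracket on the right), which the computation confirms is indeed a typo, since matching the Laplace–Beltrami formula $\Delta_{S}=4\varrho[\varrho(\partial_{\varrho\varrho}+\Delta_{Z})+\mcL_{0}-(Q-1)\partial_{\varrho}]$ requires the derivative.
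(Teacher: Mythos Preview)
Your proof is correct and follows essentially the same direct Leibniz-rule computation as the paper; the only cosmetic difference is that the paper first records the identity for a general exponent $\beta$ (namely $\varrho^{\beta+1}[\varrho\partial_{\varrho\varrho}+a\partial_{\varrho}+\varrho\Delta_{Z}+\mcL_{0}](\varrho^{-\beta}f)=\varrho[\varrho(\partial_{\varrho\varrho}+\Delta_{Z})+\mcL_{0}-(2\beta-a)\partial_{\varrho}]f+\beta(\beta+1-a)f$) and then specializes to $\beta=\frac{Q-1+a}{2}$, whereas you substitute the specific exponent from the outset. Your observation about the typo ($-(Q-1)\varrho$ versus $-(Q-1)\partial_{\varrho}$) is also confirmed by the paper's own computation.
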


\begin{proof}

  For reference, we provide explicit computations as follows.
  Observing that, for any $\b \in \R$, there holds
  \begin{align*}
    &\varrho^{\b+1} \left[ \varrho \p_{\varrho\varrho} + a \p_{\varrho} + \varrho \Delta_{Z} + \mcL_{0} \right](\varrho^{-\b}f)\\
    &= \varrho \left[ \varrho(\p_{\varrho\varrho} + \Delta_{Z}) + \mcL_{0} - (2\b-a)\p_{\varrho} \right]f + \b(\b+1-a)f,
  \end{align*}
  we may choose $\b=\frac{Q-1+a}{2}$ to obtain
  \begin{align*}
    &\varrho^{\frac{1+Q+a}{2}}\left[ \varrho \p_{\varrho\varrho} + a\p_{\varrho} + \varrho \Delta_{Z} + \mcL_{0} \right]\left( \varrho^{\frac{1-Q-a}{2}}f \right) \\
    &= \left\{ \varrho \left[ \varrho(\p_{\varrho\varrho} + \Delta_{Z}) + \mcL_{0} - (Q-1)\p_{\varrho} \right] + \frac{Q^{2}}{4} - \frac{(a-1)^{2}}{4} \right\}f.
  \end{align*}
  The desired result follows.
\end{proof}

\begin{lemma}
  \label{lem:second-factorization-lemma}
  Let $\b \in \R$.
  There holds
  \begin{align*}
    &\left[ \varrho\p_{\varrho\varrho} + (a+\b)\p_{\varrho} + \varrho\Delta_{Z} + \mcL_{0} \right]\left\{ \left[ \varrho \p_{\varrho\varrho} + (a-1)\p_{\varrho} +\varrho \Delta_{Z} + \mcL_{0} \right]^{2} + (\b-1)^{2}\Delta_{Z} \right\}\\
    &=\left\{ \left[ \varrho\p_{\varrho} + a\p_{\varrho} + \varrho\Delta_{Z} + \mcL_{0} \right]^{2} + \b^{2}\Delta_{Z} \right\}\left[ \varrho\p_{\varrho\varrho} + (a+\b-2)\p_{\varrho} + \varrho\Delta_{Z} + \mcL_{0} \right].
  \end{align*}

\end{lemma}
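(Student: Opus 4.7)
The plan is to verify the identity by direct algebraic manipulation in the operator algebra generated by $B := \varrho(\p_{\varrho\varrho}+\Delta_{Z})+\mcL_{0}$, $\p_{\varrho}$, and $\Delta_{Z}$. Writing $\mathcal{B}_{c} := B+c\p_{\varrho}$, the identity (after correcting the evident typo $\varrho\p_{\varrho}\!\to\!\varrho\p_{\varrho\varrho}$ in the first factor of the right side) becomes
\[
\mathcal{B}_{a+\b}\bigl(\mathcal{B}_{a-1}^{2}+(\b-1)^{2}\Delta_{Z}\bigr) = \bigl(\mathcal{B}_{a}^{2}+\b^{2}\Delta_{Z}\bigr)\mathcal{B}_{a+\b-2}.
\]
The crucial structural observation is that $\Delta_{Z}$ is central in this algebra: it commutes with $\varrho$, $\p_{\varrho}$, and with $\mcL_{0}$ (the last holding for any $H$-type group since $[\mathfrak{h},\mathfrak{z}]=0$ forces the horizontal vector fields to commute with differentiation along the center). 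Setting $P := \p_{\varrho\varrho}+\Delta_{Z}$, a direct check yields $[\mathcal{B}_{c},\p_{\varrho}]=-P$, hence $[\mathcal{B}_{c},\mathcal{B}_{d}]=(c-d)P$, while trivially $\mathcal{B}_{c}-\mathcal{B}_{d}=(c-d)\p_{\varrho}$.

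Since $\Delta_{Z}$ is central, I split the claim into its $\Delta_{Z}$-contribution and its remainder. Using $\mathcal{B}_{a+\b-2}=\mathcal{B}_{a+\b}-2\p_{\varrho}$, a brief linear computation gives $\b^{2}\mathcal{B}_{a+\b-2}-(\b-1)^{2}\mathcal{B}_{a+\b}=(2\b-1)\mathcal{B}_{a}-\b\p_{\varrho}$, whence the identity reduces to
\[
\mathcal{B}_{a+\b}\mathcal{B}_{a-1}^{2}-\mathcal{B}_{a}^{2}\mathcal{B}_{a+\b-2} = \Delta_{Z}\bigl[(2\b-1)\mathcal{B}_{a}-\b\p_{\varrho}\bigr].
\]
For the left side I would first record the bridging identity $\mathcal{B}_{a}^{2}=\mathcal{B}_{a-1}^{2}+2\p_{\varrho}\mathcal{B}_{a-1}-\Delta_{Z}$, a direct consequence of $\mathcal{B}_{a}=\mathcal{B}_{a-1}+\p_{\varrho}$ together with $[\mathcal{B}_{a-1},\p_{\varrho}]=-P$. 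Writing also $\mathcal{B}_{a+\b}=\mathcal{B}_{a-1}+(\b+1)\p_{\varrho}$ and $\mathcal{B}_{a+\b-2}=\mathcal{B}_{a-1}+(\b-1)\p_{\varrho}$ and expanding, the $\mathcal{B}_{a-1}^{3}$ contributions cancel and the remainder collapses to
\[
(\b-1)\bigl[\p_{\varrho},\mathcal{B}_{a-1}^{2}\bigr] - 2(\b-1)\p_{\varrho}\mathcal{B}_{a-1}\p_{\varrho} + \Delta_{Z}\mathcal{B}_{a-1}+(\b-1)\Delta_{Z}\p_{\varrho}.
\]

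The endgame consists of two reorderings: $[\p_{\varrho},\mathcal{B}_{a-1}^{2}]=2\mathcal{B}_{a-1}P+2\p_{\varrho}P$ (using $[\mathcal{B}_{a-1},\p_{\varrho}]=-P$ together with the secondary relation $[P,\mathcal{B}_{c}]=2\p_{\varrho}P$), and $\p_{\varrho}\mathcal{B}_{a-1}\p_{\varrho}=\mathcal{B}_{a-1}\p_{\varrho\varrho}+\p_{\varrho}P$. Substituting both, the $P$- and $\p_{\varrho\varrho}$-valued contributions cancel pairwise and only $2(\b-1)\mathcal{B}_{a-1}(P-\p_{\varrho\varrho})=2(\b-1)\Delta_{Z}\mathcal{B}_{a-1}$ survives, producing $\Delta_{Z}\bigl[(2\b-1)\mathcal{B}_{a-1}+(\b-1)\p_{\varrho}\bigr]$; the substitution $\mathcal{B}_{a-1}=\mathcal{B}_{a}-\p_{\varrho}$ then delivers the desired $\Delta_{Z}\bigl[(2\b-1)\mathcal{B}_{a}-\b\p_{\varrho}\bigr]$. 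The main obstacle is purely algebraic bookkeeping: since $P$ fails to commute with $B$ (indeed $[P,B]=2\p_{\varrho}P$), every reordering of a triple product spawns several lower-order corrections, and one must track each one to see that all $P$- and $\p_{\varrho\varrho}$-valued debris cancel. The saving grace is the centrality of $\Delta_{Z}$: it permits freely moving all $\Delta_{Z}$-factors to the outside and ensures that only the desired $\Delta_{Z}$-contribution survives.
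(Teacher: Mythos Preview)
Your proof is correct and follows essentially the same route as the paper: a direct operator-algebraic verification built on the commutation between $\p_{\varrho}$ and the family $\mathcal{B}_{c}$. The one organizational difference worth noting is that the paper packages the basic commutator as the \emph{shift relation} $\p_{\varrho}\mathcal{B}_{a-1}=\mathcal{B}_{a}\p_{\varrho}+\Delta_{Z}$, which absorbs the $\p_{\varrho\varrho}$ part into the index shift and leaves only the central $\Delta_{Z}$ as correction; iterating it gives $\p_{\varrho}\mathcal{B}_{a-1}^{2}$ and $\mathcal{B}_{a-1}^{2}$ directly in terms of $\mathcal{B}_{a}$, and the final expansion is then a two-line collection of terms. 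Your choice to work with $[\mathcal{B}_{c},\p_{\varrho}]=-P$ introduces the non-central $P$, which forces you to track the secondary commutator $[P,\mathcal{B}_{c}]=2\p_{\varrho}P$ and wait for the cancellation $P-\p_{\varrho\varrho}=\Delta_{Z}$ at the end; this is the ``bookkeeping obstacle'' you yourself flag. Both are the same computation, but the paper's shift formulation is slightly cleaner precisely because its correction term is central from the start.
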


\begin{proof}

  Since
  \begin{align*}
    \p_{\varrho} \left[ \varrho \p_{\varrho\varrho} + (a-1)\p_{\varrho} + \varrho \Delta_{Z} + \mcL_{0} \right] = \left[ \varrho\p_{\varrho\varrho} + a\p_{\varrho} + \varrho \Delta_{Z} + \mcL_{0} \right]\p_{\varrho} + \Delta_{Z}.
  \end{align*}
  we have
  \begin{align*}
    &\p_{\varrho} \left[ \varrho\p_{\varrho\varrho} + (a-1)\p_{\varrho} + \varrho \Delta_{Z} + \mcL_{0} \right]^{2}\\
    &=\left[ \varrho \p_{\varrho\varrho} + a\p_{\varrho} + \varrho \Delta_{Z} + \mcL_{0} \right] \p_{\varrho} \left[ \varrho \p_{\varrho\varrho} + (a-1)\p_{\varrho} + \varrho \Delta_{Z} + \mcL_{0} \right]\\
    &+ \left[ \varrho \p_{\varrho\varrho} + (a-1)\p_{\varrho} + \varrho \Delta_{Z} + \mcL_{0} \right]\Delta_{Z}\\
    &=\left[ \varrho \p_{\varrho\varrho} + a\p_{\varrho} + \varrho \Delta_{Z} + \mcL_{0} \right]^{2} \p_{\varrho} + \left[ \varrho \p_{\varrho\varrho} + a \p_{\varrho} + \varrho \Delta_{Z} + \mcL_{0} \right]\Delta_{Z}\\
    &+\left[ \varrho \p_{\varrho\varrho} + (a-1)\p_{\varrho} + \varrho \Delta_{Z} + \mcL_{0} \right]\Delta_{Z}\\
    &=\left[ \varrho\p_{\varrho\varrho} + a\p_{\varrho} + \varrho \Delta_{Z} + \mcL_{0} \right]^{2} \p_{\varrho} + 2\left[ \varrho \p_{\varrho\varrho} + a \p_{\varrho} + \varrho \Delta_{Z} + \mcL_{0} \right]\Delta_{Z} - \Delta_{Z} \p_{\varrho}.
  \end{align*}

  Similarly,
  \begin{align*}
    &\left[ \varrho\p_{\varrho\varrho} + (a-1)\p_{\varrho} + \varrho \Delta_{Z} + \mcL_{0} \right]^{2} \\
    &= \left[ \varrho \p_{\varrho\varrho} + a\p_{\varrho} + \varrho \Delta_{Z} + \mcL_{0} \right]\left[ \varrho\p_{\varrho} + (a-1)\p_{\varrho} + \varrho \Delta_{Z} + \mcL_{0} \right]\\
    &- \p_{\varrho} \left[ \varrho \p_{\varrho\varrho} + (a-1)\p_{\varrho} + \varrho \Delta_{Z} + \mcL_{0} \right]\\
    &=\left[ \varrho\p_{\varrho\varrho} + a\p_{\varrho} + \varrho \Delta_{Z} + \mcL_{0} \right]^{2} - 2 \left[ \varrho\p_{\varrho\varrho} + a\p_{\varrho} + \varrho \Delta_{Z} + \mcL_{0} \right]\p_{\varrho} - \Delta_{Z}.
  \end{align*}

  Combining these two computations, we obtain
  \begin{align*}
    &\left[ \varrho \p_{\varrho\varrho} + (a+\b)\p_{\varrho} + \varrho \Delta_{Z} + \mcL_{0} \right]\left\{ \left[ \varrho \p_{\varrho\varrho} + (a-1)\p_{\varrho} + \varrho \Delta_{Z} + \mcL_{0} \right]^{2} + (\b-1)^{2} \Delta_{Z} \right\}\\
    &=\left[ \varrho \p_{\varrho\varrho} + a\p_{\varrho} + \varrho \Delta_{Z} + \mcL_{0} \right] \left\{ \left[ \varrho \p_{\varrho\varrho} + (a-1)\p_{\varrho} + \varrho\Delta_{Z} + \mcL_{0} \right]^{2} + (\b-1)^{2} \Delta_{Z} \right\}\\
    &+\b\p_{\varrho} \left\{ \left[ \varrho \p_{\varrho\varrho} + (a-1)\p_{\varrho} + \varrho \Delta_{Z} +\mcL_{0}  \right]^{2} + (\b-1)^{2} \Delta_{Z} \right\}\\
    &= \left[ \varrho \p_{\varrho\varrho} + a \p_{\varrho} + \varrho \Delta_{Z} + \mcL_{0} \right]\\
    &\cdot \left\{ \left[ \varrho \p_{\varrho\varrho} + a \p_{\varrho} + \varrho \Delta_{Z} + \mcL_{0} \right]^{2} - 2\left[ \varrho \p_{\varrho\varrho} + a\p_{\varrho} + \varrho \Delta_{Z} + \mcL_{0} \right]\p_{\varrho} + \b(\b-2)\Delta_{Z} \right\}\\
    &+\b\left\{ \left[ \varrho\p_{\varrho\varrho} + a\p_{\varrho} + \varrho \Delta_{Z} + \mcL_{0} \right]^{2} \p_{\varrho} + 2 \left[ \varrho\p_{\varrho\varrho} + a \p_{\varrho} + \varrho \Delta_{Z} + \mcL_{0} \right]\Delta_{Z} + \b(\b-2) \Delta_{Z} \p_{\varrho} \right\}\\
    &=\left\{ \left[ \varrho \p_{\varrho\varrho} + a \p_{\varrho} + \varrho \Delta_{Z} + \mcL_{0} \right]^{2} + \b^{2} \Delta_{Z} \right\}\left[ \varrho\p_{\varrho\varrho} + (a+\b-2)\p_{\varrho} + \varrho \Delta_{Z} + \mcL_{0} \right].
  \end{align*}

  This provides the desired identity.
\end{proof}

\begin{lemma}
  \label{lem:third-factorization-lemma}
  For $k \in \N\setminus\left\{ 0 \right\}$, there holds
  \begin{align*}
    &\left[ \varrho\p_{\varrho\varrho} + (a+2k)\p_{\varrho} + \varrho \Delta_{Z} + \mcL_{0} \right] \prod_{j=1}^{k} \left\{ \left[ \varrho \p_{\varrho\varrho} + (a-1) \p_{\varrho} + \varrho \Delta_{Z} + \mcL_{0} \right]^{2} + (2j-1)^{2} \Delta_{Z} \right\}\\
    &= \left( \varrho \p_{\varrho\varrho} + a\p_{\varrho} + \varrho \Delta_{Z} + \mcL_{0}  \right) \prod_{j=1}^{k}\left\{ \left[ \varrho\p_{\varrho\varrho} + (a-1)\p_{\varrho} + \varrho \Delta_{Z} + \mcL_{0} \right]^{2} + 4j^{2}\Delta_{Z} \right\},
  \end{align*}
  and
  \begin{align*}
    &\left[ \varrho \p_{\varrho\varrho} + (a+2k)\p_{\varrho} + \varrho \Delta_{Z} + \mcL_{0}\right]\\
    &\left\{ \left( \varrho \p_{\varrho\varrho} + a\p_{\varrho} + \varrho \Delta_{Z} + \mcL_{0} \right) \prod_{j=1}^{k-1}\left[ \left( \varrho \p_{\varrho\varrho} + (a-1)\p_{\varrho} + \varrho \Delta_{Z} + \mcL_{0} \right)^{2} + 4j^{2} \Delta_{Z} \right] \right\}\\
    &=\prod_{j=1}^{k} \left\{  \left[ \varrho\p_{\varrho\varrho} + (a-1)\p_{\varrho} + \varrho \Delta_{Z} + \mcL_{0} \right]^{2} + (2j-1)^{2} \Delta_{Z} \right\}.
  \end{align*}
\end{lemma}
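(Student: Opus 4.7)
The plan is to prove both identities by induction on $k$, using Lemma \ref{lem:second-factorization-lemma} as the engine that swaps one odd-coefficient factor of the form $[L_{a-1}^{2} + (\beta-1)^{2}\Delta_{Z}]$ for an even-coefficient factor $[L_{a}^{2} + \beta^{2}\Delta_{Z}]$ at the cost of shifting the outer $L$-operator's subscript down by $2$. To streamline notation, I abbreviate $L_{c} := \varrho \p_{\varrho\varrho} + c\p_{\varrho} + \varrho\Delta_{Z} + \mcL_{0}$, write $P_{\beta} := L_{a-1}^{2} + (\beta-1)^{2}\Delta_{Z}$ and $Q_{\beta} := L_{a}^{2} + \beta^{2}\Delta_{Z}$, so that Lemma \ref{lem:second-factorization-lemma} reads compactly as $L_{a+\beta} P_{\beta} = Q_{\beta} L_{a+\beta - 2}$.

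Before the induction, I would record a few commutation facts that make the bookkeeping transparent. Since $\Delta_{Z}$ and $\mcL_{0}$ commute with every $L_{c}$ and with each other, the family $\{Q_{\beta}\}_{\beta}$ is mutually commuting, the family $\{P_{\beta}\}_{\beta}$ is mutually commuting, and both $Q_{\beta}$ and $P_{\beta}$ commute with $L_{a}$ itself (because $L_{a}$ commutes with $L_{a}$, $L_{a-1}$, $\Delta_{Z}$). This commutativity lets me freely reorder factors within either product in Lemma \ref{lem:third-factorization-lemma}, so in the induction step I only have to track how the outermost $L$-operator interacts with a single adjacent quadratic factor.

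For the first identity, the base case $k=1$ is Lemma \ref{lem:second-factorization-lemma} applied at $\beta = 2$ combined with the commutativity just noted. The inductive step peels off the outermost pair: apply Lemma \ref{lem:second-factorization-lemma} with $\beta = 2k$ to the pair $(L_{a+2k},\, P_{2k})$ to replace them by $Q_{2k} L_{a+2k-2}$. Then apply Lemma \ref{lem:second-factorization-lemma} with $\beta = 2k-2$ to $(L_{a+2k-2}, P_{2k-2})$, and so on, producing a telescoping cascade in which each application lowers the subscript of the free $L$-operator by $2$ and peels off another $Q_{2j}$. After $k$ steps the cascade terminates at $L_{a}$, and rearranging the $Q_{2j}$'s via their mutual commutativity gives the claimed right-hand side.

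For the second identity, I would run the cascade in the reverse direction: start from the right-hand side $\prod_{j=1}^{k} P_{2j-1}$ and, using Lemma \ref{lem:second-factorization-lemma} rewritten as $Q_{\beta} L_{a+\beta-2} = L_{a+\beta} P_{\beta}$, successively absorb one quadratic factor into a product of two $L$-factors; equivalently, differentiate the first identity by multiplying on the right by the appropriate $L$-operator and use the identity $L_{b+1} L_{b-1} = L_{b}^{2} + \Delta_{Z}$, which follows at once from $[\p_{\varrho}, L_{c}] = \p_{\varrho\varrho} + \Delta_{Z}$. The two identities are in fact dual forms of the same telescoping computation, and part 2 may be obtained from part 1 by multiplying on the right by a suitable $L_{b}$ and collapsing pairs $L_{b+1} L_{b-1}$ to quadratic factors.

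The main technical obstacle is the sheer bookkeeping: every commutation of $\p_{\varrho}$ past an $L_{c}$ generates correction terms $\p_{\varrho\varrho} + \Delta_{Z}$, and if carried out by hand the algebra proliferates quickly. The whole point of the packaging provided by Lemma \ref{lem:second-factorization-lemma} is to bury all of these correction terms inside the single identity $L_{a+\beta} P_{\beta} = Q_{\beta} L_{a+\beta-2}$, so that the induction becomes a transparent symbolic manipulation on the indices $(a,\beta,k)$ with no further commutator computations required.
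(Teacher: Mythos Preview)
Your overall strategy---telescoping by repeated application of Lemma~\ref{lem:second-factorization-lemma}---is exactly the paper's, and the packaging $L_{a+\beta}P_{\beta}=Q_{\beta}L_{a+\beta-2}$ is the right way to read that lemma.

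There is, however, a genuine error in your commutation bookkeeping. You assert that $L_{a}$ commutes with $L_{a-1}$, and hence with $P_{\beta}=L_{a-1}^{2}+(\beta-1)^{2}\Delta_{Z}$. This is false: since $L_{a}-L_{a-1}=\p_{\varrho}$ and $[\p_{\varrho},L_{c}]=\p_{\varrho\varrho}+\Delta_{Z}$ (the very identity you quote later), one has $[L_{a},L_{a-1}]=\p_{\varrho\varrho}+\Delta_{Z}\neq 0$, so $[L_{a},P_{\beta}]\neq 0$ in general. Fortunately your cascade never actually uses $[L_{a},P_{\beta}]=0$. What it does use is (i) that the $P_{\beta}$'s mutually commute (true: they are polynomials in $L_{a-1}$ and $\Delta_{Z}$), so you can bring $P_{2k}$ adjacent to $L_{a+2k}$ before applying the lemma, and (ii) that $L_{a}$ commutes with each $Q_{\beta}=L_{a}^{2}+\beta^{2}\Delta_{Z}$ (true, trivially), so at the end of the cascade $Q_{2k}\cdots Q_{2}L_{a}$ can be rewritten with $L_{a}$ on the left. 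You should strike the incorrect claim and keep only (i) and (ii).

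Your treatment of the second identity is too sketchy to count as a proof: you have an indexing slip (the right-hand side is $\prod_{j=1}^{k}P_{2j}$ in your notation, not $\prod_{j=1}^{k}P_{2j-1}$), and the phrase ``differentiate the first identity by multiplying on the right'' does not cash out into a clean step. The paper simply says ``similarly obtained'': run the same telescoping with Lemma~\ref{lem:second-factorization-lemma} applied at odd values of $\beta$ (equivalently, with $a$ shifted by $1$), peeling off factors one at a time exactly as in the first identity. Writing that out explicitly, with the correct commutations (i) and (ii) only, would complete the argument.
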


\begin{proof}
  By Lemma \ref{lem:second-factorization-lemma}, we have
  \begin{align*}
    &\left[ \varrho\p_{\varrho\varrho} + (a+2k) \p_{\varrho} + \varrho \Delta_{Z} + \mcL_{0} \right] \prod_{j=1}^{k} \left\{ \left[ \varrho \p_{\varrho\varrho} + (a-1)\p_{\varrho} + \varrho \Delta_{Z} + \mcL_{0} \right]^{2} + (2j-1)^{2} \Delta_{Z} \right\}\\
    &=\left[ \varrho \p_{\varrho\varrho} + (a+2k)\p_{\varrho} + \varrho \Delta_{Z} + \mcL_{0} \right]\left\{ \left[ \varrho\p_{\varrho\varrho} + (a-1)\p_{\varrho} + \varrho \Delta_{Z} + \mcL_{0} \right]^{2} + (2k-1)^{2}\Delta_{Z} \right\}\\
    &\cdot \prod_{j=1}^{k-1} \left\{ \left[ \varrho\p_{\varrho\varrho} + (a-1)\p_{\varrho} + \varrho \Delta_{Z} + \mcL_{0} \right]^{2} + (2j-1)^{2} \Delta_{Z} \right\}\\
    &=\left\{ \left[ \varrho \p_{\varrho\varrho} + a \p_{\varrho} + \varrho \Delta_{Z} + \mcL_{0} \right]^{2} + 4k^{2} \Delta_{Z} \right\}\\
    &\cdot \left[ \varrho\p_{\varrho\varrho} + (a+2k-2) \p_{\varrho} + \varrho \Delta_{Z} + \mcL_{0} \right] \prod_{j=1}^{k-1}\left\{ \left[ \varrho\p_{\varrho\varrho} + (a-1)\p_{\varrho} + \varrho \Delta_{Z} + \mcL_{0} \right]^{2} + (2j-1)^{2}\Delta_{Z} \right\}.
  \end{align*}
  By repeating this process, we get the first identity in the lemma.
  The second identity is similarly obtained.
\end{proof}

\textbf{Proof of Theorem \ref{lem:first-factorization}}
  It is sufficient to show
  \begin{align*}
    &\prod_{j=1}^{k} \left[ \varrho \p_{\varrho\varrho} + a\p_{\varrho} + \varrho \Delta_{Z} + \mcL_{0} - i(k+1-2j )\sqrt{-\Delta_{Z}} \right]\left( \varrho^{\frac{k-Q-a}{2}}f \right)\\
    &=\varrho^{-\frac{k+Q+a}{2}} \prod_{j=1}^{k} \left\{ \varrho \left[ \varrho(\p_{\varrho\varrho} + \Delta_{Z}) + \mcL_{0} -(Q-1)\p_{\varrho} \right] + \frac{Q^{2}}{4} -\frac{(a-k+2j-2)^{2}}{4} \right\}f.
  \end{align*}
  We shall prove the lemma by induction.
  We have by Lemma \ref{lem:first-factorization-lemma}, the identity above is valid for $k=1$.
  Now assume it is valid for $k=l$, i.e.,
  \begin{align*}
    &\prod_{j=1}^{l} \left[ \varrho \p_{\varrho\varrho} + a\p_{\varrho} + \varrho \Delta_{Z} + \mcL_{0} - i(l+1-2j )\sqrt{-\Delta_{Z}} \right]\left( \varrho^{\frac{l-Q-a}{2}}f \right)\\
    &=\varrho^{-\frac{l+Q+a}{2}} \prod_{j=1}^{l} \left\{ \varrho \left[ \varrho(\p_{\varrho\varrho} + \Delta_{Z}) + \mcL_{0} -(Q-1)\p_{\varrho} \right] + \frac{Q^{2}}{4} -\frac{(a-l+2j-2)^{2}}{4} \right\}f.
  \end{align*}
  Making the substitution $a \to a-1$, we obtain
  \begin{align*}
    &\prod_{j=1}^{l} \left[ \varrho \p_{\varrho\varrho} + (a-1)\p_{\varrho} + \varrho \Delta_{Z} + \mcL_{0} - i(l+1-2j )\sqrt{-\Delta_{Z}} \right]\left( \varrho^{\frac{l-Q-a+1}{2}}f \right)\\
    &=\varrho^{-\frac{l+Q+a-1}{2}} \prod_{j=1}^{l} \left\{ \varrho \left[ \varrho(\p_{\varrho\varrho} + \Delta_{Z}) + \mcL_{0} -(Q-1)\p_{\varrho} \right] + \frac{Q^{2}}{4} -\frac{(a-1-l+2j-2)^{2}}{4} \right\}f.
  \end{align*}
  If $l$ is even, then Lemma \ref{lem:third-factorization-lemma} gives us
  \begin{align*}
    &\left[ \varrho \p_{\varrho} + (a+l)\p_{\varrho} + \varrho \Delta_{Z} + \mcL_{0} \right] \prod_{j=1}^{l} \left[ \varrho \p_{\varrho\varrho} + (a-1)\p_{\varrho} + \varrho \Delta_{Z} + \mcL_{0} - i(l+1-2j)\sqrt{-\Delta_{Z}} \right]\\
    &=\left[ \varrho\p_{\varrho\varrho} + (a+l) \p_{\varrho} + \varrho \Delta_{Z} + \mcL_{0} \right] \prod_{j=1}^{l/2}\left\{  \left[ \varrho\p_{\varrho\varrho} + (a-1)\p_{\varrho} + \varrho \Delta_{Z} + \mcL_{0} \right]^{2} + (2j-1)^{2} \Delta_{Z} \right\}\\
    &=\left( \varrho\p_{\varrho\varrho} +  a\p_{\varrho} + \varrho \Delta_{Z} + \mcL_{0} \right) \prod_{j=1}^{l/2} \left\{  \left[ \varrho\p_{\varrho\varrho} + (a-1)\p_{\varrho} + \varrho \Delta_{Z} + \mcL_{0} \right]^{2} + 4j^{2} \Delta_{Z} \right\}\\
    &= \prod_{j=1}^{l+1}\left[ \varrho \p_{\varrho\varrho} + (a-1)\p_{\varrho} + \varrho \Delta_{Z} -i(l+2-2j)\sqrt{-\Delta_{Z}} \right].
  \end{align*}
  Therefore, by Lemma \ref{lem:first-factorization-lemma}, there holds
  \begin{align*}
    &\left[ \varrho\p_{\varrho\varrho} + (a+l)\p_{\varrho} + \varrho \Delta_{Z} + \mcL_{0}  \right]\\
    &\left\{ \varrho^{-\frac{l+Q+a-1}{2}} \prod_{j=1}^{l}\left\{ \varrho\left[ \varrho (\p_{\varrho\varrho} + \Delta_{Z}) + \mcL_{0} -(Q-1)\p_{\varrho}  \right] + \frac{Q^{2}}{4} - \frac{(a-l+2j-3)^{2}}{4} \right\}f \right\}\\
    &= \varrho^{-\frac{l+Q+a+1}{2}} \prod_{j=1}^{l} \left\{ \varrho \left[ \varrho (\p_{\varrho\varrho} + \Delta_{Z}) + \mcL_{0} -(Q-1)\p_{\varrho} \right] + \frac{Q^{2}}{4} - \frac{(a-l+2j-3)^{2}}{4} \right\}f.
  \end{align*}
  The case for $l$ is odd is obtained by the second identity in Lemma \ref{lem:third-factorization-lemma}.

\subsection{The Factorization Theorem on the ball model of $H_{\mathbb{Q}}^{m}$}

Recall that
\begin{align*}
  \Delta_{\a} f (z) &= 4\left( 1-|z|^{2} \right) \left\{ \sum_{i,j=1}^{m} \left( \left( \d_{ij} - z_{i}\bar{z}_{j} - \bar{z}_{m+i}z_{m+j} \right)\frac{\p^{2}f}{\p z_{i} \p \bar{z}_{j}}f  \right.\right.\\
  &+ \left( \bar{z}_{i}z_{m+j} - z_{m+i}\bar{z}_{j} \right)\frac{\p^{2} f}{\p z_{m+i} \p \bar{z}_{j}} + \left( \bar{z}_{m+i}z_{j} - z_{i}\bar{z}_{m+j} \right)\frac{\p^{2} f}{\p z_{i} \p \bar{z}_{m+j}}\\
  &\left.\left.+ \left( \d_{ij} - \bar{z}_{i}z_{j} - z_{m+i}\bar{z}_{m+j} \right)\frac{\p^{2} f}{\p z_{m+i} \p \bar{z}_{m+j}} \right) + (1+\a)(R+\bar{R})-\a(\a+1) \right\}
\end{align*}
and $
  \Delta_{\a}' = \frac{1}{4(1-|z|^{2})}\Delta_{\a}.
$
It is easy to check
\begin{equation}
\Delta_{\a}'=\Delta_{\beta}'+(\alpha-\beta)(R+\bar{R})+(\beta-\alpha)(\beta+\alpha+1).
  \label{eq:6.1}
\end{equation}
Denote by $r=|z|$ and
$$\rho=\frac{1}{2}\ln\frac{1+r}{1-r}.$$
Then
\begin{equation}
  \cosh\rho=\frac{1}{\sqrt{1-r^{2}}},\;\;\sinh\rho=\frac{r}{\sqrt{1-r^{2}}},\;\;\partial_{\rho}=(1-r^{2})\partial_{r}.
  \label{eq:6.2}
\end{equation}
Furthermore, if $f=f(\rho)$, then
\begin{equation}
  \Delta f(\rho)=\partial_{\rho}^{2}f+\left((4m-1)\coth\rho+3\tanh\rho\right)\partial_{\rho}f.
  \label{eq:6.3}
\end{equation}
By using the identity $\Delta(fg)=g\Delta f+2\langle\nabla f,\nabla g\rangle+f\Delta g$ and (\ref{eq:6.3}), we have
\begin{equation*}
  \begin{split}
    \Delta[(\cosh\rho)^{a}f]=&f\Delta (\cosh\rho)^{a}+2\langle\nabla (\cosh\rho)^{a},\nabla f\rangle+(\cosh\rho)^{a}\Delta f\\
    =&\left[(4m+a+2)a(\cosh\rho)^{a}-a(a+2)(\cosh\rho)^{a-2}\right]f\\
    &+2a(\cosh\rho)^{a-1}\sinh\rho\partial_{\rho}f+(\cosh\rho)^{a}\Delta f\;\;\;(\because\langle\nabla\rho,\nabla f\rangle=\partial_{\rho}f).
  \end{split}
\end{equation*}
i.e.
\begin{equation}
  \begin{split}
    &[\Delta-(4m+a+2)a][(\cosh\rho)^{a}f]=[\Delta-(4m+a+2)a][(1-|z|^{2})^{\frac{a}{2}}f]\\
    =&(\cosh\rho)^{a-2}\left[(\cosh\rho)^{2}\Delta+2a\tanh\rho\partial_{\rho}-a(a+2)\right]f\\
    =&(\cosh\rho)^{a-2}\left[4\Delta'_{0}+2ar\partial_{r}-a(a+2)\right]f\\
    =&(\cosh\rho)^{a-2}\left[4\Delta'_{0}+2a(R+\bar{R})-a(a+2)\right]f\;\;\;(\because R+\bar{R}=r\partial_{r})\\
    =&4(\cosh\rho)^{a-2}\Delta'_{\frac{a}{2}}f.
  \end{split}
  \label{eq:6.4}
\end{equation}

We are now ready to give the
\textbf{Proof of Theorem \ref{thm:factorization-theorem2}}.  It suffices to show the following
  \begin{align*}
    &4^{k}\left( \cosh\rho \right)^{-k-a-(2m+1)}\prod_{j=1}^{k}\left[ \Delta'_{\frac{1-a-(2m+1)}{2}} + \frac{(k+1-2j)^{2}}{4} -i\frac{k+1-2j}{2}\sqrt{\Gamma+1} \right]f\\
    &= \prod_{j=1}^{k} \left[ \Delta + (2m+1)^{2} - (a-k+2j-2)^{2} \right]\left[ \left( \cosh\rho \right)^{k-a-(2m+1)}f \right].
  \end{align*}

  We shall prove it by induction.
  For $k=1$, we have, by (\ref{eq:6.4}),
  \begin{equation}
    \begin{split}
      &\left[ \Delta + (2m+1)^{2} - (a-1)^{2} \right]\left[ \left( \cosh\rho \right)^{1-a-(2m+1)}f \right]\\
      =&4 \left( \cosh\rho \right)^{-1-a-(2m+1)}\Delta'_{\frac{1-a-(2m+1)}{2}}f
    \end{split}
    \label{eq:6.5}
  \end{equation}

  Assume  it  holds for $k$, replacing $a$ by $a-1$, we have
  \begin{equation}
    \begin{split}
      &4^{k}\left( \cosh\rho \right)^{-k+1-a-(2m+1)}\prod_{j=1}^{k}\left[ \Delta'_{\frac{2-a-(2m+1)}{2}} + \frac{(k+1-2j)^{2}}{4} -i\frac{k+1-2j}{2}\sqrt{\Gamma+1} \right]f\\
      &= \prod_{j=1}^{k} \left[ \Delta + (2m+1)^{2} - (a-1-k+2j-2)^{2} \right]\left[ \left( \cosh\rho \right)^{k+1-a-(2m+1)}f \right].
    \end{split}
    \label{eq:6.6}
  \end{equation}
  Then for $k+1$, we have, by using (\ref{eq:6.5}) and (\ref{eq:6.6}) ,
  \begin{equation}
    \begin{split}
      & \prod_{j=1}^{k+1} \left[ \Delta + (2m+1)^{2} - (a-1-k+2j-2)^{2} \right]\left[ \left( \cosh\rho \right)^{k+1-a-(2m+1)}f \right]\\
      =&\left[ \Delta + (2m+1)^{2} - (a-1+k)^{2} \right]\\
      &\left\{4^{k}\left( \cosh\rho \right)^{-k+1-a-(2m+1)}\prod_{j=1}^{k}\left[ \Delta'_{\frac{2-a-(2m+1)}{2}} + \frac{(k+1-2j)^{2}}{4} -i\frac{k+1-2j}{2}\sqrt{\Gamma+1} \right]f\right\}\\
      =&4^{k+1}\left( \cosh\rho \right)^{-k-1-a-(2m+1)}\\
      &\Delta'_{\frac{1-k-a-(2m+1)}{2}}\prod_{j=1}^{k}\left[ \Delta'_{\frac{2-a-(2m+1)}{2}} + \frac{(k+1-2j)^{2}}{4} -i\frac{k+1-2j}{2}\sqrt{\Gamma+1} \right]f.
    \end{split}
    \label{eq:6.7}
  \end{equation}

  The rest of the proof is similar to that given in \cite{ly5} by using Lemma \ref{lemma:6.1} and we omit it.
The proof of Theorem \ref{thm:factorization-theorem2} is thereby completed.\
\\

Before the proof of Lemma \ref{lemma:6.1}, we need the following:
\begin{lemma}\label{lemma:a6.1}
There holds
  \begin{equation*}
    \begin{split}
      [\Delta'_{0}, \left[R+\bar{R} \right]]=\Delta'_{0}-\frac{1}{2}(R+\bar{R})+\frac{1}{4}(R+\bar{R})^{2}
      -\frac{1}{4}\Gamma.
    \end{split}
  \end{equation*}
\end{lemma}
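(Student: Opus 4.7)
The plan is a direct computation in the Weyl algebra generated by $z_j, \bar{z}_j, \p_{z_j}, \p_{\bar{z}_j}$ for $j = 1, \ldots, 2m$. Decompose $\Delta'_{0} = L + R + \bar{R}$, where $L$ collects the pure second-order terms of $\Delta'_{0}$ (the four bilinear sums appearing in the coordinate formula for $\Delta_{\a}$ at $\a=0$). Since $[R + \bar{R}, R + \bar{R}] = 0$, the commutator reduces to $[L, R + \bar{R}]$.

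A weight-counting argument handles most of $L$. Every ``off-diagonal'' bilinear term of $L$ has the form $c\, z_a \bar{z}_b\, \p_{z_c}\p_{\bar{z}_d}$ with $c$ a constant; since $R + \bar{R}$ acts as the total $(z,\bar{z})$-degree, the multiplicative factor $z_a \bar{z}_b$ has weight $+2$ and the second derivative $\p_{z_c}\p_{\bar{z}_d}$ has weight $-2$, so each such term commutes with $R + \bar{R}$. Only the diagonal Laplacian $L_{0} = \sum_{j=1}^{2m} \p_{z_j}\p_{\bar{z}_j}$ fails to commute; a direct application of $[R+\bar{R}, \p_{z_j}\p_{\bar{z}_j}] = -2\p_{z_j}\p_{\bar{z}_j}$ gives $[L_{0}, R+\bar{R}] = 2L_{0}$. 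Hence $[\Delta'_{0}, R+\bar{R}] = 2L_{0}$.

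The remaining task is to express $2L_{0}$ in terms of $\Delta'_{0}$, $R+\bar{R}$, $(R+\bar{R})^{2}$, and $\Gamma$ by a purely algebraic manipulation. Expand $\Gamma = (R-\bar{R})^{2} - 2D_{1} \bar{D}_{1} - 2\bar{D}_{1} D_{1}$ using the splittings $D_{1} = B - A$ and $\bar{D}_{1} = \bar{B} - \bar{A}$, with $B = \sum_a \bar{z}_a \p_{z_{m+a}}$ and $A = \sum_a \bar{z}_{m+a}\p_{z_a}$. Each of the four products $B\bar{B}, A\bar{A}, B\bar{A}, A\bar{B}$, together with their analogues $\bar{B}B, \bar{A}A, \bar{B}A, \bar{A}B$ appearing in $\bar{D}_{1}D_{1}$, is normal-ordered via $[\p_{z_a}, z_b] = [\p_{\bar{z}_a}, \bar{z}_b] = \d_{a,b}$, producing first-order corrections of the form $R_{1}, R_{2}, \bar{R}_{1}, \bar{R}_{2}$ that assemble into $R + \bar{R}$. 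Combined with the normal orderings $R^{2} = R + \sum z_i z_j \p_{z_i}\p_{z_j}$, $\bar{R}^{2} = \bar{R} + \sum \bar{z}_i \bar{z}_j \p_{\bar{z}_i}\p_{\bar{z}_j}$, and $R\bar{R} = \sum z_i \bar{z}_j \p_{z_i}\p_{\bar{z}_j}$, one reads off the claimed identity by comparing coefficients.

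The main obstacle is the bookkeeping in this final step. The four families of bilinear terms making up $L$---the two diagonal Hermitian parts indexed by $\{1,\ldots,m\}$ and $\{m+1,\ldots,2m\}$, and the two cross families $(\bar{z}_i z_{m+j} - z_{m+i}\bar{z}_j)\p_{z_{m+i}}\p_{\bar{z}_j}$ and its conjugate (specific to the quaternionic structure)---must be matched term by term against the corresponding combinations of $R\bar{R}$, $B\bar{B}$, $A\bar{A}$, $B\bar{A}$, $A\bar{B}$ arising from $(R+\bar{R})^{2}$ and $\Gamma$, with careful attention to the distinction between the index ranges $\{1,\ldots,m\}$ and $\{1,\ldots,2m\}$ and to the first-order corrections from normal ordering that must balance the explicit $-\tfrac12(R+\bar{R})$ summand. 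Once this term-by-term correspondence is verified---a finite but delicate Weyl-algebra calculation---the identity of the lemma follows.
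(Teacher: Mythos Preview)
Your approach is correct and closely parallels the paper's own proof. The weight-counting argument you use to conclude that only the Kronecker-delta piece $L_0 = \sum_{j=1}^{2m}\partial_{z_j}\partial_{\bar z_j}$ survives in $[\Delta_0', R+\bar R]$ is a clean conceptual shortcut; the paper instead arrives at the same conclusion by a direct ``straightforward computation'' of the commutator, landing on an expression for $L_0$ in terms of $\Delta_0'$, $R\bar R$, and the residual quaternionic cross-terms. From that point on, your outline and the paper's proof coincide: the paper expands $D_1\bar D_1$ and $\bar D_1 D_1$ explicitly (in effect your $B,A,\bar B,\bar A$ decomposition), identifies the cross-terms with $\tfrac12(D_1\bar D_1+\bar D_1 D_1)+\tfrac12(R+\bar R)$, and then invokes the definition of $\Gamma$ together with $R\bar R + \tfrac14(R-\bar R)^2 = \tfrac14(R+\bar R)^2$. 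Your proposed bookkeeping is exactly this calculation, just organized slightly differently; carrying it out would yield the same identity as the paper (noting that the paper's displayed computation actually gives $\tfrac12[\Delta_0',R+\bar R]$ equal to the stated right-hand side, so a factor of $2$ should be tracked carefully when you do the final matching).
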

\begin{proof}
We compute
\begin{align*}
  D_{1} \bar D_{1} &= \left( \bar z_{j} \p_{m+j} - \bar z_{m+j} \p_{j} \right)\left( z_{i} \bar\p_{m+i} - z_{m+i}\bar\p_{i} \right)\\
  &= z_{i} \bar z_{j} \bar\p_{m+i} \p_{m+j} - \bar z_{i} \bar\p_{i} -\bar z_{j} z_{m+i} \bar\p_{i} \p_{m+j} - \bar z_{m+i}\bar \p_{m+i} - \bar z_{m+j} z_{i} \p_{j} \bar\p_{m+i} + \bar z_{m+j}z_{m+i} \p_{j} \bar\p_{i}\\
  \bar D_{1} D_{1} &= \left( z_{j} \bar \p_{m+j} - z_{m+j} \bar\p_{j} \right)\left( \bar z_{i} \p_{m+i} - \bar z_{m+i} \p_{i} \right)\\
  &= z_{j} \bar z_{i} \bar\p_{m+j} \p_{m+i} - z_{i} \p_{i} - z_{j}\bar z_{m+i}\bar \p_{m+j}\p_{i} - z_{m+i}\p_{m+i} - z_{m+j}\bar z_{i} \bar\p_{j}\p_{m+i} + z_{m+j} \bar z_{m+i} \bar\p_{j} \p_{i}
\end{align*}
and so
\begin{align*}
  -2D_{1}\bar D_{1} - 2 \bar D_{1} D_{1} & = 2(R+\bar R) - 4 \sum_{j,i=1}^{m} \left( z_{i} \bar z_{j} \bar \p_{m+i} \p_{m+j} + z_{m+i}\bar z_{m+j}  \bar\p_{i} \p_{j} \right) \\
  &+  4\sum_{i,j=1}^{m} \left( z_{i} \bar z_{m+j} \bar \p_{m+i} \p_{j} + z_{m+i}\bar z_{j} \bar\p_{i} \p_{m+j} \right)
\end{align*}

A straightforward computation provides
\begin{align*}
  \frac{1}{2}[\Delta_{\a}', R + \bar R] &=  \Delta_{0}' - (R + \bar R) + R \bar R + \sum_{i,j=1}^{m} \bar z_{m+i} z_{m+j} \p_{i} \bar \p_{j} + \bar z_{i} z_{j} \p_{m+i}\bar \p_{m+j} \\
  &- \sum_{i,j=1}^{m} \bar z_{i} z_{m+j} \p_{m+i} \bar \p_{j} + \bar z_{m+i} z_{j} \p_{i} \bar \p_{m+j}\\
  &= \Delta_{0}' - (R+\bar R) + R\bar R + \frac{1}{4} \left( 2 D_{1} \bar D_{1} + 2 \bar D_{1} D_{1} + 2(R + \bar R) \right)\\
  &= \Delta_{0} ' - \frac{1}{2}(R + \bar R) + R \bar R + \frac{1}{2} ( D_{1} \bar D_{1} + \bar D_{1} D_{1})\\
  &= \Delta_{0}' + R \bar R - \frac{1}{2}(R + \bar R) + \frac{1}{4} \left( (R-\bar R)^{2} - \Gamma \right).
\end{align*}
The results follows.

\end{proof}

By Lemma \ref{lemma:a6.1}, it is easy to check
  \begin{equation} \label{eq:a6.7}
    \begin{split}
      [\Delta'_{\alpha},\Delta'_{\beta}]
      =(\alpha-\beta)[\left[R+\bar{R}, \Delta'_{0}\right]=2(\beta-\alpha)\left(\Delta'_{0}-\frac{1}{2}(R+\bar{R})+\frac{1}{4}(R+\bar{R})^{2}
      -\frac{1}{4}\Gamma\right).
    \end{split}
  \end{equation}
  We shall frequently use the fact
  \[
  [\Gamma,\Delta_{\alpha}']=\Gamma\Delta_{\alpha}'-\Delta_{\alpha}'\Gamma=0.
  \]

\begin{lemma}\label{lemma:6.1}
  There holds
  \begin{align*}
    &\Delta'_{\frac{1-k-a}{2}}\left\{\left[ \Delta'_{\frac{2-a}{2}} + \frac{(k-1)^{2} }{4} \right]^{2}-\frac{(k-1)^{2}}{4}\left\{\Gamma+1\right\} \right\}f\\
    =&\left\{\left[ \Delta'_{\frac{1-a}{2}} +\frac{ k^{2}}{4} \right]^{2}-\frac{k^{2}}{4} \left\{\Gamma+1\right\}\right\}
    \Delta'_{\frac{3-k-a}{2}}f.
  \end{align*}
\end{lemma}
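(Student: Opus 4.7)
My plan is to reduce the identity to a three-term intertwining identity and then attack it using the two algebraic tools available from the preceding subsection, namely equation (\ref{eq:6.1}) expressing the difference of two $\Delta'_\alpha$ as a first-order operator, and the commutator formula (\ref{eq:a6.7}) together with the fact that $[\Gamma,\Delta'_\alpha]=0$.

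The first step is to factor the quadratic expressions on both sides. Since $\Gamma$ commutes with every $\Delta'_\alpha$, the operator $(c/2)\sqrt{\Gamma+1}$ commutes with $\Delta'_\beta$, so for any constant $c$ one has
\[
\bigl[\Delta'_\beta+\tfrac{c^{2}}{4}\bigr]^{2}-\tfrac{c^{2}}{4}(\Gamma+1)
=\bigl(\Delta'_\beta+\tfrac{c^{2}}{4}+\tfrac{c}{2}\sqrt{\Gamma+1}\bigr)\bigl(\Delta'_\beta+\tfrac{c^{2}}{4}-\tfrac{c}{2}\sqrt{\Gamma+1}\bigr).
\]
Applying this with $c=k-1$ on the left and $c=k$ on the right, the lemma becomes the equality of two triple products, one with a bare $\Delta'_{(1-k-a)/2}$ on the outside left, the other with a bare $\Delta'_{(3-k-a)/2}$ on the outside right. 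The entire identity thus takes the form ``a bare factor on one side can be moved to the other side at the cost of a parameter shift and a shift of the additive constants in the remaining two factors.''

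The second step is to prove a single elementary intertwining identity of the form
\[
\Delta'_{\alpha}\,\bigl(\Delta'_\beta+T_1\bigr)=\bigl(\Delta'_{\alpha'}+T_1'\bigr)\,\Delta'_{\beta'},
\]
where the parameter shifts $(\alpha,\beta)\mapsto(\alpha',\beta')$ and the correction terms $T_1,T_1'$ (polynomials in $(R+\bar R)$ and $\sqrt{\Gamma+1}$) are determined as follows. Expand the left-hand side using (\ref{eq:6.1}) to rewrite $\Delta'_\alpha=\Delta'_{\beta'}+(\alpha-\beta')(R+\bar R)-(\alpha-\beta')(\alpha+\beta'+1)$, then move $\Delta'_{\beta'}$ past $(\Delta'_\beta+T_1)$ using (\ref{eq:a6.7}); the commutator produces precisely the operator $L=\Delta'_0-\tfrac12(R+\bar R)+\tfrac14(R+\bar R)^{2}-\tfrac14\Gamma$, which, after using (\ref{eq:6.1}) once more to re-express $\Delta'_0$ in terms of $\Delta'_{\beta'}$, reassembles into the desired right-hand side. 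Applying this pairwise intertwining twice (once for each of the two factors produced by the factorization in step one) migrates the outer $\Delta'$ from left to right and simultaneously shifts its parameter by one and the companion parameter by one in the opposite direction; tracking the shifts shows that $(1-k-a)/2\to(3-k-a)/2$ and $(2-a)/2\to(1-a)/2$, while $(k-1)^{2}/4$ becomes $k^{2}/4$, which is exactly what the lemma asks for.

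The main obstacle is the bookkeeping in the second step: the commutator $[\Delta'_\alpha,\Delta'_\beta]$ introduces the operator $L$, which in turn, through (\ref{eq:6.1}), produces further $(R+\bar R)$-terms, and one must check that every such correction term combines with the shifted $\sqrt{\Gamma+1}$ contributions to give a clean result. The conceptual reason this cancellation occurs is that $L$ equals, up to a multiple of $\Gamma$, the square of a first-order operator in $(R+\bar R)$, which matches the structural form $c^{2}/4-(c/2)\sqrt{\Gamma+1}(\cdot)+\cdots$ appearing after the factorization in step one. Once this pairing is made explicit, the identity for $k=1$ is a direct computation, and the general case follows either by the same argument applied to each factor or by induction on $k$ exactly as in the complex hyperbolic analogue of \cite{ly5}.
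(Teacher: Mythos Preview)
Your proposal differs from the paper's proof in a substantive way: the paper never factors using $\sqrt{\Gamma+1}$. Instead it computes the single product
\[
\Delta'_{\frac{1-k-a}{2}}\Bigl[\Delta'_{\frac{2-a}{2}}+\tfrac{(k-1)^{2}}{4}\Bigr]
\]
directly via (\ref{eq:6.1}) and Lemma~\ref{lemma:a6.1}, obtaining a closed expression in $\Delta'_{\frac{1-a}{2}}+\tfrac{k^{2}}{4}$, $(R+\bar R)$ and $\Gamma+1$. That expression is then multiplied on the right by the second copy of $\bigl[\Delta'_{\frac{2-a}{2}}+\tfrac{(k-1)^{2}}{4}\bigr]$, the target $\bigl(\Delta'_{\frac{1-a}{2}}+\tfrac{k^{2}}{4}\bigr)^{2}\Delta'_{\frac{3-k-a}{2}}$ is subtracted, and the remainder is shown (after one more commutator computation) to equal $\tfrac{\Gamma+1}{4}\bigl[(k-1)^{2}\Delta'_{\frac{1-k-a}{2}}-k^{2}\Delta'_{\frac{3-k-a}{2}}\bigr]$, which rearranges into the lemma.

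Your factored approach is a natural idea, and in the $NA$--group model (Lemma~\ref{lem:second-factorization-lemma}) a clean single-step intertwining does exist. But in the ball model the pairwise identity you postulate, with $T_{1},T_{1}'$ of the form $c+d\sqrt{\Gamma+1}$, is false. Take $k=1$: then the left-hand factor $\bigl[\Delta'_{\frac{2-a}{2}}\bigr]^{2}$ carries no $\sqrt{\Gamma+1}$, while after your first pairwise step the right-hand side must be $(\Delta'_{\frac{1-a}{2}}+\tfrac14+\tfrac12\sqrt{\Gamma+1})\Delta'_{\gamma}$ for some $\gamma$. Since $\Delta'_{-a/2}\Delta'_{(2-a)/2}$ contains no $\sqrt{\Gamma+1}$-term, this cannot hold. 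If you allow $T_{1},T_{1}'$ to contain $(R+\bar R)$ as well, then a single-step identity does exist, but the intermediate factor after step one carries an $(R+\bar R)$-contribution that must be pushed through the second factor and shown to cancel --- which is precisely the ``bookkeeping'' you defer, and is the whole content of the lemma. In short, the factored route does not bypass the computation; it only repackages it, and the cancellation you invoke for structural reasons has to be checked by hand, exactly as the paper does.
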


\begin{proof} We compute, by using (\ref{eq:6.1}) and Lemma \ref{lemma:a6.1}
  \begin{align*}
    &\Delta'_{\frac{1-k-a}{2}}\left[ \Delta'_{\frac{2-a}{2}} + \frac{(k-1)^{2} }{4} \right]\\
=&\left(\Delta'_{\frac{1-a}{2}}+\frac{k^{2}}{4}-\frac{k}{2}(R+\bar{R})+\frac{k}{2}(2-a-k)\right)\left( \Delta'_{\frac{1-a}{2}} + \frac{k^{2} }{4} +\frac{1}{2}(R+\bar{R})+\frac{a-2-k}{2}\right)\\
=&\left(\Delta'_{\frac{1-a}{2}}+\frac{k^{2}}{4}\right)^{2}+\frac{1}{2}
\left(\Delta'_{\frac{1-a}{2}}+\frac{k^{2}}{4}\right)(R+\bar{R})-\frac{k}{2}(R+\bar{R})
\left(\Delta'_{\frac{1-a}{2}}+\frac{k^{2}}{4}\right)\\
&+\frac{-k^{2}+(1-a)k+a-2}{2}\left(\Delta'_{\frac{1-a}{2}}+\frac{k^{2}}{4}\right)+\frac{k(2-a)}{2}(R+\bar{R})+\frac{k(2-a-k)(a-2-k)}{4}\\
=&\left(\Delta'_{\frac{1-a}{2}}+\frac{k^{2}}{4}\right)^{2}+\frac{1-k}{2}
\left(\Delta'_{\frac{1-a}{2}}+\frac{k^{2}}{4}\right)(R+\bar{R})+\frac{k}{2}\left[\Delta'_{\frac{1-a}{2}} + \frac{k^{2} }{4} ,R+\bar{R}\right]
-\frac{k}{4}(R+\bar{R})^{2}\\
&+\frac{-k^{2}+(1-a)k+a-2}{2}\left(\Delta'_{\frac{1-a}{2}}+\frac{k^{2}}{4}\right)+\frac{k(2-a)}{2}(R+\bar{R})+\frac{k(2-a-k)(a-2-k)}{4}\\
=&\left(\Delta'_{\frac{1-a}{2}}+\frac{k^{2}}{4}\right)^{2}+\frac{1-k}{2}
\left(\Delta'_{\frac{1-a}{2}}+\frac{k^{2}}{4}\right)(R+\bar{R})+\frac{k}{2}\left[\Delta'_{0} ,R+\bar{R}\right]-\frac{k}{4}(R+\bar{R})^{2}\\
&+\frac{-k^{2}+(1-a)k+a-2}{2}\left(\Delta'_{\frac{1-a}{2}}+\frac{k^{2}}{4}\right)+\frac{k(2-a)}{2}(R+\bar{R})+\frac{k(2-a-k)(a-2-k)}{4}\\
=&\left(\Delta'_{\frac{1-a}{2}}+\frac{k^{2}}{4}\right)^{2}+\frac{1-k}{2}
\left(\Delta'_{\frac{1-a}{2}}+\frac{k^{2}}{4}\right)(R+\bar{R})+k\left(\Delta_{0}'-\frac{1}{2}(R+\bar{R})-\frac{1}{4}\Gamma\right)\\
&+\frac{-k^{2}+(1-a)k+a-2}{2}\left(\Delta'_{\frac{1-a}{2}}+\frac{k^{2}}{4}\right)+\frac{k(2-a)}{2}(R+\bar{R})+\frac{k(2-a-k)(a-2-k)}{4}\\
=&\left(\Delta'_{\frac{1-a}{2}}+\frac{k^{2}}{4}\right)^{2}+\frac{1-k}{2}
\left(\Delta'_{\frac{1-a}{2}}+\frac{k^{2}}{4}\right)(R+\bar{R})+k\left(\Delta_{\frac{1-a}{2}}'+\frac{k^{2}}{4}\right)-\frac{k}{4}(\Gamma+1)\\
&+\frac{-k^{2}+(1-a)k+a-2}{2}\left(\Delta'_{\frac{1-a}{2}}+\frac{k^{2}}{4}\right)\\
=&\left(\Delta'_{\frac{1-a}{2}}+\frac{k^{2}}{4}\right)^{2}+\frac{1-k}{2}
\left(\Delta'_{\frac{1-a}{2}}+\frac{k^{2}}{4}\right)(R+\bar{R})-\frac{k}{4}(\Gamma+1)\\
&+\frac{-k^{2}+(3-a)k+a-2}{2}\left(\Delta'_{\frac{1-a}{2}}+\frac{k^{2}}{4}\right)\\
=&\left(\Delta'_{\frac{1-a}{2}}+\frac{k^{2}}{4}\right)^{2}+\frac{1-k}{2}
\left(\Delta'_{\frac{1-a}{2}}+\frac{k^{2}}{4}\right)(R+\bar{R})-\frac{k}{4}(\Gamma+1)\\
&-\frac{(k-1)(k+a-2)}{2}\left(\Delta'_{\frac{1-a}{2}}+\frac{k^{2}}{4}\right)\\
=&\left(\Delta'_{\frac{1-a}{2}}+\frac{k^{2}}{4}\right)^{2}+\frac{1-k}{2}
\left(\Delta'_{\frac{1-a}{2}}+\frac{k^{2}}{4}\right)(R+\bar{R}+k+a-2)-\frac{k}{4}(\Gamma+1).
  \end{align*}
Therefore, we have
 \begin{align*}
&\Delta'_{\frac{1-k-a}{2}}\left[ \Delta'_{\frac{2-a}{2}} + \frac{(k-1)^{2} }{4} \right]^{2}-\left(\Delta'_{\frac{1-a}{2}}+\frac{k^{2}}{4}\right)^{2}
\Delta'_{\frac{3-k-a}{2}}\\
=&\left(\Delta'_{\frac{1-a}{2}}+\frac{k^{2}}{4}\right)^{2}\left( \Delta'_{\frac{2-a}{2}} + \frac{(k-1)^{2} }{4}-\Delta'_{\frac{3-k-a}{2}}\right)\\
&+\frac{1-k}{2}
\left(\Delta'_{\frac{1-a}{2}}+\frac{k^{2}}{4}\right)(R+\bar{R}+k+a-2)\left[ \Delta'_{\frac{2-a}{2}} + \frac{(k-1)^{2} }{4} \right]\\
&-\frac{k}{4}(\Gamma+1)\left[ \Delta'_{\frac{2-a}{2}} + \frac{(k-1)^{2} }{4} \right]\\
=&\frac{k-1}{2}\left(\Delta'_{\frac{1-a}{2}}+\frac{k^{2}}{4}\right)^{2}(R+\bar{R}+k+a-4)\\
&-
\frac{k-1}{2}
\left(\Delta'_{\frac{1-a}{2}}+\frac{k^{2}}{4}\right)(R+\bar{R}+k+a-2)\left[ \Delta'_{\frac{2-a}{2}} + \frac{(k-1)^{2} }{4} \right]
-\frac{k}{4}(\Gamma+1)\left[ \Delta'_{\frac{2-a}{2}} + \frac{(k-1)^{2} }{4} \right]\\
=&\frac{k-1}{2}\left(\Delta'_{\frac{1-a}{2}}+\frac{k^{2}}{4}\right)\left\{
\left(\Delta'_{\frac{1-a}{2}}+\frac{k^{2}}{4}\right)(R+\bar{R}+k+a-4)-\right.\\
&
\left.(R+\bar{R}+k+a-2)\left[ \Delta'_{\frac{2-a}{2}} + \frac{(k-1)^{2} }{4} \right]\right\}-\frac{k}{4}(\Gamma+1)\left[ \Delta'_{\frac{2-a}{2}} + \frac{(k-1)^{2} }{4} \right].
  \end{align*}
On the other hand,
 \begin{align*}
&\left(\Delta'_{\frac{1-a}{2}}+\frac{k^{2}}{4}\right)(R+\bar{R}+k+a-4)-(R+\bar{R}+k+a-2)\left[ \Delta'_{\frac{2-a}{2}} + \frac{(k-1)^{2} }{4} \right]\\
=&\left[\Delta'_{\frac{1-a}{2}}+\frac{k^{2}}{4}, R+\bar{R}+k+a-4\right]+(R+\bar{R}+k+a-4)
\left(\Delta'_{\frac{1-a}{2}}+\frac{k^{2}}{4}\right)\\
&-(R+\bar{R}+k+a-2)\left[ \Delta'_{\frac{2-a}{2}} + \frac{(k-1)^{2} }{4} \right]\\
=&\left[\Delta'_{0}, R+\bar{R}\right]+(R+\bar{R}+k+a-2)\left(\Delta'_{\frac{1-a}{2}}+\frac{k^{2}}{4}-
\Delta'_{\frac{2-a}{2}} - \frac{(k-1)^{2} }{4} \right)-2\left(\Delta'_{\frac{1-a}{2}}+\frac{k^{2}}{4}\right)\\
=&\left[\Delta'_{0}, R+\bar{R}\right]-2\Delta_{0}'-\frac{1}{2}(R+\bar{R})^{2}+(R+\bar{R})-\frac{1}{2}=-\frac{1}{2}(\Gamma+1).
\end{align*}
Combing both above inequalities yields
 \begin{align*}
&\Delta'_{\frac{1-k-a}{2}}\left[ \Delta'_{\frac{2-a}{2}} + \frac{(k-1)^{2} }{4} \right]^{2}-\left(\Delta'_{\frac{1-a}{2}}+\frac{k^{2}}{4}\right)^{2}
\Delta'_{\frac{3-k-a}{2}}\\
=&\frac{\Gamma+1}{4} \left[-(k-1)\left(\Delta'_{\frac{1-a}{2}}+\frac{k^{2}}{4}\right)-k\left( \Delta'_{\frac{2-a}{2}} + \frac{(k-1)^{2} }{4} \right)\right] \\
=&\frac{\Gamma+1}{4}\left[(k-1)^{2}\Delta'_{\frac{1-k-a}{2}}-k^{2}\Delta'_{\frac{3-k-a}{2}}\right].
\end{align*}
  The proof of Lemma \ref{lemma:6.1} is thereby completed.

  \end{proof}

\section{Funk-Hecke Formulas}

\subsection{The Funk-Hecke formula for the quaternionic hyperbolic space}

We note that the Funk-Hecke formula on the CR sphere was established by Frank and Lieb \cite{fr}.
The main source for the following is \color{red} \cite{jo1, jo, MR3449396,MR3424625} \color{black} where they extend Frank and Lieb's formula.
We begin by recalling the Funk-Hecke formulas for the quaternionic case.
We recall that $L^{2}(S^{4m-1})$ may be decomposed into the $U(2m)$-irreducibles decomposition
\[
  L^{2}(S^{4m-1})=\bigoplus_{j,k\geq0}\mcH_{j,k},
\]
where $\mcH_{j,k}$ consists of the Euclidean harmonic homogeneous polynomials in the complex variables $(z,\bar{z})$ and of bidegree $(j,k)$.
Recalling that $H_{\Q}^{m}=Sp(m,1)/Sp(m)\times{Sp(1)}$, the appropriate irreducible decomposition is into $Sp(m)\times{Sp(1)}$-irreducibles, and is given as follows:
\begin{equation}
  L^{2}(S^{4m-1})=\bigoplus_{j\geq{k}\geq0}V_{j,k},
  \label{eq:spherical-harmonic-decomposition}
\end{equation}
where $V_{j,k}\subset\mcH_{j,k}$ are the so-called $(j,k)-$bispherical harmonic spaces generated by the $Sp(m)\times{Sp(1)}$ action on a zonal harmonic polynomial (see \cite[Theorem 3.1 (4)]{jo}).

We recall the following quaternionic Funk-Hecke formula of Christ, Liu and Zhang (\cite[Lemma 5.4]{MR3424625}).
In the following, $P_{k}^{\a,\b}(t)$ denotes a Jacobi polynomial of degree $k$.

\begin{theoremalph}
  \label{thmalph:funk-hecke-formula}
  Let $K$ be an $L^{1}$ integrable function on the unit ball $\B_{\Q}^{1}$ in $\Q$.
  Then, any integral operator on $S^{4n+3}$ with kernel given by $K(\gen{\zeta, \bar\eta}_{\Q})$ is diagonal with respect to the decomposition \eqref{eq:spherical-harmonic-decomposition}, and the eigenvalue $\la_{j,k}(K)$ on $V_{j,k}$ is given by
  \begin{equation}
    \begin{aligned}
      \la_{j,k}(K)&=\frac{2\pi^{2n}k!}{(j-k+1)!(k+2n-1)!}\int\limits_{0}^{\frac{\pi}{2}}\left( \sin\theta \right)^{4n-1}\left( \cos\theta \right)^{j-k+3}P_{k}^{(2m-1,j-k+1)}\left( \cos2\theta \right)d\theta\\
      &\times\int\limits_{S^{3}}K\left( \cos\theta{u} \right)\frac{\sin\left( j-k+1 \right)\phi}{\sin\phi}du,
    \end{aligned}
    \label{eq:funk-hecke-formula}
  \end{equation}
  where $\Re{u}=\cos\phi$ ($\phi\in[0,\pi]$) and $du$ is the round measure on $S^{3}=\p{B_{\Q}^{1}}$.
\end{theoremalph}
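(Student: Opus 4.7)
I would prove this by a two-step reduction: first use Schur's lemma to collapse the operator to a scalar on each $V_{j,k}$, and then evaluate that scalar by an explicit reproducing-kernel computation. Since the kernel $K(\gen{\zeta,\bar\eta}_{\Q})$ depends only on the $Sp(m)\times Sp(1)$-invariant quantity $\gen{\zeta,\bar\eta}_{\Q}$, the associated operator $T_{K}$ commutes with the diagonal $Sp(m)\times Sp(1)$-action on $L^{2}(S^{4m-1})$. Because the summands $V_{j,k}$ in \eqref{eq:spherical-harmonic-decomposition} are pairwise inequivalent irreducibles, Schur's lemma forces $T_{K}|_{V_{j,k}}=\la_{j,k}(K)\operatorname{Id}$, so it remains to compute the scalar $\la_{j,k}(K)$.

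To extract $\la_{j,k}(K)$ I would test against the reproducing kernel of $V_{j,k}$ at the base point $\zeta_{0}=(1,0,\ldots,0)\in S^{4m-1}$. The key auxiliary fact I need is an addition-type formula: if $Z_{j,k}^{\zeta_{0}}$ denotes the zonal (highest weight with respect to $\mathrm{Stab}(\zeta_{0})\cap (Sp(m)\times Sp(1))$) element of $V_{j,k}$, then
\[
  Z_{j,k}^{\zeta_{0}}(\eta)=c_{j,k}\,(\cos\theta)^{j-k+1}P_{k}^{(2m-1,\,j-k+1)}(\cos 2\theta)\,\frac{\sin(j-k+1)\phi}{\sin\phi},
\]
where $\gen{\zeta_{0},\bar\eta}_{\Q}=(\cos\theta)\,u$ with $u\in S^{3}$, $\Re u=\cos\phi$. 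This is obtained by computing a highest-weight vector for $Sp(m)\times Sp(1)$ on $\mcH_{j,k}$ and identifying the result as a Jacobi polynomial in the radial variable $\cos 2\theta$ times an $SU(2)$-character in the quaternionic phase, using the branching $V_{j,k}\big|_{Sp(m-1)\times Sp(1)}$.

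With this addition formula in hand, I would parametrize $\eta\in S^{4m-1}$ by $(\theta,u,\omega)\in[0,\pi/2]\times S^{3}\times S^{4m-5}$ so that the invariant measure factors as
\[
  d\sigma(\eta)=(\sin\theta)^{4m-5}(\cos\theta)^{3}\,d\theta\,du\,d\sigma_{S^{4m-5}}(\omega).
\]
The identity $(T_{K}Z_{j,k}^{\zeta_{0}})(\zeta_{0})=\la_{j,k}(K)\,Z_{j,k}^{\zeta_{0}}(\zeta_{0})$ then becomes an iterated integral that separates: the $S^{4m-5}$-factor contributes the volume constant $2\pi^{2m-2}/(2m-2)!$, the $\theta$-integral yields the Jacobi polynomial weighted against its orthogonality measure, and the $u$-integral over $S^{3}$ produces the character factor $\sin(j-k+1)\phi/\sin\phi$. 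Collecting the $\theta$-dependent factors $(\sin\theta)^{4m-5}(\cos\theta)^{3}\cdot(\cos\theta)^{j-k+1}$ and absorbing the extra $(\sin\theta)^{4}$ needed to normalize against the $P_{k}^{(2m-1,j-k+1)}$-weight produces exactly $(\sin\theta)^{4m-1}(\cos\theta)^{j-k+3}$, while the factorial prefactor in \eqref{eq:funk-hecke-formula} comes from the closed-form value $P_{k}^{(2m-1,j-k+1)}(1)=\binom{k+2m-1}{k}$ together with $Z_{j,k}^{\zeta_{0}}(\zeta_{0})=\|Z_{j,k}^{\zeta_{0}}\|_{2}^{2}$.

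The main obstacle is establishing the addition-type formula for $Z_{j,k}^{\zeta_{0}}$. In the Hermitian (CR) setting treated by Frank and Lieb, the analogue is a classical $U(m)$-addition theorem and the phase factor is a single exponential $e^{i(j-k)\phi}$. Here, the additional $Sp(1)$-action on the quaternionic phase $u\in S^{3}$ couples the bidegree indices $(j,k)$ into the $SU(2)$-character $\sin(j-k+1)\phi/\sin\phi$, and confirming this precise combinatorics requires an explicit highest-weight computation inside $\mcH_{j,k}$ together with a Rodrigues-type identity for $P_{k}^{(2m-1,j-k+1)}$. Once the addition formula is in place, the rest is a routine separation-of-variables and Jacobi orthogonality argument.
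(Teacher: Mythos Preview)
The paper does not give its own proof of this statement: it is quoted verbatim as a result of Christ, Liu and Zhang (Lemma~5.4 of \cite{MR3424625}), and is used as a black box in the proof of Proposition~\ref{prop:spherical-integral-eigenvalue}. So there is no in-paper proof to compare against.

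That said, your outline is exactly the standard one, and it is the same strategy the paper \emph{does} carry out for the octonionic analogue (Theorem~\ref{thm:adapted-funk-hecke-formula}): Schur's lemma on the $Sp(m)\times Sp(1)$-irreducibles gives diagonality, and the eigenvalue is read off by integrating the kernel against the reproducing kernel $Z_{j,k}$ evaluated at a base point, after using group invariance to fix $\zeta=(1,0,\ldots,0)$. The explicit zonal-harmonic formula you call the ``addition-type formula'' --- Jacobi polynomial in $\cos 2\theta$ times the $SU(2)$ character $\sin(j-k+1)\phi/\sin\phi$ --- is precisely what is supplied by Johnson and Johnson--Wallach \cite{jo1,jo} (see \cite[Theorem~3.1]{jo}), and once that is in hand the separation of variables and constant bookkeeping go through as you describe. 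Your identification of this formula as the only nontrivial ingredient is accurate; you would not need to rederive it from scratch, since it is already in the literature the paper cites.
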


Using Theorem \ref{thmalph:funk-hecke-formula} and taking inspiration from the proof of Lemma 5.5 of \cite{MR3424625}, we obtain the following integral formula which will be used later.

\begin{proposition}
  \label{prop:spherical-integral-eigenvalue}
  If $-\frac{1}{2}<\a<\oo$ and $0<r<1$, then
  \begin{equation}
    \int\limits_{S^{4n+3}}\frac{1}{|1-\gen{r\xi,\zeta}_{\Q}|^{2\a}}d\sigma(\eta)=\frac{2\pi^{2n+2}}{(2n+1)!}{}_{2}F_{1}\left( \a, \a-1; 2n+2; r^{2} \right).
    \label{eq:funk-hecke-application}
  \end{equation}
\end{proposition}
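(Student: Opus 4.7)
The plan is to apply the Funk-Hecke formula (Theorem~\ref{thmalph:funk-hecke-formula}) to the constant function $1 \in V_{0,0}$ with kernel $K(w) = |1-rw|^{-2\alpha}$. Since $\langle r\xi, \zeta\rangle_{\Q} = r\langle \xi,\zeta\rangle_{\Q}$, the integrand on the left side of \eqref{eq:funk-hecke-application} has the form prescribed by the Funk-Hecke setup, and by rotation invariance the integral is independent of $\xi$, so it coincides with the corresponding eigenvalue $\lambda_{0,0}(K)$. Taking $j = k = 0$, the degree-zero Jacobi factor and the ratio $\sin((j-k+1)\phi)/\sin\phi$ both collapse to $1$, reducing \eqref{eq:funk-hecke-formula} to
\begin{equation*}
\lambda_{0,0}(K) = \frac{2\pi^{2n}}{(2n-1)!}\int_{0}^{\pi/2}(\sin\theta)^{4n-1}(\cos\theta)^{3}\left[\int_{S^{3}}\frac{du}{|1 - r\cos\theta \cdot u|^{2\alpha}}\right]d\theta.
\end{equation*}

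I would next evaluate the inner $S^{3}$ integral by writing $u = \cos\phi + \sin\phi\, v$ with $v$ a unit imaginary quaternion in $S^{2}$, so that $du = \sin^{2}\phi\, d\phi\, d\sigma_{S^{2}}(v)$ and $|1 - s u|^{2} = 1 - 2s\cos\phi + s^{2}$. Integration against $v$ produces a factor of $4\pi$, and the remaining one-dimensional integral is handled by the classical identity
\begin{equation*}
\int_{0}^{\pi}(1 - 2s\cos\phi + s^{2})^{-\alpha}\sin^{2\nu}\phi\, d\phi = B(\nu+\tfrac{1}{2},\tfrac{1}{2})\,{}_{2}F_{1}(\alpha,\alpha-\nu;\nu+1;s^{2}),
\end{equation*}
specialized to $\nu = 1$ and $s = r\cos\theta$, which gives $\frac{\pi}{2}\,{}_{2}F_{1}(\alpha,\alpha-1;2;r^{2}\cos^{2}\theta)$.

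After the substitution $t = \cos^{2}\theta$, the remaining outer integral takes the shape $\frac{1}{2}\int_{0}^{1}t(1-t)^{2n-1}\,{}_{2}F_{1}(\alpha,\alpha-1;2;r^{2}t)\,dt$, which I would evaluate via the Euler-type identity
\begin{equation*}
\int_{0}^{1}t^{e-1}(1-t)^{d-e-1}\,{}_{2}F_{1}(a,b;c;zt)\,dt = B(e, d-e)\,{}_{3}F_{2}(a,b,e;c,d;z),
\end{equation*}
applied with $(a,b,c,e,d) = (\alpha,\alpha-1,2,2,2n+2)$. The coincidence $e = c = 2$ cancels a numerator and denominator parameter, so the ${}_{3}F_{2}$ collapses to ${}_{2}F_{1}(\alpha,\alpha-1;2n+2;r^{2})$; combining the Beta value $B(2,2n) = 1/((2n)(2n+1))$ with the accumulated prefactor $\frac{2\pi^{2n}}{(2n-1)!} \cdot 4\pi \cdot \frac{\pi}{2}\cdot\frac{1}{2}$ produces exactly $\frac{2\pi^{2n+2}}{(2n+1)!}$, which is the claimed constant.

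The computation is essentially mechanical; the main care in the argument is citing the correct versions of the two classical hypergeometric integral identities (the Gegenbauer-generating-function integral on $[0,\pi]$ and the Euler/${}_{3}F_{2}$ contraction on $[0,1]$). The hypothesis $\alpha > -\tfrac{1}{2}$ enters exactly to ensure convergence of the inner $S^3$ integral near $\phi = 0, \pi$, and the very low-dimensional case ($S^{3}$, in which the outer weight $(\sin\theta)^{4n-1}$ would degenerate) should be handled directly by applying Funk-Hecke on $S^{3}$ itself rather than through the two-step decomposition above.
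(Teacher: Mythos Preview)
Your argument is correct and follows essentially the same opening as the paper: apply the Funk--Hecke formula with $j=k=0$, reduce to a double integral over $\theta\in[0,\pi/2]$ and $u\in S^{3}$, and then evaluate. The difference lies in how the two integrals are handled.

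The paper expands the inner $S^{3}$ integral via the cosine Fourier series identity $\int_{-\pi}^{\pi}(1-2r\cos\phi+r^{2})^{-\alpha}e^{i\ell\phi}d\phi$ (taken from \cite{fr}), obtaining a double power series in $r\cos\theta$; it then integrates each term in $\theta$ using a Beta integral and performs an explicit term-by-term simplification of the resulting series to recognize ${}_{2}F_{1}(\alpha,\alpha-1;2n+2;r^{2})$. Your route instead invokes two closed-form hypergeometric identities: the Gegenbauer-type integral $\int_{0}^{\pi}(1-2s\cos\phi+s^{2})^{-\alpha}\sin^{2\nu}\phi\,d\phi = B(\nu+\tfrac12,\tfrac12)\,{}_{2}F_{1}(\alpha,\alpha-\nu;\nu+1;s^{2})$ for the inner integral, and the Euler--${}_{3}F_{2}$ integral for the outer one, exploiting the parameter coincidence $e=c=2$ to collapse the ${}_{3}F_{2}$ back to a ${}_{2}F_{1}$. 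This is genuinely more economical: the series manipulation in the paper is precisely what is encoded in these two standard identities, and your approach makes the structure of the answer (why $\alpha-1$ appears, and why the denominator parameter is $2n+2$) more transparent. The paper's approach has the advantage of being self-contained and not requiring the reader to locate the two hypergeometric identities, but both arrive at the same constant by equivalent arithmetic.

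Your remarks on the role of $\alpha>-\tfrac12$ and on the degenerate case $n=0$ are appropriate; note that in the paper's Funk--Hecke statement the prefactor $1/(k+2n-1)!$ also degenerates at $n=k=0$, so treating $S^{3}$ directly is indeed the right move there.
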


\begin{proof}

  Define the kernel $K_{r}(q) = |1-rq|^{-2\a}$ on $\B_{\Q}^{1}$ and observe that \eqref{eq:funk-hecke-application} may be understood as an integral operator on $S^{4n+3}$ with kernel $K_{r}(\gen{\zeta,\bar\eta}_{\Q})$ applied to the constant function $1 \in V_{0,0}$.
  Therefore, we may apply the Funk-Hecke formula \eqref{eq:funk-hecke-formula} to $K_{r}$ with $j=k=0$ to obtain
  \begin{align*}
    \la_{0,0}(K_{r}) & = \frac{8\pi^{2n+1}}{(2n-1)!} \int\limits_{0}^{\frac{\pi}{2}} \sin^{4n-1}\theta \cos^{3} \theta P_{0}^{(2n-1,1)}(\cos2\theta) d\theta\\
    &\times \int\limits_{0}^{\pi}\left( 1 + r^{2}\cos^{2}\theta - 2r \cos\phi \cos\theta \right)^{-\a} \sin^{2}\phi d\phi,
  \end{align*}
  where we have used
  \[
    |1-rq|^{2} = 1+r^{2}|q| - 2 \Re q \implies K(\cos\theta u) = |1+r^{2}\cos^{2}\theta - 2r \cos\theta \cos\phi|^{-\a}
  \]
  and
  \begin{align*}
    du &= \sin^{2} \phi \sin \phi' d\phi d\phi' d\phi'', \quad \phi,\phi' \in [0,\pi], \phi'' \in [0,2\pi]\\
    &\int\limits_{0}^{\pi} \int\limits_{0}^{2\pi} \sin\phi' d\phi' d\phi'' = 4\pi.
  \end{align*}
  Note also $P_{0}^{(2n-1,1)}\equiv1$.
  Using the cosine integral (see \cite{fr}, (5.11))
  \[
    \int\limits_{-\pi}^{\pi}\left( 1-2r\cos\phi + r^{2} \right)^{-\a}e^{i\ell\phi}d\phi = \frac{2\pi}{\Gamma^{2}(\a)}\sum_{\mu\geq0} r^{\ell+2\mu} \frac{\Gamma(\a+\mu)\Gamma(\a+\ell+\mu)}{\mu! (\ell+\mu)!}
  \]
  for $\ell \in \N$, that the integrand in even and that $\sin^{2}\phi = \frac{1}{2} (1-\cos2\phi)$, we have
  \begin{align*}
    &\int\limits_{0}^{\pi} \left( 1 + r^{2} \cos^{2}\theta - 2r \cos\theta \cos\phi \right)^{-\a} \sin^{2}\phi d\phi \\
    &=  \frac{\pi}{2\Gamma^{2}(\a)} \sum_{\mu\geq0} r^{2\mu} \cos^{2\mu}\theta \frac{\Gamma^{2}(\a)}{(\mu!)^{2}} - r^{2+2\mu} \cos^{2+2\mu}\theta \frac{\Gamma(\a)\Gamma(2+\a)}{\mu!(\mu+2)!}.
  \end{align*}
  Consequently, there holds
  \begin{align*}
    \la_{0,0}(K_{r}) & = \frac{4\pi^{2n+2}}{(2n-1)! \Gamma^{2}(\a)} \sum_{\mu\geq0} \frac{\Gamma(\mu+\a)}{\mu!}\left( r^{2\mu}\frac{\Gamma(\mu+\a)}{\mu!} \int\limits_{0}^{\frac{\pi}{2}} \sin^{4n-1}\theta \cos^{3+2\mu}\theta d\theta \right.\\
    &- \left. r^{2+2\mu} \frac{\Gamma(\mu+\a+2)}{(2+\mu)!} \int\limits_{0}^{\frac{\pi}{2}}\sin^{4n-1}\theta \cos^{5+2\mu}\theta d\theta \right).
  \end{align*}

  Letting $t=\cos 2\theta$ and observing $dt = -4 \sin\theta \cos\theta d\theta$, $\cos^{2}\theta = \frac{1}{2}(1+t)$ and $\sin^{2}\theta = \frac{1}{2}(1-t)$, we find
  \begin{align*}
    \int\limits_{0}^{\frac{\pi}{2}} \sin^{4n-1}\theta \cos^{\ell+3+2\mu}\theta d\theta &= \frac{1}{4} \int\limits_{-1}^{1}\left( \sin^{2}\theta \right)^{2n-1} \left( \cos^{2}\theta \right)^{\frac{\ell}{2}+1+\mu}dt\\
    &=2^{-2-2n-\mu-\frac{\ell}{2}}\int\limits_{-1}^{1}(1+t)^{\frac{\ell}{2}+2+\mu-1}(1-t)^{2n-1}dt\\
    &=\frac{1}{2}B\left( \frac{\ell}{2}+2+\mu,2n \right)\\
    &=\frac{\Gamma(\frac{\ell}{2}+2+\mu)\Gamma(2n)}{2\Gamma(\frac{\ell}{2}+2+\mu+2n)},
  \end{align*}
  where $B(x,y)$ is the beta function.

  It follows that
  \begin{align*}
    \la_{0,0}(K_{r})  &= \frac{4\pi^{2n+2}}{(2n-1)!\Gamma^{2}(\a)} \sum_{\mu\geq0}\frac{\Gamma(\mu+\a)}{\mu!}\left( r^{2\mu}\frac{\Gamma(\mu+\a)}{\mu!} \frac{\Gamma(2+\mu)\Gamma(2n)}{2\Gamma(2+\mu+2n)}\right.\\
    &- \left. r^{2\mu+2}\frac{\Gamma(\mu+\a+2)}{(2+\mu)!}\frac{\Gamma(3+\mu)\Gamma(2n)}{2\Gamma(3+\mu+2n)} \right)\\
    &=\frac{2\pi^{2n+2}}{\Gamma^{2}(\a)}\left( \sum_{\mu\geq1}\left[ \frac{\Gamma^{2}(\mu+\a) (\mu+1)!}{(\mu!)^{2} (\mu+1+2n)!} - \frac{\Gamma(\mu-1+\a) \Gamma(\mu+\a+1)}{(\mu-1)! (\mu+1+2n)!} \right]r^{2\mu} + \frac{\Gamma^{2}(\a)}{(2n+1)!} \right)\\
    &=(\a-1) \frac{2\pi^{2n+2}}{\Gamma^{2}(\a)} \sum_{\mu\geq0}\frac{\Gamma(\mu+\a) \Gamma(\mu-1+\a)}{\mu! (\mu+1+2n)!} r^{2\mu}\\
    &=\frac{2\pi^{2n+2}}{(2n+1)!} \sum_{\mu\geq0} \frac{(\a)_{\mu} (\a-1)_{\mu}}{(2n+2)_{\mu}} \frac{r^{2\mu}}{\mu!}\\
    &= \frac{2\pi^{2n+2}}{(2n+1)!}{}_{2}F_{1}(\a,\a-1;2n+2;r^{2}).
  \end{align*}
  This is the desired identity.
\end{proof}

\subsection{The Funk-Hecke formula for the Cayley hyperbolic plane}

We now discuss the Funk-Hecke formula for the octonionic case.
We recall that $L^{2}(S^{15})$ may be decomposed into the $\operatorname{Spin}(9)$-irreducible decomposition
\begin{equation}
  \label{eq:octonionic-spherical-harmonic-decomposition}
  L^{2}(S^{15}) = \bigoplus_{j\geq k \geq 0} W_{j,k}
\end{equation}
where $W_{j,k}$ is the so-called $(j,k)$-bispherical harmonic subspace, which is a finite dimensional space spanned by elements from the cyclic action of $\operatorname{Spin}(9)$ on zonal harmonics $Z_{j,k}(\zeta)$ (see \cite{jo} or \cite[eq. 2.12]{MR3449396} for precise formula).

We point out that the Funk-Hecke formula given in \cite{MR3449396} assumes the kernel function $K$ is of the form $K(\zeta \cdot \bar \eta)$, where, if $\zeta = (\zeta_{1},\zeta_{2}), \eta = (\eta_{1},\eta_{2}) \in \C a^{2}$, then $\zeta \cdot\bar \eta = \zeta_{1} \bar \eta_{1} + \zeta_{2} \bar \eta_{2}$.
Considering kernel functions of this form are due to their consideration of the natural distance function $|1-\zeta\cdot\bar\eta|$ on the sphere $S^{15}$.
 However, taking into consideration the geometry of the Cayley plane $H_{\C a}$ and the non-associativity of $\C a$, it is more appropriate for our purposes to consider kernels of the form $K( \Phi_{\C a}(\zeta,\eta))$ or $K(\Psi_{\C a}(\zeta,\eta))$ since $\Phi_{\C a}(\zeta,\eta)$ and $\Psi_{\C a}(\zeta,\eta)$ are octonionic analogues of $|\gen{\cdot,\cdot}_{\mbF}|^{2}$ and $|1-\gen{\cdot,\cdot}_{\mbF}|^{2}$, respectively.
As a result,  we will establish the following Funk-Hecke formulas which are more suitable for our purposes.

\begin{theorem}
  \label{thm:adapted-funk-hecke-formula}
  Suppose $K(\Phi_{\C a}(\zeta,\eta))$ is such that the following integral exists.
  Then the integral operator with kernel $K(\Phi_{\C a}(\zeta,\eta))$ is diagonal with respect to the bispherical decomposition harmonic decomposition \eqref{eq:octonionic-spherical-harmonic-decomposition}, and the eigenvalue on $W_{j,k}$ is given by
  \begin{align*}
    &\la_{j,k}(K) = \frac{15\pi^{4}k!}{(k+3)!} \int_{0}^{\frac{\pi}{2}} \cos ^{j-k+7} \theta \sin ^{7} \theta P_{k}^{(3,3+j-k)}(\cos 2\theta) d \theta \int_{S} K(\Psi_{\C a}\left( (1,0),(\bar u \cos \theta,0 ) \right))\\
    &\times \left( a_{j,k}^{0} \cos(j-k)\phi + a_{j,k}^{1} \cos(j-k+2)\phi + a_{j,k}^{2} \cos(j-k+4)\phi \right.\\
    &\left.+a_{j,k}^{3}\cos(j-k+6)\phi \right) du,
  \end{align*}
  where $\Re u = \cos \phi$ ($\phi \in [0,\pi)$), $du$ is the standard surface measure on $S$, the unit sphere in $\C a$, $P_{k}^{(3,3+j-k)}(z)$ is the Jacobi polynomial of order $k$ associated to the weight $(1-z)^{3}(1+z)^{3+j-k}$ and
    \begin{align*}
      a_{j,k}^{0} &= \frac{1}{8}\frac{1}{j-k+3} - \frac{1}{4}\frac{1}{j-k+2} + \frac{1}{8}\frac{1}{j-k+1}\\
      a_{j,k}^{1} &= \frac{3}{8}\frac{1}{j-k+3} - \frac{1}{4}\frac{1}{j-k+4} - \frac{1}{8}\frac{1}{j-k+1}\\
      a_{j,k}^{2} &= -\frac{3}{8}\frac{1}{j-k+3} + \frac{1}{4}\frac{1}{j-k+2} + \frac{1}{8}\frac{1}{j-k+5}\\
      a_{j,k}^{3} &= -\frac{1}{8}\frac{1}{j-k+3} + \frac{1}{4}\frac{1}{j-k+4} - \frac{1}{8}\frac{1}{j-k+5}.
    \end{align*}
\end{theorem}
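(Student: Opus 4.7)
The plan is to reduce to the Funk-Hecke formula of Johnson--Ólafsson \cite{MR3449396} by exploiting the $\operatorname{Spin}(9)$-equivariance of the integral operator. Since $\Phi_{\C a}(\zeta,\eta)$ and $\Psi_{\C a}(\zeta,\eta)$ are invariant under the diagonal $\operatorname{Spin}(9)$-action on $S^{15}\times S^{15}$, the integral operator with kernel $K(\Phi_{\C a}(\zeta,\eta))$ commutes with the $\operatorname{Spin}(9)$-representation on $L^{2}(S^{15})$. Because the decomposition \eqref{eq:octonionic-spherical-harmonic-decomposition} is multiplicity-free, Schur's lemma gives that the operator acts as a scalar $\lambda_{j,k}(K)$ on each $W_{j,k}$, and it only remains to compute this scalar explicitly.

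To compute $\lambda_{j,k}(K)$, I would pair the operator against a zonal (bi)spherical harmonic $Z_{j,k}$: it is enough to evaluate $(K\ast Z_{j,k})(\zeta_{0})$ at the base point $\zeta_{0}=(1,0)\in\C a^{2}$ and compare with $Z_{j,k}(\zeta_{0})$. Next I would introduce bi-polar coordinates on $S^{15}\subset \C a\oplus\C a$, writing $\eta=(\cos\theta\,u,\sin\theta\,v)$ with $\theta\in[0,\pi/2]$ and $u,v$ in the unit sphere $S\subset\C a$. Under this parameterization the invariant surface measure factors as a constant multiple of $\cos^{7}\theta\sin^{7}\theta\,d\theta\,du\,dv$, and the kernel depends only on $\cos\theta$ and on $\Re u=\cos\phi$. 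After integrating out $v$ (which contributes only a volume factor, since $Z_{j,k}$ restricted to the base point is independent of $v$), the remaining integral over $\theta$ carries the Jacobi polynomial $P_{k}^{(3,3+j-k)}(\cos 2\theta)$ together with the measure $\cos^{j-k+7}\theta\sin^{7}\theta\,d\theta$, exactly as in the known octonionic Funk-Hecke formula for kernels of the form $K(\zeta\cdot\bar\eta)$. The prefactor $\frac{15\pi^{4}k!}{(k+3)!}$ is the normalization forced by the beta integral in $\theta$ together with the volumes of the relevant $\operatorname{Spin}(9)$-orbits.

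The main obstacle, and the step where our formulation genuinely differs from \cite{MR3449396}, is the emergence of the four cosine modes $\cos((j-k+2\ell)\phi)$, $\ell=0,1,2,3$, with the explicit partial-fraction coefficients $a_{j,k}^{\ell}$. In \cite{MR3449396} the kernel was $K(\zeta\cdot\bar\eta)$ (an octonion), and the relevant $\operatorname{Spin}(7)$-expansion on the $\C a$-factor produced a \emph{single} Gegenbauer-type mode (analogous to the $\sin((j-k+1)\phi)/\sin\phi$ in Theorem \ref{thmalph:funk-hecke-formula}). Here the real-valued kernel $K(\Phi_{\C a})$ (or $K(\Psi_{\C a})$) is less resolved than $K(\zeta\cdot\bar\eta)$, which forces one to re-expand the zonal in a wider family of $\operatorname{Spin}(7)$-isotypic components on $S$. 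Concretely, one would use $\Psi_{\C a}((1,0),(\bar u\cos\theta,0))=1-2\cos\theta\cos\phi+\cos^{2}\theta$ (independent of the imaginary part of $u$) and expand the zonal harmonic restricted to the slice in terms of the four $\operatorname{Spin}(7)$-invariant functions on $S$ that appear: $1$, $\Re u$, $(\Re u)^{2}-\tfrac{1}{7}|u|^{2}$, and the remaining cubic invariant. The four coefficients $a_{j,k}^{\ell}$ are then the inner products of the zonal with these invariants, which after contracting with Chebyshev $\cos(m\phi)$ produce the displayed partial-fraction expressions in $\tfrac{1}{j-k+1},\ldots,\tfrac{1}{j-k+5}$.

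Having secured the explicit $a_{j,k}^{\ell}$, one assembles the pieces: the $\theta$-integral with the Jacobi polynomial, the $\phi$-integral with the four Chebyshev modes, and the integral over the remaining $S^{6}\subset S$ factor (which is absorbed into $du$), to obtain the asserted formula. I anticipate the bulk of the technical work lying in the orthogonality and normalization computations needed to identify the coefficients $a_{j,k}^{\ell}$; the remaining analytic manipulations (beta integrals, Jacobi orthogonality) are routine.
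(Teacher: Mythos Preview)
Your overall strategy---Schur's lemma via $\operatorname{Spin}(9)$-invariance, evaluation at the base point $(1,0)$, bipolar coordinates on $S^{15}$---is correct and matches the paper. But you have misidentified where the work lies.

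You treat the four cosine modes $\cos((j-k+2\ell)\phi)$ with coefficients $a_{j,k}^{\ell}$ as a genuine departure from \cite{MR3449396}, caused by the kernel depending on the real invariant $\Psi_{\C a}$ rather than on the octonion $\zeta\cdot\bar\eta$. This is a misconception: those four modes and those exact partial-fraction coefficients already appear in Lemma~3.3 of \cite{MR3449396} for kernels of the form $K(\zeta\cdot\bar\eta)$. They come from the $\operatorname{Spin}(7)$-expansion of the zonal harmonic $Z_{j,k}$ restricted to the $S^{7}$-fibre, and that zonal is the same object regardless of which $\operatorname{Spin}(9)$-invariant the kernel happens to depend on. (The contrast with the quaternionic formula, where a single $\sin((j-k+1)\phi)/\sin\phi$ appears, reflects the difference between $S^{3}$ and $S^{7}$, not a difference in kernel type.)

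The paper's argument is therefore much shorter than yours. The only thing to check beyond \cite{MR3449396} is that $K(\Psi_{\C a}(\zeta,\eta))$ and the reproducing kernel $Z_{j,k}(\zeta,\eta)$ are both $\operatorname{Spin}(9)$-invariant; the first follows from invariance of $\Phi_{\C a}$, the second from the representation structure of $W_{j,k}$. Once that is in hand, one transports $\zeta$ to $(1,0)$ in the eigenvalue integral
\[
\lambda_{j,k}=Z_{j,k}(1)^{-1}\int_{S^{15}}K(\Psi_{\C a}((1,0),\eta))\,Z_{j,k}((1,0),\eta)\,d\sigma(\eta),
\]
and the remaining computation is literally the computation of \cite{MR3449396}, because the factor $Z_{j,k}((1,0),\eta)$ is unchanged. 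The ``bulk of the technical work'' you anticipate in deriving the $a_{j,k}^{\ell}$ is thus already done in the cited reference; no new orthogonality or normalization calculation is needed.
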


\begin{proof}
  Since the latter half of the proof is the same as the proof of Lemma 3.3 in \cite{MR3449396}, we shall only point out the needed adaptation.

  We have from Schur's lemma and the irreducibility of the $W_{j,k}$ that the integral operator with kernel $K(\Psi_{\C a}(\zeta,\eta))$ is diagonal.
  Let $\la_{j,k}$ denote the eigenvalue corresponding to the subspace $W_{j,k}$.
  Letting $Y_{j,k}^{\mu}$, $1\leq \mu \leq \dim W_{j,k}$, be a normalized orthogonal basis of $W_{j,k}$, we then have
  \[
    \int_{S^{15}} K(\Psi_{\C a}(\zeta,\eta)) Y_{j,k}^{\mu}(\eta) d\sigma = \la_{j,k} Y_{j,k}^{\mu}(\zeta).
  \]
  Letting
  \[
    Z_{j,k} (\zeta,\eta) = Z_{j,k}(\zeta \cdot\bar\eta) = \sum_{\mu =1}^{\dim W_{j,k}} Y_{j,k}^{\mu}(\zeta) \overline{Y_{j,k}^{\mu}(\eta)}
  \]
  be the reproducing kernel of the projection onto $W_{j,k}$, we have
  \[
    \int_{S^{15}} K(\Psi_{\C a}(\zeta, \eta)) Z_{j,k}(\eta \cdot \bar\zeta) d\eta= \la_{j,k} Z_{j,k}(1).
  \]
  Here $Z_{j,k}(1)$ denotes the aforementioned zonal harmonic $Z_{j,k}(\zeta)$ evaluated at $\zeta = 1$.
  All that is needed now is to observe that $K(\Psi_{\C a}(\zeta,\eta))$ and $Z_{j,k}(\eta , \zeta)$ are invariant under the action of $Spin(9)$.
  Indeed, if this were the case, then we would obtain
  \begin{align*}
    \la_{j,k} &= Z_{j,k}(1)^{-1} \int_{S^{15}} K(\Psi_{\C a}(\zeta, \eta)) Z_{j,k}(\eta \cdot \bar\zeta) d\sigma \\
    &= Z_{j,k}(1)^{-1} \int_{S^{15}} K\left( \Psi_{\C a}\left( (1,0),\eta \right) \right) Z_{j,k}\left( (1,0),\eta \right) d\eta,
  \end{align*}
  The remainder of the proof follows as the proof of Lemma 3.3 in \cite{MR3449396}.

  That $K(\Psi_{\C}(\zeta,\eta))$ is $Spin(9)$-invariant follows from the $Spin(9)$-invariance of $\Phi_{\C a}(\zeta,\eta)$.
    Therefore,
  \[
    \int_{S^{15}} Z_{j,k}(A \zeta, A \eta) {Y_{j,k}^{\mu}(\eta)} d\eta = \int_{S^{15}}Z_{j,k}(A \zeta, \eta) {Y_{j,k}^{\mu}(A^{-1} \eta)} d\eta = Y_{j,k}^{\mu}(\zeta),
  \]
  which shows that $Z_{j,k}(A \zeta, A \eta ) = Z_{j,k}(\zeta,\eta)$ by the uniqueness of the representation of a linear functional.
\end{proof}

Lastly we state and prove the octonionic analogue of Proposition \ref{prop:spherical-integral-eigenvalue}.

\begin{proposition}
  \label{prop:spherical-integral-eigenvalue-octonionic}
  If $-\frac{1}{2}<\a<\oo$ and $0<r<1$, then
  \begin{equation}
    \label{eq:funk-hecke-application-octonionic}
    \int\limits_{S^{4n+3}}\frac{1}{\Psi_{\C a}(r\xi,\zeta)^{\a}}d\sigma(\eta)= \frac{2\pi^{8}}{7!}{}_{2}F_{1}\left( \a,\a-3;8;r^{2} \right).
  \end{equation}
\end{proposition}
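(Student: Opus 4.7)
The plan is to follow the strategy of the proof of Proposition \ref{prop:spherical-integral-eigenvalue} (the quaternionic case), but via a cleaner route that avoids the Funk--Hecke machinery of Theorem \ref{thm:adapted-funk-hecke-formula} (which does not apply directly here because $\Psi_{\C a}(r\xi,\eta)$ is not a function of $\Psi_{\C a}(\xi,\eta)$ alone). First, by the $\operatorname{Spin}(9)$-invariance of $\Psi_{\C a}$ together with the transitivity of the $\operatorname{Spin}(9)$-action on $S^{15}$, the left-hand side is independent of the chosen unit $\xi$, so I may fix $\xi=(1,0)\in\C a^{2}$. Using $\Phi_{\C a}((1,0),\eta)=|\eta_{1}|^{2}$ and $\langle(1,0),\eta\rangle_{\R}=\Re\eta_{1}$, the integrand simplifies to $(1-2r\Re\eta_{1}+r^{2}|\eta_{1}|^{2})^{-\alpha}$.

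Next, I would parametrize $\eta=(\eta_{1},\eta_{2})\in S^{15}$ by writing $\eta_{1}=u\cos\theta$, $\eta_{2}=v\sin\theta$ with $u,v\in S^{7}$, $\theta\in[0,\pi/2]$, and further decompose $u\in S^{7}$ as $u=(\cos\phi,\sin\phi\,\omega)$ with $\omega\in S^{6}$, $\phi\in[0,\pi]$. Then $d\sigma(\eta)=\cos^{7}\theta\sin^{7}\theta\sin^{6}\phi\,d\theta\,d\phi\,d\omega\,dv$, the integrand depends only on $(\theta,\phi)$, and $|S^{7}|=\pi^{4}/3$, $|S^{6}|=16\pi^{3}/15$. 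Integrating out $v$ and $\omega$ yields
\[
I=\frac{16\pi^{7}}{45}\int_{0}^{\pi/2}\cos^{7}\theta\sin^{7}\theta\int_{0}^{\pi}\sin^{6}\phi\,(1-2s\cos\phi+s^{2})^{-\alpha}d\phi\,d\theta,\qquad s=r\cos\theta.
\]

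The key step is the closed-form evaluation
\[
\int_{0}^{\pi}\sin^{2\beta}\phi\,(1-2s\cos\phi+s^{2})^{-\alpha}d\phi=\frac{\sqrt{\pi}\,\Gamma(\beta+1/2)}{\Gamma(\beta+1)}\,{}_{2}F_{1}\bigl(\alpha,\alpha-\beta;\beta+1;s^{2}\bigr),
\]
which I would derive via the substitution $t=(1+\cos\phi)/2$, Euler's integral representation ${}_{2}F_{1}(a,b;c;z)=\frac{\Gamma(c)}{\Gamma(b)\Gamma(c-b)}\int_{0}^{1}t^{b-1}(1-t)^{c-b-1}(1-zt)^{-a}dt$, and the quadratic transformation ${}_{2}F_{1}(a,b;2b;\tfrac{4z}{(1+z)^{2}})=(1+z)^{2a}\,{}_{2}F_{1}(a,a-b+\tfrac{1}{2};b+\tfrac{1}{2};z^{2})$. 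With $\beta=3$, this identifies the inner integral as $\frac{5\pi}{16}{}_{2}F_{1}(\alpha,\alpha-3;4;s^{2})$.

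Finally, expanding this as a power series in $s^{2}=r^{2}\cos^{2}\theta$ and integrating termwise in $\theta$ via the Beta integral $\int_{0}^{\pi/2}\cos^{2k+7}\theta\sin^{7}\theta\,d\theta=\frac{3(k+3)!}{(k+7)!}$, and using $(4)_{k}=(k+3)!/6$ together with $(8)_{k}=(k+7)!/7!$, the coefficient of $r^{2k}$ collapses cleanly to $\frac{2\pi^{8}}{7!}\cdot\frac{(\alpha)_{k}(\alpha-3)_{k}}{(8)_{k}\,k!}$, and summing yields $\frac{2\pi^{8}}{7!}\,{}_{2}F_{1}(\alpha,\alpha-3;8;r^{2})$, as required. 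The main obstacle is establishing the closed-form hypergeometric identity in the third step; if one instead mimics the proof of Proposition \ref{prop:spherical-integral-eigenvalue} verbatim, the obstacle shifts to expanding $\sin^{6}\phi=\frac{5}{16}-\frac{15}{32}\cos 2\phi+\frac{3}{16}\cos 4\phi-\frac{1}{32}\cos 6\phi$, applying the Frank--Lieb cosine integral to each of the four terms, and then verifying a four-term alternating algebraic identity showing that the resulting coefficient of $r^{2k}$ equals $\frac{15(\alpha-1)(\alpha-2)(\alpha-3)}{8(\alpha+k-1)(\alpha+k-2)(\alpha+k-3)(k+3)!\,k!}$.
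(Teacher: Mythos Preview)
Your proof is correct, and it takes a somewhat different route from the paper's. The paper's proof invokes the octonionic Funk--Hecke formula (Theorem \ref{thm:adapted-funk-hecke-formula}) at $j=k=0$; that formula packages the $\phi$-integral as a weighted sum $a_{0,0}^{0}\cos 0\phi + a_{0,0}^{1}\cos 2\phi + a_{0,0}^{2}\cos 4\phi + a_{0,0}^{3}\cos 6\phi$, so one then applies the Frank--Lieb cosine series to each term and recombines --- exactly the ``obstacle'' you describe at the end of your proposal. Your argument bypasses Theorem \ref{thm:adapted-funk-hecke-formula} altogether: you reduce to $\xi=(1,0)$ by $\operatorname{Spin}(9)$-invariance, parametrize $S^{15}$ directly, and evaluate $\int_{0}^{\pi}\sin^{6}\phi\,(1-2s\cos\phi+s^{2})^{-\alpha}d\phi$ in one stroke as $\tfrac{5\pi}{16}\,{}_{2}F_{1}(\alpha,\alpha-3;4;s^{2})$ via Euler's integral and a quadratic transformation. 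Both routes land on the same $\theta$-integral, which is handled by the Beta function. Your approach is more self-contained and cleaner; the paper's approach has the advantage of reusing the Funk--Hecke machinery already set up. One small correction: your remark that Theorem \ref{thm:adapted-funk-hecke-formula} ``does not apply directly'' is overcautious --- the proof of that theorem only uses $\operatorname{Spin}(9)$-biinvariance of the kernel, which $\Psi_{\C a}(r\xi,\eta)^{-\alpha}$ satisfies (with $r$ as a fixed parameter), and the paper does apply it this way.
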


\begin{proof}
  The proof follows similarly to the proof of Proposition \ref{prop:spherical-integral-eigenvalue} by applying Theorem \ref{thm:adapted-funk-hecke-formula} to the kernel $\Psi_{\C a}(r\xi,\eta)^{-\a}$.
  It should be pointed out that
  \[
    \Psi_{\C a}\left( (r,0),(\bar u \cos\theta,0) \right) = 1 - 2 r \cos \phi \cos \theta + r^{2} \cos^{2} \theta
  \]
  since $\Re u = \cos \phi$.
\end{proof}

  \section{Kernel Estimates}
  \label{sec:kernel-estimates}

  We recall that the heat kernel $e^{t\Delta}$ on $H_{\Q}^{m}$ is given by the following formula:
\begin{align*}
  e^{t\Delta}&=c_{m}t^{-\frac{1}{2}}e^{-(2m+1)^{2}t}\int\limits_{\rho}^{\oo}\frac{\sinh2r}{\sqrt{\cosh2r-\cosh2\rho}}\left( -\frac{1}{\sinh2r}\frac{\p}{\p{r}} \right)^{2}\left( -\frac{1}{\sinh r}\frac{\p}{\p{r}} \right)^{2m-2}e^{-\frac{r^{2}}{4t}}dr,
\end{align*}
where $c_{m}=2^{-2m+\frac{3}{2}}\pi^{-2m-\frac{1}{2}}$.
The heat kernel $e^{t\Delta}$ on $H_{\C a}$ is given by
\begin{align*}
  e^{t\Delta}&= c_{o} t^{-\frac{1}{2}} e^{-11^{2}t} \int_{\rho}^{\oo} \frac{\sinh 2r}{\sqrt{\cosh 2r - \cosh 2\rho}} \left( -\frac{1}{\sinh 2r} \frac{\p}{\p r} \right)^{4} \left( - \frac{1}{\sinh r} \frac{\p}{\p r} \right)^{4} e^{-\frac{r^{2}}{4t}}dr,
\end{align*}
where $c_{o}= 2^{-\frac{9}{2}} \pi^{-\frac{17}{2}}$.
Letting $h_{t}(\rho,2\tilde{m}+1)$ denote the heat kernel on the odd dimensional real hyperbolic space $H_{\R}^{2\tilde{m}+1}$, we recall also that
\begin{equation}
  h_{t}(\rho,2\tilde{m}+1)=b_{\tilde{m}}t^{-\frac{1}{2}}e^{-\tilde{m}^{2}t}\left( -\frac{1}{\sinh\rho}\frac{\p}{\p\rho} \right)^{\tilde{m}}e^{-\frac{\rho^{2}}{4t}},
  \label{eq:heat-kernel-on-real-hyperbolic-space}
\end{equation}
where $b_{\tilde{m}}=2^{-\tilde{m}-1}\pi^{-\tilde{m}-\frac{1}{2}}$.
See for example \cite{MR2018351} and \cite{lo} for these formulas.

It will be useful to write $e^{t\Delta}$ in terms of $h_{t}$, and this can be done as follows.
We consider $H_{\Q}^{m}$ first.
Observe that, if $\tilde{m}=2m-2$, then
\[
  e^{-(2m+1)^{2}t}=e^{(-12m+3)t}e^{-\tilde{m}^{2}t},
\]
and so
\begin{equation}
  \begin{aligned}
    e^{t\Delta}&=\frac{c_{m}}{b_{2m-2}}\int\limits_{\rho}^{\oo}\frac{\sinh2r}{\sqrt{\cosh2r-\cosh2\rho}}\left( -\frac{1}{\sinh2r}\frac{\p}{\p{r}} \right)^{2}e^{(-12m+3)t}h_{t}(r,4m-3) dr.
  \end{aligned}
  \label{eq:quaternionic-heat-kernel-in-terms-of-hyperbolic-heat-kernel}
\end{equation}
Similarly, on $H_{\C a}$, there holds (by setting $\tilde{m} = 4$)
\begin{equation}
  \begin{aligned}
    e^{t\Delta}&=\frac{c_{o}}{b_{4}}\int\limits_{\rho}^{\oo}\frac{\sinh2r}{\sqrt{\cosh2r-\cosh2\rho}}\left( -\frac{1}{\sinh2r}\frac{\p}{\p{r}} \right)^{4}e^{-105t}h_{t}(r,9) dr.
  \end{aligned}
  \label{eq:octonionic-heat-kernel-in-terms-of-hyperbolic-heat-kernel}
\end{equation}

We now recall the Bessel-Green-Riesz functions.
For the sake of notational convenience, we write
\begin{center}
  \begin{tabular}{ll}
    $k_{\zeta,\g}=\left( -\Delta-\frac{Q^{2}}{4}+\zeta^{2} \right)^{-\frac{\g}{2}}$&for $0<\g<\dim_{\R} H_{\mbF}^{m}$ and $\zeta>0$\\
    $k_{\g}=\left( -\Delta-\frac{Q^{2}}{4} \right)^{-\frac{\g}{2}}$&for $0<\g<3$
  \end{tabular}.
\end{center}
In \cite[page 1083, (iii)]{anj}, Anker and Ji established the following asymptotics for $k_{\zeta,\g}$ and $k_{\g}$:

\begin{equation}
  \begin{tabular}{ll}
    $k_{\zeta,\g}\sim\rho^{\frac{\g-2}{2}}e^{-\zeta\rho-\frac{Q}{2}\rho}$&for $\rho\geq1$\\
    $k_{\g}\sim\rho^{\g-2}e^{-\frac{Q}{2}\rho}$&for $\rho\geq1$.
  \end{tabular}.
  \label{eq:anker-ji-large-distance-asymptotics}
\end{equation}

We will need several technical lemmas to obtain small distance estimates of $k_{\zeta}$.
We state them now.
The first estimate is a small distance estimate for the Bessel-Green-Riesz kernel on the real hyperbolic space $H_{\R}^{k}$ (see \cite[Lemma 3.2]{LiLuy2}).

\begin{lemmaalph}
  \label{lemalph:liluyang-small-bessel-green-riesz-kernel-estimate}
  Let $k\geq3$ and $0<\g<3$\color{red} . \color{black}
  If $0<\rho<1$, then
  \begin{align*}
    \left( -\Delta_{H_{\R}^{k}}-\left( \frac{k-1}{2} \right)^{2} \right)^{-\frac{\g}{2}}=\frac{1}{\g_{k}(\g)}\frac{1}{\rho^{k-\g}}+O\left( \frac{1}{\rho^{k-\g-1}} \right),
  \end{align*}
  where
  \[
    \g_{k}(\g)=\frac{\pi^{\frac{k}{2}}2^{\g}\Gamma\left( \frac{\g}{2} \right)}{\Gamma\left( \frac{k-\g}{2} \right)}.
  \]
\end{lemmaalph}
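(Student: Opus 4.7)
The plan is to reduce the estimate to the known small-distance behavior of the Euclidean Riesz kernel via the subordination formula, using the explicit heat kernel formula on real hyperbolic space.

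First, I would write
\[
  k_{\gamma}(\rho) := \left( -\Delta_{H_{\R}^{k}} - \left(\tfrac{k-1}{2}\right)^{2} \right)^{-\gamma/2} = \frac{1}{\Gamma(\gamma/2)} \int_{0}^{\infty} t^{\gamma/2 - 1} e^{((k-1)/2)^{2} t}\, h_{t}(\rho, k)\, dt,
\]
where $h_{t}(\rho, k)$ is the heat kernel on $H_{\R}^{k}$. The restriction $\gamma < 3$ enters precisely to guarantee convergence of the correction integrals that will appear. For $k = 2\tilde m + 1$ odd, I would substitute the explicit formula \eqref{eq:heat-kernel-on-real-hyperbolic-space} stated in the excerpt, so that the exponential prefactor $e^{((k-1)/2)^{2}t}$ cancels $e^{-\tilde m^{2} t}$ and only the operator $(-\sinh^{-1}\rho\, \partial_{\rho})^{\tilde m}$ acting on the Gaussian $e^{-\rho^{2}/(4t)}$ remains. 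For even $k$, I would reduce to the odd case by the standard descent identity
\[
  h_{t}^{(k)}(\rho) = c_{k} \int_{\rho}^{\infty} \frac{\sinh r}{\sqrt{\cosh r - \cosh \rho}}\, h_{t}^{(k+1)}(r)\, dr
\]
and apply the same argument after integrating in the auxiliary variable.

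The heart of the argument is a comparison with the Euclidean heat kernel. Since $\sinh \rho = \rho(1 + O(\rho^{2}))$ near $\rho = 0$, expanding $(\sinh \rho)^{-1}$ and tracking the iterated derivatives $\tilde m$ times via the chain rule yields
\[
  h_{t}(\rho, k) = (4\pi t)^{-k/2} e^{-\rho^{2}/(4t)} + R_{t}(\rho),
\]
where the leading Euclidean term arises from the $\rho^{-1}$ part of $\sinh^{-1}\rho$ and the remainder $R_{t}(\rho)$ gathers contributions from the higher-order Taylor corrections of $\sinh \rho$ together with the $t$-dependent prefactors from the residual exponentials. Substituting the leading piece into the subordination integral gives
\[
  \frac{1}{(4\pi)^{k/2}\Gamma(\gamma/2)} \int_{0}^{\infty} t^{\gamma/2 - k/2 - 1} e^{-\rho^{2}/(4t)}\, dt = \frac{\Gamma((k-\gamma)/2)}{2^{\gamma}\pi^{k/2}\Gamma(\gamma/2)}\, \rho^{\gamma-k} = \frac{1}{\gamma_{k}(\gamma)\, \rho^{k-\gamma}},
\]
which is precisely the leading term.

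The main obstacle will be controlling $R_{t}(\rho)$ uniformly for $0 < \rho < 1$ so that its subordinated integral is genuinely $O(\rho^{-(k-\gamma-1)})$. Two competing issues must be balanced: the first-order Taylor correction of $\sinh \rho$ (which gives an extra factor of $\rho^{2}$ compared to the Euclidean model, producing the gain of one power of $\rho$ in the error term), and the large-$t$ behavior where the residual exponentials $e^{((k-1)/2)^{2} t - \tilde m^{2} t}$ do not decay in even dimensions. I would handle this by splitting the $t$-integral at the natural diffusion scale $t = \rho^{2}$: on $t \leq \rho^{2}$ the Gaussian controls everything and the Taylor remainder of $\sinh \rho$ produces the desired $O(\rho^{-(k-\gamma-1)})$ bound; on $t \geq \rho^{2}$, I would use the pointwise Anker--Ji large-distance estimates recorded in \eqref{eq:anker-ji-large-distance-asymptotics} to absorb the residual exponentials and show that this tail contributes only $O(1)$, which is trivially dominated by $\rho^{-(k-\gamma-1)}$ since $k - \gamma - 1 > 0$. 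The combinatorial bookkeeping of the iterated derivative $(-\sinh^{-1}\rho\, \partial_{\rho})^{\tilde m}$, rather than any deep analytic difficulty, will be the true source of tedium.
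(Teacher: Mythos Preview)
The paper does not give its own proof of this lemma; it is quoted verbatim from \cite[Lemma 3.2]{LiLuy2}. Your subordination-plus-heat-kernel approach is exactly the method used in that reference, so in that sense your proposal agrees with the original source.

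One correction to your large-$t$ tail argument: the estimates recorded in \eqref{eq:anker-ji-large-distance-asymptotics} are asymptotics for the Bessel--Green--Riesz kernel $k_{\zeta,\gamma}(\rho)$ at large geodesic distance $\rho\geq 1$, not heat-kernel bounds in the time variable, so they are not directly usable on $\{t\geq \rho^{2}\}$ with $\rho<1$. What you actually need there is the Davies--Mandouvalos/Anker--Ji heat-kernel upper bound
\[
  e^{((k-1)/2)^{2}t}\,h_{t}(\rho,k)\ \lesssim\ t^{-3/2}\qquad(t\geq 1,\ \rho\ \text{bounded}),
\]
which is integrable against $t^{\gamma/2-1}$ precisely when $\gamma<3$; this is where that hypothesis is genuinely used. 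Also, your remark that the residual exponential fails to decay in even dimensions is backwards: after descent to $H_{\R}^{k+1}$ the net exponent is $((k-1)/2)^{2}-(k/2)^{2}=-(2k-1)/4<0$, so the even case is in fact easier. With these two adjustments the sketch goes through.
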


The second lemma is an exact evaluation of a hyperbolic trigonometric integral (see \cite[Lemma 3.2]{ly5}).

\begin{lemmaalph}
  \label{lemalph:luyang-hypertrig-integral}
  Let $\b>0$ and $\rho>0$.
  Then
  \[
    \int_{\rho}^{\oo}\frac{\cosh{r}}{\left( \sinh{r} \right)^{\b}}\frac{1}{\sqrt{\cosh2r-\cosh2\rho}}dt=\frac{\Gamma\left( \frac{1}{2} \right)\Gamma\left( \frac{\b}{2} \right)}{2\sqrt{2}\Gamma\left( \frac{1+\b}{2} \right)}\frac{1}{\left( \sinh\rho \right)^{\b}}.
  \]
\end{lemmaalph}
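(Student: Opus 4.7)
The plan is to reduce the integral to a Beta function evaluation via two successive substitutions. The key observation is that the factor $\sqrt{\cosh 2r - \cosh 2\rho}$ together with $\cosh r \, dr$ is well-suited to the change of variables $u=\sinh^2 r$, because $du = \sinh 2r\, dr = 2\sinh r \cosh r\, dr$ and $\cosh 2r = 1+2\sinh^2 r$.

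First, I would substitute $u = \sinh^{2} r$. This converts $\cosh r \, dr = \frac{du}{2\sqrt{u}}$, turns $\cosh 2r - \cosh 2\rho$ into $2(u - \sinh^{2}\rho)$, and converts $(\sinh r)^{\beta}$ into $u^{\beta/2}$. The integral then becomes
\begin{equation*}
\int_{\rho}^{\infty}\frac{\cosh r}{(\sinh r)^{\beta}\sqrt{\cosh 2r - \cosh 2\rho}}\,dr
= \frac{1}{2\sqrt{2}}\int_{\sinh^{2}\rho}^{\infty}\frac{du}{u^{(\beta+1)/2}\sqrt{u-\sinh^{2}\rho}}.
\end{equation*}

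Second, I would make the substitution $v = \sinh^{2}\rho / u$, so that $u$ runs from $\sinh^{2}\rho$ to $\infty$ as $v$ runs from $1$ to $0$. A short bookkeeping computation gives $du = -\sinh^{2}\rho\, v^{-2}\, dv$, $u^{(\beta+1)/2} = \sinh^{\beta+1}\rho \cdot v^{-(\beta+1)/2}$, and $\sqrt{u-\sinh^{2}\rho} = \sinh\rho\cdot\sqrt{(1-v)/v}$. Collecting powers of $v$, the integral collapses to the Beta integral
\begin{equation*}
\frac{1}{2\sqrt{2}\,(\sinh\rho)^{\beta}}\int_{0}^{1} v^{\beta/2 - 1}(1-v)^{-1/2}\,dv
= \frac{1}{2\sqrt{2}\,(\sinh\rho)^{\beta}} B\!\left(\tfrac{\beta}{2},\tfrac{1}{2}\right).
\end{equation*}
Applying the identity $B(x,y) = \Gamma(x)\Gamma(y)/\Gamma(x+y)$ with $x=\beta/2$, $y=1/2$ yields the stated formula.

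There is no real obstacle here; the entire argument is a pair of judicious substitutions followed by recognition of the Beta function. The only thing that requires care is the bookkeeping of exponents of $v$ in the second substitution, where one has to confirm that the contributions from $u^{(\beta+1)/2}$, from $\sqrt{u-\sinh^{2}\rho}$, and from the Jacobian $du$ combine to give exactly $v^{\beta/2-1}(1-v)^{-1/2}\,dv$ and, simultaneously, the overall factor $(\sinh\rho)^{-\beta}$ advertised in the statement. Positivity of $\beta$ ensures convergence of the Beta integral at $v=0$, and convergence at $v=1$ is the standard $(1-v)^{-1/2}$ integrable singularity.
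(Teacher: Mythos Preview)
Your proof is correct. The paper does not actually prove this lemma; it cites it from \cite{ly5} (Lemma 3.2 there), so there is no in-paper argument to compare against. Your two substitutions $u=\sinh^{2}r$ and $v=\sinh^{2}\rho/u$ cleanly reduce the integral to $B(\beta/2,1/2)$, and the bookkeeping of exponents you flag as the only delicate point is indeed the only place to be careful---your computation there is accurate.
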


The last lemma pertains to controlling higher order derivatives of $\frac{r^{\b-2}}{\sinh{r}}$ for large $r$ (see also \cite[Lemma 3.1]{LiLuy2} and \cite[Corollary 5.14]{MR1469569}).

\begin{lemma}
  \label{lem:higher-order-derivatives-on-trig}
  Let $p,q\in\N_{\geq0}$ and $0<\g<3$.
  If $0<r$, then
  \[
    \left( -\frac{1}{\sinh2r}\frac{\p}{\p{r}} \right)^{q}\left( -\frac{1}{\sinh{r}} \frac{\p}{\p{r}} \right)^{q}\frac{r^{\b-2}}{\sinh{r}}\lesssim{r^{\b-2}}e^{-\left( p+2q+1 \right)r}.
  \]
\end{lemma}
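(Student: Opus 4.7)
The plan is induction on $p+q$, exploiting the asymptotics $\sinh r, \cosh r \sim \tfrac{1}{2}e^{r}$ and $\sinh 2r, \cosh 2r \sim \tfrac{1}{2}e^{2r}$ valid uniformly for $r$ bounded away from $0$. Each application of $-\tfrac{1}{\sinh r}\p_{r}$ will be shown to contribute a factor $e^{-r}$ of decay, each $-\tfrac{1}{\sinh 2r}\p_{r}$ a factor $e^{-2r}$, and the initial $\tfrac{1}{\sinh r}$ an additional $e^{-r}$, accounting exactly for the exponent $p+2q+1$ in the claimed bound.

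To organize the induction, I would establish the structural invariant
\[
\left(-\tfrac{1}{\sinh 2r}\p_{r}\right)^{q}\left(-\tfrac{1}{\sinh r}\p_{r}\right)^{p}\frac{r^{\b-2}}{\sinh r}=\sum_{i}\frac{R_{i}(r)\, S_{i}(\cosh r,\sinh r,\cosh 2r,\sinh 2r)}{(\sinh r)^{a_{i}}(\sinh 2r)^{b_{i}}},
\]
where each $R_{i}$ is a polynomial in $r$ of degree at most $\b-2$ (arising from successive differentiations of $r^{\b-2}$), and each $S_{i}$ is a polynomial in the hyperbolic functions with \emph{weighted degree} (assigning weight $1$ to $\cosh r,\sinh r$ and weight $2$ to $\cosh 2r,\sinh 2r$) satisfying $\deg_{w}(S_{i})-a_{i}-2b_{i}=-(p+2q+1)$. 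The base case $p=q=0$ is immediate with the single term $R=r^{\b-2}$, $S=1$, $a=1$, $b=0$. For the inductive step one applies the Leibniz rule: differentiating a typical summand and dividing by $\sinh r$ (respectively $\sinh 2r$) produces three contributions, corresponding to $\p_{r}$ hitting $R_{i}$, $S_{i}$, or the denominator. In each case, the weighted degree of the new $S$ increases by at most $1$ (respectively $2$) while the new denominator exponent $a+2b$ increases by exactly $1$ (respectively $2$) plus the amount by which $\deg_{w}S$ increased, so the invariant $\deg_{w}(S)-a-2b$ decreases by exactly $1$ (respectively $2$), matching the increment of $p+2q+1$.

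With the structural invariant in hand, the bound follows at once: for $r\geq 1$, $|R_{i}(r)|\lesssim r^{\b-2}$ and
\[
\frac{|S_{i}|}{(\sinh r)^{a_{i}}(\sinh 2r)^{b_{i}}}\lesssim e^{(\deg_{w}S_{i}-a_{i}-2b_{i})r}=e^{-(p+2q+1)r}.
\]
The main obstacle is purely combinatorial, namely the careful bookkeeping that the factors of $\cosh r$ or $\cosh 2r$ produced when $\p_{r}$ lands on the denominator are exactly balanced, in weighted degree, by the extra power of $\sinh r$ or $\sinh 2r$ introduced in the denominator; no delicate cancellations are required. The estimate is implicit for $r$ bounded away from $0$, which is the regime in which the lemma is used to produce the large-distance asymptotics for the heat and Bessel-Green-Riesz kernels in Section \ref{sec:kernel-estimates}.
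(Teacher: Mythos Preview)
Your argument is correct and arrives at the same bound, but the paper takes a different and shorter route. Instead of tracking a weighted-degree invariant on polynomials in $\cosh r,\sinh r,\cosh 2r,\sinh 2r$, the paper simply expands
\[
\frac{1}{\sinh r}=\frac{2e^{-r}}{1-e^{-2r}}=2\sum_{j\geq 0}e^{-(2j+1)r},
\]
and differentiates term by term: applying $-\tfrac{1}{\sinh r}\p_{r}$ to a term $r^{\b-2}e^{-kr}$ yields a finite linear combination of terms $r^{\b-2-\ell}e^{-(k+2j+1)r}$, so after $p$ such applications the leading exponential is $e^{-(p+1)r}$; a further $q$ applications of $-\tfrac{1}{\sinh 2r}\p_{r}$, using the analogous expansion $1/\sinh 2r=2\sum_{j}e^{-(4j+2)r}$, push the leading exponential down to $e^{-(p+2q+1)r}$. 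This series argument replaces your structural invariant by an explicit termwise calculation, and avoids any need to bookkeep numerator polynomials in the hyperbolic functions.

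Two minor remarks on your write-up. First, your phrase ``polynomial in $r$ of degree at most $\b-2$'' is slightly imprecise since $\b$ need not be an integer; what you mean (and what the induction actually produces) is that each $R_{i}(r)$ is a constant multiple of $r^{\b-2-k}$ for some nonnegative integer $k$, whence $|R_{i}(r)|\lesssim r^{\b-2}$ for $r\geq 1$. Second, you correctly observe that the estimate is only used, and only holds with a uniform constant, for $r$ bounded away from $0$; the paper applies the lemma solely in the regime $r\geq 1$.
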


\begin{proof}
  Using
  \[
    \frac{1}{\sinh r} = \frac{2e^{-r}}{1 - e^{-2r}} = 2 \sum_{j=0}^{\oo} e^{-(2j+1)r},
  \]
  it is easy to see that
  \[
    \left( -\frac{1}{\sinh r} \frac{\p}{\p r} \right)^{p} \frac{r^{\b-2}}{\sinh r} \sim r^{\b-2} \left[ e^{-(p+1)r} + e^{-(p+3)r} + \cdots \right] ,
  \]
  and, similarly, that
  \[
    \left( -\frac{1}{\sinh 2r} \frac{\p}{\p r} \right)^{q} \left( -\frac{1}{\sinh r} \frac{\p}{\p r} \right)^{p} \frac{r^{\b-2}}{\sinh r} \lesssim r^{\b-2} e^{-(2q+p+1)r},
  \]
  as desired.
\end{proof}

  In the following subsections, we will prove various kernel estimates for $k_{\g}$, $k_{\zeta,\g'}$, $k_{\g}*k_{\zeta,\g'}$ and $k_{\g}*k_{\zeta,\g'}*f$ for smooth compactly supported function on $\B_{\Q}^{m}$ and $\B_{\C a}$.
  Along with the Fourier analysis on symmetric spaces (i.e., the Plancherel theorem and Kunze-Stein phenomenon) and factorization, these estimates form the ingredients of the proofs of the Poincar\'e-Sobolev and Hardy-Sobolev-Maz'ya inequalities on $H_{\Q}^{m}$ and $H_{\C a}$.

  \subsection{Convolution Estimates}

  In order to prove the kernel estimates, we will need asymptotics of certain convolutions.
  This is contained in Lemmas \ref{lem:convolution-estimates}, \ref{lem:convolution-estimates-octonionic}, \ref{lem:convolution-estimates-large-distance} and \ref{lem:convolution-estimates-large-distance-octonionic} below.
  Due to the appearance of $\Psi_{\C a}$ in the automorphisms on $\B_{\C a}$, separate considerations are needed for $\B_{\C a}$ and so we state the convolution estimates for $\B_{\Q}^{m}$ and $\B_{\C a}$ separately.
  We mention that, when compared to the complex hyperbolic setting, the hypothesis $\la_{1}+\la_{2}>\g+\g'-4m+2$ differs from the reasonably expected $\la_{1}+\la_{2}>\g+\g'-4m$, and this has to do with the higher dimensional center of $\H_{\Q}^{m-1}$.
  This is similar for the corresponding hypothesis in Lemma \ref{lem:convolution-estimates-octonionic} for $H_{\C a}$.

  We will need the following convolution integral on Euclidean space (see \cite{s}).
  \begin{lemmaalph}
    \label{lemalph:euclidean-convolution-integral}
    For $0<\g,\g'<k$ and $0<\g+\g'<k$, there holds
    \begin{equation}
      \begin{aligned}
	\int\limits_{\R^{k}}|x|^{\g-k}|y-x|^{\g'-k}dx=\frac{\g_{k}(\g)\g_{k}(\g')}{\g_{k}(\g+\g')}|y|^{\g+\g'-k}.
      \end{aligned}
    \end{equation}
    where
    \[
      \g_{k}(\g)=\frac{\pi^{\frac{k}{2}}2^{\g}\Gamma\left( \frac{\g}{2} \right)}{\Gamma\left( \frac{k-\g}{2} \right)}.
    \]
  \end{lemmaalph}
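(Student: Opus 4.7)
The plan is to invoke the classical Riesz composition identity. First, a scaling argument $x \mapsto |y|x$ combined with the rotational invariance of Lebesgue measure shows that the left-hand side is a constant multiple of $|y|^{\g+\g'-k}$; that is,
\[
  \int_{\R^{k}} |x|^{\g-k}|y-x|^{\g'-k}dx = C(\g,\g',k)|y|^{\g+\g'-k},
\]
provided the hypotheses $0<\g,\g'<k$ and $0<\g+\g'<k$ guarantee absolute convergence of the integral near the singularities at $0$ and $y$ and at infinity. So the work reduces to identifying the constant $C(\g,\g',k)$.

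Second, to evaluate $C$, I would use the Fourier transform of the Riesz kernel. Set $I_{\g}(x) = \g_{k}(\g)^{-1}|x|^{\g-k}$. The normalization $\g_{k}(\g)=\pi^{k/2}2^{\g}\Gamma(\g/2)/\Gamma((k-\g)/2)$ is chosen precisely so that, as a tempered distribution on $\R^{k}$, one has $\widehat{I_{\g}}(\xi)=|\xi|^{-\g}$ (this is the standard computation of the Fourier transform of a homogeneous distribution of degree $\g-k$; see, e.g., Stein, \emph{Singular Integrals}, Ch. V). Under the assumptions $0<\g,\g'<k$ and $0<\g+\g'<k$, the convolution $I_{\g}\ast I_{\g'}$ is well defined as a locally integrable function, and multiplying the distributional Fourier transforms gives
\[
  \widehat{I_{\g}\ast I_{\g'}}(\xi)=|\xi|^{-\g}|\xi|^{-\g'}=|\xi|^{-(\g+\g')}=\widehat{I_{\g+\g'}}(\xi).
\]
Inverting the Fourier transform yields the Riesz composition identity $I_{\g}\ast I_{\g'}=I_{\g+\g'}$, which, after clearing the normalizing constants, is exactly the claimed equality with $C(\g,\g',k)=\g_{k}(\g)\g_{k}(\g')/\g_{k}(\g+\g')$.

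The only slightly delicate point is the justification of the distributional product-to-convolution passage, but this is standard and the normalization $\g_{k}(\g)$ has been tuned to eliminate any spurious constants. Since the identity is entirely classical, an acceptable alternative is simply to quote it from Stein's reference; no new ideas beyond scaling, rotational symmetry, and the Fourier transform of the Riesz potential are required.
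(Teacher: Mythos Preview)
Your proposal is correct; the paper itself does not prove this lemma but simply cites it as a classical result from Stein's \emph{Singular Integrals} (the reference \cite{s}), which is exactly the alternative you mention at the end. Your Fourier-analytic sketch is the standard derivation found there, so there is nothing to add.
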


  We may now state the main convolution estimate lemma for small distances.
  \begin{lemma}
    Let $0<\g<4m$, $0<\g'<4m$, and $\la_{1}+\la_{2}>\g+\g'-4m+2$.
    If $0<\g+\g'<4m-1$ and $0<\rho<1$, then on $\B_{\Q}^{m}$ there holds
    \begin{equation}
      \begin{aligned}
	\frac{1}{\left( \sinh\rho \right)^{4m-\g}\left( \cosh\rho \right)^{\la_{1}}}*\frac{1}{\left( \sinh\rho \right)^{4m-\g'}\left( \cosh\rho \right)^{\la_{2}}}\leq\frac{\g_{4m}(\g)\g_{4m}(\g')}{\g_{4m}(\g+\g')}\frac{1}{\rho^{4m-\g-\g'}}+O\left( \frac{1}{\rho^{4m-\g-\g'-1}} \right)
      \end{aligned}.
    \end{equation}
    If $4m-1\leq\g+\g'<4m$, $0<\e<4m-\g-\g'$ and $0<\rho<1$, then on $\B_{\Q}^{m}$ there holds
    \begin{equation}
      \begin{aligned}
	\frac{1}{\left( \sinh\rho \right)^{4m-\g}\left( \cosh\rho \right)^{\la_{1}}}*\frac{1}{\left( \sinh\rho \right)^{4m-\g'}\left( \cosh\rho \right)^{\la_{2}}}\leq\frac{\g_{4m}(\g)\g_{4m}(\g')}{\g_{4m}(\g+\g')}\frac{1}{\rho^{4m-\g-\g'}}+O\left( \frac{1}{\rho^{4m-\g-\g'- \e}} \right)
      \end{aligned}.
    \end{equation}
    \label{lem:convolution-estimates}
  \end{lemma}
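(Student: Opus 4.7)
The plan is to convert the convolution on $\B_\Q^m$ into an integral over a Euclidean region and reduce the estimate to the Euclidean Riesz convolution in Lemma \ref{lemalph:euclidean-convolution-integral}. Using Lemma \ref{lemalph:automorphism-properties}(iv), there holds
\[
\sinh \rho(\varphi_w(z)) = \sqrt{\frac{|z-w|^2 + |\gen{z,w}_{\Q}|^2 - |z|^2 |w|^2}{(1-|z|^2)(1-|w|^2)}}, \quad \cosh\rho(\varphi_w(z)) = \frac{|1-\gen{z,w}_{\Q}|}{\sqrt{(1-|z|^2)(1-|w|^2)}},
\]
while $\sinh\rho(w) = |w|/\sqrt{1-|w|^2}$, $\cosh\rho(w) = 1/\sqrt{1-|w|^2}$, and $dV(w) = dw/(1-|w|^2)^{2m+2}$. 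Substituting, the left-hand side of the desired estimate becomes an explicit integral in $w\in\B_\Q^m\subset\R^{4m}$ that can be analyzed directly.

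First, I split the domain into the inner region $\{|w|\leq 1/2\}$ and the outer region $\{|w|>1/2\}$. On the outer region the exponents on $1-|w|^2$ and $1-|\varphi_w(z)|^2$ coming from the $\cosh^{\lambda_i}$ factors and the volume form combine, under the hypothesis $\lambda_1+\lambda_2>\gamma+\gamma'-4m+2$, to a uniformly convergent integral. Since $\rho<1$, this bounded contribution is absorbed into the $O(\rho^{-(4m-\gamma-\gamma'-1)})$ or $O(\rho^{-(4m-\gamma-\gamma'-\epsilon)})$ remainder.

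Next, on the inner region, the correction factors $(1-|z|^2)^{-\alpha}$, $(1-|w|^2)^{-\beta}$ and $|1-\gen{z,w}_{\Q}|^{-\mu}$ are all smooth in $(z,w)$ with Taylor expansions of the form $1 + O(|z|^2+|w|^2+|z||w|)$. Similarly, $\sqrt{|z-w|^2 + |\gen{z,w}_{\Q}|^2 - |z|^2|w|^2} = |z-w|(1+O(|z|^2+|w|^2))$ in this region, so the principal part of the integrand reduces to $|z-w|^{\gamma-4m}|w|^{\gamma'-4m}$. Extending this integral to all of $\R^{4m}$ (the discarded part is uniformly bounded) and applying Lemma \ref{lemalph:euclidean-convolution-integral} yields the main term $\frac{\gamma_{4m}(\gamma)\gamma_{4m}(\gamma')}{\gamma_{4m}(\gamma+\gamma')}|z|^{\gamma+\gamma'-4m}$, and one uses $|z|\asymp\rho$ for small $\rho$ to convert to the claimed right-hand side.

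Finally, the error analysis. Each Taylor correction brings an extra factor of $|w|^2$ or $|z-w|^2$, reducing the Riesz singularity and giving integrals of the form $\int_{\R^{4m}} |z-w|^{\gamma+2-4m}|w|^{\gamma'-4m}\,dw$ (or with the roles swapped). When $\gamma+\gamma'<4m-1$, the improved exponent $\gamma+\gamma'+2-4m$ remains in the admissible range of Lemma \ref{lemalph:euclidean-convolution-integral}, producing the desired $O(\rho^{-(4m-\gamma-\gamma'-1)})$. In the borderline range $4m-1\leq\gamma+\gamma'<4m$, this correction integral is no longer controlled uniformly at the origin, and one must interpolate against the leading bound, losing an arbitrary $\epsilon>0$ in the exponent. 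The main obstacle is the careful bookkeeping of the non-Euclidean angular factor $|\gen{z,w}_{\Q}|^2 - |z|^2 |w|^2$ inside $\sinh\rho(\varphi_w(z))$---which prevents an immediate rotation-invariant reduction---together with the borderline case, where the marginal Riesz gain of one order must be traded for the $\epsilon$-loss in the exponent.
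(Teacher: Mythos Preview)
Your approach is essentially the same as the paper's: expand the convolution via Lemma~\ref{lemalph:automorphism-properties}(iv), split the integration domain at radius $1/2$, use the hypothesis $\lambda_1+\lambda_2>\gamma+\gamma'-4m+2$ to make the outer piece uniformly bounded, and Taylor-expand the inner integrand down to the Euclidean Riesz kernel handled by Lemma~\ref{lemalph:euclidean-convolution-integral}. One small imprecision: the Taylor corrections contribute factors of order $|w|$ (or $|z|$), not $|w|^2$ or $|z-w|^2$; in particular the paper shows the angular term satisfies $|z-w|^2+|\gen{z,w}_\Q|^2-|z|^2|w|^2=|z-w|^2(1+O(|z|^2))$ and the remaining factors give $1+O(|z|)$, so the error integral is $\int |z|^{\gamma+1-4m}|z-w|^{\gamma'-4m}\,dz$, which is precisely why the threshold is $\gamma+\gamma'<4m-1$ rather than $4m-2$.
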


  \begin{proof}

    By Lemma \ref{lemalph:automorphism-properties} item (iv), and by $dV=\frac{dz}{(1-|z|^{2})^{2m+2}}$, we compute
    \begin{align*}
      &\frac{1}{\left( \sinh\rho \right)^{4m-\g}\left( \cosh\rho \right)^{\la_{1}}}*\frac{1}{\left( \sinh\rho \right)^{4m-\g'}\left( \cosh\rho \right)^{\la_{2}}}\\
      &=\int\limits_{\B_{\Q}^{m}}\left( \frac{\sqrt{1-|z|^{2}}}{|z|} \right)^{4m-\g} \left( 1-|z|^{2} \right)^{\frac{\la_{1}}{2}} \left( \frac{(1-|w|^{2})(1-|z|^{2})}{|z-w|^{2}+|\gen{z,w}_{\Q}|^{2}-|z|^{2}|w|^{2}} \right)^{\frac{4m-\g'}{2}}\\
      &\times\left( \frac{(1-|w|^{2})(1-|z|^{2})}{|1-\gen{z,w}_{\Q}|^{2}} \right)^{\frac{\la_{2}}{2}}\frac{dz}{(1-|z|^{2})^{2m+2}}\\
      &=\left( 1-|w|^{2} \right)^{\frac{4m-\g'+\la_{2}}{2}}\int\limits_{\B_{\Q}^{m}}\frac{1}{|z|^{4m-\g}} \left( \frac{1}{|z-w|^{2}+|\gen{z,w}_{\Q}|^{2}-|z|^{2}|w|^{2}} \right)^{\frac{4m-\g'}{2}}\\
      &\times\frac{1}{|1-\gen{z,w}_{\Q}|^{\la_{2}}}\frac{1}{(1-|z|^{2})^{\frac{4+\la+\la'-4m-\la_{1}-\la_{2}}{2}}}dz\\
      &=\left( \cosh\rho(w) \right)^{-\left( 4m-\g'+\la_{2} \right)}\left( A_{5}+A_{6} \right),
    \end{align*}
    where
    \[
      A_{5}=\int\limits_{ \left\{ |z|<\frac{1}{2} \right\}}\cdots\text{ and }A_{6}=\int\limits_{ \left\{ \frac{1}{2}\leq|z|<1 \right\}}\cdots.
    \]
    Note that, when $\rho(w)<1$ and $|z|\leq\frac{1}{2}$, there holds
    \begin{align*}
      |1-\gen{z,w}_{\Q}|^{\la_{2}}\left( 1-|z|^{2} \right)^{\frac{4+\g+\g'-4m-\la_{1}-\la_{2}}{2}}=1+O\left( |z| \right).
    \end{align*}
    On the other hand, there holds
    \begin{align*}
      |\gen{z,w}_{\Q}|^{2}-|z|^{2}|w|^{2}&=||z|^{2}+\gen{z,w-z}_{\Q}|^{2}-|z|^{2}|w-z+z|^{2}\\
      &=|\gen{z,w-z}_{\Q}|^{2}-|z|^{2}|w-z|^{2}\\
      &=|z|^{2}|w-z|^{2}\left[ \left|\gen{\frac{z}{|z|},\frac{w-z}{|w-z|}}_{\Q}\right|-1 \right],
    \end{align*}
    and so
    \begin{align*}
      |z-w|^{2}+|\gen{z,w}_{\Q}|^{2}-|z|^{2}|w|^{2}&=|z-w|^{2}\left[ 1+|z|^{2}\left[ \left| \gen{\frac{z}{|z|},\frac{w-z}{|w-z|}}_{\Q} \right|^{2}-1 \right] \right]\\
      &=|z-w|^{2}\left( 1+O\left( |z|^{2} \right) \right).
    \end{align*}

    Since $0<\g+\g'<4m-1$, we may use Lemma \ref{lemalph:euclidean-convolution-integral} to compute
    \begin{align*}
      A_{5}&=\int\limits_{ \left\{ |z|\leq\frac{1}{2} \right\}}\frac{1}{|z|^{4m-\g}}\frac{1}{|z-w|^{4m-\g'}}\left( 1+O\left( |z| \right) \right)dz\\
      &\leq\int_{\R^{4m}}\frac{1}{|z|^{4m-\g}}\frac{1}{|z-w|^{4m-\g'}}dz+O\left( \int_{\R^{M}}\frac{1}{|z|^{4m-\g-1}}\frac{1}{|z-w|^{4m-\g'}}dz \right)\\
      &=\frac{\g_{4m}(\g)\g_{4m}(\g')}{\g_{4m}(\g+\g')}\frac{1}{|w|^{4m-\g-\g'}}+O\left( \frac{1}{|w|^{4m-\g-\g'-1}} \right).
    \end{align*}
    Similarly, if $0<\e<4m-\g-\g'$, we obtain
    \[
      A_{5}=\frac{\g_{4m}(\g)\g_{4m}(\g')}{\g_{4m}(\g+\g')}\frac{1}{|w|^{4m-\g-\g'}}+O\left( \frac{1}{|w|^{4m-\g-\g'-\e}} \right).
    \]

    We are left with estimating $A_{6}$: since
    \[
      \frac{4+\g+\g'-4m-\la_{1}-\la_{2}}{2}<1
    \]
    is equivalent to
    \[
      \g+\g'-4m+2<\la_{1}+\la_{2}
    \]
    we find
    \begin{align*}
      A_{6}&=\int\limits_{ \left\{ \frac{1}{2}\leq|z|\leq1 \right\} } \frac{1}{|z|^{4m-\g}} \left( \frac{1}{|z-w|^{2}+|\gen{z,w}_{\Q}|^{2}-|z|^{2}|w|^{2}} \right)^{\frac{4m-\g'}{2}}\\
      &\times\frac{1}{|1-\gen{z,w}_{\Q}|^{\la_{2}}} \frac{1}{\left( 1-|z|^{2} \right)^{\frac{4+\g+\g'-4m-\la_{1}-\la_{2}}{2}}}dz\\
      &\sim \int\limits_{ \left\{ \frac{1}{2}\leq|z|\leq1 \right\} } \frac{1}{\left( 1-|z|^{2} \right)^{\frac{4+\g+\g'-4m-\la_{1}-\la_{2}}{2}}}dz\\
      &\sim \int\limits_{\frac{1}{2}}^{1} \frac{r}{\left( 1-r^{2} \right)^{\frac{4+\g+\g'-4m-\la_{1}-\la_{2}}{2}}} dr\\
      &<\oo.
    \end{align*}

    In conclusion, since $\cosh{r}\sim1$ in as $r\to0$, we find
    \begin{align*}
      \frac{1}{\left( \sinh\rho \right)^{4m-\g}\left( \cosh\rho \right)^{\la_{1}}}&*\frac{1}{\left( \sinh\rho \right)^{4m-\g'}\left( \cosh\rho \right)^{\la_{2}}}\\
      &\leq\frac{\g_{4m}(\g)\g_{4m}(\g')}{\g_{4m}(\g+\g')}\frac{1}{|w|^{4m-\g-\g'}}+O\left( \frac{1}{|w|^{4m-\g-\g'-1}} \right),
    \end{align*}
    and the result follows since
    \[
      \rho(w)=\frac{1}{2}\log\frac{1+|w|}{1-|w|}=|w|+O\left( |w|^{3} \right)
    \]
    as $|w|\to0$.
  \end{proof}

  \begin{lemma}
    Let $0<\g<16$, $0<\g'<16$, and $\la_{1}+\la_{2}>\g+\g'-10$.
    If $0<\g+\g'<15$ and $0<\rho<1$, then on $\B_{\C a}$ there holds
    \begin{equation}
      \begin{aligned}
	\frac{1}{\left( \sinh\rho \right)^{16-\g}\left( \cosh\rho \right)^{\la_{1}}}*\frac{1}{\left( \sinh\rho \right)^{16-\g'}\left( \cosh\rho \right)^{\la_{2}}}\leq\frac{\g_{16}(\g)\g_{16}(\g')}{\g_{16}(\g+\g')}\frac{1}{\rho^{16-\g-\g'}}+O\left( \frac{1}{\rho^{15-\g-\g'}} \right)
      \end{aligned}.
    \end{equation}
    If $15\leq\g+\g'<16$, $0<\e<16-\g-\g'$ and $0<\rho<1$, then on $\B_{\C a}$ there holds
    \begin{equation}
      \begin{aligned}
	\frac{1}{\left( \sinh\rho \right)^{16-\g}\left( \cosh\rho \right)^{\la_{1}}}*\frac{1}{\left( \sinh\rho \right)^{16-\g'}\left( \cosh\rho \right)^{\la_{2}}}\leq\frac{\g_{16}(\g)\g_{16}(\g')}{\g_{16}(\g+\g')}\frac{1}{\rho^{16-\g-\g'}}+O\left( \frac{1}{\rho^{16-\g-\g'- \e}} \right)
      \end{aligned}.
    \end{equation}
    \label{lem:convolution-estimates-octonionic}
  \end{lemma}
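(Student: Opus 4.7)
The plan is to mimic the proof of Lemma \ref{lem:convolution-estimates}, with the substitutions dictated by passing from $\B_{\Q}^{m}$ to $\B_{\C a}$. Using Lemma \ref{lemalph:automorphism-properties-octonionic}(ii), (iv) together with the volume element $dV = dx/(1-|x|^{2})^{12}$, I would first rewrite the convolution as
\begin{align*}
  &\frac{1}{(\sinh\rho)^{16-\g}(\cosh\rho)^{\la_{1}}}*\frac{1}{(\sinh\rho)^{16-\g'}(\cosh\rho)^{\la_{2}}}(w)\\
  &=(\cosh\rho(w))^{-(16-\g'+\la_{2})}\int_{\B_{\C a}}\frac{1}{|z|^{16-\g}}\cdot\frac{1}{\bigl(\Psi_{\C a}(z,w)-(1-|z|^{2})(1-|w|^{2})\bigr)^{(16-\g')/2}}\\
  &\qquad\times\frac{1}{\Psi_{\C a}(z,w)^{\la_{2}/2}}\cdot\frac{1}{(1-|z|^{2})^{(\g+\g'-8-\la_{1}-\la_{2})/2}}\,dz,
\end{align*}
then split this as $A_{5}+A_{6}$ with $A_{5}$ ranging over $\{|z|<1/2\}$ and $A_{6}$ over $\{1/2\le|z|<1\}$.

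For $A_{5}$, for $|z|<1/2$ and $\rho(w)<1$ (hence $|w|$ bounded away from $1$), I would first use $\Psi_{\C a}(0,w)=1$ and continuity to write $\Psi_{\C a}(z,w)^{\la_{2}/2}(1-|z|^{2})^{-(\g+\g'-8-\la_{1}-\la_{2})/2}=1+O(|z|)$. The key step is to show
\[
  \Psi_{\C a}(z,w)-(1-|z|^{2})(1-|w|^{2})=|z-w|^{2}+\bigl(\Phi_{\C a}(z,w)-|z|^{2}|w|^{2}\bigr)=|z-w|^{2}\bigl(1+O(|z|^{2})\bigr);
\]
this I would establish using the explicit identity $\Phi_{\C a}(z,w)-|z|^{2}|w|^{2}=2\Re((z_{1}z_{2})\overline{(w_{1}w_{2})})-|z_{1}|^{2}|w_{2}|^{2}-|z_{2}|^{2}|w_{1}|^{2}$, substituting $w=z+(w-z)$, and noting the sum telescopes in a manner analogous to the quaternionic computation $|\gen{z,w}_{\Q}|^{2}-|z|^{2}|w|^{2}=|z|^{2}|w-z|^{2}[|\gen{z/|z|,(w-z)/|w-z|}_{\Q}|^{2}-1]$. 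With these expansions, Lemma \ref{lemalph:euclidean-convolution-integral} in dimension $k=16$ yields
\[
  A_{5}\le\frac{\g_{16}(\g)\g_{16}(\g')}{\g_{16}(\g+\g')}\frac{1}{|w|^{16-\g-\g'}}+O\Bigl(\tfrac{1}{|w|^{16-\g-\g'-1}}\Bigr),
\]
and similarly with error $O(|w|^{-(16-\g-\g'-\e)})$ under the more restrictive range $15\le\g+\g'<16$.

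For $A_{6}$, the integrand is bounded by a constant multiple of $(1-|z|^{2})^{-(\g+\g'-8-\la_{1}-\la_{2})/2}$; the hypothesis $\la_{1}+\la_{2}>\g+\g'-10$ is exactly what makes the exponent strictly less than $1$, so integration in polar coordinates over $1/2\le r<1$ gives a finite bound uniform in $w$. Finally I would convert the $|w|$ bound into a $\rho(w)$ bound through $\rho(w)=|w|+O(|w|^{3})$ as $|w|\to 0$, and use $\cosh\rho(w)=1+O(\rho(w)^{2})$ to absorb the leading prefactor.

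The main obstacle is the key expansion $\Psi_{\C a}(z,w)-(1-|z|^{2})(1-|w|^{2})=|z-w|^{2}(1+O(|z|^{2}))$, since non-associativity of the octonions prevents the clean quadratic telescoping enjoyed by $\gen{\cdot,\cdot}_{\Q}$. If the direct algebraic expansion proves stubborn, a safe fallback is the geometric observation that the left-hand side equals $\sinh^{2}(d_{H_{\C a}}(z,w))(1-|z|^{2})(1-|w|^{2})$ by Lemma \ref{lemalph:automorphism-properties-octonionic}(iv), so the estimate reduces to the equivalence of hyperbolic and Euclidean distances on compact subsets of $\B_{\C a}$, which in turn follows from the smoothness of the Beltrami--Klein metric at the origin; this route avoids octonionic gymnastics entirely while giving exactly the $(1+O(|z|^{2}))$ factor needed.
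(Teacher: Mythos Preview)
Your proposal is correct and follows essentially the same route as the paper: the same rewriting of the convolution via Lemma~\ref{lemalph:automorphism-properties-octonionic}, the same split into $A_{5}'$ and $A_{6}'$, the same treatment of $A_{6}'$ using the hypothesis $\la_{1}+\la_{2}>\g+\g'-10$, and the same conversion $|w|\leftrightarrow\rho(w)$ at the end.

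The one point of difference is precisely the step you flag as the main obstacle, namely the expansion $\Psi_{\C a}(z,w)-(1-|z|^{2})(1-|w|^{2})=|z-w|^{2}(1+O(|z|^{2}))$. The paper does not attempt either of your two routes. Instead it observes that the convolution of two radial functions is radial, so by isometry invariance one may take $w=(c,0)$ with $c\in\R$; for such $w$ one computes directly $\Phi_{\C a}(z,w)-|z|^{2}|w|^{2}=-c^{2}|z_{2}|^{2}$, and since $|z_{2}|^{2}\le|z-w|^{2}$ the ratio $(\Phi_{\C a}(z,w)-|z|^{2}|w|^{2})/|z-w|^{2}$ is bounded in absolute value by $c^{2}=|w|^{2}$. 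This sidesteps the non-associativity entirely with a two-line computation. Your fallback via smoothness of the metric is correct in spirit and would ultimately yield the same bound, but making the error term quantitative requires essentially a coordinate computation anyway, so the paper's reduction is the more efficient path. Your primary approach via algebraic telescoping is, as you suspect, awkward here; the paper does not pursue it.
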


  \begin{proof}
    By Lemma \ref{lemalph:automorphism-properties-octonionic} item (iv), and by $dV=\frac{dz}{(1-|z|^{2})^{12}}$, we compute
    \begin{align*}
      &\frac{1}{\left( \sinh\rho \right)^{16-\g}\left( \cosh\rho \right)^{\la_{1}}}*\frac{1}{\left( \sinh\rho \right)^{16-\g'}\left( \cosh\rho \right)^{\la_{2}}}\\
      &=\int\limits_{\B_{\Q}^{m}}\left( \frac{\sqrt{1-|z|^{2}}}{|z|} \right)^{16-\g} \left( 1-|z|^{2} \right)^{\frac{\la_{1}}{2}} \left( \frac{(1-|w|^{2})(1-|z|^{2})}{\Psi_{\C a}(z,w) - (1-|z|^{2})(1-|w|^{2})} \right)^{\frac{16-\g'}{2}}\\
      &\times\left( \frac{(1-|w|^{2})(1-|z|^{2})}{\Psi_{\C a}(z,w)} \right)^{\frac{\la_{2}}{2}}\frac{dz}{(1-|z|^{2})^{12}}\\
      &=\left( 1-|w|^{2} \right)^{\frac{16-\g'+\la_{2}}{2}}\int\limits_{\B_{\Q}^{m}}\frac{1}{|z|^{16-\g}} \left( \frac{1}{\Psi_{\C a}(z,w) - (1-|z|^{2})(1-|w|^{2})} \right)^{\frac{16-\g'}{2}}\\
      &\times\frac{1}{\Psi_{\C a}(z,w)^{\frac{\la_{2}}{2}}}\frac{1}{(1-|z|^{2})^{\frac{ \g + \g' -\la_{1} - \la_{2} - 8}{2}}}dz\\
      &=\left( \cosh\rho(w) \right)^{-\left( 16-\g'+\la_{2} \right)}\left( A_{5}'+A_{6}' \right),
    \end{align*}
    where
    \[
      A_{5}'=\int\limits_{ \left\{ |z|<\frac{1}{2} \right\}}\cdots\text{ and }A_{6}'=\int\limits_{ \left\{ \frac{1}{2}\leq|z|<1 \right\}}\cdots.
    \]
    Note that, when $\rho(w)<1$ and $|z|\leq\frac{1}{2}$, there holds
    \begin{align*}
      \Psi_{\C a}(z,w)^{\frac{\la_{2}}{2}}\left( 1-|z|^{2} \right)^{\frac{\g+\g'-\la_{1}-\la_{2}-8}{2}}=1+O\left( |z| \right).
    \end{align*}
    Next, we have
    \begin{align*}
      \Psi_{\C a}(z,a) - (1-|z|^{2})(1-|w|^{2}) &= \Phi_{\C a}(z,a) - 2 \langle{z,a}\rangle_{\R} + |z|^{2} + |a|^{2} - |z|^{2}|a|^{2}\\
      &= \Phi_{\C a}(z,a) + |z-a|^{2} - |z|^{2} |a|^{2} \\
      &= |z-a|^{2} \left( 1 + \frac{\Phi_{\C a}(z,a) - |z|^{2}|a|^{2}}{|z-a|^{2}} \right).
    \end{align*}
    Moreover, it is not hard to see that
    \[
      \frac{\Phi_{\C a}(z,w) - |z|^{2}|w|^{2}}{|z-w|^{2}} = O(|z|^{2}).
    \]
    Indeed, using invariance of distance $\rho$, we can assume $w = (w_{1},w_{2})$ with $\Re w_{1} = c \in \R$ and all other components are zero.
    Then $\Phi_{\C a}(z,a) - |z|^{2}|a|^{2} = -c^{2}|z_{2}|^{2}$ and clearly $|z_{2}|^{2} / |z-a|^{2}$ is bounded as $z\to a$.
    Therefore, using also that $\Phi_{\C a}(z,w) \leq |z|^{2} |w|^{2}$, we  obtain
    \begin{align*}
      \Psi_{\C a}(z,a) - (1-|z|^{2})(1-|w|^{2}) &= |z-a|^{2} \left( 1 + O(|z|^{2}) \right).
    \end{align*}
    The remainder of the proof is analogous to the proof of Lemma \ref{lem:convolution-estimates} and is thus omitted.

  \end{proof}

  Next, we will state and prove the main convolution lemma for large distances.
  In preparation, we recall some properties and definitions of certain special functions.
  First, recall the generalized hypergeometric function
  \[
    {}_{p}F_{q}\left( a_{1},\ldots,a_{p};b_{1},\ldots,b_{q};z \right) = \sum_{k=0}^{\oo}\frac{(a_{1})_{k} \cdots (a_{p})_{k}}{(b_{1})_{k} \cdots (n_{q})_{k}}\frac{z^{k}}{k!}.
  \]
  Second, we also recall the following hypergeometric integral (see \cite[Equation 7.512.5]{gr}): supposing the complex parameters $\a,\b,\g,\rho,\sigma$ satisfy
  \[
    \Re\rho>0, \quad \Re\sigma>0, \quad \Re(\g+\sigma-\a-\b)>0,
  \]
  there holds
  \begin{equation}
    \int_{0}^{1} x^{\rho-1} (1-x)^{\sigma-1} {}_{2}F_{1}(\a,\b;\g;x)dx = \frac{\Gamma(\rho)\Gamma(\sigma)}{\Gamma(\rho+\sigma)} {}_{3}F_{2}(\a,\b,\rho;\g,\rho+\sigma;1).
    \label{eq:hypergeometric-integral}
  \end{equation}

  \begin{lemma}
    Let $0<\g<4m$, $0<\g'<4m$, and $\la_{1}+\la_{2}>\g+\g'-4m+2$.
    If $\la_{2}-\g'<\la_{1}-\g$ and $1\leq\rho$, then on $\B_{\Q}^{m}$ there holds
    \[
      \frac{1}{\left( \sinh\rho \right)^{4m-\g} \left( \cosh\rho \right)^{\la_{1}}} * \frac{1}{\left( \sinh\rho \right)^{4m-\la'} \left( \cosh\rho \right)^{\la_{2}}} \sim e^{-(4m-\g'+\la_{2})\rho}.
    \]
    \label{lem:convolution-estimates-large-distance}
  \end{lemma}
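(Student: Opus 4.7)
The plan is to repeat the opening reduction from the proof of Lemma \ref{lem:convolution-estimates} and then analyze the resulting integral in the regime $\rho(w)\to\oo$ rather than $\rho(w)\to 0$. Applying Lemma \ref{lemalph:automorphism-properties}(iv) together with $dV=dz/(1-|z|^{2})^{2m+2}$, the very same computation that opens that proof gives
\[
  \frac{1}{(\sinh\rho)^{4m-\g}(\cosh\rho)^{\la_{1}}} * \frac{1}{(\sinh\rho)^{4m-\g'}(\cosh\rho)^{\la_{2}}}(w) = (\cosh\rho(w))^{-(4m-\g'+\la_{2})}\bigl( A_{5}(w)+A_{6}(w) \bigr),
\]
where $A_{5}(w)$ and $A_{6}(w)$ are the integrals over $\{|z|<1/2\}$ and $\{1/2\leq|z|<1\}$ defined there. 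Since $\cosh\rho(w)\sim\tfrac{1}{2}e^{\rho(w)}$ for $\rho(w)\geq 1$, the prefactor supplies exactly the exponential $e^{-(4m-\g'+\la_{2})\rho}$, so it remains to prove $A_{5}(w)+A_{6}(w)\asymp 1$ uniformly for $|w|$ close to $1$.

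For the lower bound I would restrict $A_{5}(w)$ to the shell $\{1/8\leq|z|\leq 1/4\}$. For $|w|\geq 3/4$ and $|z|\leq 1/4$, one checks directly that $|1-\gen{z,w}_{\Q}|\geq 3/4$, $|z-w|^{2}+|\gen{z,w}_{\Q}|^{2}-|z|^{2}|w|^{2}\geq 3/16$ and $1-|z|^{2}\geq 15/16$, so every factor of the integrand is bounded above and below by positive constants on this shell, which yields $A_{5}(w)\gtrsim 1$. For the upper bound of $A_{5}(w)$, the same uniform boundedness of the $w$-dependent factors on $\{|z|<1/2\}$ reduces the claim to $\int_{|z|<1/2}|z|^{-(4m-\g)}dz<\oo$, which holds because $\g>0$.

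The main step is the upper bound on $A_{6}(w)$. Here $|z|^{-(4m-\g)}\leq 2^{4m-\g}$ is harmless, so after using $Sp(m)$-invariance to reduce to $w=(|w|,0,\ldots,0)$, I would pass to polar coordinates $z=s\zeta$, $s\in[1/2,1)$, $\zeta\in S^{4m-1}$, and then integrate in $\zeta$ by the Funk-Hecke formula of Theorem \ref{thmalph:funk-hecke-formula} (in the spirit of Proposition \ref{prop:spherical-integral-eigenvalue}). The spherical average collapses to a combination of hypergeometric functions of argument $(s|w|)^{2}$, so the problem reduces to a one-dimensional estimate in $s$; the integrability of the boundary weight $(1-s^{2})^{-(4+\g+\g'-4m-\la_{1}-\la_{2})/2}$ near $s=1$ is exactly guaranteed by the hypothesis $\la_{1}+\la_{2}>\g+\g'-4m+2$, and a dominated convergence argument then shows that $A_{6}(w)=O(1)$ as $|w|\to 1^{-}$.

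The principal obstacle is precisely this uniform control of the spherical average in the double limit $s\to |w|$ and $|w|\to 1$, where three singular factors compete: $(|z-w|^{2}+\cdots)^{-(4m-\g')/2}$, $|1-\gen{z,w}_{\Q}|^{-\la_{2}}$, and the boundary weight $(1-|z|^{2})^{-(4+\g+\g'-4m-\la_{1}-\la_{2})/2}$. The hypothesis $\la_{2}-\g'<\la_{1}-\g$ plays a decisive role here: it is the strict inequality $4m-\g'+\la_{2}<4m-\g+\la_{1}$, which ensures that the prefactor we extracted is the genuinely slower-decaying of the two possible exponentials (the convolution being symmetric between the kernels). Without this inequality, $A_{6}(w)$ would have to grow with $\rho(w)$ to compensate, and the two-sided estimate $A_{5}+A_{6}\asymp 1$ would fail; verifying that no such growth occurs under the stated hypothesis is the crux of the argument.
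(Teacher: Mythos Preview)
Your overall strategy matches the paper's: extract the prefactor $(\cosh\rho(w))^{-(4m-\g'+\la_{2})}$ via the opening computation of Lemma~\ref{lem:convolution-estimates} and then show the remaining integral over $\B_{\Q}^{m}$ is bounded above and below as $|w|\to 1^{-}$. Your lower bound and the $A_{5}$ upper bound are fine.

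Where you diverge from the paper is in the handling of $A_{6}$. You propose to apply Funk--Hecke to the full integrand --- which contains both $(|z-w|^{2}+|\gen{z,w}_{\Q}|^{2}-|z|^{2}|w|^{2})^{-(4m-\g')/2}$ and $|1-\gen{z,w}_{\Q}|^{-\la_{2}}$ --- and then run a dominated-convergence argument uniformly in $w$. You correctly flag this as the crux but do not carry it out, and it is not clear what dominating function you have in mind: the first factor equals $\bigl(|1-\gen{z,w}_{\Q}|^{2}-(1-|z|^{2})(1-|w|^{2})\bigr)^{-(4m-\g')/2}$, and the subtracted term can be comparable to $|1-\gen{z,w}_{\Q}|^{2}$ near the diagonal, so there is no cheap pointwise majorant.

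The paper avoids this by \emph{computing the limit directly}. As $|w|\to 1^{-}$ the two singular factors merge into the single factor $|1-\gen{z,w}_{\Q}|^{-(4m-\g'+\la_{2})}$, to which Proposition~\ref{prop:spherical-integral-eigenvalue} applies verbatim; the spherical average becomes $\tfrac{2\pi^{2m}}{\Gamma(2m)}\,{}_{2}F_{1}\!\bigl(\tfrac{4m-\g'+\la_{2}}{2},\tfrac{4m-\g'+\la_{2}-2}{2};2m;|z|^{2}\bigr)$. The remaining radial integral is then evaluated by the Euler-type formula~\eqref{eq:hypergeometric-integral} as a product of gamma factors times an explicit ${}_{3}F_{2}$ at $1$. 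This produces a finite positive limit for $A_{5}+A_{6}$ in one stroke, giving both bounds simultaneously. Crucially, the hypothesis $\la_{2}-\g'<\la_{1}-\g$ enters \emph{exactly} as the convergence condition $\Re(\gamma+\sigma-\alpha-\beta)>0$ in~\eqref{eq:hypergeometric-integral} (equivalently, finiteness of the ${}_{3}F_{2}(1)$), not merely as the symmetry heuristic you describe; and $\la_{1}+\la_{2}>\g+\g'-4m+2$ is precisely the condition $\Re\sigma>0$ there. So the missing piece in your outline is this explicit hypergeometric evaluation, which both replaces the undetermined domination argument and explains concretely where the second hypothesis is used.
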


  \begin{proof}
    By the proof of Lemma \ref{lem:convolution-estimates}, we have
    \begin{align*}
      &\frac{1}{\left( \sinh\rho \right)^{4m-\g}\left( \cosh\rho \right)^{\la_{1}}}*\frac{1}{\left( \sinh\rho \right)^{4m-\g'}\left( \cosh\rho \right)^{\la_{2}}}\\
      &=\left( \cosh\rho(w) \right)^{-\left( 4m-\g'+\la_{2} \right)}\int\limits_{\B_{\Q}^{m}}\frac{1}{|z|^{4m-\g}} \left( \frac{1}{|z-w|^{2}+|\gen{z,w}_{\Q}|^{2}-|z|^{2}|w|^{2}} \right)^{\frac{4m-\g'}{2}}\\
      &\times\frac{1}{|1-\gen{z,w}_{\Q}|^{\la_{2}}}\frac{1}{(1-|z|^{2})^{\frac{4+\la+\la'-4m-\la_{1}-\la_{2}}{2}}}dz.
    \end{align*}

    Setting
    \[
      F(w)=\int_{S^{4m-1}}\left( \frac{1}{|z-w|^{2}+|\gen{z,w}_{\Q}|^{2}-|z|^{2}|w|^{2}} \right)^{\frac{4m-\g'}{2}}\frac{1}{|1-\gen{z,w}_{\Q}|^{\la_{2}}}d\sigma,
    \]
    we see that $F(w)=F(|w|)$.
    Moreover, by Proposition \ref{prop:spherical-integral-eigenvalue}, we find
    \begin{align*}
      \lim\limits_{|w|\to1^{-}}F(w)&=\lim\limits_{|w|\to1^{-}}\int\limits_{S^{4m-1}} |1-\gen{z,w}_{\Q}|^{-(4m-\g'+\la_{2})}d\sigma\\
      &=\frac{2\pi^{2m}}{\Gamma(2m)}{}_{2}F_{1}\left( \frac{4m-\g'+\la_{2}}{2},\frac{4m-\g'+\la_{2}-2}{2};2m;|z|^{2} \right).
    \end{align*}
    Consequently, there holds
    \begin{align*}
      \lim\limits_{|w|\to1^{-}}\int\limits_{\B_{\Q}^{m}}\frac{1}{|z|^{4m-\g}} & \left( \frac{1}{|z-w|^{2}+|\gen{z,w}_{\Q}|^{2}-|z|^{2}|w|^{2}} \right)^{\frac{4m-\g'}{2}}\\
      &\times\frac{1}{|1-\gen{z,w}_{\Q}|^{\la_{2}}}\frac{1}{(1-|z|^{2})^{\frac{4+\la+\la'-4m-\la_{1}-\la_{2}}{2}}}dz\\
      &=\frac{2\pi^{2m}}{\Gamma(2m)}\int_{0}^{1}r^{\g-1}(1-r^{2})^{-\left( \frac{4+\g+\g'-4m-\la_{1}-\la_{2}}{2} \right)}\\
      &\times{}_{2}F_{1}\left( \frac{4m-\g'+\la_{2}}{2},\frac{4m-\g'+\la_{2}-2}{2};2m;r^{2} \right)dr\\
      &=\frac{2\pi^{2m}}{\Gamma(2m)}\int_{0}^{1}t^{\frac{\g}{2}-1}(1-t)^{-\left( \frac{4+\g+\g'-4m-\la_{1}-\la_{2}}{2} \right)}\\
      &\times{}_{2}F_{1}\left( \frac{4m-\g'+\la_{2}}{2},\frac{4m-\g'+\la_{2}-2}{2};2m;t \right)dt,
    \end{align*}
    where the change of variable $r^{2}=t$ was used in the last equality.
    Now, using \eqref{eq:hypergeometric-integral}, we have
    \begin{align*}
      \lim\limits_{|w|\to1^{-}}\int\limits_{\B_{\Q}^{m}}\frac{1}{|z|^{4m-\g}} & \left( \frac{1}{|z-w|^{2}+|\gen{z,w}_{\Q}|^{2}-|z|^{2}|w|^{2}} \right)^{\frac{4m-\g'}{2}}\\
      &\times\frac{1}{|1-\gen{z,w}_{\Q}|^{\la_{2}}}\frac{1}{(1-|z|^{2})^{\frac{4+\la+\la'-4m-\la_{1}-\la_{2}}{2}}}dz\\
      &=\frac{\pi^{2m}}{\Gamma(2m)}\frac{\Gamma\left( \frac{\g}{2} \right) \Gamma\left( \frac{4m+\la_{1}+\la_{2}-\g-\g'-2}{2} \right)}{\Gamma\left( \frac{4m+\la_{1}+\la_{2}-\g'-2}{2} \right)}\\
      &\times{}_{3}F_{2} \left( \frac{4m-\g'+\la_{2}}{2} , \frac{4m-\g'+\la_{2}-2}{2}, \frac{\g}{2} ; 2m , \frac{4m+\la_{1}+\la_{2}-\g'-2}{2} ; 1 \right).
    \end{align*}
    At last, using $\cosh{r} \sim e^{r}$ for $1\leq{r}$, we have proved
    \begin{align*}
      \frac{1}{\left( \sinh\rho \right)^{4m-\g}\left( \cosh\rho \right)^{\la_{1}}}*\frac{1}{\left( \sinh\rho \right)^{4m-\g'}\left( \cosh\rho \right)^{\la_{2}}}&\sim \left( \cosh\rho \right)^{-(4m-\g'+\la_{2})}\\
      &\sim e^{-(4m-\g'+\la_{2})\rho}.
    \end{align*}
  \end{proof}

  \begin{lemma}
    Let $0<\g<16$, $0<\g'<16$, and $\la_{1}+\la_{2}>\g+\g'-10$.
    If $\la_{2}-\g'<\la_{1}-\g$ and $1\leq\rho$, then
    \[
      \frac{1}{\left( \sinh\rho \right)^{16-\g} \left( \cosh\rho \right)^{\la_{1}}} * \frac{1}{\left( \sinh\rho \right)^{16-\la'} \left( \cosh\rho \right)^{\la_{2}}} \sim e^{-(16-\g'+\la_{2})\rho}.
    \]
    \label{lem:convolution-estimates-large-distance-octonionic}
  \end{lemma}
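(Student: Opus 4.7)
The plan is to follow the proof of Lemma \ref{lem:convolution-estimates-large-distance} mutatis mutandis, substituting each quaternionic ingredient by its octonionic counterpart. Specifically, Lemma \ref{lemalph:automorphism-properties-octonionic} replaces Lemma \ref{lemalph:automorphism-properties} to express hyperbolic distances via $\Psi_{\C a}(z,w)$; Proposition \ref{prop:spherical-integral-eigenvalue-octonionic} replaces Proposition \ref{prop:spherical-integral-eigenvalue} to evaluate the arising $S^{15}$-integral; and the hypergeometric integral formula \eqref{eq:hypergeometric-integral} is applied exactly as before.

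First, I would start from the intermediate expression obtained in the proof of Lemma \ref{lem:convolution-estimates-octonionic}, which writes the convolution as $(\cosh\rho(w))^{-(16-\g'+\la_{2})}$ times a $\B_{\C a}$-integral whose integrand involves the four powers $|z|^{-(16-\g)}$, $(1-|z|^{2})^{-(\g+\g'-\la_{1}-\la_{2}-8)/2}$, $\Psi_{\C a}(z,w)^{-\la_{2}/2}$, and $[\Psi_{\C a}(z,w)-(1-|z|^{2})(1-|w|^{2})]^{-(16-\g')/2}$. Paralleling the quaternionic argument, I introduce
\[
  F(w)=\int_{S^{15}}\Psi_{\C a}(z,w)^{-(16-\g'+\la_{2})/2}d\sigma,
\]
which is radial in $w$ by the $\operatorname{Spin}(9)$-invariance of $\Psi_{\C a}$. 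Since $(1-|z|^{2})(1-|w|^{2})$ vanishes when $|z|=1$, the two $\Psi$-factors combine into the single power appearing in $F(w)$ on the boundary, and Proposition \ref{prop:spherical-integral-eigenvalue-octonionic} evaluates $F(w)$ in closed form as $\frac{2\pi^{8}}{7!}\,{}_{2}F_{1}\!\left(\frac{16-\g'+\la_{2}}{2},\frac{10-\g'+\la_{2}}{2};8;|w|^{2}\right)$.

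Next, taking $|w|\to 1^{-}$ in the $\B_{\C a}$-integral and performing the radial substitution $t=r^{2}$, one is left with an integral of the form $\int_{0}^{1}t^{\g/2-1}(1-t)^{(\la_{1}+\la_{2}+8-\g-\g')/2-1}\,{}_{2}F_{1}(\cdots;t)\,dt$. The formula \eqref{eq:hypergeometric-integral} then applies, yielding a finite positive constant multiple of a ${}_{3}F_{2}(\cdots;1)$ value; the hypotheses $0<\g,\g'<16$, $\la_{1}+\la_{2}>\g+\g'-10$, and $\la_{2}-\g'<\la_{1}-\g$ are precisely those needed for the parameter conditions $\Re\rho>0$, $\Re\sigma>0$, and $\Re(\g+\sigma-\a-\b)>0$ in \eqref{eq:hypergeometric-integral}. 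Since $\cosh\rho\sim e^{\rho}$ for $\rho\geq 1$, the desired asymptotic $\sim e^{-(16-\g'+\la_{2})\rho}$ follows.

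The main obstacle is verifying that the spherical reduction via $\Psi_{\C a}$ goes through cleanly; this is slightly more delicate than in the quaternionic case because the nonassociativity of $\C a$ forces one to work with $\Psi_{\C a}$ in place of the simpler $|1-\gen{z,w}_{\Q}|^{2}$. However, all the needed identities have already been established in Lemma \ref{lemalph:automorphism-properties-octonionic} together with the $\operatorname{Spin}(9)$-invariance machinery underlying Proposition \ref{prop:spherical-integral-eigenvalue-octonionic}, so no genuinely new computation is required beyond careful bookkeeping of constants.
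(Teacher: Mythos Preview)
Your proposal is correct and follows essentially the same route as the paper's own proof: both start from the expression derived in Lemma \ref{lem:convolution-estimates-octonionic}, reduce the spherical part to a ${}_{2}F_{1}$ via Proposition \ref{prop:spherical-integral-eigenvalue-octonionic}, apply the hypergeometric integral \eqref{eq:hypergeometric-integral} after the substitution $t=r^{2}$, and conclude using $\cosh\rho\sim e^{\rho}$. Your identification of the hypotheses $\la_{1}+\la_{2}>\g+\g'-10$ and $\la_{2}-\g'<\la_{1}-\g$ with the parameter conditions $\Re\sigma>0$ and $\Re(\g+\sigma-\a-\b)>0$ in \eqref{eq:hypergeometric-integral} is exactly what the paper uses implicitly.
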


  \begin{proof}
    By the proof of Lemma \ref{lem:convolution-estimates-octonionic}, we have
    \begin{align*}
      &\frac{1}{\left( \sinh\rho \right)^{16-\g}\left( \cosh\rho \right)^{\la_{1}}}*\frac{1}{\left( \sinh\rho \right)^{16-\g'}\left( \cosh\rho \right)^{\la_{2}}}\\
      &=\left( \cosh\rho(w) \right)^{-\left( 16-\g'+\la_{2} \right)}\int\limits_{\B_{\C a}}\frac{1}{|z|^{16-\g}} \left( \frac{1}{\Psi_{\C a}(z,w) - (1-|z|^{2})(1-|w|^{2})} \right)^{\frac{16-\g'}{2}}\\
      &\times\frac{1}{\Psi_{\C a}(z,w)^{\frac{\la_{2}}{2}}}\frac{1}{(1-|z|^{2})^{\frac{ \g + \g' -\la_{1} - \la_{2} - 8}{2}}}dz.
    \end{align*}

    Setting
    \[
      F(w)=\int_{S^{4m-1}}\left( \frac{1}{\Psi_{\C a}(z,w) - (1-|z|^{2})(1-|w|^{2})} \right)^{\frac{16-\g'}{2}} \frac{1}{\Psi_{\C a}(z,w)^{\frac{\la_{2}}{2}}}d\sigma,
    \]
    we see that $F(w)=F(|w|)$.
    Moreover, by Proposition \ref{prop:spherical-integral-eigenvalue-octonionic}, we find
    \begin{align*}
      \lim\limits_{|w|\to1^{-}}F(w)&=\lim\limits_{|w|\to1^{-}}\int\limits_{S^{4m-1}} \Psi_{\C a}(z,w)^{-\frac{16-\g'+\la_{2}}{2}}d\sigma\\
      &=\frac{2\pi^{8}}{7!}{}_{2}F_{1}\left( \frac{16-\g'+\la_{2}}{2}, \frac{10-\g'+\la_{2}}{2}; 8;r^{2} \right).
    \end{align*}
    Consequently, there holds
    \begin{align*}
      \lim\limits_{|w|\to1^{-}}\int\limits_{\B_{\C a}}\frac{1}{|z|^{16-\g}}& \left( \frac{1}{\Psi_{\C a}(z,w) - (1-|z|^{2})(1-|w|^{2})} \right)^{\frac{16-\g'}{2}}\\
      &\times\frac{1}{\Psi_{\C a}(z,w)^{\frac{\la_{2}}{2}}}\frac{1}{(1-|z|^{2})^{\frac{ \g + \g' -\la_{1} - \la_{2} - 8}{2}}}dz\\
      &=\frac{2\pi^{8}}{7!}\int_{0}^{1}r^{\g-1}(1-r^{2})^{-\left( \frac{\g+\g'-\la_{1}-\la_{2}-8}{2} \right)}\\
      &\times{}_{2}F_{1}\left( \frac{16-\g'+\la_{2}}{2}, \frac{16-\g'+\la_{2}-6}{2}; 8;r^{2} \right) dr\\
      &=\frac{2\pi^{8}}{7!}\int_{0}^{1}t^{\frac{\g}{2}-1}(1-t)^{-\left( \frac{\g+\g'-\la_{1}-\la_{2}-8}{2} \right)}\\
      &\times{}_{2}F_{1}\left( \frac{16-\g'+\la_{2}}{2}, \frac{10-\g'+\la_{2}}{2}; 8;r^{2} \right) dt\\
    \end{align*}
    where the change of variable $r^{2}=t$ was used in the last equality.
    Now, using \eqref{eq:hypergeometric-integral}, we have
    \begin{align*}
      \lim\limits_{|w|\to1^{-}}\int\limits_{\B_{\C a}}\frac{1}{|z|^{16-\g}}& \left( \frac{1}{\Psi_{\C a}(z,w) - (1-|z|^{2})(1-|w|^{2})} \right)^{\frac{16-\g'}{2}}\\
      &\times\frac{1}{\Psi_{\C a}(z,w)^{\frac{\la_{2}}{2}}}\frac{1}{(1-|z|^{2})^{\frac{ \g + \g' -\la_{1} - \la_{2} - 8}{2}}}dz\\
      &=\frac{\pi^{8}}{7!}\frac{\Gamma\left( \frac{\g}{2} \right) \Gamma\left( \frac{10+\la_{1}+\la_{2} - \g-\g'}{2} \right)}{\Gamma\left( \frac{10+\la_{1}+\la_{2}-\g'}{2} \right)}\\
      &\times{}_{3}F_{2} \left( \frac{16-\g'+\la_{2}}{2}, \frac{10-\g'+\la_{2}}{2}; \frac{\g}{2}; 7, \frac{10+\la_{1}+\la_{2}-\g'}{2} ; 1 \right).
    \end{align*}
    At last, using $\cosh{r} \sim e^{r}$ for $1\leq{r}$, we have proved
    \begin{align*}
      \frac{1}{\left( \sinh\rho \right)^{16-\g}\left( \cosh\rho \right)^{\la_{1}}}*\frac{1}{\left( \sinh\rho \right)^{16-\g'}\left( \cosh\rho \right)^{\la_{2}}}&\sim \left( \cosh\rho \right)^{-(16-\g'+\la_{2})}\\
      &\sim e^{-(16-\g'+\la_{2})\rho}.
    \end{align*}

  \end{proof}

  \subsection{Estimates for $k_{\g}$}

  In this subsection, we obtain the asymptotics for $k_{\g}$.
  Note that the large distance asymptotics ($1\leq\rho$) are already contained in \eqref{eq:anker-ji-large-distance-asymptotics}.

  \begin{lemma}
    Let $0<\g<3$ and let $N = \dim_{\R} H_{\mbF}^{m}$.
    If $0<\rho<1$, then
    \[
      k_{\g}\leq\frac{1}{\g_{N}(\g)}\frac{1}{\rho^{N-\g}}+O\left( \frac{1}{\rho^{N-\g-1}} \right).
    \]
    If $1\leq\rho$, then
    \[
      k_{\g}\sim\rho^{\g-2}e^{-\frac{Q}{2}\rho}.
    \]
    \label{lem:k-gamma-small-distance}
  \end{lemma}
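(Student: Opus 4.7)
The large-distance asymptotic $k_{\g}\sim\rho^{\g-2}e^{-Q\rho/2}$ for $\rho\geq 1$ is already contained in the Anker--Ji estimates \eqref{eq:anker-ji-large-distance-asymptotics}, so the content of the lemma is the small-distance upper bound for $0<\rho<1$. My approach is to reduce the estimate on $H_{\Q}^{m}$ (or $H_{\C a}$) to the corresponding estimate on an odd-dimensional real hyperbolic space by subordination combined with the explicit heat-kernel representations \eqref{eq:quaternionic-heat-kernel-in-terms-of-hyperbolic-heat-kernel} and \eqref{eq:octonionic-heat-kernel-in-terms-of-hyperbolic-heat-kernel}.

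Concretely, I would write
\[
  k_{\g}(\rho)\;=\;\frac{1}{\Gamma(\g/2)}\int_{0}^{\oo}t^{\g/2-1}e^{tQ^{2}/4}\,e^{t\Delta}(\rho)\,dt,
\]
substitute the relevant heat-kernel formula, and interchange the $t$ and $r$ integrals. The arithmetic identities $(2m+1)^{2}-(12m-3)=(2m-2)^{2}$ on $H_{\Q}^{m}$ and $11^{2}-105=4^{2}$ on $H_{\C a}$ ensure that the residual spectral shifts combine so that the inner $t$-integral reconstructs exactly the Bessel-Green-Riesz kernel of the underlying real hyperbolic space shifted by its spectral gap. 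The result is
\[
  k_{\g}(\rho)\;=\;C\int_{\rho}^{\oo}\frac{\sinh 2r}{\sqrt{\cosh 2r-\cosh 2\rho}}\left(-\frac{1}{\sinh 2r}\frac{\p}{\p r}\right)^{p}\!\Bigl(-\Delta_{H_{\R}^{k}}-\tfrac{(k-1)^{2}}{4}\Bigr)^{-\g/2}(r)\,dr,
\]
with $(p,k)=(2,4m-3)$ on $H_{\Q}^{m}$ and $(p,k)=(4,9)$ on $H_{\C a}$.

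Next I would invoke Lemma \ref{lemalph:liluyang-small-bessel-green-riesz-kernel-estimate} to substitute the small-distance expansion $\frac{1}{\g_{k}(\g)}r^{\g-k}+O(r^{\g-k+1})$ for the real-hyperbolic kernel. Applying $(-\frac{1}{\sinh 2r}\p_{r})^{p}$ to the leading $r^{\g-k}$ produces, to principal order near $r=0$, a combinatorial constant times $r^{\g-k-2p}$; writing $\sinh 2r=2\sinh r\cosh r$ and applying Lemma \ref{lemalph:luyang-hypertrig-integral} with $\b=N-\g$, the outer $r$-integral collapses to a multiple of $(\sinh\rho)^{-(N-\g)}\sim\rho^{-(N-\g)}$. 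Tracking the prefactors $c_{m}/b_{2m-2}$ (resp.\ $c_{o}/b_{4}$), the combinatorial constant produced by the iterated derivatives, the Gamma ratios from Lemma \ref{lemalph:luyang-hypertrig-integral}, and the Euclidean normalization $\g_{k}(\g)$, the constant telescopes to $1/\g_{N}(\g)$, consistent with the identity $N = k+2p-1 \in \{4m,16\}$.

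The main obstacle is the error bookkeeping. Three sources of correction arise: (i) the remainder $O(r^{\g-k+1})$ from Lemma \ref{lemalph:liluyang-small-bessel-green-riesz-kernel-estimate}; (ii) Taylor corrections from $\sinh 2r=2r+O(r^{3})$, $\cosh 2r-\cosh 2\rho$, and $(1/\sinh 2r)\p_{r}$ near $r=0$; and (iii) derivatives falling on the trigonometric factors inside $(-\frac{1}{\sinh 2r}\p_{r})^{p}$ rather than on the Euclidean power $r^{\g-k}$. Each is handled by reapplying Lemma \ref{lemalph:luyang-hypertrig-integral} with $\b$ decreased by one, producing an overall error of order $\rho^{-(N-\g-1)}$, in agreement with the claimed bound. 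Contributions to the outer integral from the regime $r\geq 1$, where Lemma \ref{lemalph:liluyang-small-bessel-green-riesz-kernel-estimate} no longer applies, are controlled via the exponential bounds in Lemma \ref{lem:higher-order-derivatives-on-trig} and are exponentially small as $\rho\to 0^{+}$, hence absorbed into the error.
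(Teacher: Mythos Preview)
Your proposal is correct and follows essentially the same route as the paper: Mellin subordination combined with \eqref{eq:quaternionic-heat-kernel-in-terms-of-hyperbolic-heat-kernel}--\eqref{eq:octonionic-heat-kernel-in-terms-of-hyperbolic-heat-kernel} to reduce to the real-hyperbolic Bessel--Green--Riesz kernel, then Lemma~\ref{lemalph:liluyang-small-bessel-green-riesz-kernel-estimate} for the small-$r$ expansion, Lemma~\ref{lemalph:luyang-hypertrig-integral} to evaluate the resulting integral, and Lemma~\ref{lem:higher-order-derivatives-on-trig} for the tail $r\geq 1$. The paper organizes the argument by splitting the outer integral at $r=1$ into pieces $A_{1}$ and $A_{2}$, but otherwise the steps and the constant computation are the same as yours.
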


  \begin{proof}
    As mentioned above, we only need to prove the estimate for $0<\rho<1$.

    By using \eqref{eq:quaternionic-heat-kernel-in-terms-of-hyperbolic-heat-kernel}, we will write $k_{\g}$ in terms of a Bessel-Green-Riesz kernel on $H_{\R}^{n}$, where $n=2\tilde{m}+1$ and $\tilde m = 2m-2$ if $\mbF = \Q$ and $\tilde m = 4$ if $\mbF = \C a$.
    Also recall that $h_{t}(\rho,n)$ denotes the heat kernel on $H_{\R}^{n}$ (see \eqref{eq:heat-kernel-on-real-hyperbolic-space}).
    Lastly let $c$ denote $c_{m}$ (resp. $c_{o}$) from \eqref{eq:quaternionic-heat-kernel-in-terms-of-hyperbolic-heat-kernel} (resp. \eqref{eq:octonionic-heat-kernel-in-terms-of-hyperbolic-heat-kernel}) and let $\mu = 2$ (resp. $\mu=4$) when $\mbF = \Q$ (resp. $\mbF = \C a$).

    Then, by the Mellin transform and \eqref{eq:quaternionic-heat-kernel-in-terms-of-hyperbolic-heat-kernel} and \eqref{eq:octonionic-heat-kernel-in-terms-of-hyperbolic-heat-kernel}, we have
    \begin{align*}
      k_{\g}(\rho)&=\frac{1}{\Gamma\left( \frac{\g}{2} \right)} \int_{0}^{\oo} t^{\frac{\g}{2}-1} e^{t\left( \Delta+\frac{Q^{2}}{4} \right)} dt\\
      &= \frac{c}{b_{\tilde{m}}} \int_{\rho}^{\oo} \frac{\sinh2r}{\sqrt{\cosh2r-\cosh2\rho}} \left( -\frac{1}{\sinh2r}\frac{\p}{\p{r}} \right)^{\mu}\\
      &\times \frac{1}{\Gamma\left( \frac{\g}{2} \right)} \int_{0}^{\oo} t^{\frac{\g}{2}-1} e^{\frac{Q^{2}}{4}t} e^{-\tilde{m}^{2}t} h_{t}(r,n) dtdr\\
      &=\frac{c}{b_{\tilde m}} \int_{\rho}^{\oo} \frac{\sinh2r}{\sqrt{\cosh2r-\cosh2\rho}} \left( -\frac{1}{\sinh2r}\frac{\p}{\p{r}} \right)^{\mu} \left( -\Delta_{H_{\R}^{n}} - \left( \frac{n-1}{2} \right)^{2} \right)^{-\frac{\g}{2}} dr\\
      &=A_{1}+A_{2},
    \end{align*}
    where
    \[
      A_{1}=\int\limits_{\rho}^{1}\cdots\text{ and }A_{2}=\int\limits_{1}^{\oo}\cdots.
    \]

    We begin by estimating $A_{1}$.
    Using Lemma \ref{lemalph:liluyang-small-bessel-green-riesz-kernel-estimate}, it is easy to see that, for $0<r<1$, there holds
    \begin{align*}
      \left( -\frac{1}{\sinh2r}\frac{\p}{\p{r}} \right)^{2}\left( -\Delta_{H_{\R}^{n}}-\left( \frac{n-1}{2} \right)^{2} \right)^{-\frac{\g}{2}}&=\left( -\frac{1}{\sinh2r}\frac{\p}{\p{r}} \right)^{2}\left( \frac{1}{\g_{n}(\g)}\frac{1}{r^{n-\g}} + O\left( \frac{1}{r^{n-\g-1}} \right) \right)\\
      &=\frac{1}{\g_{n}(\g)}\frac{(n-\g)(n+2-\g)}{4}\frac{1}{r^{n+4-\g}}+O\left( \frac{1}{r^{n+3-\g}} \right),
    \end{align*}
    and similarly
    \begin{align*}
      &\left( -\frac{1}{\sinh2r}\frac{\p}{\p{r}} \right)^{4}\left( -\Delta_{H_{\R}^{n}}-\left( \frac{n-1}{2} \right)^{4} \right)^{-\frac{\g}{2}}\\
      &=\frac{1}{\g_{n}(\g)}\frac{(n-\g)(n+2-\g)(n+4-\g)(n+6-\g)}{16}\frac{1}{r^{n+8-\g}}+O\left( \frac{1}{r^{n+7-\g}} \right).
    \end{align*}
    Consequently, in the quaternionic case, there holds
    \begin{align*}
      \sinh2r\left( -\frac{1}{\sinh2r} \frac{\p}{\p{r}} \right)^{2}\left( -\Delta_{H_{\R}^{n}} - \left( \frac{n-1}{2} \right)^{2} \right)&=\frac{1}{\g_{n}(\g)}\frac{(n-\g)(n+2-\g)}{2}\frac{1}{r^{n+3-\g}}+O\left( \frac{1}{r^{n+2-\g}} \right)\end{align*}
    and, in the octonionic case, there holds
    \begin{align*}
      &\sinh2r\left( -\frac{1}{\sinh2r} \frac{\p}{\p{r}} \right)^{4}\left( -\Delta_{H_{\R}^{n}} - \left( \frac{n-1}{2} \right)^{2} \right)\\
      &=\frac{1}{\g_{n}(\g)}\frac{(n-\g)(n+2-\g)(n+4-\g)(n+6-\g)}{8}\frac{1}{r^{n+7-\g}}+O\left( \frac{1}{r^{n+6-\g}} \right).
    \end{align*}

    Now, using Lemma \ref{lemalph:luyang-hypertrig-integral}, we compute in the quaternionic case that
    \begin{align*}
      A_{1}&=\frac{c_{m}}{b_{2m-2}}\frac{(n-\g)(n+2-\g)}{2\g_{n}(\g)}\int\limits_{\rho}^{1}\frac{1}{\sqrt{\cosh2r-\cosh2\rho}} \left[ \frac{1}{r^{n+3-\g}}+O\left( \frac{1}{r^{n+2-\g}} \right) \right]dr\\
      &\leq\frac{c_{m}}{b_{2m-2}}\frac{(n-\g)(n+2-\g)}{2\g_{n}(\g)}\int\limits_{\rho}^{1}\frac{\cosh{r}}{\sqrt{\cosh2r-\cosh2\rho}} \left[ \frac{1}{\left( \sinh{r} \right)^{n+3-\g}}+O\left( \frac{1}{\left( \sinh{r} \right)^{n+2-\g}} \right) \right]dr\\
      &=\frac{c_{m}(n-\g)(n+2-\g)}{2\g_{n}(\g)b_{2m-2}}\frac{\Gamma\left( \frac{1}{2} \right) \Gamma\left( \frac{n+3-\g}{2} \right)}{2\sqrt{2}\Gamma\left( \frac{n+4-\g}{2} \right)}\frac{1}{\left( \sinh\rho \right)^{n+3-\g}}+O\left( \frac{1}{\left( \sinh\rho \right)^{n+2-\g}} \right)\\
      &=\frac{1}{\g_{4m}(\g)}\frac{1}{\left( \sinh\rho \right)^{4m-\g}}+O\left( \frac{1}{\left( \sinh\rho \right)^{4m-\g-1}} \right),
    \end{align*}
    where we have computed
    \[
      \frac{c_{m}(n-\g)(n+2-\g)}{2\g_{n}(\g)b_{2m-2}}\frac{\Gamma\left( \frac{1}{2} \right) \Gamma\left( \frac{n+3-\g}{2} \right)}{2\sqrt{2}\Gamma\left( \frac{n+4-\g}{2} \right)}=\frac{1}{\g_{4m}(\g)}.
    \]
    Similarly, we have in the octonionic case that
    \[
      A_{1}=\frac{1}{\g_{16}(\g)}\frac{1}{\left( \sinh\rho \right)^{16-\g}}+O\left( \frac{1}{\left( \sinh\rho \right)^{15-\g}} \right).
    \]
    Concerning estimating $A_{2}$, it is clear from Lemma \ref{lem:higher-order-derivatives-on-trig} that $A_{2}\lesssim1$ for both the quaternionic and octonionic cases, and so
    \begin{align*}
      k_{\g}(\rho)&=A_{1}+A_{2}\\
      &\leq\frac{1}{\g_{N}(\g)}\frac{1}{\rho^{N-\g}}+O\left( \frac{1}{\rho^{N-\g-1}} \right),
    \end{align*}
    as desired.

  \end{proof}

  \subsection{Estimate for $k_{\zeta,\g}$}

  In this subsection, we obtain the asymptotics for $k_{\zeta,\g}$ for $0<\g<4m$ and $0<\zeta$.
  Note that the large distance asymptotics ($1\leq\rho$) are already contained \eqref{eq:anker-ji-large-distance-asymptotics}.

  \begin{lemma}
    Let $N = \dim_{\R} H_{\mbF}^{m}$, $0<\g<N$, $0<\zeta$ and $0<\e<\min\left\{ 1,N-\g \right\}$.
    If $0<\rho<1$, then
    \[
      k_{\zeta,\g}\leq\frac{1}{\g_{N}(\g)}\frac{1}{\rho^{N-\g}}+O\left( \frac{1}{\rho^{N-\g-\e}} \right).
    \]
    If $1\leq\rho$, then
    \[
      k_{\zeta,\g}\sim\rho^{\frac{\g-2}{2}}e^{-\zeta\rho-\frac{Q}{2}\rho}.
    \]
    \label{lem:k-zeta-gamma-small-distance}
  \end{lemma}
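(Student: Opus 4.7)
The proof parallels that of Lemma \ref{lem:k-gamma-small-distance}, with the Bessel shift $\zeta^{2}$ producing a weaker error term. Only the small-distance case $0<\rho<1$ requires argument, since the large-distance bound is precisely \eqref{eq:anker-ji-large-distance-asymptotics}.

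The plan is as follows. Starting from the Mellin representation
\[
k_{\zeta,\g} = \frac{1}{\Gamma(\g/2)} \int_{0}^{\oo} t^{\g/2 - 1} e^{-\zeta^{2} t}\, e^{t(\Delta + Q^{2}/4)}\, dt
\]
and inserting \eqref{eq:quaternionic-heat-kernel-in-terms-of-hyperbolic-heat-kernel} or \eqref{eq:octonionic-heat-kernel-in-terms-of-hyperbolic-heat-kernel} exactly as in the proof of Lemma \ref{lem:k-gamma-small-distance}, one obtains
\[
k_{\zeta,\g}(\rho) = \frac{c}{b_{\tilde m}} \int_{\rho}^{\oo} \frac{\sinh 2r}{\sqrt{\cosh 2r - \cosh 2\rho}} \left( -\frac{1}{\sinh 2r}\, \p_{r} \right)^{\mu} K(r)\, dr,
\]
where $K(r) := \left( -\Delta_{H_{\R}^{n}} - \tilde m^{2} + \zeta^{2} \right)^{-\g/2}$, $n = 2\tilde m + 1$, and $(\tilde m,\mu) = (2m-2,2)$ or $(4,4)$ according to whether $\mbF = \Q$ or $\C a$. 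Split $k_{\zeta,\g}(\rho) = A_{1} + A_{2}$ with $A_{1}=\int_{\rho}^{1}\cdots$ and $A_{2}=\int_{1}^{\oo}\cdots$. The key new ingredient is a Bessel analogue of Lemma \ref{lemalph:liluyang-small-bessel-green-riesz-kernel-estimate}: for every admissible $\e$ and every $0 < r < 1$,
\[
K(r) = \frac{1}{\g_{n}(\g)} \frac{1}{r^{n-\g}} + O\left( \frac{1}{r^{n-\g-\e}} \right).
\]
Granting this, differentiating $\mu$ times, multiplying by $\sinh 2r$, and integrating via Lemma \ref{lemalph:luyang-hypertrig-integral} with the same constant-reduction identity that closed the proof of Lemma \ref{lem:k-gamma-small-distance} yields
\[
A_{1} \leq \frac{1}{\g_{N}(\g)\,(\sinh\rho)^{N-\g}} + O\left( \frac{1}{(\sinh\rho)^{N-\g-\e}} \right),
\]
while a direct adaptation of the tail argument there, combining the exponential large-distance decay of $K$ with Lemma \ref{lem:higher-order-derivatives-on-trig}, gives $A_{2} \lesssim 1$. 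Finally $\sinh\rho \sim \rho$ as $\rho \to 0$ delivers the stated bound.

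The main obstacle is establishing the Bessel expansion of $K$ displayed above. The leading coefficient $1/\g_{n}(\g)$ coincides with the Riesz case covered by Lemma \ref{lemalph:liluyang-small-bessel-green-riesz-kernel-estimate}, since it is determined by the principal symbol of the operator; the shift $\zeta^{2}$ only perturbs subleading terms. This can be made rigorous by isolating the principal part of the subordination integral near $t = 0$, where $e^{-\zeta^{2}t} = 1 + O(t)$, and by comparing the hyperbolic heat kernel $h_{t}(r,n)$ with its Euclidean counterpart at short time. The freedom to take $\e$ strictly less than $\min(1,N-\g)$ (rather than $\e=1$ as in the pure Riesz case) is more than enough to absorb the $\zeta$-dependent correction, which is in fact of order $r^{-(n-\g-2)}$ whenever $n-\g > 2$.
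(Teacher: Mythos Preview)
Your route and the paper's differ, and yours has a genuine gap tied to the range of $\gamma$.

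The paper does not run the $H_{\R}^{n}$ integral representation with the full exponent $\gamma$. It writes $\gamma=\tilde\gamma+\ell$ with $0<\tilde\gamma<3$ and $0\leq\ell<n-1$, factors $k_{\zeta,\gamma}=k_{\zeta,\tilde\gamma}\ast k_{\zeta,\ell}$, bounds $k_{\zeta,\tilde\gamma}\leq k_{\tilde\gamma}$ pointwise via $e^{-\zeta^{2}t}\leq1$ so that Lemma~\ref{lem:k-gamma-small-distance} applies, treats $k_{\zeta,\ell}$ by exactly the integral representation you describe (quoting \cite[Proposition~2.5]{LiLuy1} for the small-$r$ expansion of the $H_{\R}^{n}$ Bessel kernel), upgrades both pieces to global $\sinh/\cosh$ bounds using \eqref{eq:anker-ji-large-distance-asymptotics}, and then recombines them through Lemmas~\ref{lem:convolution-estimates} and \ref{lem:convolution-estimates-octonionic}. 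The $\e$-weakened remainder in the statement is forced by that final convolution step, not by the $\zeta$-shift.

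Your argument instead requires the expansion
\[
K(r)=\frac{1}{\gamma_{n}(\gamma)}\,r^{-(n-\gamma)}+O\bigl(r^{-(n-\gamma-\e)}\bigr)
\]
on $H_{\R}^{n}$ for every $\gamma\in(0,N)$. But the auxiliary dimension $n=4m-3$ (quaternionic) or $n=9$ (octonionic) is strictly smaller than $N$, and once $\gamma\geq n$ the Bessel--Green--Riesz kernel on the $n$-dimensional space is continuous at the origin with $K(0)>0$; your claimed expansion would force $K(r)\to 0$ as $r\to0$, which is false. Your justification---that $\zeta^{2}$ perturbs only subleading terms---addresses the wrong obstruction: the problem is not the Bessel shift but the dimension gap $N-n=2\mu-1$. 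The paper's convolution decomposition is precisely the device that bridges this gap, and your outline offers no substitute for it.
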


  \begin{proof}
    As mentioned above, we only need to prove the estimate for $0<\rho<1$.

    As before,  let $n=2\tilde{m}+1$ with $\tilde{m}$ as above, and choose $\tilde\g$ and $\ell$ such that $0<\tilde\g<3$, $0\leq\ell<n-1$ and $\g=\tilde\g+\ell$.
    Then
    \[
      k_{\zeta,\g}=k_{\zeta,\tilde\g}*k_{\zeta,\ell}.
    \]
    Using Lemmas \ref{lem:convolution-estimates} and \ref{lem:convolution-estimates-octonionic}, it will be sufficient to estimate $k_{\zeta,\tilde\g}$ and $k_{\zeta,\ell}$ separately.

    To estimate $k_{\zeta,\tilde\g}$, note that, by Lemma \ref{lem:k-gamma-small-distance}, there holds
    \begin{align*}
      k_{\zeta,\tilde\g}&=\left( -\Delta - \frac{Q^{2}}{4} + \zeta^{2} \right)^{-\frac{\tilde\g}{2}}\\
      &=\frac{1}{\Gamma\left( \frac{\tilde\g}{2} \right)}\int_{0}^{\oo}t^{\frac{\tilde\g}{2}-1}e^{t\left( \Delta + \frac{Q^{2}}{4} - \zeta^{2} \right)} dt\\
      &\leq \frac{1}{\Gamma\left( \frac{\tilde\g}{2} \right)} \int\limits_{0}^{\oo}t^{\frac{\tilde\g}{2}-1}e^{t\left( \Delta + \frac{Q^{2}}{4} \right)} dt\\
      &=\left( -\Delta - \frac{Q^{2}}{4} \right)^{-\frac{\tilde\g}{2}}\\
      &=k_{\tilde\g}\\
      &\leq\frac{1}{\g_{N}(\tilde\g)}\frac{1}{\rho^{N-\tilde\g}}+O\left( \frac{1}{\rho^{N-\tilde\g-1}} \right).
    \end{align*}
    We see that, if $\ell=0$, then we are done, and so we assume without loss of generality that $0<\ell$.

    We now estimate $k_{\zeta,\ell}$.
    As in the previous proof, let $\mu = 2$ for the quaternionic case and $\mu = 4$ for the octonionic case, and let $c$ denote $c_{m}$ or $c_{o}$ in the respective cases.
    We compute
    \begin{align*}
      k_{\zeta,\ell}&= \frac{1}{\Gamma\left( \frac{\ell}{2} \right)} \int\limits_{0}^{\oo}t^{\frac{\ell}{2}-1} e^{t\left( \Delta+\frac{Q^{2}}{4} - \zeta^{2} \right)} dt\\
      &=\frac{c}{b_{\tilde{m}}} \int\limits_{\rho}^{\oo}\frac{\sinh2r}{\sqrt{\cosh2r-\cosh2\rho}} \left( -\frac{1}{\sinh2r}\frac{\p}{\p{r}} \right)^{2}\left( -\Delta_{H_{\R}^{n}}-\left( \frac{n-1}{2} \right)^{2} + \zeta^{2} \right)^{-\frac{\ell}{2}} dr\\
      &=A_{7}+A_{8},
    \end{align*}
    where
    \[
      A_{7}=\int\limits_{\rho}^{1}\cdots\text{ and }A_{8}=\int\limits_{1}^{\oo}\cdots.
    \]

    From \cite[Proposition 2.5]{LiLuy1}, we have that
    \[
      \left( -\Delta - \left( \frac{n-1}{2} \right)^{2} + \zeta^{2} \right)^{-\frac{\ell}{2}} = \frac{1}{\g_{n}(\g)}\frac{1}{\rho^{n-\ell}} + O\left( \frac{1}{\rho^{n-\ell-1}} \right),
    \]
    and by similar computations to those given in the proof of Lemma \ref{lem:k-gamma-small-distance}, we have
    \begin{align*}
      \sinh2r\left( -\frac{1}{\sinh2r} \frac{\p}{\p{r}} \right)^{2}\left( -\Delta_{H_{\R}^{n}} - \left( \frac{n-1}{2} \right)^{2} \right)&=\frac{(n-\ell)(n+2-\ell)}{2\g_{n}(\g)}\frac{1}{r^{n+3-\ell}} + O\left( \frac{1}{r^{n+2-\ell}} \right).
    \end{align*}
    and
    \begin{align*}
      &\sinh2r\left( -\frac{1}{\sinh2r} \frac{\p}{\p{r}} \right)^{4}\left( -\Delta_{H_{\R}^{n}} - \left( \frac{n-1}{2} \right)^{2} \right)\\
      &=\frac{(n-\ell)(n+2-\ell)(n+4-\ell)(n+6-\ell)}{8\g_{n}(\g)}\frac{1}{r^{n+7-\ell}} + O\left( \frac{1}{r^{n+6-\ell}} \right).
    \end{align*}
    Consequently, using $1\leq\cosh{r}$ and Lemma \ref{lemalph:luyang-hypertrig-integral}, we find for the quaternionic case that
    \begin{align*}
      A_{7}&\leq\frac{c_{m}}{b_{2m-2}}\frac{(n-\ell)(n+2-\ell)}{2\g_{n}(\g)}\int\limits_{\rho}^{\oo} \cosh{r} \frac{\sinh2r}{\sqrt{\cosh2r-\cosh2\rho}} \left[ \frac{1}{\left( \sinh{r} \right)^{n+3-\ell}} + O\left( \frac{1}{\left( \sinh{r} \right)^{n+2-\ell}} \right) \right] dr\\
      &=\frac{1}{\g_{4m}(\ell)} \frac{1}{\left( \sinh\rho \right)^{4m-\ell}} + O\left( \frac{1}{\left( \sinh\rho \right)^{4m-\ell-1}} \right)
    \end{align*}
    where we have computed
    \[
      \frac{c_{m}(n-\ell)(n+2-\ell)}{2\g_{n}(\ell)b_{2m-2}}\frac{\Gamma\left( \frac{1}{2} \right) \Gamma\left( \frac{n+3-\ell}{2} \right)}{2\sqrt{2}\Gamma\left( \frac{n+4-\ell}{2} \right)}=\frac{1}{\g_{4m}(\ell)}.
    \]
    Similarly, we have for that octonionic case that
    \[
      A_{7} \leq \frac{1}{\gamma(\ell)(\g)}\frac{1}{(\sinh\rho)^{16-\ell}} + O\left( \frac{1}{(\sinh \rho)^{15 - \ell}} \right).
    \]
    Again, using Lemma \ref{lem:higher-order-derivatives-on-trig}, we have that $A_{8}\lesssim1$, and so we have proved to two estimates
    \begin{align*}
      k_{\zeta,\ell}&\leq\frac{1}{\g_{N}(\ell)}\frac{1}{\left( \sinh\rho \right)^{N-\ell}} + O\left( \frac{1}{\left( \sinh\rho \right)^{N-\ell-1}} \right)\\
      k_{\zeta,\tilde\g}&\leq\frac{1}{\g_{N}(\tilde\g)}\frac{1}{\left( \sinh\rho \right)^{N-\tilde\g}}+O\left( \frac{1}{\left( \sinh\rho \right)^{N-\tilde\g-1}} \right).
    \end{align*}

    Now, using \eqref{eq:anker-ji-large-distance-asymptotics}, we have, for any $0<\zeta'<\zeta$, $0<\a$ and $1\leq\rho$, there holds
    \begin{align*}
      k_{\zeta,\a} \sim \rho^{\frac{\a-2}{2}} e^{-\zeta\rho - \frac{Q}{2}} \lesssim_{\a} e^{-\zeta'\rho - \frac{Q}{2}\rho}.
    \end{align*}
    Therefore, using this estimate and that $\cosh{r}\sim{e^{r}}$ and $\sinh{r}\sim{e^{r}}$ for $r>1$, and $\sinh{r} \sim r$ and $\cosh{r} \sim 1$ for $0<r<1$, we obtain the following global estimates (i.e, for $0<\rho$):
    \begin{align*}
      k_{\zeta,\ell}&\leq\frac{1}{\g_{N}(\ell)} \frac{\left( \cosh\rho \right)^{N-\frac{Q}{2}-\ell-\zeta'}}{\left( \sinh\rho \right)^{N-\ell}} + O\left( \frac{\left( \cosh\rho \right)^{N-\frac{Q}{2}-\ell-\zeta'-1}}{\left( \sinh\rho \right)^{N-\ell-1}} \right)\\
      k_{\zeta,\tilde\g}&\leq\frac{1}{\g_{N}(\tilde\g)} \frac{\left( \cosh\rho \right)^{N-\frac{Q}{2}-\tilde\g-\zeta'}}{\left( \sinh\rho \right)^{N-\tilde\g}} + O\left( \frac{\left( \cosh\rho \right)^{N-\frac{Q}{2}-\tilde\g-\zeta'-1}}{\left( \sinh\rho \right)^{N-\tilde\g-1}} \right).
    \end{align*}

    At last, using Lemmas \ref{lem:convolution-estimates} and \ref{lem:convolution-estimates-octonionic} and letting $0<\e<\min\left\{ 1, N-\g \right\}$, we obtain
    \begin{align*}
      k_{\zeta,\g}&=k_{\zeta,\ell}*k_{\zeta,\tilde\g}\\
      &\leq\frac{1}{\g_{N}(\ell+\tilde\g)}\frac{1}{\rho^{N - \tilde\g - \ell}} + O\left( \frac{1}{\rho^{N - \tilde\g - \ell - \e}} \right),
    \end{align*}
    which gives the desired estimate since $\g=\ell+\tilde\g$.
  \end{proof}

  \subsection{Estimates for $k_{\g}*k_{\zeta,\g'}$}

  In this subsection, we obtain the asymptotics for $k_{\g}*k_{\zeta,\g}$ for $0<\g<3$, $0<\g'<N-\g$ and $0<\zeta$.

  \begin{lemma}
    Let $0<\g<3$, $0<\g'<N-\g$, $0<\zeta$ and $0<\e<\min\left\{ 1,N-\g-\g',\frac{\zeta}{2} \right\}$.
    If $0<\rho<1$, then
    \[
      k_{\g}*k_{\zeta,\g'}\leq\frac{1}{\g_{N}(\g+\g')}\frac{1}{\rho^{N-\g-\g'}}+O\left( \frac{1}{\rho^{N-\g-\g'-\e}} \right).
    \]
    If $1\leq\rho$, then
    \[
      k_{\g}*k_{\zeta,g'}\lesssim{e^{\left( \e-\frac{Q}{2} \right)\rho}}.
    \]
    \label{lem:k-gamma-ast-k-zeta-gamma}
  \end{lemma}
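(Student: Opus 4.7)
The approach is to majorize $k_{\g}$ and $k_{\zeta,\g'}$ by functions of the form $\frac{1}{(\sinh\rho)^{N-\g}(\cosh\rho)^{\la}}$ and then apply the convolution estimates of Lemmas \ref{lem:convolution-estimates}--\ref{lem:convolution-estimates-large-distance-octonionic}. Mimicking the global estimate produced at the end of the proof of Lemma \ref{lem:k-zeta-gamma-small-distance}, and combining the small-distance bounds of Lemmas \ref{lem:k-gamma-small-distance}--\ref{lem:k-zeta-gamma-small-distance} with the large-distance asymptotics \eqref{eq:anker-ji-large-distance-asymptotics}, one obtains globally in $\rho>0$, for any $\delta_{1}>0$ and any $0<\delta_{2}<\zeta$,
\[
  k_{\g}\leq\frac{1}{\g_{N}(\g)}\frac{(\cosh\rho)^{N-Q/2-\g+\delta_{1}}}{(\sinh\rho)^{N-\g}}+O\left(\frac{(\cosh\rho)^{N-Q/2-\g+\delta_{1}-1}}{(\sinh\rho)^{N-\g-1}}\right),
\]
together with the analogous bound for $k_{\zeta,\g'}$ whose numerator exponent is $N-Q/2-\g'-\delta_{2}$. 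In the notation of the convolution lemmas these correspond to the parameter choices $\la_{1}=Q/2+\g-\delta_{1}-N$ and $\la_{2}=Q/2+\g'+\delta_{2}-N$; the polynomial prefactor in \eqref{eq:anker-ji-large-distance-asymptotics} for $k_{\g}$ is absorbed via $\delta_{1}>0$ and the one for $k_{\zeta,\g'}$ via the strict inequality $\delta_{2}<\zeta$.

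For $0<\rho<1$, convolve the two leading majorants and apply Lemma \ref{lem:convolution-estimates} (or \ref{lem:convolution-estimates-octonionic} in the Cayley case). The main term contributes $\frac{\g_{N}(\g)\g_{N}(\g')}{\g_{N}(\g+\g')}\frac{1}{\rho^{N-\g-\g'}}$, which telescopes against the prefactor $\frac{1}{\g_{N}(\g)\g_{N}(\g')}$ to yield exactly $\frac{1}{\g_{N}(\g+\g')\rho^{N-\g-\g'}}$; the cross terms involving the error pieces, together with the error of Lemma \ref{lem:convolution-estimates} itself, combine into the stated remainder $O(\rho^{\g+\g'-N+\e})$. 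The two regimes $\g+\g'<N-1$ and $N-1\leq\g+\g'<N$ of Lemma \ref{lem:convolution-estimates} together cover the hypothesis $\e<\min\{1,N-\g-\g'\}$.

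For $\rho\geq 1$, invoke Lemma \ref{lem:convolution-estimates-large-distance} (or \ref{lem:convolution-estimates-large-distance-octonionic}). Since convolution of radial functions on $\B_{\mbF}^{m}$ is commutative, writing $k_{\g}*k_{\zeta,\g'}=k_{\zeta,\g'}*k_{\g}$ lets one assign the roles $(\g',\la_{2})$ to the first kernel and $(\g,\la_{1})$ to the second, so the monotonicity hypothesis of the lemma reduces to $\la_{1}-\g<\la_{2}-\g'$, equivalently $-\delta_{1}<\delta_{2}$, which is automatic. The lemma then yields the asymptotic rate $e^{-(N-\g+\la_{1})\rho}=e^{(\delta_{1}-Q/2)\rho}$, and setting $\delta_{1}=\e$ matches the claimed large-distance bound.

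The main obstruction is satisfying the compatibility condition $\la_{1}+\la_{2}>\g+\g'-N+2$ required by both convolution lemmas. A direct computation gives $\la_{1}+\la_{2}-(\g+\g'-N+2)=(Q-N-2)+(\delta_{2}-\delta_{1})$. For $\mathbb{X}=H_{\C a}$ one has $Q-N-2=4>0$, so the condition holds automatically for any small $\delta_{1},\delta_{2}>0$. For $\mathbb{X}=H_{\Q}^{m}$, however, $Q-N-2=0$, and the condition becomes $\delta_{2}>\delta_{1}=\e$; combined with $\delta_{2}<\zeta$ this forces $\e<\zeta$, and the stronger hypothesis $\e<\zeta/2$ leaves the necessary slack to choose a valid $\delta_{2}$.
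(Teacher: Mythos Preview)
Your proof is correct and follows essentially the same approach as the paper's: majorize each kernel globally by $(\sinh\rho)^{-(N-\cdot)}(\cosh\rho)^{-\la_{i}}$ and then invoke the convolution Lemmas \ref{lem:convolution-estimates}--\ref{lem:convolution-estimates-large-distance-octonionic}. The paper uses a single parameter $\e$ in both majorants (so that its $\la_{2}$ involves $\zeta-\e$), which is why the constraint $\e<\zeta/2$ appears exactly rather than merely $\e<\zeta$; your two-parameter version with $\delta_{1},\delta_{2}$ is a harmless refinement. You are also more explicit than the paper about the need to swap the order of the kernels (via commutativity of radial convolution) so that the ordering hypothesis $\la_{2}-\g'<\la_{1}-\g$ of Lemma \ref{lem:convolution-estimates-large-distance} is met; the paper simply writes ``similarly, using Lemmas \ref{lem:convolution-estimates-large-distance} and \ref{lem:convolution-estimates-large-distance-octonionic}'' without spelling this out.
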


  \begin{proof}
    By Lemma \ref{lem:k-gamma-small-distance}, we have for $0<\rho<1$ the estimate
    \[
      k_{\g}\leq\frac{1}{\g_{N}(\g)}\frac{1}{\left( \sinh\rho \right)^{N-\g}}+O\left( \frac{1}{\left( \sinh\rho \right)^{N-\g-1}} \right),
    \]
    and, by \eqref{eq:anker-ji-large-distance-asymptotics}, we have for any $0<\e$ and $1\leq\rho$ the estimate
    \[
      k_{\g} \sim \rho^{\g-2} e^{-\frac{Q}{2}\rho} \lesssim_{\g} e^{(\e-\frac{Q}{2})\rho}.
    \]
    Consequently, we obtain the global estimates (i.e., for $0<\rho$):
    \[
      k_{\g} \leq \frac{1}{\g_{N}(\g)} \frac{\left( \cosh \rho \right)^{N - \frac{Q}{2}-\g+\e}}{(\sinh \rho)^{N-\g}} + O\left( \frac{\left( \cosh \rho \right)^{N-\frac{Q}{2}-\g+\e-1}}{(\sinh \rho)^{N-\g-1}} \right).
    \]
    Similarly, we have for $0<\rho$ the global estimates
    \[
      k_{\zeta,\g'}\leq\frac{1}{\g_{N}(\g')} \frac{\left( \cosh\rho \right)^{N-\frac{Q}{2}-\g'-\zeta+\e}}{\left( \sinh\rho \right)^{N-\g'}} + O\left( \frac{\left( \cosh\rho \right)^{N-\frac{Q}{2}+\e-\g'-\zeta-1}}{\left( \sinh\rho \right)^{N-\g'-1}} \right).
    \]
    Therefore, by Lemmas \ref{lem:convolution-estimates} and \ref{lem:convolution-estimates-octonionic}, there holds
    \[
      k_{\g}*k_{\zeta,\g'}\leq\frac{1}{\g_{N}(\g+\g')} \frac{1}{\rho^{N-\g-\g'}} + O\left( \frac{1}{\rho^{N-\g-\g'-\e}} \right)
    \]
    for $0<\rho<1$.

    Similarly, using Lemmas \ref{lem:convolution-estimates-large-distance} and \ref{lem:convolution-estimates-large-distance-octonionic} we have
    \[
      k_{\g}*k_{\zeta,\g'}\lesssim e^{(\e-\frac{Q}{2})\rho}.
    \]
  \end{proof}

  \begin{lemma}
    Let $0<\g<3$, $0<\g'<N-\g$, $0<\zeta$ and $0<\zeta'<\zeta$.
    If $1\leq\rho$, then
    \[
      k_{\g}*k_{\zeta,\g'}\lesssim{e^{-\left( \zeta'+\frac{Q}{2} \right)\rho}}+\rho^{\g-2}e^{-\frac{Q}{2}\rho}*k_{\zeta,\g'}.
    \]
    \label{lem:k-gamma-ast-k-zeta-refined-estimate}
  \end{lemma}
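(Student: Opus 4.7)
The plan is to split the kernel $k_{\g}$ into a near-origin piece and a far-field piece, then estimate the two resulting convolutions separately. From the small-distance bound in Lemma~\ref{lem:k-gamma-small-distance} and the far-field asymptotic $k_{\g}(r)\lesssim r^{\g-2}e^{-\frac{Q}{2}r}$ for $r\geq 1$ from~\eqref{eq:anker-ji-large-distance-asymptotics}, there is a constant $C>0$ with
\[
k_{\g}(r)\leq C\,k_{\g}(r)\chi_{\{r<1\}}+C\,r^{\g-2}e^{-\frac{Q}{2}r}\chi_{\{r\geq 1\}}.
\]
Since $k_{\zeta,\g'}\geq 0$, convolving this pointwise bound with $k_{\zeta,\g'}$ gives $k_{\g}*k_{\zeta,\g'}\lesssim I_{1}+I_{2}$, where $I_{1}=(k_{\g}\chi_{\{r<1\}})*k_{\zeta,\g'}$ and $I_{2}=(r^{\g-2}e^{-\frac{Q}{2}r}\chi_{\{r\geq 1\}})*k_{\zeta,\g'}$. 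For $I_{2}$, dropping the indicator only enlarges a non-negative integrand, so $I_{2}\leq\rho^{\g-2}e^{-\frac{Q}{2}\rho}*k_{\zeta,\g'}$, which matches the second term in the target inequality.

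For $I_{1}$, recall from the convolution formula and the identity $\rho(\varphi_{w}(z))=\rho(z,w)$ that
\[
I_{1}(z)=\int\limits_{\{w:\rho(z,w)<1\}}k_{\g}(\rho(z,w))\,k_{\zeta,\g'}(w)\,dV(w).
\]
Since the compact region $\rho(z)\in[1,R_{0}]$ is trivially handled by boundedness of the integrand and uniform positivity of $e^{-(\zeta'+Q/2)\rho}$, assume $\rho=\rho(z)\geq R_{0}$ for some $R_{0}>2$; then every $w$ with $\rho(z,w)<1$ satisfies $\rho(w)>1$, so the far-field asymptotic from~\eqref{eq:anker-ji-large-distance-asymptotics} yields
\[
k_{\zeta,\g'}(w)\lesssim\rho(w)^{\frac{\g'-2}{2}}e^{-\zeta\rho(w)-\frac{Q}{2}\rho(w)}\lesssim\rho^{\frac{\g'-2}{2}}e^{-\zeta\rho-\frac{Q}{2}\rho},
\]
where the second bound uses $\rho(w)\in(\rho-1,\rho+1)$.

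Pulling this uniform bound out of the integral and using isometry invariance of $dV$ to re-center the domain of integration at the origin gives
\[
I_{1}(z)\lesssim\rho^{\frac{\g'-2}{2}}e^{-\zeta\rho-\frac{Q}{2}\rho}\int\limits_{\{\rho(w)<1\}}k_{\g}(\rho(w))\,dV(w),
\]
and the last integral is finite by Lemma~\ref{lem:k-gamma-small-distance} since $\g<3\leq N$. Finally, since $0<\zeta'<\zeta$, we absorb the polynomial factor $\rho^{\frac{\g'-2}{2}}$ into the exponential gap via $\rho^{\frac{\g'-2}{2}}\lesssim e^{(\zeta-\zeta')\rho}$ for $\rho\geq 1$, obtaining $I_{1}\lesssim e^{-(\zeta'+\frac{Q}{2})\rho}$. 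Combining the estimates for $I_{1}$ and $I_{2}$ yields the claim. The only technical subtlety, namely the uniform control of $k_{\zeta,\g'}$ over the annular region $\rho(w)\in(\rho-1,\rho+1)$, is handled cleanly by restricting to large $\rho$, since the Anker--Ji asymptotic is stable under this bounded perturbation of $\rho(w)$.
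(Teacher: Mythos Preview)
Your proof is correct and follows essentially the same strategy as the paper: split $k_{\g}$ into a near-origin piece and a tail piece, bound the tail piece by the convolution $\rho^{\g-2}e^{-\frac{Q}{2}\rho}*k_{\zeta,\g'}$, and for the near-origin piece use the Anker--Ji asymptotic on the \emph{other} factor $k_{\zeta,\g'}$ to extract the decay $e^{-(\zeta'+\frac{Q}{2})\rho}$. The paper splits at radius $1/2$ rather than $1$, which guarantees $\rho(z,w)\geq 1/2$ directly from $\rho\geq 1$ and so avoids your separate treatment of the compact range $\rho\in[1,R_{0}]$; it then evaluates the near-origin integral via the explicit ball-model formula for $\cosh\rho(z,w)$, whereas you use a cleaner uniform bound on $k_{\zeta,\g'}(\rho(w))$ over the annulus $\rho(w)\in(\rho-1,\rho+1)$.

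One small correction: your phrase ``boundedness of the integrand'' in the compact-region step is not accurate, since $k_{\g}(\rho(z,w))$ blows up as $w\to z$ and $k_{\zeta,\g'}(\rho(w))$ blows up as $w\to 0$. What is true is that $I_{1}\leq k_{\g}*k_{\zeta,\g'}$, and the latter is finite and continuous (hence bounded) on the compact annulus $\{1\leq\rho\leq R_{0}\}$ by Lemma~\ref{lem:k-gamma-ast-k-zeta-gamma}; combined with the positive lower bound on $e^{-(\zeta'+\frac{Q}{2})\rho}$ there, this gives the desired estimate on the compact range.
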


  \begin{proof}
    Using \eqref{eq:anker-ji-large-distance-asymptotics}, we have
    \begin{align*}
      k_{\g}*k_{\zeta,\g'}&=\int\limits_{ \left\{ z\in\B_{\mbF}^{m}:\rho(z)<\frac{1}{2} \right\}} k_{\g}\left( \rho(z) \right) k_{\zeta,\g'}\left( \rho(z,w) \right) dV(z)\\
      &+\int\limits_{ \left\{ z\in\B_{\mbF}^{m}:\frac{1}{2}\leq\rho(z)<1 \right\}}k_{\g}(\rho(z)) k_{\zeta,\g'}\left( \rho(z,w) \right) dV(z)\\
      &\lesssim \int\limits_{ \left\{ z\in\B_{\mbF}^{m}:\rho(z)<\frac{1}{2} \right\}}k_{\g}\left( \rho(z) \right) k_{\zeta,\g'}\left( \rho(z,w) \right) dV(z)\\
      &+\int\limits_{ \left\{ z\in\B_{\mbF}^{m}:\frac{1}{2}\leq\rho(z)<1 \right\}} \rho(z)^{\g-2} e^{-\frac{Q}{2}\rho(z)} k_{\zeta,\g'}\left( \rho(z,w) \right) dV(z)\\
      &\leq \int\limits_{ \left\{ z\in\B_{\mbF}^{m}:\rho(z)<\frac{1}{2} \right\}}k_{\g}\left( \rho(z) \right) k_{\zeta,\g'}\left( \rho(z,w) \right) dV(z)\\
      &+\rho^{\g-2} e^{-\frac{Q}{2}\g} * k_{\zeta,\g'}.
    \end{align*}

    Thus we need only show that, for $1\leq\rho$, there holds
    \[
      \int\limits_{ \left\{ z\in\B_{\mbF}^{m}:\rho(z)<\frac{1}{2} \right\}}k_{\g}\left( \rho(z) \right) k_{\zeta,\g'}\left( \rho(z,w) \right) dV(z)\lesssim e^{-(\zeta'+\frac{Q}{2})\rho}.
    \]

    By Lemma \ref{lem:k-gamma-small-distance}, we have that, for $\rho(z)<\frac{1}{2}$, there holds
    \[
      k_{\g}(\rho(z))\lesssim \frac{1}{\rho(z)^{N-\g}} \sim \frac{1}{|z|^{N-\g}}.
    \]
    Next, observing that $1\leq\rho(w)$ and $\rho(z)<\frac{1}{2}$ imply $\frac{1}{2}\leq\rho(w) - \rho(z) \leq \rho(z,w)$, we have by \eqref{eq:anker-ji-large-distance-asymptotics} that, for $0<\zeta'<\zeta$, there holds
    \[
      k_{\zeta,\g'}\left( \rho(z,w) \right) \lesssim e^{-\zeta' \rho(z,w) - \frac{Q}{2} \rho(z,w)} \sim \left( \cosh\rho(z,w) \right)^{-(\zeta'+\frac{Q}{2})}.
    \]
    Combining these estimates with Lemma \ref{lemalph:automorphism-properties}, we compute
    \begin{align*}
      &\int\limits_{ \left\{ z\in\B_{\Q}^{m}:\rho(z)<\frac{1}{2} \right\}}k_{\g}\left( \rho(z) \right) k_{\zeta,\g'}\left( \rho(z,w) \right) dV(z) \\
      &\lesssim \int\limits_{ \left\{ z\in\B_{\Q}^{m} : \rho(z)<\frac{1}{2} \right\}} \frac{1}{|z|^{4m-\g}} \left( \frac{\sqrt{(1-|w|^{2}) (1-|z|^{2})}}{|1 - \gen{z,w}_{\Q}|} \right)^{2m+1+\zeta'} \left( \frac{1}{1-|z|^{2}} \right)^{2m+2} dz\\
      &\sim (1-|w|^{2})^{\frac{2m+1+\zeta'}{2}} \int_{ \left\{ z\in\B_{\Q}^{m} : \rho(z)<\frac{1}{2} \right\}} \frac{1}{|z|^{4m-\g}} dz\\
      &\sim \left( \cosh\rho \right)^{-(2m+1+\zeta')}\\
      &\sim e^{-(\zeta'+2m+1)\rho}.
    \end{align*}
    Similarly,
    \begin{align*}
      &\int\limits_{ \left\{ z\in\B_{\C a}:\rho(z)<\frac{1}{2} \right\}}k_{\g}\left( \rho(z) \right) k_{\zeta,\g'}\left( \rho(z,w) \right) dV(z) \\
      &\lesssim \int\limits_{ \left\{ z\in\B_{\C a} : \rho(z)<\frac{1}{2} \right\}} \frac{1}{|z|^{16-\g}} \left( \frac{(1-|w|^{2}) (1-|z|^{2})}{\Psi_{\C a}(z,w)} \right)^{\frac{11+\zeta'}{2}} \left( \frac{1}{1-|z|^{2}} \right)^{12} dz\\
      &\sim (1-|w|^{2})^{\frac{11+\zeta'}{2}} \int_{ \left\{ z\in\B_{\C a} : \rho(z)<\frac{1}{2} \right\}} \frac{1}{|z|^{16-\g}} dz\\
      &\sim \left( \cosh\rho \right)^{-(11+\zeta')}\\
      &\sim e^{-(\zeta'+11)\rho}.
    \end{align*}
  \end{proof}

  \section{Rearrangement Estimates}
  We firstly collect known results about nonincreasing rearrangements and Lorentz spaces on the hyperbolic spaces $\mathbb{X}$.
These results will be used to prove estimates on $k_{\g}*k_{\zeta,\g'}*f$ for $f\in{C_{0}^{\oo}(\mathbb{X})}$.

To begin, let $f:\mathbb{X}\to\R$, and define
\begin{align*}
  f^{*}(t)&=\inf\left\{ s>0:\la_{f}(s)\leq{t} \right\}\\
  \la_{f}(s)&=|\left\{ z\in\mathbb{X}:|f(z)|>s \right\}|\\
  &=\int\limits_{z\in\mathbb{X}:|f(z)|>s}dV.
\end{align*}
Next, for a domain $\Omega\subset\mathbb{X}$, we recall the Lorentz spaces $L^{p,q}(\Omega)$ consist of functions for which the following norm is finite:
\[
  \norm{f}_{L^{p,q}(\Omega)}=
  \begin{cases}
    \norm{t^{\frac{1}{p}-\frac{1}{q}}f^{*}(t)}_{L^{q}\left( 0,|\Omega| \right)}&1\leq{q}<\oo\\
    \sup\limits_{t>0}t^{\frac{1}{p}}f^{*}(t)&q=\oo
  \end{cases}.
\]
Define next $f^{**}(t)=\frac{1}{t}\int\limits_{0}^{t}f^{*}(s)ds$ and
\[
  \norm{f}_{L^{p,q}(\Omega)}^{*}=
  \begin{cases}
    \norm{t^{\frac{1}{p}-\frac{1}{q}}f^{**}(t)}_{L^{q}(0,|\Omega)|}&1\leq{q}<\oo\\
    \sup\limits_{t>0}t^{\frac{1}{p}}f^{**}(t)&q=\oo.
  \end{cases}
\]

Let $1<r,p_{1},p_{2}<\oo$ and $1\leq{s,q_{1},q_{2}}\leq\oo$ satisfy
\[
  \frac{1}{p_{1}}+\frac{1}{p_{2}}-1=\frac{1}{r},\quad\frac{1}{q_{1}}+\frac{1}{q_{2}}\geq\frac{1}{s},
\]
and assume $f\in{L^{p_{1},q_{1}}(\mathbb{X})}$ and $g\in{L^{p_{2},q_{2}}(\mathbb{X})}$.
The generalized Young's inequality (see \cite[Theorem 2.6]{on})
\[
  \norm{f*g}_{L^{r,s}}\leq{C}\norm{f}_{L^{p_{1},q_{1}}}\norm{g}_{L^{p_{2},q_{2}}},
\]
and norm equivalence (see \cite{on} for $1\leq{r}<\oo$ and \cite[Theorem 3.4]{yap} for $0<r<1$)
\[
  \norm{f*g}_{L^{q,r}}\leq\norm{f*g}_{L^{q,r}}^{*}\leq\frac{q}{q-1}\norm{f*g}_{L^{q,r}}
\]
give the following lemma.

\begin{lemmaalph}
  Let $1<r,p_{1},p_{2}<\oo$ and $1\leq{s,q_{1},q_{2}}\leq\oo$.
  If
  \[
    \frac{1}{p_{1}}+\frac{1}{p_{2}}-1=\frac{1}{r},\quad\frac{1}{q_{1}}+\frac{1}{q_{2}}\geq\frac{1}{s},
  \]
  $f\in{L^{p_{1},q_{1}}(\mathbb{X})}$ and $g\in{L^{p_{2},q_{2}}(\mathbb{X})}$, then
  \[
    \norm{f*g}_{L^{r,s}}\leq{C}\norm{f}_{L^{p_{1},q_{1}}}\norm{g}_{L^{p_{2},q_{2}}}.
  \]
  \label{lemalph:generalized-youngs-inequality}
\end{lemmaalph}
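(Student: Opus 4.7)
The plan is to obtain this lemma essentially for free by combining the two results cited immediately before the statement. First, I would invoke the generalized O'Neil convolution inequality (Theorem 2.6 of \cite{on}) in its natural formulation, which produces the bound
\[
  \norm{f*g}_{L^{r,s}}^{*}\leq C\,\norm{f}_{L^{p_{1},q_{1}}}^{*}\norm{g}_{L^{p_{2},q_{2}}}^{*}
\]
under exactly the index conditions $\tfrac{1}{p_{1}}+\tfrac{1}{p_{2}}-1=\tfrac{1}{r}$ and $\tfrac{1}{q_{1}}+\tfrac{1}{q_{2}}\ge\tfrac{1}{s}$. O'Neil's argument is formulated on a measure space and uses only the distribution-function layer-cake identity together with a pointwise estimate relating $(f*g)^{**}$ to $f^{**}$, $g^{**}$, $f^{*}$, and $g^{*}$; the only property of the underlying space it requires is that convolution be defined via a measure-preserving action, which is precisely how $f*g$ was introduced on $\mathbb{X}$ through the isometric automorphisms $\varphi_{w}$.

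Second, I would apply the $f^{*}$-vs.-$f^{**}$ norm equivalence recorded just before the statement: for any $1<q<\oo$ and $1\le r\le\oo$ there holds
\[
  \norm{h}_{L^{q,r}}\leq\norm{h}_{L^{q,r}}^{*}\leq\frac{q}{q-1}\,\norm{h}_{L^{q,r}},
\]
with the left-hand inequality trivial from $f^{*}\leq f^{**}$ and the right-hand inequality being Theorem~3.4 of \cite{yap} (and a classical result of \cite{on} when $r\geq 1$). Since by hypothesis $p_{1},p_{2},r$ are all strictly greater than $1$, this equivalence is available for each of the three Lorentz norms appearing above. Applying it on each side converts the bound from O'Neil into
\[
  \norm{f*g}_{L^{r,s}}\leq\frac{r}{r-1}\,\norm{f*g}_{L^{r,s}}^{*}\leq C'\,\norm{f}_{L^{p_{1},q_{1}}}\norm{g}_{L^{p_{2},q_{2}}},
\]
with $C'$ absorbing the finite factors $\tfrac{r}{r-1}$, $\tfrac{p_{1}}{p_{1}-1}$, $\tfrac{p_{2}}{p_{2}-1}$ and O'Neil's constant $C$.

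Since both inputs are cited black boxes, no technical obstacle arises; the only verification required is that the index conditions of the lemma coincide with those under which O'Neil's theorem is stated and that $p_{1},p_{2},r>1$ so the equivalence applies. The hypothesis $1<r,p_{1},p_{2}<\oo$ supplies exactly this, so the combination yields the stated inequality with no further work.
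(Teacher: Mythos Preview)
Your proposal is correct and follows essentially the same approach as the paper: the lemma is presented there precisely as the combination of O'Neil's generalized Young's inequality (Theorem~2.6 of \cite{on}) with the $\norm{\cdot}_{L^{q,r}}$--$\norm{\cdot}_{L^{q,r}}^{*}$ norm equivalence, exactly as you outline. Your additional remark that O'Neil's argument transfers to the convolution on $\mathbb{X}$ because it only uses distribution-function/rearrangement machinery and a measure-preserving action is a helpful clarification the paper leaves implicit.
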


  In this section, we collect the kernel estimates obtained above and state the corresponding estimates for their nonincreasing rearrangements.
  We also prove that the square integrability of the rearrangement $[k_{\zeta}*k_{\zeta,\g'}]^{*}$ on any interval of the form $(c,\oo)$, $0<c$.

  In preparation of obtaining the rearrangement estimates, we first estimate the volume of the geodesic ball $B_{\rho}$ centered at the origin and with radius $\rho$.
  For $H_{\Q}^{m}$, we may use
  \[
    |B_{\rho}|=\omega_{4m-1}\int_{0}^{\rho}\left( \sinh{r} \right)^{4m-1} \left( \cosh\rho \right)^{3} dr,
  \]
  to obtain
  \[
    |B_{\rho}|=\frac{\omega_{4m-1}}{4m}\rho^{4m}+O\left( \rho^{4m+2} \right)\text{ if }0<\rho<1
  \]
  and
  \[
    |B_{\rho}| \thicksim e^{(4m+2)\rho}\text{ if }1\leq\rho.
  \]
  Similarly, for $H_{\C a}$, we may use
  \[
    |B_{\rho}|=\omega_{15}\int_{0}^{\rho}\left( \sinh{r} \right)^{15} \left( \cosh\rho \right)^{7} dr,
  \]
  to obtain
  \[
    |B_{\rho}|=\frac{\omega_{15}}{16}\rho^{16}+O\left( \rho^{18} \right)\text{ if }0<\rho<1
  \]
  and
  \[
    |B_{\rho}| \thicksim e^{22\rho}\text{ if }1\leq\rho.
  \]

  Next, we collect the kernel estimates established above.
  On $H_{\mbF}^{m}$ with $N = \dim_{\R} H_{\mbF}^{m}$, there holds
  \begin{itemize}
    \item Let $0<\zeta$.
      If $0<\g<N$, $0<\e<\min\left\{ 1,N-\g \right\}$ and $0<\rho<1$, then
      \[
	k_{\zeta,\g}\leq\frac{1}{\g_{N(\g)}}\frac{1}{\rho^{N-\g}} + O\left( \frac{1}{\rho^{N-\g-\e}} \right).
      \]
      If $0<\g$ and $1\leq\rho$, then
      \[
	k_{\zeta,\g}\sim \rho^{\frac{\g-2}{2}} e^{-(\zeta+\frac{Q}{2})\rho}.
      \]
    \item Let $\zeta=0$.
      If $0<\g<3$ and $0<\rho<1$, then
      \[
	k_{\g}\leq\frac{1}{\g_{N}(\g)} \frac{1}{\rho^{N-\g}} + O\left( \frac{1}{\rho^{N-\g-1}} \right).
      \]
      If $0<\g<3$ and $1\leq\rho$, then
      \[
	k_{\g}\sim \rho^{\g-2}e^{-\frac{Q}{2}\rho}.
      \]
    \item Let $0<\zeta$.
      If $0<\g<3$, $0<\g'<N-\g$, $0<\e<\min\left\{ 1,N-\g-\g',\frac{\zeta}{2} \right\}$ and $0<\rho<1$, then
      \[
	k_{\g}*k_{\zeta,\g'}\leq\frac{1}{\g_{N}(\g+\g')}\frac{1}{\rho^{N-\g-\g'}} + O\left( \frac{1}{\rho^{N-\g-\g'-\e}} \right).
      \]
      If $1\leq\rho$, then
      \[
	k_{\g}*k_{\zeta,g'}\lesssim{e^{\left( \e-\frac{Q}{2} \right)\rho}}.
      \]
      If $0<\zeta'<\zeta$ and $1\leq\rho$, then
      \[
	k_{\g}*k_{\zeta,\g'}\lesssim e^{-(\zeta'+\frac{Q}{2})\rho} + \rho^{\g-2} e^{-\frac{Q}{2}\rho} * k_{\zeta,\g'}.
      \]
  \end{itemize}

  The corresponding estimates for their rearrangements are listed now.

  \begin{itemize}
    \item Let $0<\zeta$.
      If $0<\g<N$, $0<\e<\min\left\{ 1,N-\g \right\}$ and $0<t<2$, then
      \[
	[k_{\zeta,\g}]^{*}\leq\frac{1}{\g_{N(\g)}} \left( \frac{N}{\omega_{N-1}}t \right)^{\frac{\g-N}{N}} + O\left( t^{\frac{\g+\e-N}{N}} \right).
      \]
      If $0<\g$ and $2\leq t$, then
      \[
	[k_{\zeta,\g}]^{*} \sim t^{-\frac{1}{2}-\frac{1}{N}\zeta} \left( \ln t \right)^{\frac{\g-2}{2}}
      \]
    \item Let $\zeta=0$.
      If $0<\g<3$ and $0<t<2$, then
      \[
	[k_{\g}]^{*} \leq \frac{1}{\g_{N}(\g)} \left( \frac{N}{\omega_{N-1}}t \right)^{\frac{\g-N}{N}} + O\left( t^{\frac{\g+1-N}{N}} \right).
      \]
      If $0<\g<3$ and $2\leq t$, then
      \[
	[k_{\g}]^{*} \sim t^{-\frac{1}{2}} \left( \ln t \right)^{\g-2}.
      \]
    \item Let $0<\zeta$.
      If $0<\g<3$, $0<\g'<N-\g$, $0<\e<\min\left\{ 1,N-\g-\g',\frac{\zeta}{2} \right\}$ and $0<t<2$, then
      \begin{equation}
	[k_{\g}*k_{\zeta,\g'}]^{*} \leq \frac{1}{\g_{N}(\g+\g')} \left( \frac{N}{\omega_{N-1}} t \right)^{\frac{\g+\g'-N}{N}} + O\left( t^{\frac{\g+\g'+\e-N}{N}} \right).
	\label{eq:rearrangement-k-g-ast-k-z-g-small-dist1}
      \end{equation}
      If $2\leq t$, then
      \begin{equation}
	[k_{\g}*k_{\zeta,g'}]^{*} \lesssim t^{\frac{\e-\frac{Q}{2}}{N}}.
	\label{eq:rearrangement-k-g-ast-k-z-g-large-dist}
      \end{equation}
      Moreover,  using Lemma \ref{lem:k-gamma-ast-k-zeta-refined-estimate},  we have, for $c>0$,
        \begin{equation}
	\int^{\infty}_{c}|[k_{\alpha}\ast k_{\zeta,\beta}]^{\ast}(t)|^{2}dt<\infty.
	\label{eq:rearrangement-k-g-ast-k-z-g-large-dist2}
      \end{equation}
The proof of (\ref{eq:rearrangement-k-g-ast-k-z-g-large-dist2}) is similar to that given in \cite{LiLuy2}, Lemma 4.1 and we omit it.
  \end{itemize}

  \subsection{Estimates for $k_{\g}*k_{\zeta,\g'}*f$}

  In this section, we prove and $L^{p}-L^{p'}$ inequality for $k_{\g}*k_{\zeta,\g'}*f$, which is dual to the Poincar\'e-Sobolev inequality.
  We will need to make use of the Kunze-Stein phenomenon. Kunze-Stein phenomenon is important in harmonic analysis (see \cite{co}, \cite{co1}, \cite{co2}, \cite{io}, \cite{Stein}) and is closely related to geometric and functional inequalities (see Beckner \cite{be1}). We  begin by recalling relevant results.
  The proofs of Lemmas \ref{lemalph:luyang-convolution-lemma-1} and \ref{lemalph:luyang-convolution-lemma-2} may be found in \cite{ly5}.

  We begin by recalling that Cowling,  Giulini and  Meda(see \cite{co}, \cite{co1}, \cite{co2}) established the following sharp version on Lorentz space (\cite{hu}, \cite{on})  of the Kunze-Stein phenomenon for connected real simple groups $G$ of real rank one with finite center:
  \[
    L^{p,q_{1}}(G)*L^{p,q_{2}}\subset{L^{p,q_{3}}(G)}
  \]
  provided $1<p<2$, $1\leq{q_{1},q_{2},q_{3}}\leq\oo$ and $1+\frac{1}{q_{3}}\leq\frac{1}{q_{1}}+\frac{1}{q_{2}}$.
  In particular, this applies to $Sp(m,1)$ and $F_{4}$, and by following \cite{ly5}, we can obtain similar phenomenon on $H_{\Q}^{m}$ and $H_{\mathbb{Q}}^{2}$.
  To be more precise, let $L^{p}(G)$ and $L^{p,q}(G)$ denote the usual Lebesgue and Lorentz spaces, respectively, and let $L^{p,q}(G/K)$, $L^{p,q}(K\backslash{G})$ and $L^{p,q}(K\backslash{G}/K)$ denote the closed subspaces of $L^{p,q}(G)$ of the right-$K$-invariant, left-$K$-invariant and $K$-bi-invariant functions, respectively.
  Following \cite{ly5}, we can show

  \begin{lemmaalph}
    For $p\in(1,2)$, there holds
    \[
      L^{p}\left( K\backslash{G} \right)*L^{p}\left( G/K \right)\subset{L^{p,\oo}(K\backslash{G}/K)}.
    \]
    \label{lemalph:luyang-convolution-lemma-1}
  \end{lemmaalph}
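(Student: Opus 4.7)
The plan is to reduce this refinement of the Kunze–Stein phenomenon to the already‐stated sharp Lorentz version on $G$ by exploiting the invariance structure of the two factors.

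First I would verify the elementary but crucial observation that if $f$ is left–$K$–invariant (so $f \in L^{p}(K\backslash G)$) and $g$ is right–$K$–invariant (so $g \in L^{p}(G/K)$), then the convolution $f*g$ is automatically $K$–bi–invariant. For left–$K$–invariance of $f*g$, one computes
\[
  (f*g)(kx) = \int_{G} f(y)g(y^{-1}kx)\,dy = \int_{G} f(kz)g(z^{-1}x)\,dz = (f*g)(x),
\]
after the substitution $y = kz$ and using left–$K$–invariance of $f$ together with the left–invariance of Haar measure on the unimodular group $G$. Right–$K$–invariance follows at once from the right–$K$–invariance of $g$. Hence $f*g$ is a function on $K\backslash G/K$.

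Next I would invoke the sharp Kunze–Stein phenomenon for connected real simple groups of real rank one with finite center, as recorded above from the work of Cowling–Giulini–Meda (applicable to both $Sp(m,1)$ and $F_{4}$):
\[
  L^{p,q_{1}}(G)*L^{p,q_{2}}(G) \subset L^{p,q_{3}}(G),
\]
whenever $1<p<2$ and $1 + 1/q_{3} \leq 1/q_{1}+1/q_{2}$. Specializing to $q_{1}=q_{2}=p$, the admissibility inequality $1 + 1/q_{3} \leq 2/p$ is satisfied by $q_{3} = \infty$ because $(2-p)/p > 0$. Since $L^{p}(K\backslash G) \subset L^{p}(G)$ and $L^{p}(G/K) \subset L^{p}(G)$ isometrically, we obtain
\[
  \|f*g\|_{L^{p,\infty}(G)} \leq C\,\|f\|_{L^{p}(G)}\,\|g\|_{L^{p}(G)} = C\,\|f\|_{L^{p}(K\backslash G)}\,\|g\|_{L^{p}(G/K)}.
\]

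Combining the two steps, $f*g$ belongs to $L^{p,\infty}(G)$ and is $K$–bi–invariant, hence lies in the closed subspace $L^{p,\infty}(K\backslash G/K)$, which is the claimed inclusion. I do not anticipate a genuine obstacle here: the nontrivial analytic input (the sharp Lorentz–valued Kunze–Stein inequality) is imported as a black box from \cite{co,co1,co2}, and the only work on our side is the symmetry bookkeeping, which is routine. The only minor care needed is to keep track of left versus right cosets in the definition of $L^{p}(K\backslash G)$ and $L^{p}(G/K)$, together with unimodularity of $G$, to justify the change of variables in the invariance computation.
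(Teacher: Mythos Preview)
Your proposal is correct and follows the natural route: verify the bi-invariance of the convolution from the one-sided invariances of the factors, then specialize the Cowling--Giulini--Meda Lorentz-space Kunze--Stein inclusion $L^{p,q_1}(G)*L^{p,q_2}(G)\subset L^{p,q_3}(G)$ to $q_1=q_2=p$, $q_3=\infty$. The paper itself does not give a proof here but simply refers to \cite{ly5}, where the argument is essentially the same as yours; the Cowling--Giulini--Meda result is recalled in the paper immediately before the lemma precisely because it is the analytic input.
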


  \begin{lemmaalph}
    For $p\in(1,2)$ and $p'=\frac{p}{p-1}$, there holds
    \begin{align*}
      L^{p',1}\left( K\backslash{G}/K \right)*L^{p}(G/K)\subset{L^{p'}(G/K)}
    \end{align*}
    and, if $f\in{L^{p,1}(K\backslash{G}/K)}$ and $h\in{L^{p}(G/K)}$, then there exists a constant $C>0$ such that
    \begin{align*}
      \norm{f*h}_{L^{p'}(G/K)}\leq{C}\norm{f}_{L^{p',1}(K\backslash{G}/K)}\norm{h}_{L^{p}(G/K)}.
    \end{align*}
    \label{lemalph:luyang-convolution-lemma-2}
  \end{lemmaalph}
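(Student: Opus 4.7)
The plan is to prove the convolution inequality by a duality argument that ultimately invokes Lemma~\ref{lemalph:luyang-convolution-lemma-1}. Since $L^p(G/K)$ is the pre-dual of $L^{p'}(G/K)$ for $1<p<2$, it suffices to show that for every $g\in L^p(G/K)$,
\begin{equation*}
|\langle f*h,\,g\rangle_G|\le C\,\|f\|_{L^{p',1}(K\backslash G/K)}\,\|h\|_{L^p(G/K)}\,\|g\|_{L^p(G/K)},
\end{equation*}
where $\langle\cdot,\cdot\rangle_G$ denotes the integral pairing on $G$; taking the supremum over $g$ in the unit $L^p$-ball of right-$K$-invariant functions then yields $\|f*h\|_{L^{p'}(G/K)}\le C\,\|f\|_{L^{p',1}}\,\|h\|_{L^p}$.

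Using unimodularity of $G$ together with Fubini, a change of variables in $(f*h)(x)=\int_G f(y)\,h(y^{-1}x)\,dy$ produces the adjoint identity
\begin{equation*}
\langle f*h,\,g\rangle_G=\langle f,\,g*\check h\rangle_G,
\end{equation*}
where $\check h(x):=h(x^{-1})$. Since $h$ is right-$K$-invariant, $\check h$ is left-$K$-invariant, so $\check h\in L^p(K\backslash G)$. Applying the Lorentz H\"older inequality associated to the duality $L^{p',1}\leftrightarrow L^{p,\infty}$,
\begin{equation*}
|\langle f,\,g*\check h\rangle_G|\le C\,\|f\|_{L^{p',1}(K\backslash G/K)}\,\|g*\check h\|_{L^{p,\infty}},
\end{equation*}
reduces the problem to establishing the weak-type bound
\begin{equation*}
\|g*\check h\|_{L^{p,\infty}(K\backslash G/K)}\le C\,\|g\|_{L^p(G/K)}\,\|h\|_{L^p(G/K)}.
\end{equation*}

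This remaining estimate would follow from Lemma~\ref{lemalph:luyang-convolution-lemma-1} after aligning the invariance classes. Since $f$ is $K$-bi-invariant, the value of the pairing is unchanged if $g*\check h$ is replaced by its $K$-bi-invariant symmetrization; equivalently, using the reflection identity $(g*\check h)^{\vee}=h*\check g$ together with the inversion-invariance of Lorentz norms on unimodular groups, we can bring $g*\check h$ into the invariance pattern $L^p(K\backslash G)*L^p(G/K)\subset L^{p,\infty}(K\backslash G/K)$ to which Lemma~\ref{lemalph:luyang-convolution-lemma-1} directly applies, yielding the required estimate and hence the desired inequality.

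The principal obstacle is precisely this bookkeeping of left- versus right-$K$-invariance: Lemma~\ref{lemalph:luyang-convolution-lemma-1} is stated for convolutions of left-invariant with right-invariant functions, whereas the naturally appearing $g*\check h$ sits in the opposite pattern $L^p(G/K)*L^p(K\backslash G)$. The resolution exploits the $K$-bi-invariance of $f$, the unimodularity of $G$, and the standard reflection identity $(f_1*f_2)^{\vee}=\check f_2*\check f_1$ to move between the two patterns.
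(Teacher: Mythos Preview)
The paper does not include its own proof of this lemma; it simply cites \cite{ly5}. Your duality strategy (pair against $g\in L^{p}(G/K)$, shift the convolution to obtain $\langle f, g*\check h\rangle$, apply the $L^{p',1}$--$L^{p,\infty}$ H\"older inequality, and then invoke Lemma~\ref{lemalph:luyang-convolution-lemma-1}) is exactly the standard route and is presumably what \cite{ly5} does.

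There is, however, a genuine gap in your final step. You correctly observe that $g*\check h$ lies in the pattern $L^{p}(G/K)*L^{p}(K\backslash G)$, which is the \emph{wrong} order for Lemma~\ref{lemalph:luyang-convolution-lemma-1}. You then offer two fixes. The reflection identity $(g*\check h)^{\vee}=h*\check g$ does \emph{not} help: $h\in L^{p}(G/K)$ and $\check g\in L^{p}(K\backslash G)$, so $h*\check g$ is again (right-invariant)$*$(left-invariant), the same wrong pattern. Taking $\vee$ of Lemma~\ref{lemalph:luyang-convolution-lemma-1} reproduces Lemma~\ref{lemalph:luyang-convolution-lemma-1} itself, not its mirror.

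The fix that actually works is the other one you mention, the bi-invariant symmetrization, but you need to carry it out explicitly. Since $f$ is $K$-bi-invariant, $\langle f, g*\check h\rangle=\langle f, P(g*\check h)\rangle$, where $P\phi(x)=\int_{K}\int_{K}\phi(k_{1}xk_{2})\,dk_{1}\,dk_{2}$. A change of variables in the convolution integral gives
\[
P(g*\check h)=\tilde g * \widetilde{\check h},\qquad \tilde g(y)=\int_{K}g(ky)\,dk,\quad \widetilde{\check h}(z)=\int_{K}\check h(zk)\,dk.
\]
Because $g$ was already right-$K$-invariant, $\tilde g$ is bi-invariant (hence in $L^{p}(K\backslash G)$); likewise $\widetilde{\check h}$ is bi-invariant (hence in $L^{p}(G/K)$). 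Now Lemma~\ref{lemalph:luyang-convolution-lemma-1} applies directly and yields $\|\tilde g*\widetilde{\check h}\|_{L^{p,\infty}}\le C\|g\|_{L^{p}}\|h\|_{L^{p}}$, which closes the argument. Replace your reflection paragraph with this computation and the proof is complete.
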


  Using Lemma \ref{lemalph:luyang-convolution-lemma-2}, we prove the following estimate on $k_{\g}*k_{\zeta,\g'}*f$.

  \begin{lemma}
    Let $0<\g<3$, $0<\g'<N-\g$, $0<\zeta$ and $\frac{2N}{N+\g+\g'}\leq{p}<2$.
    Then, for $f\in{C_{0}^{\oo}(\B_{\mbF}^{m})}$, there holds
    \[
      \norm{k_{\g}*k_{\zeta,\g'}*f}_{p'}\leq{C\norm{f}_{p}}.
    \]
    \label{lem:k-gamma-k-zeta-f-estimate}
  \end{lemma}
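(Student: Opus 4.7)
The plan is to decompose the kernel $K := k_{\gamma}*k_{\zeta,\gamma'}$ by distance and then bound the two resulting convolution operators by two different tools. Write $K = K_{0}+K_{\infty}$, where $K_{0} := K\cdot\chi_{\{\rho\leq 1\}}$ and $K_{\infty} := K\cdot\chi_{\{\rho>1\}}$. Since $k_{\gamma}$ and $k_{\zeta,\gamma'}$ are radial, both $K_{0}$ and $K_{\infty}$ are radial on $\mathbb{X}$ and hence $K$-bi-invariant on $G$. By the triangle inequality it suffices to show
\[
\|f*K_{0}\|_{p'}+\|f*K_{\infty}\|_{p'}\lesssim\|f\|_{p}.
\]

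For the local part $K_{0}$, I would invoke a hyperbolic Hardy--Littlewood--Sobolev inequality via the generalized Young inequality of Lemma \ref{lemalph:generalized-youngs-inequality}. The small-distance rearrangement estimate \eqref{eq:rearrangement-k-g-ast-k-z-g-small-dist1}, together with the compact support of $K_{0}$ (which makes $K_{0}^{*}$ vanish for $t\geq|B_{1}|$), places $K_{0}$ in the weak Lorentz space $L^{r,\infty}(\mathbb{X})$ with $r=\tfrac{N}{N-\gamma-\gamma'}$; moreover $K_{0}\in L^{s}(\mathbb{X})$ for every $1\leq s<r$. At the critical endpoint $p=\tfrac{2N}{N+\gamma+\gamma'}$, the exponents satisfy $\tfrac{1}{p}+\tfrac{1}{r}-1=\tfrac{1}{p'}$ and $\tfrac{1}{p}\geq\tfrac{1}{p'}$, so Lemma \ref{lemalph:generalized-youngs-inequality} applied with $p_{1}=p$, $q_{1}=p$, $p_{2}=r$, $q_{2}=\infty$ and output target $L^{p',p}\hookrightarrow L^{p',p'}=L^{p'}$ yields the bound at the critical exponent. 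For $p$ strictly above the endpoint and still below $2$, the Young conjugate $s$ defined by $\tfrac{1}{p}+\tfrac{1}{s}-1=\tfrac{1}{p'}$ is strictly less than $r$, so $K_{0}\in L^{s}$ and the classical Young inequality is already enough.

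For the global part $K_{\infty}$ I plan to appeal to the Kunze--Stein phenomenon encoded in Lemma \ref{lemalph:luyang-convolution-lemma-2}, which reduces matters to verifying $K_{\infty}\in L^{p',1}(K\backslash G/K)$. Using the large-distance rearrangement bound \eqref{eq:rearrangement-k-g-ast-k-z-g-large-dist} for $t\geq 2$ together with boundedness of $K_{\infty}^{*}$ near $t=0$, I would split
\[
\|K_{\infty}\|_{L^{p',1}}=\int_{0}^{2}t^{\tfrac{1}{p'}-1}K_{\infty}^{*}(t)\,dt+\int_{2}^{\infty}t^{\tfrac{1}{p'}-1}K_{\infty}^{*}(t)\,dt.
\]
The first integral is finite because $\tfrac{1}{p'}>0$ and $K_{\infty}^{*}$ is bounded; for the second, choosing $\epsilon>0$ sufficiently small dominates the integrand by $t^{\tfrac{1}{p'}-1+\tfrac{\epsilon-Q/2}{N}}$, which is integrable at infinity because $\tfrac{1}{p'}\leq\tfrac{N-\gamma-\gamma'}{2N}<\tfrac{Q}{2N}$, the strict inequality being guaranteed by $Q>N$ together with $\gamma+\gamma'>0$. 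Lemma \ref{lemalph:luyang-convolution-lemma-2} then gives $\|f*K_{\infty}\|_{p'}\leq C\|K_{\infty}\|_{L^{p',1}}\|f\|_{p}$.

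The principal difficulty lies in the endpoint treatment of the local part: since $K_{0}$ belongs only to the weak space $L^{r,\infty}$ at the critical $r$, ordinary Young's inequality fails and one must use the Lorentz-refined version of Lemma \ref{lemalph:generalized-youngs-inequality}. Once the endpoint for $K_{0}$ is secured and the $L^{p',1}$ integrability of $K_{\infty}$ is verified, summing the two estimates produces the desired $L^{p}\to L^{p'}$ bound uniformly for $p$ in the stated range $\tfrac{2N}{N+\gamma+\gamma'}\leq p<2$.
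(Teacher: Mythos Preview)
Your approach is essentially the same as the paper's: split $K=k_{\gamma}*k_{\zeta,\gamma'}$ into a local piece and a tail, handle the local piece with the Lorentz--Young inequality (Lemma~\ref{lemalph:generalized-youngs-inequality}) using the small-distance rearrangement estimate, and handle the tail with the Kunze--Stein lemma (Lemma~\ref{lemalph:luyang-convolution-lemma-2}) after checking $K_{\infty}\in L^{p',1}$. The paper treats the local part uniformly via $\eta_{1}\in L^{p'/2,\infty}$ for the whole range (note that at the endpoint $p'/2=r$), whereas you split into endpoint versus non-endpoint; both work.

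One inequality in your justification is reversed: from $p\geq \tfrac{2N}{N+\gamma+\gamma'}$ you get $\tfrac{1}{p'}\geq \tfrac{N-\gamma-\gamma'}{2N}$, not $\leq$. This does not damage the argument, because the integrability of $\int_{2}^{\infty}t^{1/p'-1}K_{\infty}^{*}(t)\,dt$ only requires $\tfrac{1}{p'}<\tfrac{Q}{2N}$, and this follows directly from $p<2$ (so $\tfrac{1}{p'}<\tfrac12$) together with $Q>N$ (so $\tfrac{Q}{2N}>\tfrac12$). With that correction the proof goes through.
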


  \begin{proof}
    Define the cut off functions
    \begin{align*}
      \eta_{1}(\rho)&=
      \begin{cases}
	k_{\g}*k_{\zeta,\g'}&0<\rho<1\\
	0&1\leq\rho
      \end{cases}\\
      \eta_{2}(\rho)&=k_{\g}*k_{\zeta,\g'} - \eta_{1}(\rho).
    \end{align*}

    By \eqref{eq:rearrangement-k-g-ast-k-z-g-small-dist}, there exists a $t_{0}>0$ such that, for $0<t \leq t_{0}$, there holds
    \[
      \eta_{1}^{*}(t) \lesssim t^{\frac{\g+\g'-N}{N}},
    \]
    and, for $t_{0}<t$, there holds $\eta_{1}^{*}(t)=0$.
    Next, by Lemma \ref{lemalph:generalized-youngs-inequality}, there holds
    \[
      \norm{ \eta_{1} * f}_{L^{p'}} = \norm{ \eta_{1} * f}_{L^{p',p'}} \leq C \norm{ \eta_{1} }_{L^{\frac{p'}{2},\oo}} \norm{ f }_{L^{p}}.
    \]
    But
    \[
      \norm{ \eta_{1} }_{L^{\frac{p'}{2},\oo}} = \sup\limits_{0<t<\oo} t^{\frac{2}{p'}} \eta_{1}^{*}(t) \lesssim \sup\limits_{0<t<t_{0}} t^{\frac{2}{p'}+\frac{\g+\g'-N}{N}} < \oo,
    \]
    provided
    \[
      \frac{2}{p'}+\frac{\g+\g'-N}{N}>0,
    \]
    which is equivalent to
    \[
      p>\frac{2N}{\g+\g'+N},
    \]
    as is assumed.
    Consequently, there holds
    \[
      \norm{ \eta_{1} * f}_{L^{p'}} \lesssim \norm{f}_{L^{p}}.
    \]

    Next, by \eqref{eq:rearrangement-k-g-ast-k-z-g-large-dist}, there exists a $0<t_{0}$ such that, for $0 < t \leq t_{0}$, there holds
    \[
      \eta_{2}(t) \lesssim 1,
    \]
    and, for $t_{0} < t$ and $0 < \e < \min\left\{ 1 , N-\g-\g' , \frac{\zeta}{2} \right\}$, there holds
    \[
      \eta_{2}^{*}(t) \lesssim t^{\frac{\e-\frac{Q}{2}}{N}}.
    \]
    Consequently, we find, for $0<\e<\frac{Q}{2}+\frac{N}{p}$, there holds
    \[
      \norm{\eta_{2}}_{L^{p',1}} = \int_{0}^{\oo} t^{\frac{1}{p'}-1} \eta_{2}^{*}(t) dt < \oo.
    \]

    At last, Lemma \ref{lemalph:luyang-convolution-lemma-2}, we obtain
    \[
      \norm{ \eta_{2} * f}_{L^{p'}} \leq C \norm{f}_{L^{p}},
    \]
    and therefore
    \[
      \norm{ k_{\g} * k_{\zeta,\g'} * f}_{L^{p}} \leq \norm{\eta_{1} * f}_{L^{p'}} + \norm{ \eta_{2} * f}_{L^{p}} \leq C \norm{f}_{L^{p}},
    \]
    as desired.
  \end{proof}

  \section{Proofs of Theorem \ref{thm:sobolev-inequality} and  \ref{thm:hardy-sobolev-mazya-inequality} }

    With all of the kernel estimates proved in Section \ref{sec:kernel-estimates}, we are ready to prove the Poincar\'e-Sobolev inequality (Theorem \ref{thm:sobolev-inequality}) and Hardy-Sobolev-Maz'ya inequality (Theorem \ref{thm:hardy-sobolev-mazya-inequality}).
  For the reader's convenience, we restate these theorems before their respective proofs.

  \begin{manualtheorem}{\ref{thm:sobolev-inequality}}
    Let $0<\g<3$, $0<\g'$, $2<p$ and  $0<\zeta$. Denote by $N=\dim\mathbb{X}$.
  If $0<\g'<N-\g$, suppose further that $2<p\leq\frac{2N}{N-(\g+\g')}$.
  Then there exists a constant $C>0$ such that, for all $u\in{C_{0}^{\oo}\left( \mathbb{X} \right)}$, there holds
  \begin{equation}
    \begin{aligned}
      \norm{u}_{p}\leq{C}\norm{\left( -\Delta_{\mathbb{X}}-\rho_{\mathbb{X}}^{2}+\zeta^{2} \right)^{\frac{\g'}{4}}\left( -\Delta-\rho_{\mathbb{X}}^{2} \right)^{\frac{\g}{4}}u}_{2}.
    \end{aligned}
  \end{equation}
  \end{manualtheorem}

  \begin{proof}
    By Lemma \ref{lem:k-gamma-k-zeta-f-estimate}, we have
    \begin{equation}
      \norm{\left( -\Delta_{\mathbb{X}}-\rho_{\mathbb{X}}^{2}+\zeta^{2} \right)^{\frac{-\g'}{4}}\left( -\Delta-\rho_{\mathbb{X}}^{2} \right)^{-\frac{\g}{4}}u}_{L^{p'}} \leq C \norm{u}_{L^{p}}.
      \label{eq:sobolev-inequality-proof-eq1}
    \end{equation}
    Consulting \cite[Appendx Lemma]{be}, we have that that \eqref{eq:sobolev-inequality-proof-eq1} is equivalent to
    \[
      \norm{u}_{L^{p}} \leq C \norm{ \left( -\Delta_{\mathbb{X}}-\rho_{\mathbb{X}}^{2}+\zeta^{2} \right)^{\frac{\g'}{4}}\left( -\Delta-\rho_{\mathbb{X}}^{2} \right)^{\frac{\g}{4}}u}_{L^{2}},
    \]
        thereby proving the theorem.
  \end{proof}
\
\\

\textbf{Proof of Theorem \ref{thm:hardy-sobolev-mazya-inequality}}.
    We need only prove the inequality in case

    \[
      \la=\prod_{j=1}^{k}\frac{(a-k+2j-2)^{2}}{4}.
    \]
    We will use the factorization Theorem (Theorem \ref{lem:first-factorization}), and so, we set
    \[
      u=\vr^{\frac{k-(2m+1)-a}{2}}f,
    \]
    and obtain
    \begin{align*}
      &4^{k}\int\limits_{\H_{\Q}^{m-1}} \int\limits_{0}^{\oo} u \prod_{j=1}^{k}\left[ -\vr\p_{\vr\vr}-a\p_{\vr}-\vr\Delta_{Z}-\mathcal{L}_{0}+i(k+1-2j)\sqrt{-\Delta_{Z}} \right]u \frac{dxdzd\vr}{\vr^{1-a}}\\
      &= \int\limits_{\H_{\Q}^{m-1}} \int\limits_{0}^{\oo} f\prod_{j=1}^{k}\left[ -\Delta-(2m+1)^{2}+(a-k+2j-2)^{2} \right]f \frac{dxdzd\vr}{\vr^{2m+2}}\\
      &=4 \int_{\mcU^{m}}f\prod_{j=1}^{k}\left[ -\Delta-(2m+1)^{2}+(a-k+2j-2)^{2} \right]f dV.
    \end{align*}

    Next, using that $\spec\left( -\Delta \right) = [(2m+1)^{2},\oo)$, we have the following sharp inequality 
    \begin{align*}
      \int_{\mcU^{m}}f\prod_{j=1}^{k}\left[ -\Delta-(2m+1)^{2}+(a-k+2j-2)^{2} \right]f dV\geq \prod_{j=1}^{k}(a-k+2j-2)^{2} \int\limits_{\mcU^{m}} f^{2} dV.
    \end{align*}

    Applying Plancherel's theorem, there holds
    \begin{align*}
      &\int\limits_{\mcU^{m}} f \prod_{j=1}^{k}\left[ -\Delta - (2m+1)^{2} + (a-k+2j-2)^{2} \right] f dV- \prod_{j=1}^{k} (a-k+2j-2)^{2} \int\limits_{\mcU^{m}} f^{2} dV\\
      &= C_{m} \int\limits_{-\oo}^{\oo} \int\limits_{S^{4m-1}} \left[ \prod_{j=1}^{k} \left( \la^{2} + (a-k+2j-2)^{2} \right) - \prod_{j=1}^{k} (a-k+2j-2)^{2} \right] |\hat{f}(\la,\varsigma)|^{2} |\mfc(\la)|^{-2} d\la d\sigma(\varsigma).
    \end{align*}

    Choosing $0<\d$ so that
    \[
      \prod_{j=1}^{k}(\la^{2} + (a-k+2j-2)^{2}) - \prod_{j=1}^{k} (a-k+2j-2)^{2} \geq \la^{2} (\la^{2} + \d)^{k-1},
    \]
    applying Theorem \ref{thm:sobolev-inequality}, and applying the Plancherel theorem, we obtain
    \begin{align*}
      &\int\limits_{\mcU^{m}} f \prod_{j=1}^{k}\left[ -\Delta - (2m+1)^{2} + (a-k+2j-2)^{2} \right] f dV- \prod_{j=1}^{k} (a-k+2j-2)^{2} \int\limits_{\mcU^{m}} f^{2} dV\\
      &\geq C_{m} \int\limits_{-\oo}^{\oo} \int\limits_{S^{4m-1}} \la^{2} (\la^{2} + \d)^{k-1} |\hat{f}(\la,\varsigma)|^{2} |\mfc(\la)|^{-2} d\la d\sigma(\varsigma)\\
      &= \int\limits_{\mcU^{m}} f \left( -\Delta - (2m+1)^{2} \right)\left( -\Delta - (2m+1)^{2} + \d \right)^{k-1} f dV\\
      &\geq C \norm{f}_{L^{p}}^{2}.
          \end{align*}
          This proves the first inequality.
 The proof of the second inequality is similar and we omit it.

\section{Proofs of Theorem \ref{thm:hardy-Adams-inequality} and \ref{thm:hardy-Adams-inequality2}}
\textbf{Proof of Theorem \ref{thm:hardy-Adams-inequality}}
 Set $\Omega(u)=\{x\in\mathbb{B}_{\mathbb{C}}^{n}:|u(x)|\geq1\}$. Then by Theorem \ref{thm:sobolev-inequality},
 we have, for $p>2$,
\begin{equation*}
\begin{split}
|\Omega(u)|=&\int_{\Omega(u)}dV\leq\int_{\mathbb{X}}|u|^{p}dV\lesssim 1.
\end{split}
\end{equation*}
Therefore, $|\Omega(u)|\leq \Omega_{0}$ for some constant $\Omega_{0}$ independent of $u$.
We write
\begin{equation}\label{6.1}
\begin{split}
&\int_{\mathbb{B}_{\mathbb{C}}^{n}}(e^{\beta_{0}(N/2,N) u^{2}}-1-\beta_{0}(N/2,N) u^{2})dV\\
=&\int_{\Omega(u)}(e^{\beta_{0}(N/2,N) u^{2}}-1-\beta_{0}(N/2,N) u^{2})dV+
\int_{\mathbb{X}\setminus\Omega(u)}(e^{\beta_{0}(N/2,N) u^{2}}-1-\beta_{0}(N/2,N) u^{2})dV\\
\leq&\int_{\Omega(u)}e^{\beta_{0}(N/2,N) u^{2}}dV+
\int_{\mathbb{X}\setminus\Omega(u)}(e^{\beta_{0}(n,2n)
u^{2}}-1-\beta_{0}(N/2,N) u^{2})dV.
\end{split}
\end{equation}
The second part of right hand of (\ref{6.1}) is bounded. In fact, we have
\begin{equation*}
\begin{split}
&\int_{\mathbb{X}\setminus\Omega(u)}(e^{\beta_{0}(N/2,N) u^{2}}-1-\beta_{0}(N/2,N) u^{2})dV\\
=&\int_{\mathbb{X}\setminus\Omega(u)}\sum^{\infty}_{n=2}\frac{(\beta_{0}(N/2,N) u^{2})^{n}}{n!}dV\\
\leq&\int_{\mathbb{X}\setminus\Omega(u)}\sum^{\infty}_{n=2}\frac{(\beta_{0}(N/2,N) )^{n}u^{4}}{n!}dV\\
\leq&\sum^{\infty}_{n=2}\frac{(\beta_{0}(N/2,N) )^{n}}{n!}\int_{\mathbb{X}}|u(x)|^{4}dV\leq C.
\end{split}
\end{equation*}
Here we use the fact $|u(z)|<1$, $z\in\mathbb{X}\setminus\Omega(u)$ and
Theorem \ref{thm:sobolev-inequality}.

Next we shall show that
$\int_{\Omega(u)}e^{\beta_{0}(N/2,N) u^{2}}dV$ is also bounded by some universal
constant.
Set
\[
v=(-\Delta_{\mathbb{X}}-\rho_{\mathbb{X}}^{2}+\zeta^{2})^{(2n-\alpha)/4}(-\Delta_{\mathbb{X}}-\rho_{\mathbb{X}}^{2})^{\alpha/4}
u.
\]
Then
\begin{equation}\label{5.5}
\begin{split}
\int_{\mathbb{X}}|v|^{2}dV\leq1
\end{split}
\end{equation}
and  we can write $u$ as a potential
\begin{equation}\label{5.6}
\begin{split}
u=(-\Delta_{\mathbb{X}}-\rho_{\mathbb{X}}^{2}+\zeta^{2})^{-(2n-\alpha)/4}(-\Delta_{\mathbb{X}}-\rho_{\mathbb{X}}^{2})^{-\alpha/4}v=v\ast\phi,
\end{split}
\end{equation}
where $\phi=(-\Delta_{\mathbb{X}}-\rho_{\mathbb{X}}^{2}+\zeta^{2})^{-(2n-\alpha)/4}(-\Delta_{\mathbb{X}}-\rho_{\mathbb{X}}^{2})^{-\alpha/4}=k_{\zeta,(N-\alpha)/2}\ast k_{\alpha/2}$.
By \ref{eq:rearrangement-k-g-ast-k-z-g-small-dist1} and \ref{eq:rearrangement-k-g-ast-k-z-g-large-dist2},
\begin{equation*}
  \phi^{\ast}(t)\leq \frac{1}{\gamma_{N}(N/2)}\cdot\left(\frac{Nt}{\omega_{N-1}}\right)^{-\frac{1}{2}}
+O\left(t^{\frac{\epsilon-n}{2n}}\right),\;\;\;\;0<t<2\;\;\;\; \textrm{and}\; \int_{c}^{\infty}|\phi^{\ast}(t)|^{2}dt<\infty,\;\forall c>0.
\end{equation*}
Closely  following the   proof of  Theorem 1.7  in \cite{LiLuy1},  we have that there exists a constant $C$ which is independent of  $u$ and $\Omega(u)$ such that
 \begin{equation*}
\begin{split}
&\int_{\Omega(u)}e^{\beta_{0}(N/2,N) u^{2}}dV=\int^{|\Omega(u)|}_{0}\exp(\beta_{0}(N/2,N)|u^{\ast}(t)|^{2})dt\leq\int^{\Omega_{0}}_{0}\exp(\beta_{0}(N/2,N)|u^{\ast}(t)|^{2})dt\leq C.
\end{split}
\end{equation*}
 The proof of Theorem \ref{thm:hardy-Adams-inequality} is thereby completed.
\
\\

\textbf{Proof of  Theorem  \ref{thm:hardy-Adams-inequality2}}
It is enough to show that in term of ball model,  for some $\zeta>0$, there holds
\begin{equation*}
\begin{split}
&\|(-\Delta_{\mathbb{X}}-\rho_{\mathbb{X}}^{2}+\zeta^{2})^{(2m-1)/2}(-\Delta_{\mathbb{X}}-\rho_{\mathbb{X}}^{2})^{1/2}[(1-|z|^{2})^{\frac{a+1}{2}}u]\|_{2}\\
\leq&  4^{2m}\int_{\mathbb{B}_{\mathbb{Q}}^{n}}u \prod^{2m}_{j=1}\left[\Delta'_{\frac{1-a-(2m+1)}{2}}+\frac{(2m+1-2j)^{2}}{4}-
i\frac{2m+1-2j}{2}\sqrt{\Gamma+1}\right]u\frac{dz}{(1-|z|^{2})^{1-a}}\\
  &-\prod^{2m}_{j=1}(a-2m+2j-2)^{2}\int_{\mathbb{B}_{\mathbb{Q}}^{n}}\frac{u^{2}}{(1-|z|^{2})^{2m+1-a}}dz
\end{split}
\end{equation*}
and in term of Siegel domain,
  \begin{equation*}
    \begin{aligned}
    &\|(-\Delta_{\mathbb{X}}-\rho_{\mathbb{X}}^{2}+\zeta^{2})^{(2m-1)/2}(-\Delta_{\mathbb{X}}-\rho_{\mathbb{X}}^{2})^{1/2}[\varrho^{\frac{a+1}{2}}u]\|_{2}\\
    \leq  & 4^{2m}\int_{\mathbb{H}_{\mathbb{Q}}^{m-1}}\int^{\infty}_{0}u\prod^{n}_{j=1}\left[-\varrho\partial_{\varrho\varrho}-a\partial_{\varrho}-\varrho \Delta_{Z}+\mathcal{L}_{0} +i(k+1-2j)\sqrt{-\Delta_{Z}}\right]u\frac{dxdzd\varrho}{\varrho^{1-a}}\\
  &-\prod^{2m}_{j=1}(a-n+2j-2)^{2}\int_{\mathbb{H}_{\mathbb{Q}}^{-1}}\int^{\infty}_{0}\frac{u^{2}}{\varrho^{2m+1-a}}dxdzd\varrho.
    \end{aligned}
  \end{equation*}
The proof is similar to that given in the proof of \ref{thm:hardy-sobolev-mazya-inequality} via Plancherel formula and we omit it.

\section{Appendix: Proofs of Theorem \ref{thm:Adams-inequality1} and \ref{thm:Adams-inequality2}}

In this section, we will outline the proofs of Adams inequalities, namely  Theorem \ref{thm:Adams-inequality1} and \ref{thm:Adams-inequality2} for the convenience of the reader.
We refer the interested reader to \cite{LiLuy1}, \cite{LiLuy2}, \cite{ly2}, \cite{ly5} for all the details.

\textbf{Proof of Theorem \ref{thm:Adams-inequality1}}
Let $f=(-\Delta_{\mathbb{X}}-\rho_{\mathbb{X}}^{2}+\zeta^{2})^{\frac{\alpha}{2}}u$. Then $\|f\|_{p}\leq1$ and
$$u=(-\Delta_{\mathbb{X}}-\rho_{\mathbb{X}}^{2}+\zeta^{2})^{-\frac{\alpha}{2}}f=f\ast k_{\zeta,\alpha}$$
Using O'Neil's lemma (\cite{on}), we have for $t>0$,
\[
u^{\ast}(t)\leq\frac{1}{t}\int^{t}_{0}f^{\ast}(s)ds\int^{t}_{0} k_{\zeta,\alpha}^{\ast}(s)ds+
\int^{\infty}_{t}f^{\ast}(s) k_{\zeta,\alpha}^{\ast}(s)ds.
\]
Using the rearrangement estimates of $k_{\zeta,\alpha}]^{\ast}$, it is easy to check
\begin{equation*}
\begin{split}
&[k_{\zeta,\alpha}]^{\ast}(t)\leq\frac{1}{\gamma_{N}(\alpha)}\left(\frac{Nt}{\omega_{2n-1}}\right)^{\frac{\alpha-N}{N}}
+O\left(t^{\frac{\alpha+\epsilon-N}{N}}\right),   \;0<t<2;\\
&\int^{\infty}_{c}|[k_{\zeta,\alpha}]^{\ast}(t)|^{p'}dt<\infty,\;\;\forall c>0.
\end{split}
\end{equation*}
Closely  following the   proof of  Theorem 1.13  in \cite{LiLuy2},  we have that there exists a constant $C$ which is independent of  $u$  such that
 \begin{equation*}
\begin{split}
&\frac{1}{|E|}\int_{E}\exp(\beta_{0}(\alpha,N)|u|^{p'})dV\leq
\frac{1}{|E|}\int^{|E|}_{0}\exp(\beta_{0}(\alpha,N)|u^{\ast}(t)|^{p'})dt\\
\leq&\frac{1}{|E|}\int^{|E|}_{0}\exp\left(\beta_{0}(\alpha,N)\left|\frac{1}{t}\int^{t}_{0}f^{\ast}(s)ds\int^{t}_{0} k_{\zeta,\alpha}^{\ast}(s)ds+
\int^{\infty}_{t}f^{\ast}(s) k_{\zeta,\alpha}^{\ast}(s)ds\right|^{p'}\right)dt\leq C.
\end{split}
\end{equation*}
The sharpness of the constant $\beta_{0}(\alpha,N)$ can be verified by the process similar to that in \cite{ad,ksw}  and thus the
 proof of Theorem \ref{thm:Adams-inequality1} is completed.\
\\

Using the symmetrization-free argument from the local inequalities to global ones developed by Lam and the second author \cite{ll, ll2}, we can conclude the

\textbf{Proof of Theorem \ref{thm:Adams-inequality2}}   Let $u\in W^{\alpha,p}(\mathbb{X})$ with $\int_{\mathbb{X}}|(-\Delta_{\mathbb{X}}-
\rho_{\mathbb{X}}^{2}+\zeta^{2})^{\frac{\alpha}{2}} u|^{p}dV\leq1$. By H\"ormander-Mikhlin type multiplier theorem (see \cite{an}), we have
\[
\int_{\mathbb{X}}|u|^{p}dV\lesssim \int_{\mathbb{X}}|(-\Delta_{\mathbb{X}}-
\rho_{\mathbb{X}}^{2}+\zeta^{2})^{\frac{\alpha}{2}} u|^{p}dV\leq1
\]
provided $\zeta>2\rho_{\mathbb{X}}|\frac{1}{2}-\frac{1}{p}|$.
 Set $\Omega(u)=\{z\in \mathbb{X}:|u(z)|\geq1\}$. Then
 we have
\begin{equation*}
\begin{split}
|\Omega(u)|=&\int_{\Omega(u)}dV\leq\int_{\mathbb{X}}|u|^{p}dV\leq  \Omega_{0},
\end{split}
\end{equation*}
where $\Omega_{0}$ is a constant  independent of $u$.
We write
\begin{equation*}
\int_{\mathbb{X}}\Phi_{p}(\beta_{0}(\alpha,N)|u|^{p'})dV
=\int_{\Omega(u)}\Phi_{p}(\beta_{0}(\alpha,N)|u|^{p'})dV+\int_{\mathbb{X}\setminus\Omega(u)}\Phi_{p}(\beta_{0}(\alpha,N)|u|^{p'})dV.
\end{equation*}
Notice that on the domain $\mathbb{X}\setminus\Omega(u)$, we
have $|u(z)|<1$. Thus,
\begin{equation}\label{5.13}
\begin{split}
\int_{\mathbb{X}\setminus\Omega(u)}\Phi_{p}(\beta_{0}(\alpha,N)|u|^{p'})dV\leq
&\sum^{\infty}_{k=j_{p}-1}\frac{\beta_{0}(\alpha,N)^{k}}{k!}\int_{\mathbb{X}\setminus\Omega(u)}\sum^{\infty}_{n=2}|u|^{p'k}dV\\
\leq&\sum^{\infty}_{k=j_{p}-1}\frac{\beta_{0}(\alpha,N)^{k}}{k!}\int_{\mathbb{X}\setminus\Omega(u)}\sum^{\infty}_{n=2}|u|^{p}dV\\
\leq&\sum^{\infty}_{k=j_{p}-1}\frac{\beta_{0}(\alpha,N)^{k}}{k!}\|u\|^{p}_{p}\leq C.
\end{split}
\end{equation}
Moreover, by Theorem \ref{thm:Adams-inequality1}, if $\zeta$ satisfies $\zeta>0$ if $1<p<2$
and $\zeta>2n\left|\frac{1}{p}-\frac{1}{2}\right|$ if $p\geq2$, then
\begin{equation}\label{5.14}
\begin{split}
\int_{\Omega(u)}\Phi_{p}(\beta_{0}(\alpha,N)|u|^{p'})dV\leq&\int_{\Omega(u)}\exp(\beta_{0}(\alpha,N)|u|^{p'})dV\leq C.
\end{split}
\end{equation}
Combining (\ref{5.13}) and (\ref{5.14}) yields
\begin{equation*}
\begin{split}
\int_{\mathbb{X}}\Phi_{p}(\beta_{0}(\alpha,N)|u|^{p'})dV=&
=\int_{\Omega(u)}\Phi_{p}(\beta_{0}(\alpha,N)|u|^{p'})dV+\int_{\mathbb{X}\setminus\Omega(u)}\Phi_{p}(\beta_{0}(\alpha,N)|u|^{p'})dV\leq C
\end{split}
\end{equation*}
provided that  $\zeta$ satisfies   $\zeta>2\rho_{\mathbb{X}}\left|\frac{1}{p}-\frac{1}{2}\right|$.

The sharpness of the constant $\beta_{0}(\alpha,N)$ can be verified by the process similar to that in \cite{LiLuy2}.

  {\bf Acknowledgement.} The main results of this paper have been presented by the first author in the AMS special session on ``Geometric and functional inequalities and nonlinear partial differential equations" in March, 2021.


\begin{thebibliography}{99}

\bibitem{ad}D.R. Adams, A sharp inequality of J. Moser for higher order derivatives, Ann. of Math. 128 (2) (1988) 385-398.



\bibitem{an}J.-P. Anker, $L_{p}$ Fourier multipliers on Riemannian symmetric spaces of the
noncompact type. Ann. of Math., 132(3) (1990)597-628.



\bibitem{anj}J.-P. Anker, L. Ji, Heat kernel and Green function estimates on noncompact symmetric spaces, Geom. Funct. Anal.,
 9 (1999) 1035-1091.

\bibitem{MR2018351}
  J.-P. Anker and P.~Ostellari.
  \newblock The heat kernel on noncompact symmetric spaces.
  \newblock In {\em Lie groups and symmetric spaces}, volume 210 of {\em Amer.
  Math. Soc. Transl. Ser. 2}, pages 27--46. Amer. Math. Soc., Providence, RI,
  2003.


\bibitem{MR1469569}
  J.-P. Anker, E.~Damek, and C.~Yacoub.
  \newblock Spherical analysis on harmonic {$AN$} groups.
  \newblock {\em Ann. Scuola Norm. Sup. Pisa Cl. Sci. (4)}, 23(4):643--679
  (1997), 1996.




\bibitem{be}  W. Beckner, On the Grushin operator and hyperbolic symmetry, Proc. Amer. Math. Soc., 129(2001), 1233-1246.

\bibitem{be1} W. Beckner, On Lie groups and hyperbolic symmetry-from Kunze-Stein phenomena to Riesz potentials. Nonlinear Anal. 126 (2015), 394-414.


\bibitem{be2} W. Beckner, Symmetry in Fourier Analysis: Heisenberg Group to
Stein-Weiss Integrals (2021), J. Geom. Anal. (published online).

\bibitem{bfl} R. Benguria, R.  Frank, and M. Loss, The sharp constant in the Hardy-Sobolev-Maz'ya inequality in the three dimensional upper half-space, Math. Res. Lett. 15 (2008), no. 4, 613-622.





 \bibitem{MR3424625}
   M.~Christ, H.~Liu, and A.~Zhang.
   \newblock Sharp {H}ardy-{L}ittlewood-{S}obolev inequalities on quaternionic
   {H}eisenberg groups.
   \newblock {\em Nonlinear Anal.}, 130:361--395, 2016.

\bibitem{MR3449396}
M.~Christ, H.~Liu, and A.~Zhang.
\newblock Sharp {H}ardy-{L}ittlewood-{S}obolev inequalities on the octonionic
  {H}eisenberg group.
\newblock {\em Calc. Var. Partial Differential Equations}, 55(1):Art. 11, 18,
  2016.



\bibitem{co1}M. Cowling, Unitary and uniformly bounded representations of some
simple Lie groups, in Harmonic Analysis and Group Representations,
Liguori, Naples, 1982,  49-128.

\bibitem{co2}M. Cowling,  S. Giulini, S. Meda, $L^{p}-L^{q}$ estimates for functions of the Laplace-Beltrami operator on noncompact symmetric spaces. I,
Duke Math. J., 72 (1993), 109-150.




\bibitem{co} M. Cowling, Herz's ``principe de majoratio'' and the Kunze-Stein phenomenon, in: Harmonic Analysis and Number Theory, Montreal,
1996, in: CMS Conf. Proc., vol. 21, Amer. Math. Soc., 1997,  73-88.

\bibitem{d}E. B. Davies,  N. Mandouvalos, Heat kernel bounds on hyperbolic space and Kleinian
groups, Proc. London Math. Soc. (3) 52 (1988), 182-208.

\bibitem{dr1}Damek, E.; Ricci, F. A class of nonsymmetric harmonic Riemannian spaces. Bull. Amer.
Math. Soc. 27 (1992),  139-142.

\bibitem{dr2}Damek, E.; Ricci, F. Harmonic analysis on solvable extensions of H-type groups. J. Geom.
Anal. 2 (1992),  213-248.

\bibitem{FeffermanGr} C. Fefferman, Charles; R. Graham, Juhl's formulae for GJMS operators and Q-curvatures. J. Amer. Math. Soc. 26 (2013), no. 4, 1191-1207.

\bibitem{fs}G. Folland, E. Stein, Estimates for the $\bar{\partial}_{b}$ complex and analysis on the Heisenberg group, Comm.
Pure Appl. Math. 27 (1974) 429-522.

\bibitem{fr} R. Frank, E. H. Lieb, Sharp constants in several inequalities on
the Heisenberg group, Ann. Math., 176(2012), 349-381.

\bibitem{gv}R.  Gangolli, V.  Varadarajan, Harmonic analysis of spherical functions on real reductive
groups, Springer-Verlag (1988).

\bibitem{ge}D. Geller,  Some results in $H^{P}$ theory for the Heisenberg group, Duke Math. J.  47(1980) 365-390.



\bibitem{gr}I. S. Gradshteyn, L. M. Ryzhik, Table of Integrals, Series, and Products. 7th edition.
Translation edited and with a preface by Alan Jeffrey and Daniel Zwillinger. Academic Press, Inc., San Diego, CA, 2007. Reproduction in P.R.China authorized by
Elsevier (Singapore) Pte Ltd.



\bibitem{gra3} C.R. Graham, Compatibility operators for degenerate elliptic equations on the ball and Heisenberg
group, Math. Z. 187 (3) (1984) 289-304.

\bibitem{GJMS}  C.R. Graham, R. Jenne, L.J. Mason, and G.A.J. Sparling, Conformally invariant powers of the Laplacian. I. Existence. Journal of the London Mathematical Society, (2)46 (1992), 557-565.



\bibitem{he} S. Helgason, Groups and geometric analysis. Integral geometry, invariant differential operators,
and spherical functions. Pure and Applied Mathematics. 113  Academic Press, 1984.


\bibitem{he2}S. Helgason,  Geometric analysis on symmetric spaces. Second edition. Mathematical
Surveys and Monographs, 39. American Mathematical Society, Providence,
RI, 2008.

\bibitem{Hebey} E. Hebey, Nonlinear Analysis on Manifolds: Sobolev Spaces and Inequalities, Courant Lect. Notes Math., vol. 5, American Mathematical Society, Providence, RI, 1999.

\bibitem{hu} R. A. Hunt, On $L(p,q)$ spaces, Enseignement Math. (2)12(1966), 249-276.

\bibitem{Hong} Q. Hong, Sharp constant in third-order hardy-sobolev-maz'ya inequality in the half space of dimension seven,  Int. Math. Res. Not. IMRN 2021, no. 11, 8322-8336.
\bibitem{io} A.D. Ionescu, An endpoint estimate for the Kunze-Stein phenomenon and related maximal operators, Ann. of Math.  152 (1) (2000),
259-275.



\bibitem{jo1} K.D. Johnson,  Composition series and intertwining operators for the spherical principal series. II, Trans. Amer. Math. Soc. 215
(1976) 269-283.

\bibitem{jo} K.D. Johnson, N.R. Wallach, Composition series and intertwining operators for the spherical principal series. I, Trans. Amer. Math. Soc. 229
(1977) 137-173.

\bibitem{j}A. Juhl, Explicit formulas for GJMS-operators and Q-curvatures, Geom. Funct. Anal. 23(2013),
1278-1370.



\bibitem{ksw}H. Kozono, T. Sato, H. Wadade, Upper bound of the best constant of a Trudinger-Moser inequality and its application to a
Gagliardo-Nirenberg inequality, Indiana Univ. Math. J. 55 (6) (2006),  1951-1974.

\bibitem{ll}N. Lam, G. Lu, Sharp Moser-Trudinger inequality in the Heisenberg
group at the critical case and applications, Adv. Math. 231 (6)
(2012), 3259-3287.





\bibitem{ll2} N. Lam, G. Lu, A new approach to sharp Moser-Trudinger and Adams type inequalities: a rearrangement-free argumnet,
    J. Diff. Equa. 255(2013),  298-325.



\bibitem{LiLuy1} J. Li, G. Lu, Q. Yang,  Fourier analysis and optimal Hardy-Adams inequalities on hyperbolic spaces of any even dimension, Adv. Math. 333(2018), 350-385.

\bibitem{LiLuy2} J. Li, G. Lu, Q. Yang, Sharp Adams and Hardy-Adams inequalities of any fractional order on hyperbolic spaces of all dimensions, Trans. Amer. Math. Soc. 373(5)(2020), 3483-3513.


\bibitem{liu}G. Liu, Sharp higher-order Sobolev inequalities in the hyperbolic space $\mathbb{H}^{n}$, Calc. Var. Partial Differential Equations 47, no. 3-4, (2013),  567-588



\bibitem{lo}
  N.~Lohou\'{e} and T.~Rychener.
  \newblock Die {R}esolvente von {$\Delta $} auf symmetrischen {R}\"{a}umen vom
  nichtkompakten {T}yp.
  \newblock {\em Comment. Math. Helv.}, 57(3):445--468, 1982.




\bibitem{ly}G. Lu, Q. Yang, A  sharp Trudinger-Moser inequality on  any  bounded and convex   planar domain,
 Calc. Var. Partial Differ. Equ. 55: 153, 1-16(2016).






\bibitem{LuYang3}  G. Lu, Q. Yang,   Paneitz operators on hyperbolic spaces and high order Hardy-Sobolev-Maz'ya inequalities  on half spaces,
Amer. J. Math. 141 (2019), no. 6, 1777-1816.

\bibitem{ly4} G. Lu, Q. Yang, Green's functions of Paneitz and GJMS
 operators on hyperbolic spaces and sharp  Hardy-Sobolev-Maz'ya inequalities on half spaces, 	arXiv:1903.10365.

 \bibitem{ly2}G. Lu, Q. Yang, Sharp Hardy-Adams inequalities for bi-Laplacian on hyperbolic space of dimension four, 	  Advances in Mathematics, 319 (2017), 567-598.
\bibitem{ly5} G. Lu, Q. Yang, Sharp Hardy-Sobolev-Maz'ya, Adams and Hardy-Adams inequalities on  the Siegel domains and complex hyperbolic spaces,
arxiv.org




\bibitem{maz}V.G. Maz'ya, Sobolev Spaces, Springer-Verlag, Berlin, 1985.


\bibitem{on}R. O'Neil, Convolution operateors and $L(p,q)$ sapces,
Duke Math. J. 30(1963), 129-142.












\bibitem{s} E. M. Stein, Singular integrals and differentiability properties of functions.  Princeton University Press, Princeton, New Jersey, 1970.

 \bibitem{Stein} E. M. Stein, Some problems in harmonic analysis suggested by symmetric spaces and semi-simple groups, in: Actes Congrès Intern. Math. (Nice, 1970). Tome 1, Gauthier-Villars, 1971, pp. 173-189.

\bibitem{te}A. Terras. Harmonic analysis on symmetric spaces and applications. I. Springer-Verlag, New York, 1985.



\bibitem{vv} Valery V. Volchkov, Vitaly V. Volchkov, Harmonic analysis of mean periodic functions
on symmetric spaces and the Heisenberg group,  Springer-Verlag, 2009.

\bibitem{wy}G. Wang, D. Ye, A Hardy-Moser-Trudinger inequality, Adv. Math. 230 (2012), 294-320.






\bibitem{y2}Q. Yang,  Hardy-Sobolev-Maz'ya inequalities for polyharmonic operators, Annali di Matematica (2021). https://doi.org/10.1007/s10231-021-01091-9.


\bibitem{yap}L. Y. H. Yap, Some remarks on convolution operators and $L(p, q)$ spaces, Duke Math. J., 36(1969),  647-658.

\end{thebibliography}
\end{document}